\let\oldabs\abs
\def\abs{\@ifstar{\oldabs}{\oldabs*}}
\let\oldnorm\norm
\def\norm{\@ifstar{\oldnorm}{\oldnorm*}}
\newcommand{\HH}{\mathsf{H}}
\newcommand{\bu}{\bm{u}}
\newcommand{\ddd}{\mathrm{dual}}
\newcommand{\buu}{\widehat{\bm{u}}}
\newcommand{\uu}{\widehat{{u}}}
\newcommand{\bv}{\boldsymbol{v}}
\newcommand{\bw}{\boldsymbol{w}}
\newcommand{\bx}{\boldsymbol{x}}
\newcommand{\A}{\mathcal{A}}
\newcommand{\D}{\mathcal{D}}
\newcommand{\W}{\mathcal{W}}
\newcommand{\cP}{\mathcal{P}}
\newcommand{\RE}{\mathrm{RE}}
\newcommand{\h}{\mathcal H}
\newcommand{\sym}{\mathrm{sym}}
\newcommand{\vol}{\mathrm{vol}}
\newcommand{\leave}{\mathrm{leave}}
\newcommand{\full}{\mathrm{full}}
\newcommand{\ptl}{\mathrm{partial}}
\newcommand{\cc}{\mathfrak{r}}
\newcommand{\mix}{\mathfrak{s}}
\newcommand{\FI}{\mathcal G_{n,1}}
\newcommand{\cH}{\mathcal H}
\newcommand{\cJ}{\mathcal H}
\newcommand{\US}{U}
\newcommand{\st}{s}
\newcommand{\E}{E_n}
\newcommand{\FII}{\mathcal G_{n,2}}
\newcommand{\N}{N}
\newcommand{\ndeg}{N}
\newcommand{\Lu}{\mu}
\newcommand{\I}{\mathcal{I}}
\newcommand{\PP}{\mathbb{P}}
\newcommand{\GG}{\mathsf{G}}
\DeclareMathOperator*{\argmax}{arg\,max}
\DeclareMathOperator*{\argmin}{arg\,min}
\numberwithin{equation}{section}
\theoremstyle{plain}
\newtheorem{thm}{Theorem}[section]
\newtheorem{Lemma}{Lemma}[section]
\newtheorem{Proposition}{Proposition}[section]
\newtheorem{claim}{Claim}[section]
\theoremstyle{remark}
\newtheorem{Definition}{Definition}
\newtheorem{condition}{Condition}
\newtheorem{Assumption}{Assumption}
\newtheorem{remark}{Remark}
\renewcommand{\d}[1]{\ensuremath{\operatorname{d}\!{#1}}}
\newcommand{\e}[1]{\textnormal{exp}(#1)}
\newcommand{\mm}{M}
\newcommand{\ms}{m}
\newcommand{\mGamma}{\mathcal M}
\newcommand{\mZ}{\mathcal Z}
\definecolor{brickred}{rgb}{0.58, 0.22, 0.10}
\newcommand{\plainref}[1]{{{\color{blue}\ref*{#1}}}}
\newcommand{\plaineqref}[1]{{{\color{blue}(\ref*{#1})}}}
\begin{document}

\begin{frontmatter}
\title{A Unified Analysis of Likelihood-based Estimators in the Plackett--Luce Model}
\runtitle{Analysis of Likelihood-based Estimators in the PL model}

\begin{aug}

\author[A]{\fnms{Ruijian}~\snm{Han}\ead[label=e1]{ruijian.han@polyu.edu.hk}}\footnote{Authorships are ordered alphabetically\label{myfootnote}.},
\and
\author[B]{\fnms{Yiming}~\snm{Xu}\ead[label=e2]{yiming.xu@uky.edu}}\hyperref[myfootnote]{\footnotemark[\value{footnote}]}

\address[A]{Department of Data Science and Artificial Intelligence, 
	The Hong Kong Polytechnic University\printead[presep={,\ }]{e1}}

\address[B]{Department of Mathematics, University of Kentucky\printead[presep={,\ }]{e2}}
\end{aug}

\begin{abstract}
	The Plackett--Luce model has been extensively used for rank aggregation in social choice theory. A central statistical question in this model concerns estimating the utility vector that governs the model's likelihood. In this paper, we investigate the asymptotic theory of utility vector estimation by maximizing different types of likelihood, such as full, marginal, and quasi-likelihood. Starting from interpreting the estimating equations of these estimators to gain some initial insights, we analyze their asymptotic behavior as the number of compared objects increases. In particular, we establish both uniform consistency and asymptotic normality of these estimators and discuss the trade-off between statistical efficiency and computational complexity. For generality, our results are proven for deterministic graph sequences under appropriate graph topology conditions. These conditions are shown to be informative when applied to common sampling scenarios, such as nonuniform random hypergraph models and hypergraph stochastic block models. Numerical results are provided to support our findings.
\end{abstract}

\begin{keyword}[class=MSC]
	\kwd[Primary]{ 62F07}
		\kwd[; secondary]{ 62F12}
\end{keyword}	

\begin{keyword}	
	\kwd{Asymptotic normality}
	\kwd{Entrywise error}
	\kwd{Hypergraph}
	\kwd{Likelihood-based estimation}
	\kwd{Plackett--Luce model}
\end{keyword}

\end{frontmatter}

\tableofcontents


\section{Introduction}
Combining scores of multiple agents to produce a collective ranking is a frequent task across various disciplines in social science, including econometrics \citep{mcfadden1973conditional}, game analytics \citep{massey1997statistical, glickman1999rating}, psychometrics \citep{thurstone1927method}, and others \citep{hastie1997classification, baltrunas2010group, caron2014bayesian}. One common approach to modeling such situations involves incorporating latent parameters into the objects under comparison. Assuming pairwise comparison outcomes are unaffected by irrelevant alternatives, a unique parametrization of the model can be obtained \citep{MR0108411}. This leads to the so-called Plackett--Luce (PL) model \citep{plackett1975analysis}, which is the focus of this paper. 

As a special instance of random utility models, the PL model is popular as it strikes a balance between model complexity and computational tractability \citep{zhao2018composite}; it has recently been used in modern machine learning tasks such as fine-tuning in large language models \citep{ouyang2022training, rafailov2024direct}. When all comparison data are pairwise, the PL model reduces to the Bradley--Terry (BT) model \citep{MR0070925}, which has been actively investigated over the last 30 years. For a selection of results most relevant to this paper, see \cite{MR1724040, MR2051012, MR2987494, MR3504618, MR3613103, MR3953449, han2020asymptotic, chen2022optimal, chen2022partial, MR4560195, gao2023uncertainty, JMLR:v24:22-0214}. 

In contrast, the PL model with multiple comparisons was less well-understood because of the more involved data structure. Recently, a few studies emerged on the asymptotic analysis in the PL model with equal-sized comparison data \citep{jang2018top, fan2022ranking}. These results, however, do not apply to the PL model with a general comparison graph structure. To fill this gap, this paper provides a unified asymptotic theory for likelihood-based estimators in the PL model, relaxing the assumptions on graph homogeneity and equal comparison size. Before delving into the technical details, we provide a brief overview of related work, with an emphasis on studies directly related to the PL model.

\subsection{Related Work}

A large body of work in the PL model focuses on the computational aspects of parameter estimation. \cite{MR2051012} used the Minorization-Maximization (MM) algorithm to find the maximum likelihood estimation (MLE), and alternative Bayesian approaches have been proposed in \cite{guiver2009bayesian, caron2012efficient}. In \cite{maystre2015fast}, the authors provided a Markov chain interpretation of the MLE in the choice-one setting, leading to a spectral algorithm later improved in \cite{agarwal2018accelerated}. Additionally, \cite{azari2013generalized} introduced a breaking technique in the full-ranking setting by first breaking multiple comparisons into pairwise data and then applying the generalized method of moments. This data-breaking approach is closely related to the quasi-maximum likelihood estimation (QMLE) in Section \ref{3.2}. 

On the theoretical side, most existing studies assume the comparison data in the PL model to be equal-sized. In \cite{jang2018top}, the conditions for achieving exact recovery of top-$K$ ranking were discovered albeit being suboptimal. When only the choice-one comparison data are considered, the PL model falls into a general framework in \cite{MR3504618}, and a weighted $\ell_2$ convergence rate is established for the MLE. More recently, \cite{fan2022ranking} conducted a comprehensive analysis of the choice-one MLE, obtaining both uniform consistency and asymptotic normality. Nevertheless, none of these works considered the PL model in a context where the comparison graph is heterogeneous and the comparison data have varying sizes. This setting is often closer to practical scenarios but presents considerable challenges in building a theoretical statistical foundation. To our knowledge, the only study parallel to ours that accommodates more realistic comparison graph structures is \cite{fan2023spectral}, which focuses on the spectral estimator rather than the MLE. Generally speaking, analyzing network models with nonuniform hypergraph structure and imbalanced edge distribution is a challenging task \citep{zhen2022community}.

\subsection{Contributions}

This paper advances the asymptotic theory for both utility vector estimation and inference in the PL model using likelihood-based methods. The comparison graph considered here may involve edges of varying sizes and a heterogeneous degree distribution. At a high level, our contributions can be described as follows.

\begin{itemize}
\item We formulate various likelihood-based estimators in the PL model, including the full-likelihood, the marginal-likelihood, and the quasi-likelihood. Their estimating equations are interpreted through the lens of rank matching, which provides useful heuristics for why these estimators are expected to work—particularly in the quasi-likelihood setting. The details of the related results are given in Propositions~\ref{lm:1}-\ref{lm:3}. 

\item We develop a unified uniform consistency result for likelihood-based estimators under suitable graph topology conditions in deterministic designs (Theorem~\ref{main:general}). Under additional balancing assumptions, we further establish the asymptotic normality of these estimators (Theorem~\ref{thm:and}). We also discuss the trade-off between statistical efficiency and computational complexity of these estimators, offering practical guidance on their use in uncertainty quantification.

\item To demonstrate the proposed conditions, we show that for several common random sampling scenarios (including the nonuniform random hypergraph model and the hypergraph stochastic block model), the proposed conditions are optimal in terms of the leading-order model sparsity (Lemmas~\ref{main:REs} and \ref{newtech}). Moreover, these conditions are flexible enough to accommodate models with different edge sizes and heterogeneous edge densities. See Theorems~\ref{main_1}-\ref{main_2} for the results on uniform consistency and Theorems~\ref{normal_MLE}-\ref{normal_QMLE} on asymptotic normality in the random design setting.
\end{itemize}

Both the uniform consistency and asymptotic normality results are proven for deterministic graph sequences. For uniform consistency, our analysis extends the chaining technique in \cite{han2022general} to hypergraphs. This method links estimation errors to the asymptotic connectivity of the comparison graph sequence, allowing the former to be bounded using graph-theoretic attributes. Despite its conceptual simplicity, applying this method to hypergraphs is nontrivial. In particular, we need to verify an admissibility property of the chaining sets, which is not straightforward in the PL model due to the interplay between comparison outcomes and the utility vector. Moreover, to instantiate the deterministic results to concrete sampling scenarios, we show that several random graph models of interest can produce hypergraph sequences that satisfy the desired conditions (Lemma~\ref{main:REs}). This step is crucial in illustrating that the proposed conditions for deterministic graph sequences are not merely hypothetical. The proof of this step relies on intricate counting of subgraph structures in a hypergraph.

Our asymptotic normality result follows the standard route of Taylor expansion to compute the asymptotic variance of estimators. To analyze the remainder terms in the expansion, we adopt a truncated error analysis. Since we consider a class of likelihood-based estimators rather than a single one, it is helpful to discuss each estimator separately. For the choice-one MLE and QMLE, which are the most commonly studied in the literature \citep{chen2022optimal, fan2023spectral}, our method exclusively relies on the truncated error analysis. This approach is based on the Neumann series expansion of the normalized Hessians of appropriate log-likelihood functions and differs from the state-of-the-art leave-one-out analysis. Notably, the proposed conditions using truncated error analysis hold in several random sampling scenarios with heterogeneous structures (Lemma~\ref{newtech}), which may not be achievable through leave-one-out analysis alone.

For the marginal MLE, additional complexity arises because the Hessian of the marginal log-likelihood is random. Under such circumstances, we combine the truncated error analysis with a leave-one-out perturbation argument to obtain the desired results. While the latter is similar to \cite{chen2022optimal, fan2023spectral} in spirit, there are notable differences. For instance, our perturbation argument is applied when the hypergraph sequence is deterministic. To formulate the appropriate conditions, we need to bound the spectral gaps of the leave-one-out subgraphs using the spectral gap of the original graph, which requires an additional layer of perturbation analysis on graphs (Lemma~\ref{lastone}). This step involves extra technical machinery and may be of independent interest.

Compared to the analysis in pairwise graphs, our study reveals an interesting edge-sharing phenomenon unique to hypergraphs. For two distinct objects, the edge-sharing ratio is defined as the number of edges shared between them divided by the minimum of their respective degrees. This ratio captures the correlation between the utility estimates of different objects and plays a crucial role in identifying the explicit form of the asymptotic variances of the estimators. While this ratio is asymptotically vanishing in pairwise comparison graphs, it can be of constant order when multiple comparisons are present, introducing additional obstacles in the analysis. 

\subsection{Organization}
The rest of the paper is organized as follows. Section \ref{sec:2} introduces the PL model and the multiple comparison graphs studied throughout the paper. Section \ref{sec:3} reviews several likelihood-based estimators in the PL model, including the full MLE, marginal MLE, and QMLE. Interpretation of their estimating equations is provided from the viewpoint of matching certain rank-related equations. Section \ref{sec:44} contains the main asymptotic results, including both uniform consistency and asymptotic normality for the likelihood-based estimators under random designs, which is the common setting in the literature and thus stated first. These results are proven in full generality under deterministic designs in Section \ref{sec:55}. Section \ref{sec:7} presents numerical simulations to support our theoretical findings. Proofs and other technical details are deferred to the appendices.

\subsection{Notation}

For $n\in\mathbb N$, let $[n] = \{1, \ldots, n\}$. For $T\subseteq [n]$, the power set of $T$ is denoted by $\mathscr P(T)$. A permutation $\pi$ on $T$ is a bijection between $T$ and itself, and the set of permutations on $T$ is denoted by $\mathcal S(T)$. If $|T| = m$, we write $\pi$ as $\pi(1), \ldots, \pi(m)$, where $\pi(i)$ is the element in $T$ that is mapped to the $i$th order under $\pi$. For a multiple comparison model with $n$ objects, we reserve the notation $\bm u^* = (u_1^*, \ldots, u_n^*)^\top$ for the ground truth utility vector, and $\buu$ and $\widetilde{\bu}$ the corresponding (marginal) MLE and QMLE, respectively. 

A comparison graph is represented as a hypergraph $\HH(V, E)$, where $V$ is the vertex set and $E\subseteq \mathscr P(V)$ is the edge set. For $\ms\in [n]$, let $\Omega^{(\ms)} = {V\choose \ms} = \{e\in \mathscr P(V): |e| = \ms\}$ denote all possible edges of size $\ms$. For $U_1, U_2\subseteq V$ with $U_1\cap U_2=\emptyset$, we define the edges between $U_1$ and $U_2$ as $\mathcal E(U_1, U_2)=\{e\in E: e\cap U_i\neq\emptyset, i = 1, 2\}$. The set of boundary edges of $U$ is defined as $\partial U = \mathcal E(U, U^\complement)$. 

For $\bx\in\mathbb R^n$, $B\in\mathbb R^{n\times n}$ and $1\leq p\leq\infty$, we denote the $\ell_p$-norm of $\bx$ as $\|\bx\|_p$, the operator norm of $B$ induced by the $\ell_{p}$-norm as $\|B\|_{p\to p} = \max_{\bx\in\mathbb R^n: \|\bx\|_p = 1}\|B\bx\|_p$. When $p=2$, this is the spectral norm and we often write $\|B\|_{2} = \|B\|_{2\to 2}$. The $\ell_\infty$-norm of vectorized $B$ as $\|B\|_{\max}:= \max_{i,j\in [n]}|B_{ij}|$. When all eigenvalues of $B$ are real, we arrange them in the nondecreasing order as $\lambda_1(B)\leq\cdots\leq \lambda_n(B)$. For a smooth function $f(\bx): \mathbb{R}^n \to \mathbb{R}$ and $k_1, \ldots k_s \in [n]$, we let $\partial_{k_1\ldots k_s} f$ denote the $s$-order partial derivative of $f$ with respect to the $k_1, \ldots k_s$ components. We use $\vee$ and $\wedge$ to denote the \texttt{max} and \texttt{min} operators, respectively.  We use $O(\cdot )/O_p(\cdot )$ and $o(\cdot )/o_p(\cdot )$ to represent the Bachmann--Landau asymptotic notation. We use $\lesssim$ and $\gtrsim$ to denote the asymptotic inequality relations, and $\asymp$ if both $\lesssim$ and $\gtrsim$ hold. We use $\mathbf 1_{A}(x)$ to denote the indicator function on a set $A$, that is, $\mathbf 1_{A}(x)=1$ if $x\in A$ and $\mathbf 1_{A}(x)=0$ otherwise. We let $\langle 1\rangle=(1, \ldots,  1)^\top$ denote the all-ones vector with a compatible dimension that is often clear from the context.

\section{Problem Set-up}\label{sec:2}

We first briefly introduce the PL model and explain the data structure involved. Then we discuss two random comparison hypergraph models that generalize the frequently used random graph models in the pairwise setting.  

\subsection{The PL Model}
Consider a comparison network of $n$ objects. In the PL model, each object $k\in [n]$ is assumed with a latent score $\e{u_k^*}$, where $\bu^* = (u_1^*, \ldots, u_n^*)^\top\in\mathbb R^n$ is known as the utility vector. Given $T\subseteq [n]$ with $|T|=m$, a full observation on $T$ is a totally ordered sequence $\pi(1)\succ \cdots \succ\pi(m)$, where $\pi\in\mathcal S(T)$, and $\pi(j)$ denotes the element with rank $j$. Sometimes it is more convenient to work with the rank of an element, for which we introduce the notation $r(k):= \pi^{-1}(k) = \{j: \pi(j) = k\}$. Identifying a full observation as $\pi$, the PL model assumes that the probability of observing $\pi$ on $T$ is
\begin{align}
\PP_{\bu^*}(\pi \mid T) := \prod_{j=1}^m\frac{\e{u_{\pi(j)}^*}}{\sum_{t = j}^m \e{u_{\pi(t)}^*}}.\label{PL-mass}
\end{align}
The subscript of $\mathbb P$ denotes the dependence on the utility vector and is dropped when clear from the context. 
Note that \eqref{PL-mass} is invariant under the common shift of $\bm u^*$. To ensure the model is identifiable, we impose one commonly used constraint $\langle 1\rangle^\top \bu^* = \sum_{j\in [n]}u_j^* = 0$, where $\langle 1\rangle$ is the all-ones vector with compatible size.

As its name suggests, the PL model is related to Luce's choice axiom \citep{MR0108411}, which states that selecting one object over another is not affected by the presence or absence of other objects. In fact, the PL model can be characterized using the following conditions:
\begin{condition}\label{ass:1}
There exist a family of \emph{choice-one} selection probabilities $\{\mathbb P^T\}_{T\subseteq [n]}$ satisfying Luce's choice axiom. 
\end{condition}
\begin{condition}\label{ass:2}
For $T\subseteq [n]$ with $|T|=m$, $\pi\in\mathcal S(T)$ is a random permutation obtained by sequentially sampling $\pi(j)$ from $\mathbb P^{T_{-j+1}}$, where $T_{-0} = T$ and $T_{-j} = T\setminus\{\pi(1), \ldots, \pi(j)\}$. 
\end{condition}
Under Luce's choice axiom, Condition~\ref{ass:1} is equivalent to 
\begin{align*}
\mathbb P^T(\text{$k$ is selected}) = \frac{ \e{u_k^*} }{\sum_{j\in T}\e{u_j^*}}
\end{align*}
for some $\bu^* \in\mathbb R^n$. Condition~\ref{ass:2} says that $\pi$ is obtained by independently selecting the top element from the remaining comparison set: $\mathbb P(\pi \mid T) = \prod_{j=1}^{m}\mathbb P^{T_{-j+1}}(\text{$\pi(j)$ is selected}).$
With such interpretation, the equivalence between \eqref{PL-mass} and Conditions \ref{ass:1}-\ref{ass:2} is obvious.

An important property of the probability measures defined in \eqref{PL-mass} is that they are \emph{internally consistent} \citep{MR2051012}. 
In particular, for $T' \subseteq T$,
\begin{align}
&\PP(\pi' \mid T') = \sum_{\pi\in \mathcal S(T), \pi^{-1}|_{T'}\sim\pi'^{-1}}\PP(\pi \mid T)&\pi'\in \mathcal S(T'),\label{conS}
\end{align}
where $\pi^{-1}|_{T'}\sim\pi'^{-1}$ means that the relative ranks of $T'$ under $\pi$ and $\pi'$ are the same. 
This implies that the probability of a multiple comparison observation in the PL model, whether treated as a full or partial observation (on a larger comparison set), is well-defined.

\subsection{Comparison Data}\label{sec:2.2}

We now describe the data.  
Let $N$ denote the number of \emph{independent} comparisons, and $N_k := |\{i\in [N]: k\in T_i\}|$ denote the number of those containing the object $k$. For $i\in [N]$, the $i$th data can be represented as a pair $(T_i, \pi_i)$, where $T_i$ (with $|T_i| = m_i$) is where multiple comparisons take place and $\pi_i$ encodes the observed ranks of objects in $T_i$. 
Here we do \emph{not} require $m_i=m_j$ for $i, j\in [N]$, namely, the sizes of the comparison sets are allowed to be different.
We assume that $m_i$'s are independent of the utility vector $\bm u^*$.  
The corresponding full observation $\mathcal{O}^{\full}_i$ is given by
\begin{align*}
\mathcal{O}^{\full}_i  = \pi_i(1)\succ\cdots \succ \pi_i(m_i).
 \end{align*}
The rank of $k$ in $\pi_i$ (assuming $k\in T_i$) is denoted by $r_i(k)$.  

More generally, one may consider the partial observation consisting of the choice-$y_i$ ($y_i\leq m_i$) objects:
\begin{align*}
\mathcal{O}^{\ptl}_i  = \pi_i(1)\succ\cdots \succ \pi_i(y_i) \succ \text{others}.
\end{align*}
When $y_i=1$ for all $i\in [N]$, the partial observations are called the \textit{choice-one} observations. 
The partial observations become full observations if $y_i = m_i$ for all $i\in [N]$. 

It is helpful to recall the sequential sampling view in Condition \ref{ass:2}. 
For every $\pi_i\sim\mathbb P(\cdot \mid T_i)$, it can be viewed as a random process adapted to the natural filtration 
\begin{align}
\mathscr F_{i, j} = \sigma(\pi_i(t), t\leq j).\label{filt}
\end{align} 
This perspective allows for convenient interpretation of certain quantities in Section \ref{sec:3}.

\subsection{Comparison Graphs}\label{sec:cg}

The $n$ objects under comparison together with the comparison data can be represented as a hypergraph $\HH(V, E)$, where $V = [n]$ and $E = \{T_i: i\in [N]\}$. In the following, we consider two types of random hypergraph models whose parameters are independent of the utility vector $\bm u^*$.

\subsubsection{Nonuniform Random Hypergraph Models}\label{am}
We consider the following nonuniform random hypergraph model (NURHM).
Fixing an absolute integer $M\leq n$, we assume that 
\begin{align}\label{NURHM}
&E = \bigcup_{\ms=2}^{M}E^{(\ms)}& E^{(\ms)}\subseteq \Omega^{(m)}={[n]\choose \ms},
\end{align}
where $E^{(\ms)}$ are independent $\ms$-uniform random hypergraphs generated as follows.
Fixing $\ms$, $\mathbf 1_{\{e\in E^{(\ms)}\}}$ are independent Bernoulli random variables with parameter $p_{e, n}^{(\ms)}$ for $e\in\Omega^{(m)}$. 
Denote by 
\begin{align}
&p_n^{(\ms)} = \min_{e\in\Omega^{(m)}}p_{e, n}^{(\ms)}& q_n^{(\ms)} = \max_{e\in\Omega^{(m)}}p_{e, n}^{(\ms)}.\label{pnqn}
\end{align}
When $p_n^{(\ms)} = q_n^{(\ms)} = p_\mm\mathbf 1_{\{\ms=\mm\}}$ for some $p_\mm\in [0, 1]$, $\HH(V, E)$ reduces to the $\mm$-way uniform hypergraph model that generalizes the Erd\H{o}s--R\'enyi model \citep{MR0125031}. 
Nevertheless, NURHM is strictly more general as it can produce varying-size comparison data with different probabilities.  

\subsubsection{Hypergraph Stochastic Block Models}\label{hsbm}

The hypergraph stochastic block model (HSBM) is a special instance of NURHM that generalizes the stochastic block model \citep{Holland_1983} to the hypergraph setting. It was first studied in \cite{ghoshdastidar2014consistency} and has since been analyzed in the context of community detection problems \citep{florescu2016spectral, pal2021community}. Analogous to the stochastic block model \citep{abbe2017community}, an $\mm$-uniform HSBM model with $K<\infty$ clusters partitions $V$ into $K$ subsets, $V = \sqcup_{i\in [K]} V_i$, where edges within and across partitioned sets have different probabilities of occurrence: 
\begin{align}\label{sbm}
&\mathbb P(e\in E, |e| = \mm) = \begin{cases}
\omega_{n, 1}\mathbf 1_{\{e\in\Omega^{(M)}\} }& e\subseteq V_1\\
\vdots\\
\omega_{n, K}\mathbf 1_{\{e\in\Omega^{(M)}\}}& e\subseteq V_K\\
\omega_{n, 0}\mathbf 1_{\{e\in\Omega^{(M)}\}}& \text{otherwise}
\end{cases}.
\end{align}
In \eqref{sbm}, the probabilities of edges across different partitions are assumed to be the same, although they can be further refined depending on which $V_i$ intersects the edge as well as the intersected size. Moreover, the probabilities within the same partition can be made different but of the same order asymptotically. Such generalizations do not change our theoretical results but increase notational complexity. For simplicity, we adopt the setup in \eqref{sbm} when referring to an HSBM. 

An HSBM can produce heterogeneous and clusterable configurations if $\max_{i}\omega_{n, i}/\min_{i}\omega_{n, i}$ diverges as $n\to\infty$. In the literature on classification, data associated with heterogeneous graphs is typically called imbalanced data.

\section{Utility Estimation}\label{sec:3}

Conditioning on the edge set $E = \{T_i: i\in [N]\}$, we consider several commonly used likelihood-based approaches to estimating $\bm u^*$ based on observed data. We provide a unified standpoint to interpret their estimating equations.

\subsection{The Likelihood Approach}\label{3.1}
The log-likelihood function can be explicitly written down in a PL model. Given data $\{(T_i, \pi_i)\}_{i\in [N]}$, the marginal log-likelihood function based on the choice-$y_i$ objects in each $\pi_i$ ($y_i\leq |T_i|$ is given and independent of $\bm u^*$) is 
\begin{align}
l_1(\bm u) = \sum_{i\in [N]}\sum_{j\in [y_i]}\left[u_{\pi_i(j)} - \log\left(\sum_{t=j}^{m_i}\e{u_{\pi_i(t)}}\right)\right].\label{mmle}
\end{align}
Let $\mathbb E_{\bm u}$ denote the expectation operator in a PL model with parameter $\bm u$. 
For convenience, we omit the subscript when $\bm u$ is the ground truth, that is, $\bm u = \bm u^*$.  
The following proposition shows that the (marginal) MLE $\buu$ satisfies a system of estimating equations.
\begin{Proposition}\label{lm:1}
The MLE $\buu$ obtained from maximizing \eqref{mmle} subject to $\langle 1\rangle^\top\bu = 0$, if exists, satisfies
\begin{align*}
&\frac{1}{N_k}\sum_{i: k\in T_i}S_i( {\buu}; k) = 0& k\in [n],
\end{align*} 
where 
\begin{align}\label{estimating_quantity}
S_i(\bu; k) = \Big(\sum_{j\in [m_i]}\mathbb E_{\bu}[\mathbf 1_{\{\text{$r_i(k) = j$ and $r_i(k)\leq y_i$}\}}\mid \mathscr F_{i, j-1}]\Big) - \mathbf 1_{\{r_i(k)\leq y_i\}}
\end{align}
and $\mathscr F_{i, j-1}$ are defined in \eqref{filt}.
Meanwhile, the true parameter $\bu^*$ satisfies $\mathbb E[S_i(\bm u^*; k)] = 0$ for every $i\in [N]$.
\end{Proposition}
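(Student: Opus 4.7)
The plan is to compute the partial derivative of the log-likelihood $l_1(\bu)$ in \eqref{mmle} directly and show it equals $-\sum_{i:k\in T_i} S_i(\bu;k)$, then match this with the conditional-expectation representation. Writing $\partial_k$ for $\partial/\partial u_k$, the term $u_{\pi_i(j)}$ contributes $\mathbf 1_{\{r_i(k)=j\}}$ (since $\pi_i(j)=k$ iff $r_i(k)=j$), and the $\log$-sum-exp term contributes $-\mathbf 1_{\{r_i(k)\geq j\}}\cdot \e{u_k}/\sum_{t\geq j}\e{u_{\pi_i(t)}}$ because $k$ appears in the denominator exactly when $k\in\{\pi_i(j),\ldots,\pi_i(m_i)\}$. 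Summing over $j\in[y_i]$ and using $\sum_{j\in[y_i]}\mathbf 1_{\{r_i(k)=j\}}=\mathbf 1_{\{r_i(k)\leq y_i\}}$ will yield a clean expression.

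The next step is to recognize the Gibbs fraction as a conditional expectation. Because the remaining set $T_i\setminus\{\pi_i(1),\ldots,\pi_i(j-1)\}$ is $\mathscr F_{i,j-1}$-measurable, so is the event $\{r_i(k)\geq j\}$ and the sum $\sum_{t\geq j}\e{u_{\pi_i(t)}}$ (it depends only on the remaining set, not on the order of its elements). Using Condition~\ref{ass:2} with parameter $\bu$,
\begin{align*}
\mathbb E_{\bu}\bigl[\mathbf 1_{\{r_i(k)=j\}}\mid \mathscr F_{i,j-1}\bigr]
=\mathbf 1_{\{r_i(k)\geq j\}}\cdot\frac{\e{u_k}}{\sum_{t\geq j}\e{u_{\pi_i(t)}}}.
\end{align*}
Multiplying by the deterministic indicator $\mathbf 1_{\{j\leq y_i\}}$ and summing over $j\in[m_i]$ gives the first half of \eqref{estimating_quantity}. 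Combining the two displays yields $\partial_k l_1(\bu)=-\sum_{i:k\in T_i}S_i(\bu;k)$.

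To obtain the estimating equations for $\buu$, I will invoke the shift invariance of $l_1$ noted after \eqref{PL-mass}: since $l_1(\bu+c\langle 1\rangle)=l_1(\bu)$, the gradient always satisfies $\langle 1\rangle^\top \nabla l_1(\bu)=0$, so the constraint $\langle 1\rangle^\top\bu=0$ can be enforced with a zero Lagrange multiplier. Thus the first-order condition reduces to $\partial_k l_1(\buu)=0$ for every $k$, which after dividing by $N_k$ gives the stated equations. Finally, for the population identity $\mathbb E[S_i(\bu^*;k)]=0$, I will apply the tower property to the first sum: each $\mathbb E[\mathbb E_{\bu^*}[\mathbf 1_{\{r_i(k)=j,\,r_i(k)\leq y_i\}}\mid\mathscr F_{i,j-1}]]$ collapses to $\mathbb E[\mathbf 1_{\{r_i(k)=j,\,r_i(k)\leq y_i\}}]$, which sums over $j$ to $\mathbb E[\mathbf 1_{\{r_i(k)\leq y_i\}}]$, exactly cancelling the second term.

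The only subtle point, and the main bookkeeping obstacle, is verifying $\mathscr F_{i,j-1}$-measurability of the normalizing sum so that the conditional-expectation identity is valid; this hinges on the fact that the sum depends on $\{\pi_i(j),\dots,\pi_i(m_i)\}$ as a set. Everything else is direct calculus, so the proof should be short once this indexing is laid out carefully.
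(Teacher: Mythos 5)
Your proposal is correct and follows essentially the same route as the paper's proof: differentiate $l_1$, recognize the resulting Gibbs fractions $\mathbf 1_{\{j\le r_i(k)\wedge y_i\}}\e{u_k}/\sum_{t\ge j}\e{u_{\pi_i(t)}}$ as the conditional expectations $\mathbb E_{\bu}[\mathbf 1_{\{r_i(k)=j,\,r_i(k)\le y_i\}}\mid\mathscr F_{i,j-1}]$ via the sequential-sampling view, and use the tower property for the population identity. Your explicit justification that the constraint contributes a zero Lagrange multiplier (by shift invariance of $l_1$) is a small point of added care that the paper leaves implicit, but it does not change the argument.
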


The proof follows by checking the first-order optimality condition of the (marginal) MLE and is given in the appendices. 
To better understand the estimating equations, we consider two special cases of $\{y_i\}_{i\in [N]}$ introduced in the previous section.

\subsubsection{Full Observations}

In this case, $y_i = m_i$ for all $i\in [N]$.
The quantity \eqref{estimating_quantity} in the estimating equation for each $i\in [N]$ and $k\in [n]$ reduces to
\begin{align}
\Big(\sum_{j\in [m_i]}\mathbb E_{\bm u}[\mathbf 1_{\text{\{$r_i(k) = j$\}}}\mid \mathscr F_{i, j-1}] \Big)- 1.\label{full1}
\end{align}
 {Note that $\mathbb E_{\bm u}[\mathbf 1_{\text{\{$r_i(k) = j$\}}}\mid \mathscr F_{i, j-1}]$ estimates the probability of the rank of $k$ being $j$ given the information revealed before $\pi_i(j)$. Thus, the first term in \eqref{full1} is an aggregation of probability estimates for the potential ranks of $k$, whose expectation is $1$ under the ground truth model $\bm u^*$ by the tower property. 

\subsubsection{Choice-one Observations}

In this case, $y_i=1$ for all $i\in [N]$, which coincides with the choice-one comparison model in \cite{fan2022ranking}. 
This model is often called Luce's choice model. 
Similar to the full-observation case, the quantity \eqref{estimating_quantity} in the estimating equation for each $i\in [N]$ and $k\in [n]$ reduces to
\begin{align}
\mathbb E_{\bm u}[\mathbf 1_{\text{\{$r_i(k)=1$\}}}]- \mathbf 1_{\text{\{$r_i(k)=1$\}}},\label{choice1}
\end{align}
where $\mathbb E_{\bm u}[\mathbf 1_{\text{\{$r_i(k)=1$\}}}]$ denotes the probability of $k$ being the first element in $\pi_i$ without observing any information.

\begin{remark}\label{kind:remark}
The conditional expectation $\mathbb E_{\bu}[ \cdot \mid \mathscr F_{i, j-1}]$ in \eqref{estimating_quantity} is random and depends on the ranking outcome when $|T_i|>2$. This differs from the BT model ($|T_i|=2$), whose score function can be expressed by two parts: one that only depends on $\bu$ and the other that only depends on the ranking outcome. Such a difference complicates the asymptotic analysis of the marginal MLE in later sections. 
\end{remark}

\subsection{The Quasi-likelihood Approach}\label{3.2}

An alternative approach is to break the observations into pairwise comparisons. According to Condition \ref{ass:2}, multiple comparison data arising from a PL model implies a number of pairwise comparison outcomes. For example, for $T\subseteq [n]$ with $|T| = m$ and $\pi\in\mathcal S(T)$, 
\begin{align}
\{\pi(1)\succ\cdots\succ\pi(m)\} \Longrightarrow \bigcap_{1\leq j<t\leq m}\{\pi(j)\succ\pi(t)\}.\label{BT}
\end{align}
The right-hand side of \eqref{BT} is called the full breaking of $\pi$. Other types of breaking, such as the top/bottom breaking \citep{azari2013generalized}, can be considered similarly. For simplicity, we focus on the full-breaking case. 

The pairwise comparisons in a full breaking are partial observations from different comparison sets. Thanks to the internal consistency of the PL model \eqref{conS}, the pairwise events on the right-hand side of \eqref{BT} could be treated as if they were obtained from the BT model, albeit with a loss of dependence among those originating from the same breaking. As such, we can write down the log-likelihood function in this misspecified model as 
\begin{align}
l_2(\bm u) = \sum_{i\in [N]}\sum_{1\leq j<t \leq m_i}\left[u_{\pi_i(j)}-\log (\e{u_{\pi_i(j)}}+ \e{u_{\pi_i(t)}})\right],\label{pmle}
\end{align}
and seek the corresponding MLE $\widetilde{\bm u}$, which is a QMLE approach per se. 
The classical theory of the QMLE in a fixed dimension suggests that the QMLE will converge to an element in the quasi-likelihood parameter space that is closest to the true likelihood in the Kullback--Leibler (KL) divergence under suitable conditions. If the parameters of the convergent element coincide with the parameters of the true likelihood, then the QMLE is consistent. The next result shows that for every fixed $n$, the closest element to the true PL likelihood in the misspecified model \eqref{pmle} under KL divergence has the same parameters as the true PL likelihood (in the population level). Although this result provides evidence that the QMLE is a promising estimator in practice, it is different from the uniform consistency result in Section \ref{sec:4} where both $n$ and $N$ diverge.

For ease of illustration, we assume the comparison graph $\HH(V, E)$ is fixed. The likelihoods of observing $\bm \pi:=\{\pi_T\}_{T\in E}$, where $\pi_T\in\mathcal S(T)$, under the PL and the BT models (the model for the quasi-likelihood after breaking), are as follows. For the likelihood of the PL model, we have
\begin{align*}
&\text{ $f(\bm\pi; \bm u) = \prod_{T\in E}f_T(\pi_T; \bm u)\quad f_T(\pi_T; \bm u):= \prod_{j\in [|T|]}\frac{\e{u_{\pi_T(j)}}}{\sum_{t\geq j}\e{u_{\pi_T(t)}}}$}
\end{align*} 
For the likelihood of the BT model, we have
\begin{align*}
&\text{ $g(\bm\pi; \bm u) = \prod_{T\in E}g_T(\pi_T; \bm u)\quad g_T(\pi_T; \bm u):=\prod_{1\leq j<t\leq |T|}\frac{\e{u_{\pi_T(j)}}}{\e{u_{\pi_T(j)}}+\e{u_{\pi_T(t)}}}$}.
\end{align*} 
\begin{Proposition}\label{lm:2}
If $\HH(V, E)$ is connected, then for every $\bm u\in\mathbb R^n$ {with $\langle1 \rangle ^\top\bm u = 0$}, 
\begin{align*}
\bu = \argmin_{\bm v: \langle1 \rangle ^\top\bm v = 0}\mathbb E_{\bm\pi\sim f}[\mathsf{KL}(f(\bm\pi; \bm u)\| g(\bm\pi; \bm v))],
\end{align*}
where $\mathsf{KL}(\cdot \| \cdot)$ is the KL-divergence. 
\end{Proposition}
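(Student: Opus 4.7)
The plan is to recast the minimization as a concave maximization in $\bv$, verify that $\bu$ is stationary via the internal consistency identity \eqref{conS}, and then conclude uniqueness from strict concavity on $\{\bv:\langle 1\rangle^\top \bv = 0\}$. The recasting is just bookkeeping: since $\mathsf{KL}(f(\bm\pi;\bu)\|g(\bm\pi;\bv)) = \mathbb E_{\bm\pi\sim f}[\log f(\bm\pi;\bu)] - \mathbb E_{\bm\pi\sim f}[\log g(\bm\pi;\bv)]$ and only the second summand involves $\bv$, minimizing the KL divergence is equivalent to maximizing
\begin{align*}
F(\bv) := \mathbb E_{\bm\pi \sim f}[\log g(\bm\pi;\bv)] = \sum_{T\in E}\mathbb E_{\pi_T \sim f_T(\cdot;\bu)}[\log g_T(\pi_T;\bv)]
\end{align*}
over the hyperplane $\langle 1\rangle^\top \bv = 0$, which is well-posed because $F$ is invariant under the common shift $\bv \mapsto \bv + c\langle 1\rangle$.

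For the critical-point check, I would rewrite each $\log g_T(\pi_T;\bv)$ as an unordered-pair sum
\begin{align*}
\log g_T(\pi_T;\bv) = \sum_{\{k,l\}\subseteq T}\left[\mathbf 1_{\{k\succ_T l\}} v_k + \mathbf 1_{\{l\succ_T k\}} v_l - \log(\e{v_k}+\e{v_l})\right],
\end{align*}
where $\succ_T$ denotes the order induced by $\pi_T$. The decisive input is \eqref{conS}: projecting $\pi_T\sim f_T(\cdot;\bu)$ onto any pair $\{k,l\}$ reproduces the BT marginal $\mathbb P_f(k\succ_T l) = \e{u_k}/(\e{u_k}+\e{u_l})$. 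Taking expectations term-by-term and differentiating in $v_k$ then gives
\begin{align*}
\partial_{v_k} F(\bv) = \sum_{T\in E:\,k\in T}\sum_{l\in T\setminus\{k\}}\left[\frac{\e{u_k}}{\e{u_k}+\e{u_l}} - \frac{\e{v_k}}{\e{v_k}+\e{v_l}}\right],
\end{align*}
which vanishes identically at $\bv = \bu$. Hence $\bu$ is a critical point of $F$ on the constraint set.

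For uniqueness, I would establish strict concavity of $F$ modulo the shift direction. A direct Hessian calculation yields
\begin{align*}
-\nabla^2 F(\bv) = \sum_{T\in E}\sum_{\{k,l\}\subseteq T}\frac{\e{v_k}\e{v_l}}{(\e{v_k}+\e{v_l})^2}(\be_k - \be_l)(\be_k - \be_l)^\top,
\end{align*}
which is the weighted Laplacian of the pairwise skeleton graph on $V$ whose edges are pairs $\{k,l\}$ that co-occur in some $T\in E$, carrying strictly positive weights. Because each hyperedge $T$ collapses to a clique in the skeleton, connectedness of $\HH$ transfers to this skeleton, so the Laplacian kernel is exactly $\mathrm{span}\{\langle 1\rangle\}$ and $-\nabla^2 F(\bv)$ is positive definite on $\langle 1\rangle^\perp$. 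Combined with stationarity, this upgrades $\bu$ to the unique maximizer of $F$ under the constraint, equivalently the unique minimizer of the KL divergence. The only real obstacle is the pairwise reduction in the second step: once \eqref{conS} is invoked correctly for hyperedges of arbitrary size, the remainder is the standard Lagrangian-plus-Laplacian argument from the BT model.
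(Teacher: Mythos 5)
Your proposal is correct and follows essentially the same route as the paper's proof: reduce to maximizing $\mathbb E_{\bm\pi\sim f}[\log g(\bm\pi;\bm v)]$, verify stationarity at $\bm v=\bm u$ via the internal-consistency identity \eqref{conS} applied to pairwise marginals (the paper phrases this equivalently through expected ranks), and conclude uniqueness from the fact that the negative Hessian is a weighted graph Laplacian whose kernel is $\mathrm{span}\{\langle 1\rangle\}$ under connectivity.
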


Moreover, an almost identical computation as the proof reproduces an analogous result as Proposition \ref{lm:1}.

\begin{Proposition}\label{lm:3}
The QMLE $\widetilde{\bu}$ obtained from maximizing \eqref{pmle} subject to $\langle 1\rangle^\top\bu = 0$, if exists, satisfies
\begin{align*}
&\frac{1}{N_k}\sum_{i: k\in T_i}\left(r_i(k) - \mathbb E_{\widetilde{\bm u}}[r_i(k)]\right) = 0\label{!11}& k\in [n].
\end{align*} 
Meanwhile, for every $i\in [N]$, the true parameter $\bu^*$ satisfies $\mathbb E[r_i(k)-\mathbb E[r_i(k)]] = 0$.
\end{Proposition}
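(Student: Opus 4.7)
The plan is to directly compute the score $\partial_k l_2(\bm u)$, recognize that it telescopes into the sum $\sum_{i: k\in T_i}(\mathbb E_{\bm u}[r_i(k)] - r_i(k))$, and then argue that the Lagrange multiplier induced by the constraint $\langle 1\rangle^\top \bu = 0$ vanishes.

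First, I would differentiate \eqref{pmle} term-by-term. For each $i$ with $k \in T_i$, the linear part $u_{\pi_i(j)}$ contributes $+1$ for every pair $(j,t)$ with $j=r_i(k)$ and $j<t\le m_i$, giving a total of $m_i - r_i(k)$. The logarithmic part contributes $-\e{u_k}/(\e{u_k}+\e{u_l})$ for each $l\in T_i\setminus\{k\}$. Collecting terms,
\begin{align*}
\partial_k l_2(\bu) = \sum_{i: k\in T_i}\Bigl[(m_i - r_i(k)) \; - \; \sum_{l\in T_i,\, l\neq k}\frac{\e{u_k}}{\e{u_k}+\e{u_l}}\Bigr].
\end{align*}

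Next, I would translate this into rank language using the internal consistency \eqref{conS}: under the PL likelihood with parameter $\bu$, the marginal probability of the pairwise event $\{k\succ_i l\}$ is exactly the BT probability $\e{u_k}/(\e{u_k}+\e{u_l})$. Since $m_i - r_i(k) = \sum_{l\in T_i,\,l\neq k} \mathbf 1_{\{k\succ_i l\}}$, taking expectations gives $\mathbb E_{\bu}[m_i - r_i(k)] = \sum_{l\in T_i,\,l\neq k} \e{u_k}/(\e{u_k}+\e{u_l})$. Substituting,
\begin{align*}
\partial_k l_2(\bu) \; = \; \sum_{i: k\in T_i}\bigl(\mathbb E_{\bu}[r_i(k)] - r_i(k)\bigr).
\end{align*}

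The first-order optimality condition for the constrained problem gives $\partial_k l_2(\widetilde{\bu}) = \lambda$ for all $k\in[n]$ with some Lagrange multiplier $\lambda$. The key observation to eliminate $\lambda$ is that for every fixed $i$, the ranks are a permutation of $\{1,\dots,m_i\}$, so $\sum_{k\in T_i} r_i(k) = m_i(m_i+1)/2$, and by linearity $\sum_{k\in T_i}\mathbb E_{\widetilde{\bu}}[r_i(k)]$ equals the same constant. Summing $\partial_k l_2(\widetilde{\bu})$ over $k\in[n]$ and swapping the order of summation yields zero, so $n\lambda = 0$ and hence $\lambda = 0$. Dividing $\partial_k l_2(\widetilde{\bu}) = 0$ by $N_k$ gives the stated estimating equation. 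The final sentence is immediate from $\mathbb E[r_i(k)] = \mathbb E_{\bu^*}[r_i(k)]$.

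I do not anticipate any hard obstacle; the only subtle point is the use of internal consistency to identify pairwise marginals with BT probabilities (otherwise the identity $\partial_k l_2 \propto r_i(k) - \mathbb E[r_i(k)]$ would not hold for the true PL expectation), and the bookkeeping to show that the Lagrange multiplier vanishes.
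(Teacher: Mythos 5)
Your proposal is correct and follows essentially the same route as the paper: compute $\partial_k l_2$ to get $\sum_{i:k\in T_i}\bigl(|T_i|-r_i(k)-\sum_{l\neq k}\e{u_k}/(\e{u_k}+\e{u_l})\bigr)$, then use internal consistency of the PL model to identify the deterministic part with $\mathbb E_{\bu}[r_i(k)]$ (the paper's computation \eqref{guaguagua}). Your explicit elimination of the Lagrange multiplier via $\sum_{k\in T_i}r_i(k)=\sum_{k\in T_i}\mathbb E_{\widetilde{\bu}}[r_i(k)]=m_i(m_i+1)/2$ is a slightly more careful justification of the first-order condition that the paper simply asserts (it amounts to the shift-invariance identity \eqref{youwang}), but the substance is identical.
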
 

Proposition \ref{lm:3} suggests that even though the QMLE does not account for the dependence of outcomes within a multiple edge, it still functions effectively as a moment estimator of $\bu^*$. As a result, the QMLE is expected to exhibit similar desired asymptotic properties as the marginal MLE. We will further explore this in the following sections.

\section{Asymptotic Results for Random Comparison Graphs}\label{sec:44}

We first present the asymptotic results for the likelihood-based estimators when the underlying comparison graph sequence is random, that is, drawn from NURHM or HSBM. Since random designs are a common setting in the related literature, our results offer a convenient comparison with other relevant studies. The results in this section can be seen as specific instances of the more general results for deterministic designs discussed in Section~\ref{sec:55}.

\subsection{Uniform Consistency}\label{sec:4}

For uniform consistency, we require the following assumption on the comparison model.

\begin{Assumption}\label{ap:2}
The utility vector $\bm u^*$ is uniformly bounded, that is, there exists a universal constant  $C_1 \geq 1$ such that
 $\|\bm u^*\|_\infty \leq \log (C_1)$. 
\end{Assumption}

Moreover, we make additional assumptions on the comparison graph, one of which depends on whether the graph is sampled from a NURHM or an HSBM.

\begin{Assumption}\label{ap:1}
The maximum size of comparison edges is asymptotically bounded, that is, $M:=\sup_{n\in\mathbb N}\max_{i\in [N]}|T_i|<\infty$.  
\end{Assumption}
\begin{Assumption}\label{ap:4}
For a NURHM defined in Section \ref{am}, we assume
\begin{equation}
\lim_{n\to\infty}\frac{\xi^2_{n,+}(\log n)^3}{\xi^3_{n,-}} = 0,\label{2022}
\end{equation}
where 
\begin{align}
&\xi_{n,-}:=\sum_{\ms=2}^M n^{\ms-1}p_n^{(\ms)} & \xi_{n,+}:=\sum_{\ms=2}^M n^{\ms-1}q_n^{(\ms)},\label{myxis}
\end{align}
and $p_n^{(\ms)}$, $q_n^{(\ms)}$ are defined in \eqref{pnqn}. 
\end{Assumption}

\begin{Assumption}\label{ap:3}
For an HSBM in Section \ref{hsbm}, we assume
\begin{align*}
\lim_{n\to\infty}\frac{(\log n)^3}{\zeta_{n,-}} = 0,
\end{align*}
where 
\begin{align}
\zeta_{n,-}:=n^{\mm-1}\min_{0\leq i \leq K}\omega_{n, i},\label{myzeta}
\end{align}
and $\omega_{n,i}$ are defined in \eqref{sbm}. 
\end{Assumption}
Assumption \ref{ap:1} assumes that the maximum comparison edge size is uniformly bounded. Assumptions \ref{ap:4}-\ref{ap:3} are concerned with the minimal connectivity of the underlying graph, with the former imposing additional balancing conditions on the degree of heterogeneity of the edge probabilities. For homogeneous $\mm$-uniform hypergraph models, Assumption \ref{ap:4} holds if the comparison rate $p_\mm \gtrsim (\log n)^{3+\kappa}/{{n-1}\choose {\mm-1}}$ for any $\kappa>0$, a near-minimal sparsity condition that matches the result in \cite{fan2022ranking}.

\begin{thm}[Uniform consistency under NURHM]\label{main_1}
	
		Suppose the comparison graph sequence is sampled from a NURHM in Section \ref{am}.
		Under Assumptions \ref{ap:2}-\ref{ap:4}, a.s., {for all sufficiently large $n$}, both the marginal MLE $\buu$ and the QMLE $\widetilde{\bu}$ in Section \ref{sec:3} uniquely exist and are uniformly consistent, that is, 
		\begin{align}
			&\|\bm w -\bu^*\|_\infty\lesssim \sqrt{\frac{\xi^2_{n,+}(\log n)^3}{\xi^3_{n,-}}} \to 0& n\to\infty, \label{k91}
		\end{align} 
		where $\bm w = \buu$ or $\widetilde{\bu}$ and $\xi_{n, \pm}$ are the same as \eqref{myxis}. 

\end{thm}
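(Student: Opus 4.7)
The plan is to reduce the random-design statement to the deterministic master result (Theorem \ref{main:general}) stated in Section \ref{sec:55}, and then verify its graph-topology hypotheses almost surely for the NURHM sequence under Assumption \ref{ap:4}. Concretely, the strategy has two steps: (i) show that the NURHM hypergraph satisfies the deterministic connectivity conditions with quantitative parameters controlled by $\xi_{n,\pm}$; (ii) invoke Theorem \ref{main:general} on the resulting almost-sure event, so that the rate on the right-hand side of \eqref{k91} emerges directly as $\sqrt{\xi_{n,+}^2(\log n)^3/\xi_{n,-}^3}$.

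For step (i), I would verify two things. First, degree concentration: for each $k\in [n]$, the weighted degree $\sum_{\ms=2}^M\sum_{e\ni k,\,|e|=\ms}\mathbf 1_{\{e\in E^{(\ms)}\}}$ is a sum of independent bounded Bernoullis with mean pinched between constant multiples of $\xi_{n,-}$ and $\xi_{n,+}$. Bernstein's inequality, a union bound over $k\in [n]$, and Borel--Cantelli then yield almost-sure two-sided concentration with fluctuations of order $\sqrt{\xi_{n,+}\log n}=o(\xi_{n,-})$ under Assumption \ref{ap:4}. Second, the spectral gap: one needs a quantitative lower bound, of order at least $\xi_{n,-}$, on the algebraic connectivity of the normalized Hessian-type operator associated with the comparison hypergraph. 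This is precisely the role of Lemma \ref{main:REs}, and its proof would decompose the operator into independent pieces indexed by edge size $\ms$, apply matrix Bernstein to each piece near its expectation (whose spectral gap is of order $n^{\ms-1}p_n^{(\ms)}$), and aggregate the bounds across $\ms\in\{2,\ldots,M\}$.

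The main obstacle is this spectral-gap step. Unlike the pairwise Erd\H{o}s--R\'enyi case, where matrix Bernstein applied to the Laplacian produces a clean bound, for NURHM one must simultaneously handle heterogeneous per-edge probabilities within each size $\ms$ and aggregate across $\ms\in\{2,\ldots,M\}$; this demands the intricate counting of subgraph structures foreshadowed in the introduction. The $(\log n)^3$ power in Assumption \ref{ap:4}, rather than a single $\log n$, reflects the additional slack needed to convert the $\ell_2$-level spectral estimate into the $\ell_\infty$-level entrywise bound of Theorem \ref{main:general} via the hypergraph chaining argument described after the contributions.

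Once steps (i) and (ii) are in place, the theorem follows by conditioning on the almost-sure event on which the hypotheses of Theorem \ref{main:general} hold. On this event, the Hessians of $l_1$ and $l_2$ restricted to $\langle 1\rangle^\perp$ are strictly negative definite, yielding the existence and uniqueness of $\buu$ and $\widetilde{\bu}$, together with the uniform bound $\|\bw-\bu^*\|_\infty\lesssim \sqrt{\xi_{n,+}^2(\log n)^3/\xi_{n,-}^3}$ for $\bw\in\{\buu,\widetilde{\bu}\}$, which tends to $0$ by Assumption \ref{ap:4}, completing the proof.
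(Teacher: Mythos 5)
Your high-level reduction is the same as the paper's: condition on an almost-sure event where the NURHM hypergraph satisfies the deterministic hypotheses, invoke the deterministic master theorem (Theorem~\ref{main:general}), and finish with Borel--Cantelli. However, the substance of your step (i) misidentifies what those hypotheses are. Theorem~\ref{main:general} does not ask for a spectral-gap bound on a normalized Hessian-type operator; it asks for the rapid-expansion property of Assumption~\ref{ap:11}, i.e.\ that $\Gamma_n^{\RE}=\max_{\{A_j\}}\sum_j\sqrt{\log n/h_{\HH_n}(A_j)}\to 0$ over all \emph{admissible sequences}. Lemma~\ref{main:REs} is proved by purely scalar/combinatorial means: (a) a Chernoff bound on the boundary-edge counts $|\partial U|$, applied uniformly over \emph{all} subsets $U\subset[n]$ (not just singletons, so vertex-degree concentration alone is not enough), which yields $h_{\HH_n}\gtrsim\xi_{n,-}$; and (b) a combinatorial argument showing that along any admissible sequence $|A_j|$ grows (or $n-|A_j|$ shrinks) geometrically at rate $1+C\xi_{n,-}/\xi_{n,+}$, so the maximal length satisfies $J_n\lesssim \xi_{n,+}(\log n)/\xi_{n,-}$. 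The rate then falls out as $\Gamma_n^{\RE}\le (J_n-1)\sqrt{\log n/h_{\HH_n}}\lesssim\sqrt{\xi_{n,+}^2(\log n)^3/\xi_{n,-}^3}$. Your proposal has no analogue of the bound on $J_n$, which is exactly where the factor $\xi_{n,+}/\xi_{n,-}$ and two of the three powers of $\log n$ come from; a matrix-Bernstein spectral estimate would neither imply the RE property nor reproduce this rate. Your attribution of the $(\log n)^3$ to an $\ell_2$-to-$\ell_\infty$ conversion is therefore also incorrect — the paper's uniform consistency proof is a chaining argument in $\ell_\infty$ throughout and never passes through an $\ell_2$ spectral estimate (that machinery appears only in the asymptotic-normality analysis).

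A second, smaller gap: you derive existence and uniqueness of $\buu$ and $\widetilde{\bu}$ from strict negative definiteness of the Hessians on $\langle 1\rangle^\perp$. Strict concavity gives uniqueness but not existence — the MLE fails to exist if, say, some subset of objects beats its complement in every shared comparison, no matter how connected the graph is. The paper handles this through the coercivity event $\FI$ in \eqref{goodin} (every vertex partition is crossed by a comparison in both orientations), shown to hold with probability at least $1-n^{-4}$ via a Chernoff bound exploiting the same lower bound $|\partial U|\gtrsim\min\{|U|,n-|U|\}\log n$; you would need to add this step.
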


\begin{thm}[Uniform consistency under HSBM]\label{main_2}
	Suppose the comparison graph sequence is sampled from an HSBM in Section \ref{hsbm}. 
	Under Assumptions \ref{ap:2}, \ref{ap:1}, and \ref{ap:3}, a.s., {for all sufficiently large $n$}, 
	both the marginal MLE $\buu$ and the QMLE $\widetilde{\bu}$ in Section \ref{sec:3} uniquely exist and are uniformly consistent, that is, 
\begin{align}
	&\|\bm w -\bu^*\|_\infty\lesssim\sqrt{\frac{(\log n)^3}{\zeta_{n,-}}}\to 0& n\to\infty,
\end{align}
where $\bm w = \buu$ or $\widetilde{\bu}$ and $\zeta_{n,-}$ is the same as \eqref{myzeta}.
\end{thm}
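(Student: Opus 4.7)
The plan is to deduce Theorem \ref{main_2} from the deterministic consistency result (Theorem \ref{main:general}) in Section \ref{sec:55}, by showing that, almost surely, the random hypergraph produced by an HSBM satisfies the graph topology conditions underlying that theorem at the sharper rate $(\log n)^3/\zeta_{n,-}$. A naive reading of an $\mm$-uniform HSBM as a NURHM gives $\xi_{n,-}=\zeta_{n,-}$ and $\xi_{n,+}=n^{\mm-1}\max_{0\leq i\leq K}\omega_{n,i}$, so a direct application of Theorem \ref{main_1} would inflate the bound by a factor of $\max_i\omega_{n,i}/\min_i\omega_{n,i}$, which is allowed to diverge in imbalanced HSBMs. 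The improvement must therefore exploit the block structure rather than treat the edge probabilities as worst-case varying, and this is precisely what the HSBM half of Lemma \ref{main:REs} is designed to furnish.

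Following the chaining-based strategy behind Theorem \ref{main:general}, the entrywise error of both the marginal MLE and the QMLE is controlled by two graph-theoretic quantities: a two-sided concentration of all vertex degrees around their common scale $\zeta_{n,-}$, and a lower bound of the same order on the algebraic connectivity of an appropriately weighted pairwise graph associated with $\HH(V,E)$. I would first compute the expected weighted Laplacian under HSBM. Its mean decomposes into a diagonal block per partition $V_i$ with nonzero spectrum of order $n^{\mm-1}\omega_{n,i}$, together with an inter-cluster coupling of order $n^{\mm-1}\omega_{n,0}$. Because $K$ is finite and each $|V_i|$ is $\Theta(n)$, a direct eigenvalue computation (or a $K$-block Cheeger-type argument) shows that the algebraic connectivity of the expected Laplacian is of order $\min_{0\leq i\leq K}\omega_{n,i}\cdot n^{\mm-1}=\zeta_{n,-}$, with an implicit constant depending only on $K$ and not on the ratio $\max_i\omega_{n,i}/\min_i\omega_{n,i}$.

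I would then promote this deterministic spectral bound to the sampled Laplacian via a matrix Bernstein inequality, together with a Chernoff bound on degrees; Assumption \ref{ap:1} ensures the per-edge contributions are uniformly bounded, which is what makes these concentration tools applicable at the correct scale. Under Assumption \ref{ap:3} the sparsity $\zeta_{n,-}$ dominates $(\log n)^3$, so the relative operator-norm deviation of the empirical Laplacian from its mean is $o(1)$ with probability at least $1-O(n^{-c})$ for any fixed $c>0$. Borel--Cantelli converts these into almost-sure eventual events, so that the topology conditions demanded by Theorem \ref{main:general}, with the single scale $\zeta_{n,-}$ in place of the pair $\xi_{n,\pm}$, hold for all sufficiently large $n$. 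Invoking Theorem \ref{main:general} then yields the almost-sure existence, uniqueness, and claimed entrywise consistency of both $\buu$ and $\widetilde{\bu}$.

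The main obstacle is the spectral-gap lower bound without any balance assumption between $\max_{0\leq i\leq K}\omega_{n,i}$ and $\min_{0\leq i\leq K}\omega_{n,i}$: one must argue that even when intra-cluster probabilities differ by unbounded ratios from $\omega_{n,0}$, the worst-case inter-cluster bottleneck is still of order $\zeta_{n,-}$ with constants independent of this ratio. Once that combinatorial/spectral estimate is in place, the concentration step is standard provided it matches the bound with no multiplicative loss, and the chaining step in Theorem \ref{main:general} translates the spectral gap directly into the $\ell_\infty$ rate stated in Theorem \ref{main_2}.
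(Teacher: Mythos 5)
Your high-level plan (deduce Theorem~\ref{main_2} from the deterministic result plus a verification that HSBM samples satisfy its hypotheses, then apply Borel--Cantelli) matches the paper, and you correctly identify that the gain over a naive NURHM reduction must come from the block structure. However, the verification you propose targets the wrong condition and omits the hardest step. Theorem~\ref{main:general} does not ask for degree concentration around a common scale plus a spectral-gap bound on a weighted Laplacian; it asks for the rapid-expansion property of Definition~\ref{def:RE}, i.e.\ control of $\Gamma_n^{\RE}=\max_{\{A_j\}}\sum_{j}\sqrt{\log n/h_{\HH_n}(A_j)}$ over all \emph{admissible sequences}, where $h_{\HH_n}$ is the vertex-count-normalized (modified) Cheeger constant. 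The Laplacian/matrix-Bernstein machinery you invoke belongs to the asymptotic normality analysis (Section~\ref{sec:6+} and Lemma~\ref{newtech}), not to uniform consistency, and an operator-norm concentration of the empirical Laplacian does not by itself yield the RE property. Moreover, your premise that all vertex degrees concentrate around the common scale $\zeta_{n,-}$ is false in exactly the regime the theorem is meant to cover: in an imbalanced HSBM a vertex in a dense community has expected degree of order $n^{\mm-1}\omega_{n,i}$, which may exceed $\zeta_{n,-}$ by an unbounded factor.

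The concrete missing ingredient is the bound $J\lesssim\log n$ on the length of every admissible sequence. The paper's bound $\Gamma_n^{\RE}\leq (J-1)\sqrt{\log n/h_{\HH_n}}$ needs two inputs: the Chernoff-based lower bound $h_{\HH_n}\gtrsim\zeta_{n,-}$ (which your concentration step could plausibly deliver) \emph{and} the length bound $J\lesssim\log n$, which is where the $(\log n)^3$ in the rate comes from ($J^2\cdot\log n/\zeta_{n,-}$). For HSBM this length bound is precisely the delicate part: one must show that, except for a bounded number of steps, each admissibility step forces at least one community intersection $|A_j\cap V_k|$ to grow geometrically (or its complement to shrink geometrically) at a rate independent of the imbalance ratio $\max_i\omega_{n,i}/\min_i\omega_{n,i}$. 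The paper does this via a combinatorial decomposition of boundary-edge counts by community-intersection profile (the sets $\mGamma(\bm\alpha)$ and a Vandermonde-type identity), and nothing in your spectral route substitutes for it. Without an explicit bound on $J$, your argument cannot establish that $\Gamma_n^{\RE}\to 0$, let alone at the claimed rate.
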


The random hypergraph models in Theorems~\ref{main_1}-\ref{main_2} cover a range of comparison graph structures of interest.  For instance, the conditions in Theorem~\ref{main_1} admit hyperedges with varying sizes generated with balanced probabilities (allowing for a certain degree of heterogeneity). The balancing condition can be removed if community structure exists. Specifically, in the case of HSBM, according to Theorem~\ref{main_2}, requiring $\min_{0\leq i \leq K} n^{\mm-1}\omega_{n, i}\gtrsim (\log n)^{3+\kappa}$ alone is sufficient for uniform consistency. Under such circumstances, heterogeneity of typical configurations can be severe if $\max_{0\leq i \leq K} n^{\mm-1}\omega_{n, i}\gtrsim n$. Both Theorems~\ref{main_1}-\ref{main_2} generalize the graph conditions in \cite{jang2018top, fan2022ranking}.

\subsection{Asymptotic Normality}\label{sec:6}

To establish asymptotic normality, we need to strengthen the conditions in Assumptions \ref{ap:4}-\ref{ap:3}.

	\begin{Assumption}\label{ap:6}
	The comparison hypergraph is sampled from  a NURHM defined in Section \ref{am}, satisfying
		\begin{equation}
			\lim_{n\to\infty}\max\left\{\frac{\xi_{n, +}^{13}(\log n)^8}{\xi_{n,-}^{14}}, \frac{\xi_{n,+}^5(\log n)^2}{n\xi_{n,-}^5}\right\} = 0, \label{an_nurhm_condition}
		\end{equation}
		where 
		\begin{align*}
			&\xi_{n,-}=\sum_{\ms=2}^M n^{\ms-1}p_n^{(\ms)} & \xi_{n,+}=\sum_{\ms=2}^M n^{\ms-1}q_n^{(\ms)},
		\end{align*}
		and $p_n^{(\ms)}$, $q_n^{(\ms)}$ are defined in \eqref{pnqn}. 
	\end{Assumption}
	
	\begin{Assumption}\label{ap:5}
		The comparison hypergraph is sampled from an HSBM in Section \ref{hsbm}, satisfying
		\begin{align}
	\lim_{n\to\infty}\max\left\{\frac{\zeta_{n, +}^{11}(\log n)^8}{\zeta_{n,-}^{12}}, \frac{\zeta_{n,+}^5(\log n)^2}{n\zeta_{n,-}^5}\right\} =  0, \label{an_hsbm_condition}
		\end{align}
		where 
		\begin{align*}
			&\zeta_{n,-}=n^{\mm-1}\min_{0\leq i \leq K}\omega_{n, i} &\zeta_{n,+}:=n^{\mm-1}\max_{0\leq i \leq K}\omega_{n, i},
		\end{align*}
and $\omega_{n,i}$ are defined in \eqref{sbm}. 
\end{Assumption}

Compared to Assumption \ref{ap:3}, Assumption \ref{ap:5} further imposes an upper bound on $\zeta_{n,+}$ in HSBM, which is necessary for asymptotic normality. Indeed, while uniform consistency requires estimation errors of all parameters to converge to zero, asymptotic normality further demands that each estimated parameter converges at a rate compatible with its respective normalization. For instance, consider an object $k$ involved in $N_k$ comparisons. If asymptotic normality holds for $k$, then its estimated error will converge to zero at a rate of $O_p(1/\sqrt{N_k})$. However, when graph heterogeneity exists, the convergence at object $k$ is influenced by other objects with slower convergence rates. Such influence results in an additional bias term in the analysis. This is similar to the study of semi-parametric models where the nonparametric part slows down the convergence rate of the parametric part \cite[Assumption 5.1(ii)]{MR1303237}. Therefore, although \eqref{an_nurhm_condition}-\eqref{an_hsbm_condition} admit heterogeneous graph configurations, the level of heterogeneity must be controlled to a certain extent. This essentially differs from the uniform consistency result where heterogeneity can be severe; see Section~\ref{sec:add_sim} for the numerical evidence. 

To better describe the asymptotic normality results in detail, we need some additional notations. For the marginal MLE, given an edge $T\subseteq [n]$ with $|T| =m$ and $k\in T$, for $y = 1, \ldots,  m-1$, we define 
\begin{align*}
&\theta_{k, 1}(\bu^*; y, T) \\
&:= \sum_{S \subset {T}\setminus\{k\}: |S|=y-1} \mathbb{P}_{\bu^*}\left(\text{$r(j)<r(k)=y$ for $j\in S$}\mid T\right) (1- \mathbb{P}_{\bu^*}(r(k)=1\mid T\setminus S)),
\end{align*}
where $\mathbb P_{\bu^*}(\cdot \mid T)$ is the selection probability on edge $T$ with the parameter $\bu^*$ defined in \eqref{PL-mass} and $r(k)$ is the rank of $k$ on $T$. {For sake of completeness, the explicit form of $\theta_{k, 1}(\bu^*; y, T)$ is given by
\begin{eqnarray*}
	&&\theta_{k, 1}(\bu^*; y, T) \\
	&&= \sum_{i_{1}  \in {T}\setminus\{k\}}\cdots \sum_{\{i_{y-1}\} \in {T}\setminus\{k,i_1,\ldots,i_{y-2}\}}\Big\{\mathbb{P}_{\bu^*}\left(i_{1} \succ \ldots i_{y-1} \succ k \succ \text{others in }T\right) \\
	&&~~~~~~~~~~~~~~~~~~~~~~~~~~~~~~~~~~~~~~~~~~~~~~~~~~~~~~~~\times  \left(1- \mathbb{P}_{\bu^*}(k \succ \text{others in }T\setminus\{i_{1}, \ldots, i_{y-1}\} )\right)\Big\} \\
	&&= \sum_{i_{1}  \in {T}\setminus\{k\}}\cdots \sum_{\{i_{y-1}\} \in {T}\setminus\{k, i_1,\ldots,i_{y-2}\}}\Bigg\{ \bigg( \prod_{j=1}^{y-1} \frac{\e{u^*_{i_{j}}}}{\sum_{t \in T\setminus\{i_{1}, \ldots, i_{j}\}} \e{u^*_t}}\bigg) \\
	&&~~~~~~~~~~~~~~~~~~~~~~~~~~~~\times \frac{\e{u^*_{k}}}{\sum_{t \in T\setminus\{i_{1},i_{2}, \ldots, i_{y-1}\}} \e{u^*_t}}\times \bigg( 1- \frac{\e{u^*_{k}}}{\sum_{t \in T\setminus\{i_{1}, \ldots, i_{y-1}\}} \e{u^*_t}}\bigg)\Bigg\}.
\end{eqnarray*}}
The inverse asymptotic variance of the marginal MLE is characterized by
\begin{eqnarray}\label{rho_k1}
	\rho_{k,1}^2(\bu^*) := \sum_{i: k\in T_i} \sum_{y \in [y_i]} \theta_{k, 1}(\bu^*; y, T_i),
\end{eqnarray}
where $y_i$ refers to the choice-$y_i$ observations in the $i$th data as assumed in the marginal MLE estimation. 
The asymptotic normality of the marginal MLE is summarized as follows.

\begin{thm}[Asymptotic normality of marginal MLE]\label{normal_MLE}
Under Assumptions \ref{ap:2}, \ref{ap:1} and \ref{ap:6} or Assumptions \ref{ap:2}, \ref{ap:1} and \ref{ap:5}, for any fixed $k \in [n]$, the marginal MLE $\buu$ satisfies 
	\begin{align*}
				&{\rho_{k,1}(\bu^*)}(\uu_k - u_k^*) \to N(0,1)& n\to\infty. 
	\end{align*} 
\end{thm}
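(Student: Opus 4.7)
The plan is to start from the first-order condition $\partial_k l_1(\buu) = 0$ for every $k\in[n]$ (on the constraint hyperplane $\langle1\rangle^\top\bu = 0$) provided by Proposition~\ref{lm:1}, and Taylor expand around $\bu^*$. Writing $H := -\nabla^2 l_1(\bu^*)$ for the observed information restricted to $\{\langle1\rangle\}^\perp$, this produces the master decomposition
\begin{align*}
\buu - \bu^* \;=\; H^{\dagger}\nabla l_1(\bu^*) \;+\; R,
\end{align*}
where $R$ collects the second- and higher-order Taylor residuals. The uniform consistency in Theorem~\ref{main_1}/\ref{main_2} truncates $\|\buu-\bu^*\|_\infty$ to a small quantity, and a truncated error bound on the third derivatives of $l_1$ shows that $e_k^\top R$ is of strictly smaller order than $1/\rho_{k,1}(\bu^*)$ under the rate conditions \eqref{an_nurhm_condition} or \eqref{an_hsbm_condition}. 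The theorem is thereby reduced to showing that $e_k^\top H^\dagger \nabla l_1(\bu^*)$ is asymptotically Gaussian with variance $1/\rho_{k,1}^2(\bu^*)$.

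For the asymptotic variance, I would compute the expected Hessian $\bar H := \mathbb E[H]$ using the tower property along the filtration \eqref{filt}. A direct calculation shows that $\bar H$ is a weighted hypergraph Laplacian on $\{\langle 1\rangle\}^\perp$ with diagonal entries
\begin{align*}
\bar H_{kk} \;=\; \sum_{i:\,k\in T_i}\sum_{y\in[y_i]}\theta_{k,1}(\bu^*;y,T_i) \;=\; \rho_{k,1}^2(\bu^*),
\end{align*}
matching \eqref{rho_k1}. Under Assumption~\ref{ap:6}/\ref{ap:5} the spectral gap $\lambda_2(\bar H)$ on $\{\langle 1\rangle\}^\perp$ is comparable to $\min_k\bar H_{kk}$, and a Neumann-series expansion then yields $(\bar H^{\dagger})_{kk} = \bar H_{kk}^{-1}(1+o(1))$; this quantifies the heuristic that for a well-connected Laplacian the diagonal of the pseudo-inverse behaves like the reciprocal of the corresponding degree. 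The hypergraph edge-sharing phenomenon, which can keep $|\bar H_{kj}|$ of the same order as $\min(\bar H_{kk},\bar H_{jj})$ on a single shared edge, must be absorbed into this Neumann control quantitatively rather than qualitatively.

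The main obstacle, and where the argument genuinely departs from the BT analysis, is the randomness of $H$ itself: by Remark~\ref{kind:remark}, $\nabla^2 l_1(\bu^*)$ depends on the observed ranks rather than only on $\bu^*$. I plan to handle this by a leave-one-out perturbation. Let $H^{(-k)}$ be the negative Hessian obtained by discarding all edges incident to $k$; then $H^{(-k)}$ is independent of the data at $k$, and a separate deterministic spectral-gap lemma controls $\lambda_2(H^{(-k)})$ on the appropriate subspace in terms of $\lambda_2(\bar H)$. A resolvent identity such as $H^\dagger = (H^{(-k)})^\dagger + (H^{(-k)})^\dagger(H^{(-k)}-H)H^\dagger$ (read on $\{\langle 1\rangle\}^\perp$) then transfers the diagonal estimate from $\bar H^\dagger$ to $H^\dagger$ with vanishing error. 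Establishing the spectral-gap perturbation on the leave-one-out hypergraph, and coupling it with the independence structure it unlocks, is the true technical heart of the proof.

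Once the approximation $e_k^\top H^\dagger\nabla l_1(\bu^*) = \bar H_{kk}^{-1}\sum_{i:\,k\in T_i} S_i(\bu^*;k) + o_p(1/\rho_{k,1}(\bu^*))$ is in place, the finale is a Lindeberg CLT. The summands $\{S_i(\bu^*;k)\}_i$ are independent across $i$ (because the data $\{(T_i,\pi_i)\}$ are independent), mean-zero by Proposition~\ref{lm:1}, and uniformly bounded by $1$. Rewriting each conditional variance in the Doob decomposition of $\mathbf 1_{\{r_i(k)\leq y_i\}}$ as a $\theta_{k,1}$ quantity gives $\sum_i\operatorname{Var}(S_i(\bu^*;k)) = \rho_{k,1}^2(\bu^*)$ from \eqref{rho_k1}. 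The sparsity lower bounds in Assumption~\ref{ap:6}/\ref{ap:5} force $\rho_{k,1}(\bu^*)\to\infty$, so Lindeberg's condition holds trivially, and the CLT delivers $\rho_{k,1}(\bu^*)(\uu_k - u_k^*)\to N(0,1)$.
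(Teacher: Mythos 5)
Your proposal is correct in outline and follows essentially the same route as the paper: linearize the score equation, show the leading term $\D^{-1/2}\nabla l_1(\bu^*)$ obeys a Lindeberg CLT with variance governed by $\rho_{k,1}^2(\bu^*)=\mathbb E[-\partial_{kk}l_1(\bu^*)]$, kill the off-diagonal contribution of the (pseudo-)inverse by a truncated Neumann expansion controlled by the edge-sharing ratio $\cc$ and the spectral parameter $\mix$, and use a leave-one-out perturbation to handle the randomness of the Hessian of the marginal log-likelihood.

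The one organizational difference is that you invert the random observed Hessian $H=-\nabla^2 l_1(\bu^*)$ and then transfer diagonal estimates from $\bar H^\dagger$ via a resolvent identity, whereas the paper never inverts the random Hessian: it works throughout with the deterministic $\mathcal L_\sym$ built from $\cH^*(\bu^*)=\mathbb E[\cH(\bu^*)]$ and isolates all randomness of the Hessian into the single remainder $\{\cH(\bu^*)-\cH^*(\bu^*)\}(\buu-\bu^*)$, which is the only place leave-one-out is invoked. Both organizations work, but be aware that your intermediate claim "$(\bar H^{\dagger})_{kk}=\bar H_{kk}^{-1}(1+o(1))$" is not by itself sufficient: what must be shown is that the full linear functional $e_k^\top\bar H^{\dagger}\nabla l_1(\bu^*)$ concentrates on its diagonal part, i.e. that $\sum_{j\neq k}(\bar H^{\dagger})_{kj}(\nabla l_1(\bu^*))_j$ is $o_p(1/\rho_{k,1})$. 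This requires pairing each term of the truncated Neumann series with the covariance $\mathbb E[\nabla l_1(\bu^*)\nabla l_1(\bu^*)^\top]=-\cH^*(\bu^*)$ and showing each term has variance $O(\cc)$, then summing over $t_n\asymp(\log n)/\mix$ terms and applying Chebyshev; this is exactly where the condition coupling $\cc$, $\mix$, and $\log n$ in Assumption~\ref{ap:6}/\ref{ap:5} is consumed. You flag this as needing "quantitative" treatment but do not supply the mechanism; with that second-moment computation filled in, the argument is complete.
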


In Theorem \ref{normal_MLE}, $1/{\rho_{k,1}(\bu^*)}$ is the asymptotic standard deviation of $(\uu_k - u_k^*)$ for large $n$. Since each $\theta_{k,1}(\bu^*; y, T_i) >0$, \eqref{rho_k1} implies that the standard deviation of $(\uu_k - u_k^*)$ will decrease if $N_k$ increases, which is as anticipated.

On the other hand, \eqref{rho_k1} reveals a trade-off between statistical efficiency and computational complexity between different marginal MLEs. The larger $y_i$, the smaller the standard deviations, but with the increasing cost of computation. The computational cost for $\theta_{k,1}(\bu^*; y, T_i) $ is $|T_i|!/(|T_i|-y)!$. When $|T_i| = M$ and $y_i = y$ for all $i \in[N]$, the total computational cost for \eqref{rho_k1} is $N_k (M!/(M-y)!)$. Specifically, for the full observation, the complexity reaches $N_k(M!)$. Despite being statistically optimal, the asymptotic variance of the full MLE becomes computationally prohibitive even for moderate $M$. For instance, $M=14$ in the horse-racing data analysis in Section \ref{horse}. In this case, we don't use the full MLE due to its heavy computational cost.
\begin{remark}
Theorem~\ref{normal_MLE} extends the results in \cite{fan2022ranking} to the setting of both nonuniform and heterogeneous hypergraphs. 
Among other asymptotic normality results for the BT model \citep{han2020asymptotic, MR4560195, gao2023uncertainty}, Theorem~\ref{normal_MLE} provides the first result that allows for heterogeneous graphs in the sense that the upper and lower bounds of the vertex degree $N_i$ can have different orders as $n\to\infty$ (although in a controlled manner). In the homogeneous case, our assumptions are optimal in terms of the leading-order sparsity, similar to \cite{fan2022ranking}. For example, when $\zeta_{n,-}\asymp \zeta_{n,+}$, \eqref{an_hsbm_condition} becomes $\zeta_{n,-} \gg (\log n)^8$ while \cite{fan2022ranking} requires $\zeta_{n,-} \gtrsim \text{poly}(\log n).$

On the other hand, the optimality of the graph heterogeneity part of the assumptions is less well-understood. Our simulation results in Section \ref{sec:add_sim} suggest that increasing the maximum expected degree alone may undermine asymptotic normality. This indicates that an optimal condition would likely involve a trade-off between the maximum and minimum expected degrees. Nevertheless, identifying an optimal condition is quite challenging and beyond the scope of the present work. Due to the limited existing literature on asymptotic normality in heterogeneous settings, direct comparisons with other results in this respect are difficult.
\end{remark}

\begin{remark}
A heuristic computation to obtain the results in Theorem \ref{normal_MLE} is the following: 
\begin{align}\label{AN_MLE_decomposition}
& \uu_k - u_k^* = \frac{\partial_{k} l_1(\bu^*)}{\partial_{kk} l_1(\bu^*)} + o_p\left(\frac{1}{\sqrt{N_k}}\right) & k \in [n].
\end{align}
Instead of focusing on a single parameter, one can make use of \eqref{AN_MLE_decomposition} to further consider the inference on the space of different parameters, such as the difference between two parameters. Since our main goal is to provide asymptotic properties for the likelihood-based estimators, the general inference on the parameter space is beyond the scope of this work. We leave them for future investigation. 
\end{remark}

For the QMLE, computing the asymptotic variance only uses probabilities of relative ranking of at most triple-wise comparisons. Let 
{ \small   \begin{eqnarray*}
	&&\theta_{k,2}(\bu^*; T) =  \sum_{\{j\} \subset {T}\setminus\{k\}} \frac{\e{u_k^* + u_j^*}}{\{ \e{u_k^*} + \e{u_j^*}\}^2}; \\
	&&\theta_{k,3}(\bu^*; T) = \theta_{k,2}(\bu^*; T)  \\
	 &&   +2\sum_{\{j, t\}  \subseteq {T}\setminus\{k\}} \bigg[\frac{\e{u_k^* }}{\e{u_k^*} + \e{u_j^*}+ \e{u_t^*}} - \frac{\e{2u_k^*}}{\{\e{u_k^*} + \e{u_j^*}\} \{\e{u_k^*} + \e{u_t^*}\} } \bigg], 
\end{eqnarray*}}
we define
\begin{eqnarray}\label{rho_k2}
	\rho_{k,2}^2(\bu^*) =  \frac{\left(\sum_{i: k\in T_i} \theta_{k,2}(\bu^*; T_i) \right)^2}{\sum_{i: k\in T_i} \theta_{k,3}(\bu^*; T_i) }.
\end{eqnarray}
 The asymptotic normality of the QMLE is summarized in the following theorem.
\begin{thm}[Asymptotic normality of QMLE]\label{normal_QMLE}
Under Assumptions \ref{ap:2}, \ref{ap:1} and \ref{ap:6} or Assumptions \ref{ap:2}, \ref{ap:1} and \ref{ap:5}, for any fixed $k \in [n]$, the QMLE $\widetilde{\bu}$ satisfies 
	\begin{align*}
		&{\rho_{k,2}(\bu^*)}(\widetilde{u}_k - u_k^*) \to N(0,1)& n\to\infty. 
	\end{align*} 
\end{thm}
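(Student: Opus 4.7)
The plan is to execute the Taylor-expansion strategy indicated in the remark following Theorem~\ref{normal_MLE}, applied now to the quasi-likelihood $l_2$ in \eqref{pmle}. By Theorems~\ref{main_1}--\ref{main_2} (both implied by the present strengthened Assumptions~\ref{ap:6}--\ref{ap:5}), the QMLE $\widetilde{\bu}$ uniquely exists and satisfies $\|\widetilde{\bu}-\bu^*\|_\infty = o(1)$ almost surely for all large $n$. Expanding the score equation $0 = \nabla l_2(\widetilde{\bu})$ around $\bu^*$ to second order yields
\begin{equation*}
\nabla l_2(\bu^*) = \bH(\widetilde{\bu} - \bu^*) - R, \qquad \bH := -\nabla^2 l_2(\bu^*),
\end{equation*}
in which $\bH$ is deterministic -- a crucial simplification relative to the marginal-MLE case -- and the cubic remainder satisfies $\|R\|_\infty \lesssim \|\bH\|_{\max}\,\|\widetilde{\bu}-\bu^*\|_\infty^2$ up to constants depending on $\|\bu^*\|_\infty$. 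The question reduces to analyzing the $k$th coordinate of $\bH^{\dagger}\nabla l_2(\bu^*)$, where $\bH^\dagger$ denotes the pseudoinverse on $\langle 1\rangle^\perp$.

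Next I would identify the leading statistics. Writing $r_i(k) = 1 + \sum_{j\in T_i\setminus\{k\}}\mathbf{1}_{\{j\succ k\}}$ and invoking the internal consistency \eqref{conS} of the PL measure, direct computation gives
\begin{equation*}
\bH_{kk} = \sum_{i: k\in T_i}\theta_{k,2}(\bu^*;T_i), \qquad \mathrm{Var}\bigl(\partial_k l_2(\bu^*)\bigr) = \sum_{i: k\in T_i}\theta_{k,3}(\bu^*;T_i),
\end{equation*}
where the $\theta_{k,2}$ part of $\theta_{k,3}$ comes from $\mathrm{Var}(\mathbf{1}_{\{k\succ j\}})$ and the non-diagonal part uses the identity $\mathbb P(k\succ j, k\succ t) = e^{u_k^*}/(e^{u_k^*}+e^{u_j^*}+e^{u_t^*})$ (internal consistency again) to evaluate $\mathrm{Cov}(\mathbf{1}_{\{k\succ j\}},\mathbf{1}_{\{k\succ t\}})$. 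Because the edge-wise summands of $\partial_k l_2(\bu^*)$ are independent with magnitudes uniformly bounded by $M$ under Assumption~\ref{ap:1}, the Lindeberg--Feller CLT yields $\partial_k l_2(\bu^*)/\sqrt{\sum_{i: k\in T_i}\theta_{k,3}(\bu^*;T_i)}\to N(0,1)$; normalizing by $\bH_{kk}$ recovers exactly the $\rho_{k,2}(\bu^*)$ in \eqref{rho_k2}.

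The decisive step is to justify the entrywise single-coordinate approximation
\begin{equation*}
\widetilde{u}_k - u_k^* = \frac{\partial_k l_2(\bu^*)}{\bH_{kk}} + o_p\!\bigl(\rho_{k,2}(\bu^*)^{-1}\bigr).
\end{equation*}
Writing $\bH = \bD - \bA$ with $\bD$ diagonal and expanding the pseudoinverse via the Neumann series $\bH^\dagger = \bD^{-1}\sum_{j\geq 0}(\bA\bD^{-1})^j$ on $\langle 1\rangle^\perp$, the residual splits into the Neumann tail plus $(\bH^\dagger R)_k$. Controlling this residual is where the main difficulty lies: the edge-sharing phenomenon flagged in the introduction means $\bH_{kl}/\bH_{kk}$ can be of constant order whenever $l$ shares many edges with $k$, so the naive diagonal approximation of $\bH^\dagger$ does not hold, and in fact $\bH$ is diagonally dominant only with equality (since $\bH\langle 1\rangle = 0$). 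The remedy is a quantitative spectral-gap bound for the normalized Laplacian $\bD^{-1/2}\bH\bD^{-1/2}$ restricted to $\langle 1\rangle^\perp$, furnished by the connectivity portion of Assumptions~\ref{ap:6}--\ref{ap:5}; the high-power dependence of those assumptions on the heterogeneity ratios $\xi_{n,+}/\xi_{n,-}$ and $\zeta_{n,+}/\zeta_{n,-}$ is precisely what is needed to keep both the truncated Neumann expansion and the $R$-contribution below the $\rho_{k,2}^{-1}$ scale. Because $\bH$ is deterministic for the QMLE, no leave-one-out perturbation is required; the entire argument reduces to the truncated-error analysis emphasized in the introduction.
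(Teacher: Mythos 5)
Your proposal is correct and follows essentially the same route as the paper: Taylor expansion of the score, exploitation of the fact that the QMLE Hessian at $\bu^*$ is deterministic (so no leave-one-out step), Lindeberg--Feller for the edge-indexed score with the variance $\sum_i\theta_{k,3}$ computed via internal consistency, and a truncated Neumann-series/spectral-gap analysis of the pseudoinverse to reduce to the single-coordinate approximation. The only detail worth flagging is that the paper's truncation argument needs $\mix=\min\{\lambda_2(\mathcal L_{\sym}),\,2-\lambda_n(\mathcal L_{\sym})\}$ (both ends of the spectrum, not just the usual spectral gap) together with an $\ell_\infty\to\ell_\infty$ bound $\|\A^t\|_{\infty\to\infty}\lesssim\sqrt{\N_{n,+}/\N_{n,-}}$ and entrywise moment bounds $[\A^{t+1}]_{kk}\lesssim\cc$, which your sketch gestures at but does not spell out.
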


\begin{remark}
	The calculation of QMLE ignores the dependence among pairwise comparisons obtained from edge-wise breaking. Since asymptotic normality is derived by averaging over independent comparisons (edges) rather than within each edge, the dependent structures within an edge have only a local effect that impacts the form of the asymptotic variance of the QMLE but not the asymptotic normality result itself.
\end{remark}

This result also matches Proposition \ref{lm:3}, which states that QMLE is essentially a moment estimator. Note
		$$ \widetilde{u}_k - u_k^* = \frac{\partial_{k} l_2(\bu^*)}{\partial_{kk} l_2(\bu^*)} + o_p\left(\frac{1}{\sqrt{N_k}}\right). $$
Unlike the log-likelihood function $l_1(\bu^*) $, $\mathbb{E}[-\nabla^2 l_2(\bu^*)] \neq \mathbb{E}[\nabla l_2(\bu^*)^\top\nabla l_2(\bu^*)]. $ 

Compared to the full MLE, the QMLE uses all the data information for estimation but requires less computational cost for uncertainty quantification. Consider again the ideal case where $|T_i| = M$ for each $i\in [N]$. The computational cost of $\eqref{rho_k2}$ is of order $N_k M(M-1)$, which is significantly reduced as opposed to $N_k (M!)$ for moderately large $M$ (e.g., when $M = 8$, the latter is $720$ times greater than the former). On the other hand, our numerical studies in Section \ref{sec: an_numerical} show that the standard deviation of the QMLE is just slightly larger than the full MLE. This observation implies that the QMLE is a good alternative to the full MLE for practical uncertainty quantification in the PL model.

\section{Asymptotic Results for Deterministic Comparison Graphs}\label{sec:55} 

In this section, we prove the asymptotic results for the likelihood-based estimators when the underlying comparison graph sequence is deterministic. The only randomness comes from the comparison outcome.

\subsection{Uniform Consistency}\label{sec:5}

Theorems~\ref{main_1} and \ref{main_2} can be deduced from a more general consequence stated for deterministic comparison graph sequences. To explain this result, we generalize the rapid expansion property of pairwise graph sequences introduced in \cite{han2022general} to the setup of hypergraph sequences. Toward this, we introduce the concept of modified Cheeger constant and admissible sequences in the hypergraph setting.

\begin{Definition}[Modified Cheeger constant]\label{def:ch}
Recall that $\partial U$ denotes the boundary edge set of $U$. 
Given a hypergraph $\HH(V, E)$ with $V = [n]$, let  
\begin{align*}
&h_\HH(U) = \frac{|\partial U|}{\min\{|U|, |U^\complement|\}}&U\subset [n].
\end{align*}
The modified Cheeger constant of $\HH$ is defined as $h_\HH = \min_{U \subset [n]}h_\HH(U)$. 
\end{Definition}

By definition, $\HH$ is connected if and only if $h_\HH>0$. Moreover, $h_\HH$ is nondecreasing when adding more edges to $\HH$. This suggests that a larger value of $h_\HH$ often suggests better connectivity of $\HH$.

\begin{Definition}[Admissible sequences]\label{def:ad}
	Given a connected hypergraph $\HH(V, E)$, a strictly increasing sequence of vertices $\{A_j\}_{j\in [J]}$ is called admissible  if at least half of the boundary edges of $A_j$ lie in $A_{j+1}$ for $j<J$, that is, 
	\begin{align}
		&\frac{|\{e\in \partial A_{j}: e\subseteq A_{j+1}\}|}{|\partial A_j|}\geq \frac{1}{2}& 1\leq j <J. \label{re_1}
	\end{align}
\end{Definition}

Admissible sequences arise when applying the chaining method to establish uniform consistency of the MLE/QMLE. A crucial step in the analysis involves constructing a nested sequence of vertex sets that links objects exhibiting the most extreme estimation errors. Such a sequence is random and satisfies the expansion property \eqref{re_1} that defines admissible sequences. To control the estimation errors along all possible admissible sequences, we introduce the following definition concerning the graph sequence topology.

\begin{Definition}[Rapid expansion]\label{def:RE}
	Let $\HH_n(V_n, E_n)$ be a sequence of connected hypergraphs with $V_n = [n]$. 
	Let $\mathscr A_{n}$ denote the set of admissible sequences in $\HH_n$. 
	$\{\HH_n\}_{n\in\mathbb N}$ is said to be rapidly expanding (RE) if
	\begin{align}
		&\Gamma^{\RE}_{n} := \max_{\{A_j\}_{j=1}^{J}\in\mathscr A_{n}}\sum_{j=1}^{J-1}\sqrt{\frac{\log n}{h_{\HH_n}(A_j)}}\to 0& n\to\infty,\label{myRE}
	\end{align}
	where $h_{\HH_n}(A_j)$ is defined in Definition \ref{def:ch}.
\end{Definition}

As we will see in Theorem~\ref{main:general} shortly, $\Gamma^{\RE}_{n}$ governs the uniform convergence rate of the MLE/QMLE. Moreover, it can be further bounded by $(J_n-1)\sqrt{\log n/h_{\HH_n}}$, where $h_{\HH_n}$ is the modified Cheeger constant of $\HH_n$ and $J_n$ is the maximum length of all admissible sequences in $\HH_n$. The intuition of $J_n$ is best understood when $\HH_n$ is a pairwise graph. In this setting, if we further restrict the set of admissible sequences by replacing the constant $1/2$ in \eqref{re_1} with $1$, then it can be verified by definition that $(J_n-1)$ coincides with the graph diameter. Hence, $(J_n-1)$ defined for admissible sequences serves as a generalization of the notion of graph diameter. The RE property thus provides a quantitative characterization of the asymptotic connectivity of a graph sequence $\{\HH_n\}_{n\in\mathbb N}$. We formulate it as the following assumption. 

\begin{Assumption}\label{ap:11}
The hypergraph sequence $\{\HH_n\}_{n\in\mathbb N}$ is RE. 
\end{Assumption}

We are ready to state the main result. 

\begin{thm}\label{main:general}
Under Assumptions \ref{ap:2}-\ref{ap:1} and \ref{ap:11}, for all sufficiently large $n$, with probability at least $1-n^{-3}$, both the marginal MLE $\buu$ and QMLE $\widetilde{\bu}$ in Section \ref{sec:3} uniquely exist and satisfy $\|\bm w -\bu^*\|_{\infty}\lesssim \Gamma_n^{\RE}\to 0$, where $\bm w = \buu$ or $\widetilde{\bu}$. In particular, both $\buu$ and $\widetilde{\bu}$ are uniformly consistent for $\bm u^*$. 
\end{thm}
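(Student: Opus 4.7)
The plan is to unify the analyses of the marginal MLE and the QMLE by regarding both as zeros of an estimating-equation system, and then to lift the chaining technique of \cite{han2022general} from pairwise graphs to hypergraphs. For either estimator $\bm w\in\{\buu,\widetilde{\bu}\}$, Propositions~\ref{lm:1} and \ref{lm:3} give an $n$-dimensional score system
\begin{align*}
\Psi_k(\bm w) \,:=\, \frac{1}{N_k}\sum_{i:\,k\in T_i}\psi_{i,k}(\bm w) \,=\, 0,\qquad k\in[n],
\end{align*}
with $\mathbb E[\psi_{i,k}(\bu^*)]=0$ and uniformly bounded derivatives on any compact slab (using Assumption~\ref{ap:2} and the explicit PL mass \eqref{PL-mass}). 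Moreover, $-\nabla\bm\Psi(\bm u)$ is a Laplacian-type operator on $\HH_n$---positive semidefinite with only the all-ones direction in its kernel, and with spectral gap quantitatively tied to $h_{\HH_n}$. These properties give strict concavity of the relevant (quasi-)log-likelihood on $\{\langle 1\rangle^\top\bm u=0\}$, so that existence and uniqueness of $\bm w$ follow once the announced $\ell_\infty$ bound is in hand.

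Set $\bm\delta=\bm w-\bu^*$ and, without loss, let $k^\star\in\arg\max_k\delta_k$ (a symmetric argument controls the minimum). Sort $[n]$ by $\delta_k$ in decreasing order and greedily construct a nested sequence $A_1=\{k^\star\}\subset A_2\subset\cdots\subset A_J$, at each step letting $A_{j+1}$ be the shortest prefix containing $A_j$ for which $|\{e\in\partial A_j: e\subseteq A_{j+1}\}|\geq |\partial A_j|/2$; this forces $\{A_j\}_{j=1}^{J}\in\mathscr A_n$. Summing the estimating equations $\Psi_k(\bm w)=0$ over $k\in A_j$, invoking $\mathbb E[\psi_{i,k}(\bu^*)]=0$, and performing a mean-value expansion gives
\begin{align*}
-\eta_j \,:=\, -\sum_{k\in A_j}\sum_{i:\,k\in T_i}\psi_{i,k}(\bu^*) \,=\, \sum_{k\in A_j}\sum_{i:\,k\in T_i}\bigl[\psi_{i,k}(\bm w)-\psi_{i,k}(\bu^*)\bigr].
\end{align*}
By the Laplacian-type antisymmetry of the Hessian, edges $e\subseteq A_j$ contribute zero to the right-hand side, so only boundary edges survive. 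Using Assumption~\ref{ap:1}, the sort order of $\bm\delta$, and the admissibility condition \eqref{re_1} to absorb the boundary contributions into coordinates already lying in $A_{j+1}$, one obtains a one-step inequality
\begin{align*}
m_j - m_{j+1} \;\lesssim\; \frac{|\eta_j|}{|\partial A_j|},\qquad m_j \,:=\, \min_{k\in A_j}\delta_k.
\end{align*}

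The noise $\eta_j$ is a sum of bounded, zero-mean terms. For the marginal MLE, Condition~\ref{ass:2} exhibits $\{\psi_{i,k}(\bu^*)\}$ as martingale differences with respect to the filtration $\mathscr F_{i,\cdot}$ in \eqref{filt}; for the QMLE, $\eta_j$ is a sum over independent edges. An Azuma--Hoeffding (or Freedman) concentration bound together with a union bound over the candidate chains (made manageable by the admissibility restriction and a standard discretization argument) yields $|\eta_j|\lesssim\sqrt{|\partial A_j|\log n}$ uniformly in $j$ with probability at least $1-n^{-3}$. Using $|\partial A_j|/\min(|A_j|,|A_j^c|)=h_{\HH_n}(A_j)$ and telescoping,
\begin{align*}
\delta_{k^\star} \,=\, m_1 \,\leq\, m_J \,+\, \sum_{j=1}^{J-1}(m_j - m_{j+1}) \,\lesssim\, m_J \,+\, \sum_{j=1}^{J-1}\sqrt{\frac{\log n}{h_{\HH_n}(A_j)}}.
\end{align*}
Extending the chain sufficiently far (or invoking the mirror argument for $-\bm\delta$ and the constraint $\langle1\rangle^\top\bm\delta=0$) ensures $m_J\leq 0$, so that $\delta_{k^\star}\lesssim \Gamma_n^{\RE}$; combining with the mirror bound on $-\min_k\delta_k$ gives $\|\bm\delta\|_\infty\lesssim\Gamma_n^{\RE}\to 0$ by Assumption~\ref{ap:11}, which in turn secures existence and uniqueness via the strict concavity noted above.

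The principal obstacle is the one-step bound in paragraph two: showing that internal contributions vanish by antisymmetry and that boundary contributions can be charged to $A_{j+1}$-coordinates modulo a universal factor. In the BT case ($|T_i|=2$) the score splits cleanly into a utility-only and an outcome-only piece (Remark~\ref{kind:remark}), and this absorption is essentially automatic. In the PL/QMLE setting, however, $\psi_{i,k}(\bm w)-\psi_{i,k}(\bu^*)$ on an edge $e$ mixes all $\delta_\ell$ with $\ell\in e$ and depends nonlinearly on the realized permutation $\pi_i$, so cross-terms can in principle have either sign. Controlling them requires a monotonicity/convexity inspection of the PL selection probabilities coupled with the sort order of $\bm\delta$, and is precisely what the paper identifies as the nontrivial admissibility property of the chaining sets in the PL model; this is where I expect the bulk of the technical work to lie.
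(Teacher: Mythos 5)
Your architecture is essentially the paper's: both arguments chain the extremal error coordinates through a nested sequence of vertex sets, bound the aggregated score over each set from above by concentration on boundary-edge sums, and from below by a monotonicity property of the PL score differences, with the admissibility condition \eqref{re_1} mediating between the two. The one organizational difference is dual rather than substantive: you build the chain greedily so that it is admissible by construction and then bound the per-step decrement $m_j-m_{j+1}$, whereas the paper defines $B_z$ by error thresholds with increments pre-set to $c\sqrt{\log n/h_{\HH_n}(B_z)}$ and then \emph{proves} admissibility by contradiction (if more than half of $\partial B_z$ protruded past $B_{z+1}$, the lower bound would exceed the concentration upper bound). The paper also runs two chains, from the argmax and the argmin, and uses that both must exceed size $n/2$ and hence intersect; this is cleaner than your single chain plus ``mirror argument,'' since a prefix of size $>n/2$ in your sorted order need not contain a coordinate with $\delta_k\le 0$, so you would in any case have to run the second chain and splice the two.

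Three concrete gaps remain. First, strict concavity on $\{\langle 1\rangle^\top\bu=0\}$ gives uniqueness but not existence: you need coercivity of the negative (quasi-)log-likelihood, which requires the event that for every cut $U$ some observed edge records a win from $U$ over $U^\complement$ (the paper's event $\FI$, established by a Chernoff bound using the lower bound \eqref{coco1} on $|\partial U|$ implied by Assumption~\ref{ap:11}). Second, your stated concentration $|\eta_j|\lesssim\sqrt{|\partial A_j|\log n}$ is too optimistic: $A_j$ depends on $\bm w$ and hence on the data, so you must union-bound over all $\binom{n}{s}\le n^{s}$ candidate subsets of size $s$, which costs a factor $\sqrt{|A_j|}$ and yields $|\eta_j|\lesssim\sqrt{|A_j||\partial A_j|\log n}$ (the paper's event $\FII$); this factor is exactly why $h_{\HH_n}(A_j)=|\partial A_j|/\min\{|A_j|,|A_j^\complement|\}$ rather than $|\partial A_j|$ appears in the rate, and your telescoped display silently uses the corrected version. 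Third, and most importantly, the one-step lower bound you defer is the actual content of the proof: one must show (i) every boundary edge of $A_j$ contributes nonnegatively to $\sum_{k\in A_j}[\partial_k l(\bu^*)-\partial_k l(\bm w)]$, which for the marginal MLE requires a ratio comparison of PL selection probabilities exploiting that all coordinates inside $A_j$ have larger errors than those outside, valid for every realization of $\pi_i$; and (ii) every edge protruding past $A_{j+1}$ contributes at least a constant multiple of the error gap, which requires that gap to be $o(1)$ (so the linearization constant is uniform) and a careful three-block splitting of the edge across $A_j$, $A_{j+1}\setminus A_j$, and $A_{j+1}^\complement$. Until (i)--(ii) are executed, the claim that ``boundary contributions can be charged to $A_{j+1}$-coordinates modulo a universal factor'' is an assertion, not a proof; you correctly identify this as the crux, but the proposal does not close it.
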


The main idea of the proof is to create a chain between vertices with large positive and negative estimation errors while ensuring a smooth transition of errors along the chain. This technique works for both the marginal MLE and QMLE since it relies solely on certain estimating equations for constructing the chain. It differs from the popular approach based on regularization plus leave-one-out analysis \citep{MR3953449, fan2022ranking, chen2022optimal, chen2022partial}, which can be used to obtain sharp phase transitions in homogeneous settings \citep{chen2022partial} but is also more restrictive in scope. (Leave-one-out analysis is inherently a perturbation method and requires sufficient symmetry or homogeneity in the underlying graph structure to be effective.) 

To see that Theorem~\ref{main:general} is nonvacuous, one needs to verify that Assumption~\ref{ap:11} is not purely conceptual but can be realized under suitable graph sampling scenarios. The following lemma provides an affirmative answer to address this issue.

\begin{Lemma}\label{main:REs}
Let $\HH_n(V_n, E_n)$ be a hypergraph sequence with $V_n = [n]$. Under Assumptions~\ref{ap:2}-\ref{ap:1}, the following statements hold: 
\begin{enumerate}
\item [(1)] If $\HH_n$ is sampled from a NURHM in Section \ref{am}, then under additional Assumption \ref{ap:4}, $\HH_n$ is RE a.s., with the corresponding $\Gamma_n^{\RE}$ satisfying
\begin{align*}
&\Gamma_n^{\RE}\lesssim \sqrt{\frac{\xi^2_{n,+}(\log n)^3}{\xi^3_{n,-}}},
\end{align*}
where $\xi_{n, \pm}$ are the same as \eqref{myxis}.
\item [(2)] If $\HH_n$ is sampled from an HSBM in Section \ref{hsbm}, then under additional Assumption \ref{ap:3}, $\HH_n$ is RE a.s., with the corresponding $\Gamma_n^{\RE}$ satisfying
\begin{align*}
&\Gamma_n^{\RE}\lesssim \sqrt{\frac{(\log n)^3}{\zeta_{n,-}}},
\end{align*} 
where $\zeta_{n,-}$ is the same as \eqref{myzeta}. 
\end{enumerate}
\end{Lemma}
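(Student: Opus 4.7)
My plan is to reduce the claim to two uniform bounds: a high-probability lower bound on $h_{\HH_n}(U)$ over all subsets $U\subsetneq[n]$, and a deterministic upper bound on the length $J$ of every admissible sequence in $\mathscr A_{n}$. Since $\Gamma_n^{\RE}=\max_{\{A_j\}}\sum_{j=1}^{J-1}\sqrt{\log n/h_{\HH_n}(A_j)}$, combining these two inputs immediately produces the stated rates.

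For the first bound, fix $U$ with $|U|=k\leq n/2$ and note that $|\partial U|$ is a sum of independent Bernoulli indicators over all edges straddling $U$ and $U^{\complement}$. Using the identity $\binom{n}{\ms}-\binom{k}{\ms}-\binom{n-k}{\ms}\asymp k(n-k)^{\ms-1}$ for $k\leq n/2$ together with the minimum edge-probability in \eqref{pnqn} (respectively $\min_i\omega_{n,i}$ for HSBM), one obtains $\mathbb E|\partial U|\gtrsim k\,\xi_{n,-}$ (resp.\ $k\,\zeta_{n,-}$). A Chernoff bound then gives $\PP(|\partial U|\leq \tfrac12\mathbb E|\partial U|)\leq\exp(-c\,k\,\xi_{n,-})$, and a union bound over the $\binom{n}{k}\leq(en/k)^k$ choices of $U$ of size $k$, summed over $k\leq n/2$, succeeds under Assumption~\ref{ap:4} (resp.\ \ref{ap:3}), which forces $\xi_{n,-}$ (resp.\ $\zeta_{n,-}$) to dominate $(\log n)^3$. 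This delivers the almost-sure uniform lower bound $h_{\HH_n}(U)\gtrsim\xi_{n,-}$ (resp.\ $\zeta_{n,-}$) for every nontrivial $U$, using $h_{\HH_n}(U)=h_{\HH_n}(U^{\complement})$ to handle $|U|>n/2$.

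For the second bound, the admissibility condition \eqref{re_1} forces at least $|\partial A_j|/2$ edges of $\partial A_j$ to lie inside $A_{j+1}$, and each such edge contributes a vertex of $A_j^{\complement}$. A Chernoff-based maximum-degree bound $\Delta_n\lesssim\xi_{n,+}$ shows that each new vertex accounts for at most $\Delta_n$ of these edges, so $|A_{j+1}\setminus A_j|\gtrsim|\partial A_j|/\Delta_n\gtrsim|A_j|\,\xi_{n,-}/\xi_{n,+}$, giving $|A_{j+1}|\geq|A_j|(1+c\,\xi_{n,-}/\xi_{n,+})$. Iterating until the sequence saturates $[n]$ produces $J\lesssim(\log n)\,\xi_{n,+}/\xi_{n,-}$. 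Multiplying this with the previous bound proves part (1): $\Gamma_n^{\RE}\lesssim J\sqrt{\log n/\xi_{n,-}}\lesssim\sqrt{\xi_{n,+}^2(\log n)^3/\xi_{n,-}^3}$.

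The main obstacle is part (2): applying the NURHM argument verbatim to the HSBM (viewed as a NURHM with $\xi_{n,\pm}=\zeta_{n,\pm}$) only gives $\sqrt{\zeta_{n,+}^2(\log n)^3/\zeta_{n,-}^3}$, which is too weak by a factor of $(\zeta_{n,+}/\zeta_{n,-})^{1/2}$. The fix must exploit the \emph{piecewise-homogeneous} structure of HSBM: all intra-cluster edges in $V_i$ share probability $\omega_{n,i}$ and all inter-cluster edges share $\omega_{n,0}$. I would decompose $A_j=\bigsqcup_{i=1}^K(A_j\cap V_i)$ and, using $K=O(1)$, argue that admissibility transfers to at least one block in which the local max-to-min expected-degree ratio is $O(1)$, producing constant expansion there and hence $J\lesssim\log n$ globally. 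Carefully tracking how the $\tfrac12$ admissibility threshold distributes across clusters without accumulating a factor of $\zeta_{n,+}/\zeta_{n,-}$ is the technically delicate step, and it is also where the boundedness of $K$ enters essentially.
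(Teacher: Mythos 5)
Your reduction of the lemma to (a) a uniform high-probability lower bound $h_{\HH_n}\gtrsim\xi_{n,-}$ (resp.\ $\zeta_{n,-}$) and (b) a deterministic bound on the length $J$ of admissible sequences is exactly the paper's strategy, and your part (1) is essentially correct: your Step I matches the paper's Chernoff-plus-union-bound argument, and your Step II via the maximum-degree bound ($|A_{j+1}\setminus A_j|\gtrsim|\partial A_j|/N_{n,+}$) is equivalent to the paper's bound $|\{e\in\partial A_j:e\subseteq A_{j+1}\}|\leq|\partial(A_{j+1}\setminus A_j)|\lesssim\Delta a_j\,\xi_{n,+}$; both yield $\Delta a_j/\min\{a_j,n-a_j\}\gtrsim\xi_{n,-}/\xi_{n,+}$ and hence $J\lesssim\xi_{n,+}\log n/\xi_{n,-}$. (You should phrase the iteration in terms of $\min\{a_j,n-a_j\}$ rather than $|A_j|$, so that the complement shrinks exponentially once $a_j>n/2$, but this is cosmetic.)

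Part (2), however, contains a genuine gap: you correctly diagnose that the NURHM argument loses a factor $(\zeta_{n,+}/\zeta_{n,-})^{1/2}$ and that the block structure must be exploited to get $J\lesssim\log n$, but the proposed fix --- ``admissibility transfers to at least one block in which the local max-to-min expected-degree ratio is $O(1)$'' --- is a heuristic, not an argument, and it does not address the hardest case. The admissibility condition $|\partial A_{j\subseteq j+1}|\geq|\partial A_j|/2$ can be driven entirely by the \emph{cross-community} edges (all of probability $\omega_{n,0}$), in which case no single community's intra-cluster edges dominate and your transfer claim has no obvious content. The paper's proof decomposes $\mathbb E[|\partial A_j|]=\sum_{i\in[K]}\vartheta_i+\vartheta_{K+1}$ and $\mathbb E[|\partial(A_{j+1}\setminus A_j)|]=\sum_{i\in[K]}\tau_i+\tau_{K+1}$, deduces $\max_{i\in[K+1]}\tau_i/(\vartheta_i+1)\gtrsim1$, and then spends most of its effort on the index $i=K+1$: it expands $\vartheta_{K+1}$ and $\tau_{K+1}$ via a Vandermonde-type sum over intersection profiles, identifies the asymptotically dominant profile through an optimization over indices $(i_0^*,i_1^*,i_2^*)$, and extracts from the resulting inequality that some $a_{j,i}$ grows (or $n_i-a_{j,i}$ shrinks) at a constant exponential rate, up to at most $O(K)$ exceptional steps. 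This combinatorial analysis is the substance of part (2), and your proposal explicitly defers it; as written, the claimed bound $\Gamma_n^{\RE}\lesssim\sqrt{(\log n)^3/\zeta_{n,-}}$ is not established.
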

Lemma~\ref{main:REs} generalizes \cite[Proposition 2]{han2022general} to the setting of random hypergraphs. 
To sketch the proof, note that $\Gamma_n^{\RE}\leq (J_n-1)\sqrt{\log n/h_{\HH_n}}$ (as per definition), where $h_{\HH_n}$ is the modified Cheeger constant of $\HH_n$ and $J_n$ is the maximum length of all admissible sequences in $\HH_n$. Therefore, it suffices to obtain an upper bound on $J_n$ and a lower bound on $\HH_n$, respectively. The upper bound is standard and follows from a degree concentration argument that applies simultaneously for both NURHM and HSBM. The lower bound, however, is model-specific and requires different machinery, especially for HSBM. Indeed, for HSBM, we observe that for any admissible sequence $\{A_j\}_{j\in [J]}$, except for at most a constant number of times (which depends only on the number of communities), there exists at least one community $V_k$ such that either $|A_j\cap V_k|$ grows exponentially or $|A^\complement_j\cap V_k|$ decreases exponentially, both at a constant rate independent of $n$. Establishing this observation goes beyond the existing proof in \cite{han2022general} and requires an elaborate combinatorial analysis of the expansion rate of admissible sequences. 

The combination of Lemma \ref{main:REs} and Theorem \ref{main:general} together with the Borel--Cantelli lemma leads to Theorems \ref{main_1} and \ref{main_2}. 

\begin{remark}\label{new_adding}
NURHM and HSBM are two examples of random hypergraph models for which the RE parameter $\Gamma_n^{\RE}$ can be explicitly bounded in terms of graph sparsity parameters. Slight variations of these models lead to other interesting random graph constructions where similar RE bounds apply. For example, by altering the comparison probabilities within each community of the HSBM while maintaining their asymptotic order, one can obtain a degree-corrected HSBM without affecting the RE property of the graph sequence.
\end{remark}

\subsection{Asymptotic Normality}\label{sec:6+}

We provide a general asymptotic normality result for deterministic graph sequences. For clarity, we introduce some additional definitions concerning the comparison graph structure.

Let $\N_{n,+} $ and $\N_{n,-}$ be the maximum and minimum vertex degree in $\HH_n$ respectively:
	\begin{align}
		&\N_{n,+} = \max_{k\in [n]} \ndeg_k& \N_{n,-} = \min_{k\in [n]} \ndeg_k.\label{mydn}
	\end{align}
For $j\neq k\in [n]$, denote the number of shared edges between $j$ and $k$ as 
\begin{align}
\N_{jk} = \left|\{i: \{j, k\}\subseteq T_i\}\right|.\label{mydij}
\end{align} 
The quantity $\N_{jk}$ is either 1 or 0 in the pairwise comparison setting (assuming $\HH_n$ is simple) but may diverge when multiple comparisons exist. To better describe the edge-sharing phenomenon, we define 
\begin{align}
\cc = \max_{j\neq k\in [n]}\frac{\N_{jk}}{\ndeg_j}.\label{mycor}
\end{align} 
The value of $\cc$ manifests the strength of the correlation of estimation across the objects. When dealing with pairwise comparisons, $\cc\leq 1/\N_{n, -}=o(1)$, so that estimation between different objects is asymptotically independent. In a hypergraph, however, $\cc$ can arbitrarily approach one when two objects share a large number of edges asymptotically. 
	 
To establish asymptotic normality, we need to compute the Hessians of the marginal log-likelihood and quasi-log-likelihood functions.  
Denote by $l(\cdot)$ the marginal log-likelihood or the quasi-log-likelihood and $\bm w$ the corresponding marginal MLE or QMLE estimator. Let $\cH(\bu) = \nabla^2 l(\bu)$. It follows from the direct computation that for the marginal MLE,
\begin{align}
	&\{\cH(\bm u)\}_{kk'} = 
	\begin{cases}
		\sum_{i: \{k, k'\}\subseteq T_i}\sum_{j\in [r_i(k)\wedge r_i(k')\wedge y_i]}\frac{\e{u_k}\e{u_{k'}}}{\{\sum_{t \geq j}\e{u_{\pi_i(t)}}\}^2} & k\neq k'\\
		\\
		-\sum_{i: k\in T_i}\sum_{j\in [r_i(k)\wedge y_i ]}\frac{\e{u_k}(\sum_{t \geq j, t\neq r_i(k)}\e{u_{\pi_i(t)}})}{(\sum_{t \geq j}\e{u_{\pi_i(t)}})^2} & k = k',
	\end{cases}
	\label{wearediff}
\end{align}
while for the QMLE,
\begin{align}	
		&\{\cH(\bm u)\}_{kk'} = 
	\begin{cases} 
		\sum_{i: \{k, k'\}\subseteq T_i} \frac{\e{u_k}\e{u_{k'}}}{(\e{u_k} + \e{u_k'})^2} & k\neq k'\\
		\\
		- 	\sum_{j \in [n]} \sum_{i: \{k, j\}\subseteq T_i}\frac{\e{u_k}\e{u_{j}}}{(\e{u_k} + \e{u_j})^2} &  k = k'.
	\end{cases}
	\label{binyan}
\end{align}

Although \eqref{wearediff} and \eqref{binyan} look different, a crucial shared property we will use later is that both are the negatives of some weighted graph Laplacian matrices. Moreover, we should point out that $\cH(\bu)$ is random even when the comparison graph sequence is deterministic unless $\bm w$ is the choice-one MLE ($y_i=1$ for all $i\in [N]$) or QMLE. This presents additional challenges in the analysis of the PL model. 
 
To get a deterministic quantity, define $\cH^*(\bu^*):=\mathbb E[\cH(\bu^*)]$. It is easy to see that $-\cH^*(\bu^*)$ is also a weighted graph Laplacian matrix with weight matrix $\W$, where $\W_{ij} = [\cH^*(\bu^*)]_{ij}$ for $i\neq j\in [n]$ and $\W_{ii} = 0$. The associated degree matrix $\D$ is a diagonal matrix with $\D_{ii} = -[\cH^*(\bu^*)]_{ii}$. The (symmetric) normalized version of $-\cH^*(\bu^*)$ is defined as 
\begin{equation*}
\mathcal{L}_{\sym} = -\D^{-1/2}\cH^*(\bu^*)\D^{-1/2} = \I - \A, 
\end{equation*}
where $\A = \D^{-1/2}\W\D^{-1/2}$ is the normalized weight matrix. The eigenvalues of $\mathcal{L}_{\sym}$ are denoted by $0 = \lambda_1(\mathcal{L}_{\sym})\leq\cdots\leq\lambda_n(\mathcal{L}_{\sym})\leq 2$. Define
\begin{align}
\mix = \min \{\lambda_2(\mathcal{L}_{\sym}),  2-\lambda_n(\mathcal{L}_{\sym})\}.\label{mymix}
\end{align}
The quantity $\mix$ is related to the mixing of random walk on the graph associated with $-\cH^*(\bu^*)$ \citep{MR1421568}. In our setting, it serves as a measure of the heterogeneity of the graph and will appear as a convergence rate factor in the operator expansion analysis of $\mathcal{L}_{\sym}^\dagger$. More precisely, $\lambda_2(\mathcal{L}_{\sym})$ measures the conductance of the graph and thus the heterogeneity, while $2-\lambda_n(\mathcal{L}_{\sym})$ measures the periodicity of the graph, that is, how close the graph is being bipartite. It is $\lambda_2(\mathcal{L}_{\sym})$ that often matters.

The assumption required to establish asymptotic normality for the choice-one MLE and the QMLE can be stated as follows:
\begin{Assumption}\label{ap:deterministic}
	The hypergraph sequence $\{\HH_n\}_{n\in\mathbb N}$ satisfies
	\begin{align}
			\lim_{n \to \infty} \frac{\log n}{\mix}\times\max\left\{\frac{(\Gamma^{\RE}_n)^2 \N_{n,+}}{\sqrt{\N_{n,-}}}, \ \sqrt{\cc}\right\}  = 0,\label{coupledq}
	\end{align}
	where $\Gamma_n^{\RE}$, $\cc$, and $\mix$ are defined in \eqref{myRE}, \eqref{mycor}, and \eqref{mymix}, respectively.  
\end{Assumption}
Assumption~\ref{ap:deterministic} captures the joint influence of several graph-theoretic attributes. In order for the left-hand side of \eqref{coupledq} to vanish in the limit as $n\to\infty$, both $\Gamma_n^\RE$ and $\cc$ must decay to zero at a sufficiently fast rate. The parameter $\Gamma_n^\RE$ quantifies the asymptotic connectivity of $\{\HH_n\}_{n\in\mathbb N}$, with faster decay indicating better connectivity, while $\cc$ measures the correlation of estimates associated with distinct objects, where smaller values correspond to weaker correlations. Consequently, Assumption~\ref{ap:deterministic} may be interpreted as imposing that $\{\HH_n\}_{n\in\mathbb N}$ remains asymptotically well-connected and that inter-object estimation dependencies diminish appropriately. Moreover, the parameter $\mix$ should remain bounded away from zero (though it may get closer asymptotically at a controlled speed) to prevent severe bottleneck structures, thereby ensuring a controlled level of graph heterogeneity across $\{\HH_n\}_{n\in\mathbb N}$.

For the marginal MLE in general, however, due to the randomness of $\cH(\bm u)$, our analysis requires bounding an extra remainder term for which we apply a leave-one-out perturbation analysis \citep{gao2023uncertainty}. Since we do not assume a model for comparison graphs, we will need an additional assumption on the Hessians of the leave-one-out log-likelihood functions. For $k\in [n]$, define the leave-one-out log-likelihood function by 
\begin{align}\label{leave_one_out_likelihood}
	&l_1^{(-k)}(\bm u) = \sum_{i: k \notin T_i} \sum_{j\in [y_i]}\left[u_{\pi_i(j)} - \log\left(\sum_{t=j}^{m_i}\e{u_{\pi_i(t)}}\right)\right]& k \in [n].
\end{align}
Denote by $\cH^{(-k)}(\bu):= \nabla^2l_1^{(-k)}(\bm u)$ and ${\cH^*}^{(-k)}(\bu) = \mathbb E[\cH^{(-k)}(\bu)]$. Note ${\cH^*}^{(-k)}(\bu)$ is nonrandom and one can check that $-{\cH^*}^{(-k)}(\bu)$ is a weighted graph Laplacian matrix on $[n]\setminus \{k\}$. Then we let
\begin{align}\label{myleave}
\lambda^{\leave}_2 = \min_{k\in [n]}\lambda_2(-{\cH^*}^{(-k)}(\bu^*))\geq 0.
\end{align}
The additional assumption can be stated as follows:
\begin{Assumption}\label{ap:deterministic+}
	The hypergraph sequence $\{\HH_n\}_{n\in\mathbb N}$ satisfies
	\begin{align*}
\lim_{n \to \infty} \frac{\log n}{\mix}\times\max\left\{\frac{\Gamma_n^\RE \N_{n,+}^{3/2}\sqrt{\log n}}{\sqrt{\N_{n,-}}\lambda_2^\leave}, \ \frac{\N_{n,+}\log n}{\sqrt{\N_{n,-}}\lambda_2^\leave}\right\}  = 0,
	\end{align*}
	where $\Gamma^{\RE}_n$, $\mix$, and $\lambda_2^\leave$ are defined in \eqref{myRE}, \eqref{mymix}, and \eqref{myleave}, respectively. 
\end{Assumption}

Aside from the conditions imposed on $\Gamma_n^\RE$, $\cc$, and $\mix$, Assumption~\ref{ap:deterministic+} additionally requires that the leave-one-out spectral parameter $\lambda_2^\leave$ diverge at a sufficiently rapid rate. Since a large value of $\lambda_2^\leave$ indicates strong graph conductance even in the worst-case scenario of removing a single vertex, Assumption~\ref{ap:deterministic+} ensures that the graph sequence $\{\HH_n\}_{n\in\mathbb N}$ behave similarly to a homogeneous graph sequence, a common scenario where the leave-one-out technique becomes effective.

Since both $\lambda_2(\mathcal L_\sym)$ and $\lambda_2^\leave$ are concerned with graph heterogeneity, one may wonder if $\lambda_2^\leave$ can be further lower bounded using $\lambda_2(\mathcal L_\sym)$ so that Assumption~\ref{ap:deterministic+} can be stated without $\lambda_2^\leave$. The answer to this question is in general negative. For instance, consider a hypergraph where all edges involving $k'$ also involve $k$. When all edges containing $k$ are removed, $k'$ is disconnected from the rest of the graph regardless of the connectivity of the original graph. Fortunately, such cases can be excluded by limiting the total number of edges shared by any two vertices, which is precisely what $\cc$ stands for.

\begin{Lemma}\label{lastone}
If $\cc/\mathcal \lambda_2(\mathcal L_\sym)\to 0$, then $\lambda_2^\leave\gtrsim \N_{n,-}\lambda^2_2(\mathcal L_\sym)$. 
\end{Lemma}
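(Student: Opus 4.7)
The plan is to lift a conductance bound from the full weighted graph to the leave-one-out graph and then invoke Cheeger's inequality twice, which is what will produce the $\lambda_2^2$ factor in the conclusion. Throughout, I view $-\mathcal H^*(\bu^*)$ as the Laplacian of a weighted graph on $V=[n]$ with entrywise weights $W_{ij}$, degrees $D_{ii}=\sum_j W_{ij}\asymp N_i$ (using Assumption \ref{ap:2}, every hyperedge weight is of constant order), and weighted conductance $\phi = \min_{S\subset V} W(S,S^c)/\min\{\vol(S),\vol(S^c)\}$. For each $k\in[n]$, the leave-one-out Laplacian $-\mathcal H^{*(-k)}(\bu^*)$ is the Laplacian of the weighted subgraph on $V\setminus\{k\}$ retaining only the contributions from hyperedges $T$ with $k\notin T$; write $\phi^{(-k)}$ for its weighted conductance and $\mathcal L_\sym^{(-k)}$ for its normalized Laplacian.

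The first step is a conductance perturbation estimate of the form $\phi^{(-k)}\geq \phi - O(\cc)$. Fix $S\subseteq V\setminus\{k\}$. The cut weight $W^{(-k)}(S,(V\setminus\{k\})\setminus S)$ differs from $W(S,V\setminus S)$ only through hyperedges that contain $k$ and intersect $S$. Decomposing this difference over vertices in $S$, it is bounded by a constant multiple of $\sum_{j\in S} N_{jk}\leq \cc\sum_{j\in S} N_j\lesssim \cc\,\vol(S)$. Combining this with $\vol(S)\leq (1-\cc)^{-1}\vol^{(-k)}(S)$ and distinguishing the two cases $\vol(S)\leq \vol(V\setminus S)$ and $\vol(S)> \vol(V\setminus S)$ (in the latter one uses $\vol(V\setminus S)\geq \vol^{(-k)}(S)$ to retrieve $W(S,V\setminus S)\geq \phi\,\vol^{(-k)}(S)$), one arrives at $\phi^{(-k)}\geq \phi - O(\cc)$.

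Next I apply Cheeger's inequality twice. The upper Cheeger bound on the full graph gives $\phi\geq \lambda_2(\mathcal L_\sym)/2$, so the assumption $\cc/\lambda_2(\mathcal L_\sym)\to 0$ implies $\phi^{(-k)}\gtrsim \lambda_2(\mathcal L_\sym)$. The lower Cheeger bound applied to the leave-one-out weighted graph then yields $\lambda_2(\mathcal L_\sym^{(-k)})\geq (\phi^{(-k)})^2/2\gtrsim \lambda_2^2(\mathcal L_\sym)$. To convert back to the unnormalized Laplacian, note $\min_i D_{ii}^{(-k)}\gtrsim (1-\cc)N_{n,-}\gtrsim N_{n,-}$, so Ostrowski's inequality applied to $-\mathcal H^{*(-k)}(\bu^*)=(D^{(-k)})^{1/2}\mathcal L_\sym^{(-k)}(D^{(-k)})^{1/2}$ produces $\lambda_2(-\mathcal H^{*(-k)}(\bu^*))\gtrsim N_{n,-}\,\lambda_2^2(\mathcal L_\sym)$. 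Taking the minimum over $k$ delivers the claim.

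The only non-routine ingredient is the conductance perturbation. The crux there is showing that the total weight lost across any cut scales as $\cc\cdot\vol(S)$ rather than $\cc\cdot\vol(V)$; this is precisely what makes $\cc$ (a maximum of per-vertex sharing ratios) the correct parameter, and it is what rules out degenerate configurations like the one flagged in the paragraph preceding the statement, where a vertex shares essentially all its edges with $k$. The remaining ingredients---both sides of Cheeger's inequality and Ostrowski's inequality---are standard, modulo mild care with the weighted structure.
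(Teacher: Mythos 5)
Your proposal is correct and follows essentially the same route as the paper's proof: a conductance perturbation bound of the form $g_{\GG^{(-k)}}\geq g_{\GG}-O(\cc)$ obtained by comparing the optimal leave-one-out cut with the corresponding cut in the full graph (with the lost cut weight controlled by $\sum_{j\in S}\N_{jk}\lesssim\cc\,\mathrm{vol}(S)$), followed by both directions of Cheeger's inequality and the degree-scaling step $\lambda_2(-{\cH^*}^{(-k)})\gtrsim\min_i[\D^{(-k)}]_{ii}\,\lambda_2(\mathcal L^{(-k)}_{\sym})$ together with $\min_i[\D^{(-k)}]_{ii}\gtrsim(1-\cc)\N_{n,-}$. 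The only cosmetic difference is that you name the congruence step as Ostrowski's inequality and state the perturbation in the direction $\phi^{(-k)}\geq\phi-O(\cc)$ rather than $g_{\GG}\leq g_{\GG^{(-k)}}+O(\cc)$, which are the same comparison.
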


Consequently, under Assumption~\ref{ap:deterministic}, a sufficient assumption for Assumption~\ref{ap:deterministic+} without introducing $\lambda_2^\leave$ is the following:
 
\begin{Assumption}\label{ap:deterministic++}
	The hypergraph sequence $\{\HH_n\}_{n\in\mathbb N}$ satisfies
	\begin{align*}
\lim_{n \to \infty} \frac{\log n}{\mix^3}\times\max\left\{\frac{\Gamma_n^\RE \N_{n,+}^{3/2}\sqrt{\log n}}{\N^{3/2}_{n,-}}, \ \frac{\N_{n,+}\log n}{\N^{3/2}_{n,-}}\right\}  = 0,
	\end{align*}
	where $\Gamma^{\RE}_n$ and $\mix$ are defined in \eqref{myRE} and \eqref{mymix}. 
\end{Assumption}
We are now ready to give the main result for asymptotic normality. 
\begin{thm}\label{thm:and}
Under Assumptions \ref{ap:2}, \ref{ap:1}, and \ref{ap:deterministic}, for any fixed $k \in [n]$, $\bm w$ satisfy the following:
\begin{enumerate}
\item [(i)] If $\bm w$ is the choice-one MLE, then ${\rho_{k,1}(\bu^*)}(w_k - u_k^*) \to N(0,1)$ 
\item [(ii)] If $\bm w$ is the QMLE, then ${\rho_{k,2}(\bu^*)}(w_k - u_k^*) \to N(0,1)$  
\end{enumerate} 
where $\rho_{k, 1}$ and $\rho_{k,2}$ are defined in \eqref{rho_k1} and \eqref{rho_k2}, respectively. 
Furthermore, if either Assumption~\ref{ap:deterministic+} or \ref{ap:deterministic++} holds, then statement (i) also holds for the general marginal MLE. 
\end{thm}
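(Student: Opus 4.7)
\textbf{Proof plan for Theorem~\ref{thm:and}.} The plan is to run a standard Taylor expansion of the first-order optimality condition at $\bu^*$, control the Hessian by its expectation, and identify the leading linear term as an asymptotically normal random variable with the claimed variance. Concretely, writing $l(\cdot)$ for either $l_1$ or $l_2$ and $\bm w$ for the corresponding estimator, one expands
\begin{equation*}
0 \;=\; \nabla l(\bm w) \;=\; \nabla l(\bu^*) \;+\; \cH(\bu^*)(\bm w - \bu^*) \;+\; R_n,
\end{equation*}
where $R_n$ collects the third-order remainder evaluated between $\bu^*$ and $\bm w$. The uniform consistency $\|\bm w - \bu^*\|_\infty \lesssim \Gamma_n^{\RE}$ from Theorem~\ref{main:general}, combined with the boundedness of the third derivatives of $l$ (which are sums of $O(\N_{n,+})$ bounded terms at each coordinate by Assumptions~\ref{ap:2} and~\ref{ap:1}), gives $\|R_n\|_\infty = O((\Gamma_n^{\RE})^2 \N_{n,+})$. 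Inverting on the orthogonal complement of $\langle 1\rangle$ and reading off the $k$th coordinate,
\begin{equation*}
w_k - u_k^* \;=\; -\,\be_k^\top\,\cH(\bu^*)^\dagger\,\nabla l(\bu^*) \;+\; \be_k^\top\,\cH(\bu^*)^\dagger R_n.
\end{equation*}

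The second step is to replace the random Hessian by its expectation. Normalize by $\D^{1/2}$ as in the definition of $\mathcal L_\sym$ and use the identity $\cH(\bu^*)^\dagger = \D^{-1/2}\mathcal L_\sym^\dagger \D^{-1/2} + (\cH(\bu^*)^\dagger - \cH^*(\bu^*)^\dagger)$. For the choice-one MLE and QMLE the Hessian is already deterministic (it depends on $\bu$ alone, not on ranking outcomes in the relevant way), so this substitution is trivial. The Neumann-series/truncated error analysis then controls the quadratic remainder using the spectral gap bound $\|\mathcal L_\sym^\dagger\|_{\infty\to\infty} \lesssim (\log n)/\mix$ (coming from the random-walk mixing interpretation of $\mix$) together with the crude bound $\|\D^{-1/2}\|_\infty = O(\N_{n,-}^{-1/2})$ and $\|\D^{1/2}\|_\infty = O(\N_{n,+}^{1/2})$. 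This converts the remainder into the quantity controlled by Assumption~\ref{ap:deterministic}: the $(\Gamma_n^\RE)^2 \N_{n,+}/\sqrt{\N_{n,-}}$ term comes from $R_n$, and the $\sqrt{\cc}$ term arises when handling the off-diagonal interaction caused by pairs of objects sharing many edges (so that $\be_k^\top \mathcal L_\sym^\dagger$ does not concentrate on coordinate $k$).

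The third step is the CLT for the leading term. Propositions~\ref{lm:1} and~\ref{lm:3} show that $\partial_k l(\bu^*)$ is a sum over $i:k\in T_i$ of bounded mean-zero martingale differences (adapted to $\mathscr F_{i,\cdot}$ in \eqref{filt} for the marginal MLE, or a simple independent sum for the QMLE). After the Hessian replacement, the $k$th coordinate reduces to $\partial_k l(\bu^*)/(-\cH^*(\bu^*))_{kk}$ up to a vanishing cross-term (again controlled by $\sqrt{\cc}$ via the normalized Laplacian). A direct variance computation using the conditional moment identities shows the variance equals $\rho_{k,1}^{-2}$ and $\rho_{k,2}^{-2}$, respectively; a Lindeberg--Feller CLT then delivers the asymptotic normality, using Assumption~\ref{ap:deterministic} to verify negligibility of the maximal summand.

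The genuinely hard case is statement~(iii), the general marginal MLE, because $\cH(\bm u^*)$ is truly random through $\mathbb E_{\bu^*}[\cdot\mid\mathscr F_{i,j-1}]$ (see Remark~\ref{kind:remark}), so the substitution $\cH(\bu^*)\mapsto\cH^*(\bu^*)$ introduces a nonnegligible random perturbation $\Delta = \cH(\bu^*)-\cH^*(\bu^*)$ that is correlated with $\nabla l(\bu^*)$. Here I would decouple via the leave-one-out device of \eqref{leave_one_out_likelihood}: replace $\cH^*(\bu^*)^\dagger$ in the linear term by ${\cH^*}^{(-k)}(\bu^*)^\dagger$, which is independent of all edges containing $k$ and hence independent of the summands of $\partial_k l(\bu^*)$. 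The cost is an additional perturbation bounded by $\|\cH^*(\bu^*)^\dagger - {\cH^*}^{(-k)}(\bu^*)^\dagger\|$, which is where $\lambda_2^\leave$ enters: a Woodbury-type expansion plus the spectral gap $\lambda_2^\leave$ of the leave-one-out Laplacian gives the first term in Assumption~\ref{ap:deterministic+}, while the concentration of $\Delta$ on the leave-one-out subgraph contributes the second term. Finally, Lemma~\ref{lastone} transfers the required lower bound on $\lambda_2^\leave$ to a bound on $\N_{n,-}\lambda_2(\mathcal L_\sym)^2$, yielding the sufficient condition in Assumption~\ref{ap:deterministic++}. I expect the main obstacle to be the bookkeeping of this leave-one-out step on a deterministic hypergraph, because $\lambda_2^\leave$ need not be close to $\lambda_2(\mathcal L_\sym)$ without a separate structural argument (precisely the content of Lemma~\ref{lastone}), and the $\ell_\infty$ bounds on $\D^{-1/2}(\cH^*(\bu^*)^\dagger - {\cH^*}^{(-k)}(\bu^*)^\dagger)\D^{-1/2}$ must be sharp enough to match the normalization $1/\sqrt{N_k}$ in the CLT.
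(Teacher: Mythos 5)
For parts (i) and (ii) your plan follows essentially the same route as the paper: Taylor expansion of $\nabla l$ around $\bu^*$ (the paper uses the exact integral form $\cJ(\bw,\bu^*)=\int_0^1\cH(t\bw+(1-t)\bu^*)\,\mathrm{d}t$ so that your $R_n$ is $\{\cJ-\cH(\bu^*)\}(\bw-\bu^*)$), inversion on $\mathrm{span}(\langle 1\rangle)^\perp$, a Neumann-series truncation of $\mathcal L_\sym^\dagger$ at $t_n\asymp(\log n)/\mix$, and a Lindeberg--Feller CLT for the diagonal term with the off-diagonal contribution controlled by $\cc$ (via the entrywise bound $[\A^{t+1}]_{kk}\lesssim\cc$). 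One quantitative caveat: the bound $\|\mathcal L_\sym^\dagger\|_{\infty\to\infty}\lesssim(\log n)/\mix$ does not follow from the spectral gap alone; the paper needs the explicit row-sum computation $\|\A^t\|_{\infty\to\infty}\lesssim\sqrt{\N_{n,+}/\N_{n,-}}$ uniformly in $t$, and it keeps the remainder in the normalized form $|[\D^{-1/2}R_n]_k|\lesssim(\Gamma_n^\RE)^2\sqrt{\D_{kk}}$ rather than bounding $\|\D^{-1/2}\|_2\|R_n\|_\infty$ crudely. If you chain operator-norm bounds the way you describe, you pick up an extra factor of order $\N_{n,+}/\N_{n,-}$ and land outside Assumption~\ref{ap:deterministic}; the entrywise accounting is needed to match it.

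The genuine gap is in your treatment of the general marginal MLE. You propose to decouple by replacing $\cH^*(\bu^*)^\dagger$ with ${\cH^*}^{(-k)}(\bu^*)^\dagger$ and invoking a Woodbury expansion, but both of these matrices are deterministic (they are expectations), so swapping one for the other breaks no dependence. The dependence that must be broken is between the random matrix entries $[\cH(\bu^*)-\cH^*(\bu^*)]_{kk'}$ — which are functions of the ranking outcomes on edges containing $k$ — and the random vector $\bw-\bu^*$ that they multiply in the term $\sum_{k'\neq k}[\cH(\bu^*)-\cH^*(\bu^*)]_{kk'}(\widehat u_{k'}-u^*_{k'})$. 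The paper's device is to leave out the \emph{estimator}, not the Laplacian: it constructs the constrained leave-one-out MLE $\widehat{\bu}^{(-k)}$ of $l_1^{(-k)}$ in \eqref{leave_one_out_likelihood}, which is measurable with respect to the outcomes on edges not containing $k$, inserts it so that Hoeffding's inequality applies conditionally to $\sum_{k'}[\cH(\bu^*)-\cH^*(\bu^*)]_{kk'}(\widehat u^{(-k)}_{k'}-u^*_{k'})$, and then controls the substitution error $\|\widehat{\bu}^{(-k)}-\widehat{\bu}_{-k}\|_2$ through the first-order optimality condition of the leave-one-out problem, which is where $\lambda_2^\leave$ enters (Lemma~\ref{lm:tired}); Lemma~\ref{lastone} and Assumption~\ref{ap:deterministic++} then only serve to lower-bound $\lambda_2^\leave$ by $\N_{n,-}\lambda_2^2(\mathcal L_\sym)$. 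Without introducing this auxiliary estimator your decoupling step has no effect, and the correlated term cannot be shown to be $o_p(1)$.
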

The proof of Theorem \ref{thm:and} is based on a truncated error analysis of the normalized Hessian matrices using the Neumann series expansion. This approach contains new ingredients such as obtaining entrywise estimates on the higher-order moments of the Hessian matrices. For the choice-one MLE or QMLE, truncated error analysis alone is sufficient to yield an asymptotic normality result that applies to heterogeneous graphs, generalizing the existing results obtained using the state-of-the-art leave-one-out analysis \citep{gao2023uncertainty, fan2022ranking}. For other marginal MLEs, additional complexity arises from the randomness in the Hessian matrices. To address this, we apply both truncated error analysis (Assumption \ref{ap:deterministic}) and leave-one-out perturbation analysis (Assumption~\ref{ap:deterministic+} or \ref{ap:deterministic++}) to obtain the desired result.

Assumptions~\ref{ap:deterministic}-\ref{ap:deterministic++} may look rather complex at first glance. Nevertheless, for both NURHM and HSBM, explicit bounds on each graph parameter can be obtained using $\xi_{n, \pm}$ and $\zeta_{n, \pm}$. Since our analysis overall does not exploit specific properties of HSBM, we present the result for NURHM only and identify HSBM as a special instance of NURHM with $\xi_{n, +}=\zeta_{n, +} $ and $\xi_{n, -} = \zeta_{n, -}$. We include a remark to indicate where improvements can be made by leveraging the community structure of HSBM. These results are stated in the following lemma. 

\begin{Lemma}\label{newtech}
Let $\HH_n(V_n, E_n)$ be a hypergraph sequence with $V_n = [n]$. Suppose that $\HH_n$ is sampled from NURHM in Section~\ref{am}. If Assumptions \ref{ap:2}-\ref{ap:1} hold and $\xi^7_{n,-} \gtrsim(\xi^6_{n,+}\log n)$, then a.s., for all sufficiently large $n$, the following estimates hold:
\begin{align}
\xi_{n, -}&\lesssim \N_{n, -}\leq \N_{n, +}\lesssim \xi_{n, +};\label{2024111}\\
\mix&\gtrsim\left(\frac{\xi_{n, -}}{\xi_{n, +}}\right)^2, \ \lambda_2^\leave\gtrsim \frac{\xi_{n,-}^3}{\xi_{n,+}^2}\label{2024222};\\
\cc & \lesssim   \max \left\{\frac{\xi_{n,+}}{n\xi_{n,-}}, \frac{\log n}{\xi_{n,-}} \right\}
\label{2024333}.
\end{align}
\end{Lemma}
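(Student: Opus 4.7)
The plan is to handle the three claims separately. Parts \eqref{2024111} and \eqref{2024333} are standard scalar Bernstein arguments, whereas \eqref{2024222} requires matrix concentration on the weighted Laplacian.

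For \eqref{2024111}: the vertex degree $\N_k=\sum_{i\in[N]}\mathbf 1_{\{k\in T_i\}}$ is a sum of independent Bernoullis with mean in $[c_1\xi_{n,-},c_2\xi_{n,+}]$ for absolute constants $c_1,c_2>0$, since there are $\binom{n-1}{m-1}\asymp n^{m-1}$ size-$m$ edges through $k$ each with probability in $[p_n^{(m)},q_n^{(m)}]$ and $m\le M$ by Assumption~\ref{ap:1}. Bernstein plus a union bound over the $n$ vertices gives $|\N_k-\mathbb E\N_k|\lesssim\sqrt{\xi_{n,+}\log n}+\log n$; the lemma's hypothesis $\xi_{n,-}^7\gtrsim\xi_{n,+}^6\log n$ (together with $\xi_{n,-}\le\xi_{n,+}$) forces this deviation to be $o(\xi_{n,-})$, and Borel--Cantelli yields \eqref{2024111} almost surely for all sufficiently large $n$. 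For \eqref{2024333}: the same template applied to $\N_{ij}$ (whose mean is of order $\xi_{n,+}/n$), with a union bound over the $\binom{n}{2}$ pairs, gives $\N_{ij}\lesssim\xi_{n,+}/n+\log n$ uniformly; dividing by $\N_j\gtrsim\xi_{n,-}$ from \eqref{2024111} produces \eqref{2024333}.

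The heart of the proof is \eqref{2024222}. Under Assumptions~\ref{ap:2} and \ref{ap:1}, the Hessian formulas \eqref{wearediff}--\eqref{binyan} give $W^*_{ij}\asymp\N_{ij}$ and $D_{ii}\asymp\N_i$ up to constants depending only on $\|\bu^*\|_\infty$ and $M$. Writing $-\cH^*(\bu^*)=\sum_e Y_e\mathbf 1_{\{e\in E\}}$ with the sum over all potential hyperedges of size in $\{2,\ldots,M\}$ and $Y_e$ deterministic positive semidefinite matrices of rank $\le|e|$ and norm $\lesssim 1$, matrix Bernstein (with variance proxy $\|\sum_e p_e Y_e^2\|\lesssim\xi_{n,+}$) yields $\|-\cH^*(\bu^*)-\mathbb E[-\cH^*(\bu^*)]\|\lesssim\sqrt{\xi_{n,+}\log n}+\log n=o(\xi_{n,-})$. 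Since $\mathbb E[-\cH^*(\bu^*)]$ has all off-diagonals uniformly $\gtrsim\xi_{n,-}/n$, the PSD ordering $\mathbb E[-\cH^*(\bu^*)]\succeq(c\xi_{n,-}/n)(nI-\langle 1\rangle\langle 1\rangle^\top)$ on $\langle 1\rangle^\perp$ gives $\lambda_2(\mathbb E[-\cH^*(\bu^*)])\gtrsim\xi_{n,-}$, and Weyl's inequality implies $\lambda_2(-\cH^*(\bu^*))\gtrsim\xi_{n,-}$ almost surely. The variational inequality $\lambda_2(\mathcal L_\sym)\ge\lambda_2(-\cH^*(\bu^*))/d_{\max}$ (obtained by decomposing any $z$ satisfying $z^T D\langle 1\rangle=0$ into its $\langle 1\rangle$-parallel and orthogonal components) combined with $d_{\max}\lesssim\xi_{n,+}$ from \eqref{2024111} yields $\lambda_2(\mathcal L_\sym)\gtrsim\xi_{n,-}/\xi_{n,+}\ge(\xi_{n,-}/\xi_{n,+})^2$. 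A parallel matrix-concentration argument on $\A-\mathbb E[\A]$ (where $\mathbb E[\A]$ approximates a rescaled complete-graph normalized adjacency, whose minimum eigenvalue is $-1/(n-1)$) gives $2-\lambda_n(\mathcal L_\sym)\gtrsim 1$, whence $\mix\gtrsim(\xi_{n,-}/\xi_{n,+})^2$. Applying the same analysis to each leave-one-out Laplacian $-\cH^{*(-k)}(\bu^*)$ (whose minimum degree remains $\gtrsim\xi_{n,-}$ by \eqref{2024111}--\eqref{2024333}), with an additional union bound over $k\in[n]$, gives $\lambda_2^\leave\gtrsim\xi_{n,-}\gtrsim\xi_{n,-}^3/\xi_{n,+}^2$.

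The main obstacle is the matrix Bernstein step: verifying $\|\sum_e p_e Y_e^2\|\lesssim\xi_{n,+}$ relies on the edge-size cap $|T_i|\le M$ together with the entrywise boundedness of the Hessian from Assumption~\ref{ap:2}, and having the resulting $\sqrt{\xi_{n,+}\log n}$ concentration radius be negligible relative to the spectral gap $\xi_{n,-}$ of the expected Laplacian is secured precisely by $\xi_{n,-}\gg\log n$, which follows from the lemma's hypothesis. The subsequent passage between unnormalized and normalized Laplacians via the variational comparison, as well as the conversion of the leave-one-out bound, is then routine.
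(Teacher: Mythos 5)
Your treatment of \eqref{2024111} and \eqref{2024333} matches the paper's (scalar Chernoff/Bernstein plus union bounds), and your route to the two spectral bounds via the \emph{unnormalized} Laplacian is genuinely different from the paper's: you apply matrix Bernstein directly to $-\cH^*(\bu^*)=\sum_e Y_e\mathbf 1_{\{e\in E\}}$ (a bona fide sum of independent PSD matrices, so the variance proxy $\|\sum_e p_eY_e^2\|\lesssim\xi_{n,+}$ is legitimate), get $\lambda_2(-\cH^*(\bu^*))\gtrsim\xi_{n,-}$ by Weyl, and pass to $\lambda_2(\mathcal L_\sym)\ge\lambda_2(-\cH^*(\bu^*))/d_{\max}$; this last inequality is correct (use that $x^\top\D\langle 1\rangle=0$ makes $x$ the minimizer of $v\mapsto v^\top\D v$ over the line $x+c\langle 1\rangle$, so $x^\top\D x\le d_{\max}\|P_{\langle 1\rangle^\perp}x\|^2$) and in fact yields $\lambda_2(\mathcal L_\sym)\gtrsim\xi_{n,-}/\xi_{n,+}$ and $\lambda_2^\leave\gtrsim\xi_{n,-}$, both stronger than the paper's bounds, which go through Cheeger's inequality on the expected normalized Laplacian plus a three-term operator-norm perturbation.

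The genuine gap is your treatment of $2-\lambda_n(\mathcal L_\sym)=1+\lambda_{\min}(\A)$. You assert that $\mathbb E[\A]$ "approximates a rescaled complete-graph normalized adjacency, whose minimum eigenvalue is $-1/(n-1)$," and conclude $2-\lambda_n(\mathcal L_\sym)\gtrsim 1$. This fails under the lemma's hypotheses, which permit $\xi_{n,+}/\xi_{n,-}\to\infty$ (e.g.\ $\xi_{n,+}\asymp\xi_{n,-}^{7/6}(\log n)^{-1/6}$): the entries of $\bar\A=\bar\D^{-1/2}\bar\W\bar\D^{-1/2}$ range over $[c\,\xi_{n,-}/(n\xi_{n,+}),\,C\,\xi_{n,+}/(n\xi_{n,-})]$, so the operator-norm distance to any multiple of $(\langle 1\rangle\langle 1\rangle^\top-\I)/(n-1)$ can be of order $\xi_{n,+}/\xi_{n,-}$, which diverges; there is no small perturbation to exploit. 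The correct order of $1+\lambda_{\min}(\bar\A)$ is $\xi_{n,-}/\xi_{n,+}$, obtained either from the identity $1+x^\top\bar\A x=\tfrac12\sum_{i\neq j}\bar\W_{ij}(y_i+y_j)^2$ with $y=\bar\D^{-1/2}x$ together with $\bar\W_{ij}\gtrsim\xi_{n,-}/n$ and $\sum_iy_i^2\gtrsim 1/\xi_{n,+}$, or from the dual Cheeger inequality as the paper does (yielding $(\xi_{n,-}/\xi_{n,+})^2$). Either still suffices for $\mix\gtrsim(\xi_{n,-}/\xi_{n,+})^2$, but your stated argument and conclusion for this eigenvalue are wrong as written. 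A secondary, fixable point: $\A-\bar\A$ is not a sum of independent matrices (the random degree matrix couples all edges), so "a parallel matrix-concentration argument on $\A-\mathbb E[\A]$" must be routed through the three-term decomposition $\|\D^{-1/2}\W\D^{-1/2}-\bar\D^{-1/2}\W\D^{-1/2}\|+\|\bar\D^{-1/2}(\W-\bar\W)\D^{-1/2}\|+\|\bar\D^{-1/2}\bar\W(\D^{-1/2}-\bar\D^{-1/2})\|$ as in the paper, rather than a direct matrix Bernstein application.
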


Estimate \eqref{2024111} follows from a standard degree concentration argument and holds under the much weaker assumption $\xi_{n,-}\gtrsim\log n$, while \eqref{2024222} is more technical and involves applying Cheeger's inequalities (from both ends of the spectrum) to control the spectral gap \citep{MR1421568, bauer_jost_2013}. Due to the nested model structure in NURHM, the lower bound in \eqref{2024222} is better than directly applying Lemma~\ref{lastone} to the estimates of $\mix$ in \eqref{2024222}, which would yield $\lambda_2^\leave\gtrsim \xi^5_{n,-}/\xi^4_{n,+}$. Estimate \eqref{2024333} can be further refined in HSBM when the sizes of communities are of the same order.

The asymptotic normality results in Theorems~\ref{normal_MLE} and \ref{normal_QMLE} follow by combining Lemmas~\ref{main:REs} and \ref{newtech} and Theorem~\ref{thm:and}. 
\begin{remark}\label{ksjsjf}
The estimate on $\cc$ can be improved in HSBM using the community structure. Let $V_1, \ldots, V_K$ be the $K$ communities in an HSBM. It can be shown that 
\begin{align*}
\cc & \lesssim \min\left\{\frac{\max\{n^{-1}\zeta_{n,+}, \log n\}}{\zeta_{n,-}}, \frac{1}{\min_{i\in [K]}|V_i|}\right\}.
\end{align*}
When the sizes of communities have the same order, that is, $\min_{i\in [K]}|V_i|\asymp n$, $\cc\lesssim\max\{n^{-1}, (\log n)/\zeta_{n,-}\}$, which improves the general result \eqref{2024333} in NURHM. The details of the proof can be found in the appendices. 
\end{remark}

 \section{Numerical Results}\label{sec:7}
In this section, we conduct numerical experiments to support the theoretical findings in the previous sections. This includes verifying both uniform consistency and asymptotic normality of the likelihood-based estimators using synthetic data. We also demonstrate that appropriate control of heterogeneity is necessary to ensure asymptotic normality; see the discussion after Assumption~\ref{ap:5}. Moreover, we apply the QMLE to analyze a real horse-racing dataset; the full/marginal MLE is not applied due to computational reasons, as clarified in Section \ref{horse}. All likelihood-based estimates are numerically computed using the MM algorithm in \cite{MR2051012}. The estimated standard deviations are computed using the plug-in method by replacing $\bm u^*$ with the estimated scores $\widehat{\bm u}$ and $\widetilde{\bm u}$ in \eqref{rho_k1} and \eqref{rho_k2}, respectively. All numerical experiments are conducted on a laptop with an Intel i7-4790 CPU.

\subsection{Uniform Consistency}\label{sec: un_numerical}
We verify the uniform consistency of both the (marginal) MLE and QMLE in the PL model using two different sampling models: NURHM and HSBM. The edge sizes of NURHM are chosen from $\{3,4,5,6,7\}$, while the edge size $\mm$ in HSBM is 5. The number of edges for each edge size in NURHM and the size of each community in HSBM are generated in a balanced fashion. In particular, for NURHM and each edge size $\ms$, we uniformly sample $0.02(\log n)^3$ edges of that size. Such choices lead to $N=0.1n(\log n)^3$ edges in total. For HSBM, we partition the objects into two communities: community one with size $0.4n$ and community two with size $0.6n$. There are three types of edge probabilities: the internal probabilities for edges within Community One and Community Two, and the cross-community probability for edges striding between the communities. We set their ratio as $5:3:2$. In total, we sample around $0.1n(\log n)^3$ comparisons. The utility vector $\bm u^*$ is selected uniformly between the range $[-0.5, 0.5]$.  

\begin{table}[b]
	\begin{center}
    \begin{tabular}{rrrrr}
      Number of objects $n$ & Number of edges $N$ in Section \ref{sec: un_numerical} & Number of edges $N$ in Section \ref{sec: an_numerical}\\\hline
      $200$ & 2,800  & 5,800 \\
      $400$ & 8,400  & 13,300\\
      $600$ & 15,600 & 21,600\\
      $800$ & 23,200 & 30,500\\
      1,000 & 32,000 & 39,800
    \end{tabular}
\end{center}
 \caption{Number of edges $N$ used in the experiments. The second and third columns are the $N$ used in the study of uniform consistency and asymptotic normality, respectively. $N$ is the same for both NURHM and HSBM in the same testing procedure. }\label{tab: size}
\end{table}

The total number of objects $n$ in the PL model ranges from 200 to 1000 with increments of 200. For each value of $n$, we repeat the experiment $300$ times and compute the average estimation errors in the $\ell_\infty$ norm. To get a thorough comparison, the likelihood-based estimators considered in this experiment include the choice-one MLE, the choice-2 MLE, the full MLE, and the QMLE. In our setup, the choice-one MLE does not exist in 3 out of 300 experiments while other estimators always exist. We drop the invalid experiments when computing summary statistics reported in Figure \ref{fig:1}.
 
 \begin{figure}[htbp]
  \centering 
   \subfigure[]{\includegraphics[width=0.45\linewidth, trim={0.2cm 0.7cm 2.3cm 2.5cm},clip]{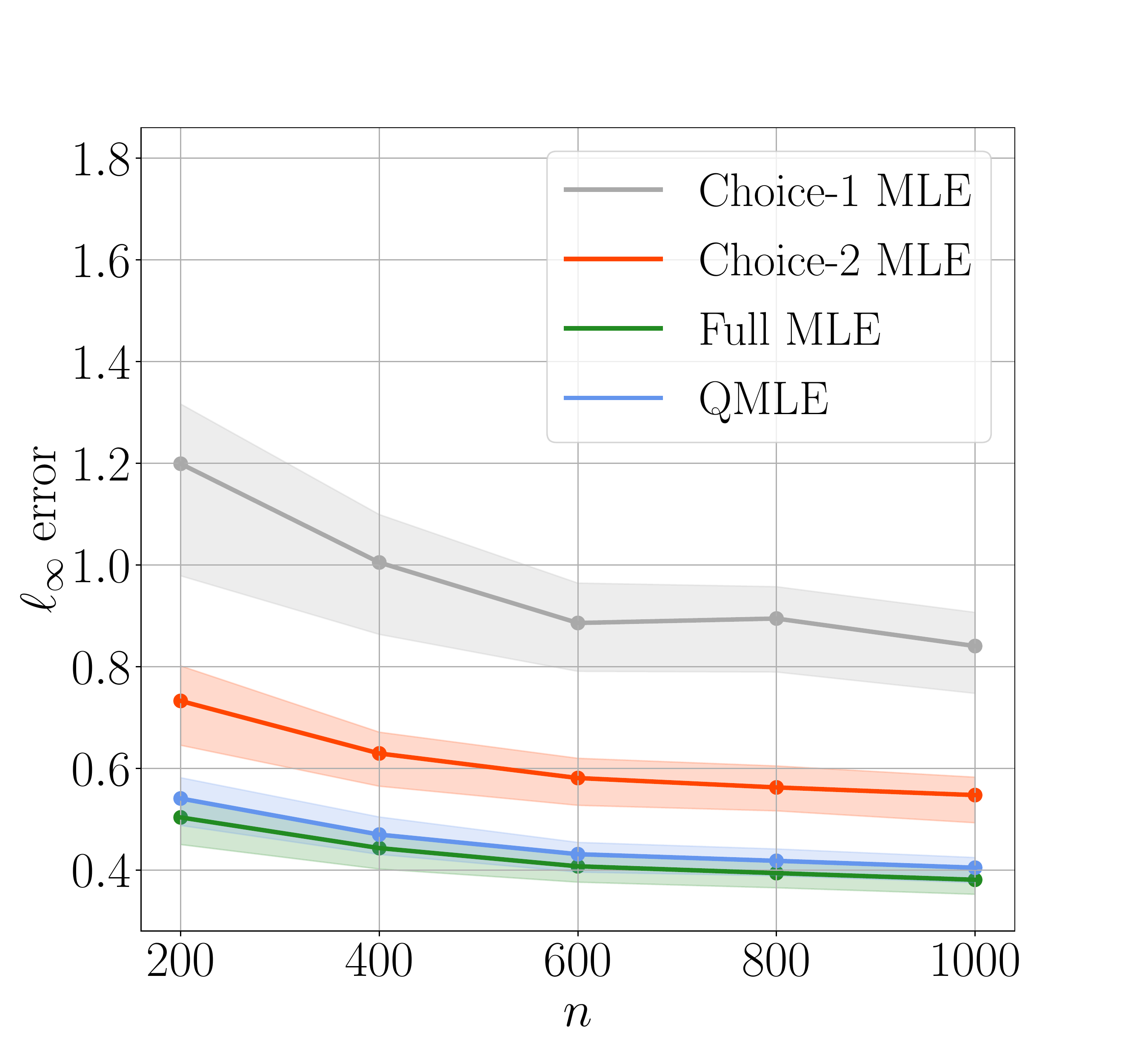}}\hfill
   \subfigure[]{\includegraphics[width=0.45\linewidth, trim={0.2cm 0.7cm 2.3cm 2.5cm},clip]{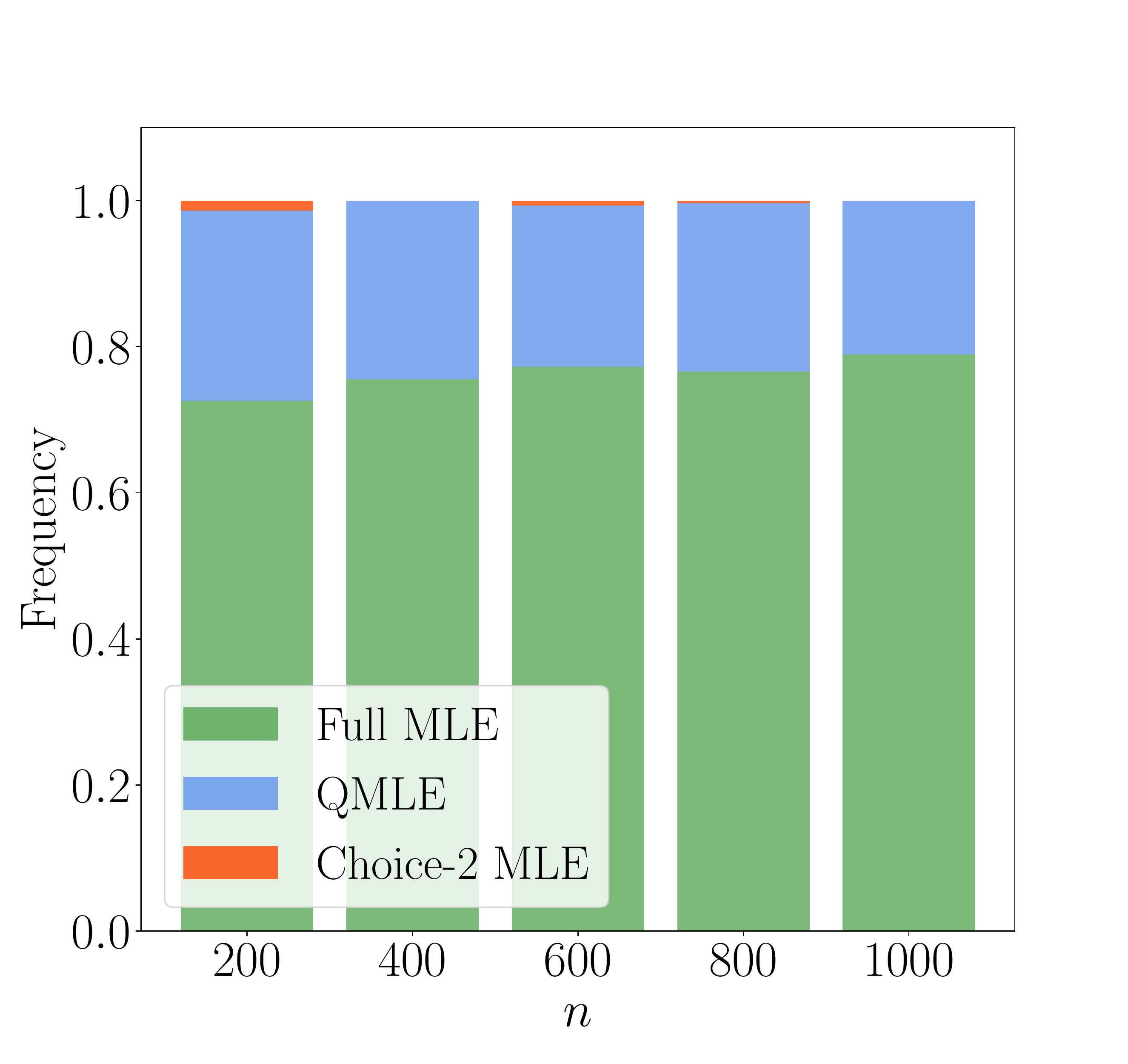}}\hfill
    \subfigure[]{\includegraphics[width=0.45\linewidth, trim={0.2cm 0.7cm 2.3cm 2.5cm},clip]{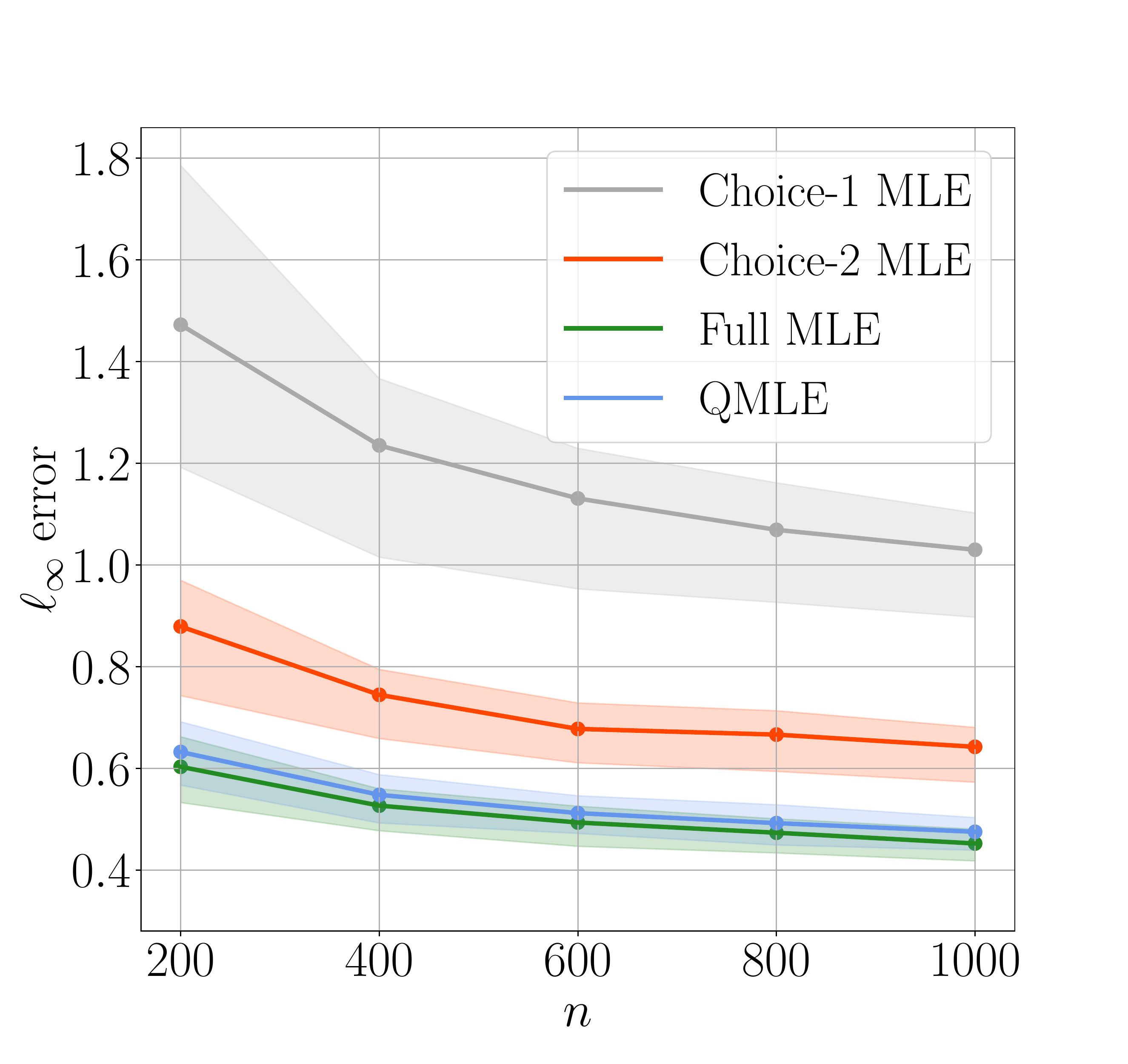}}\hfill
  \subfigure[]{\includegraphics[width=0.45\linewidth, trim={0.2cm 0.7cm 2.3cm 2.5cm},clip]{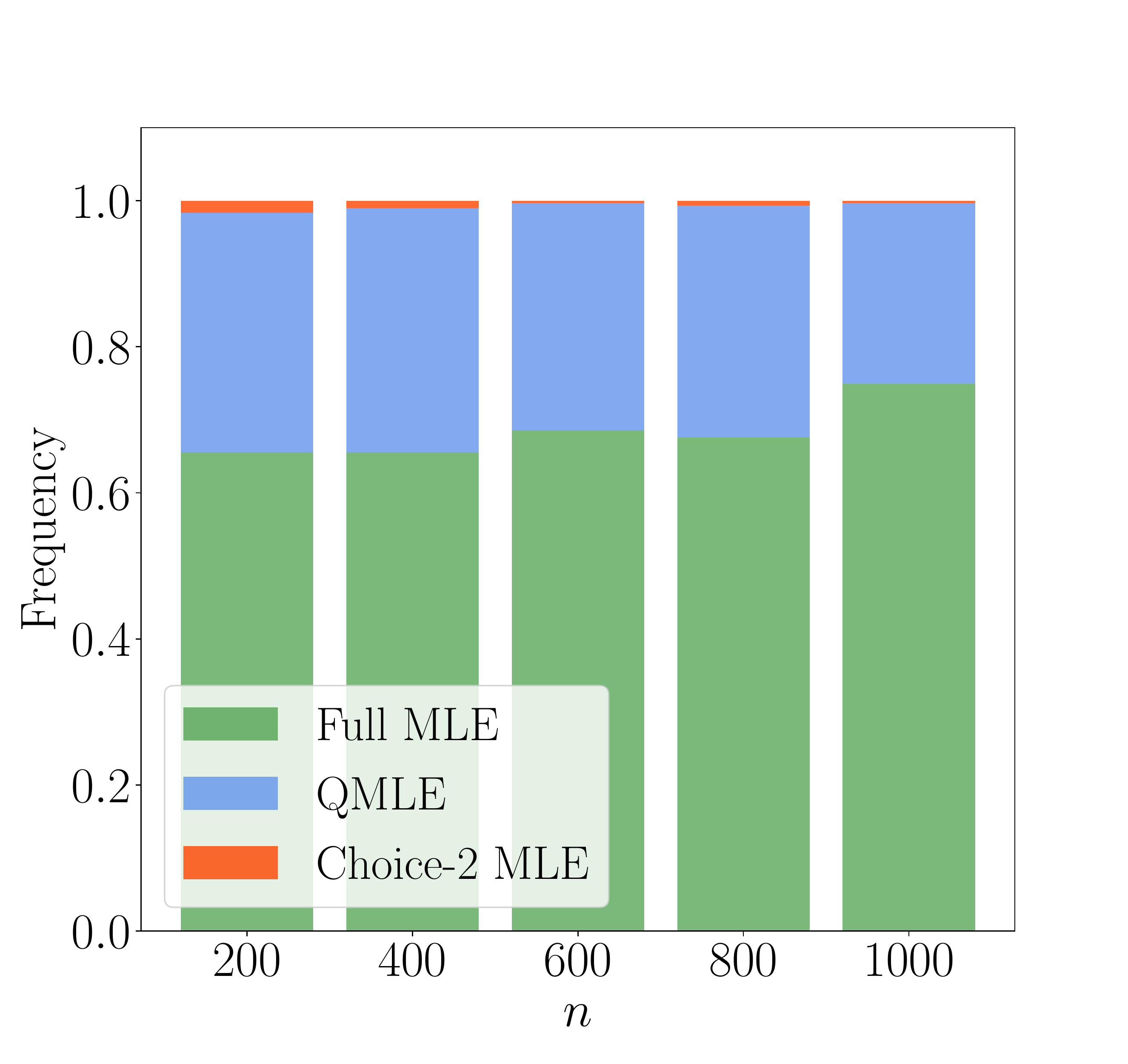}}\hfill
    \caption{Average $\ell_\infty$ error with corresponding quartiles for the marginal MLE (the choice-one MLE and the choice-2 MLE), the full MLE, and the QMLE and the frequencies of each estimator yielding the smallest $\ell_\infty$ error among others in $300$ experiments as the number of objects $n$ increases from $200$ to $1000$. (a)-(b): NURHM; (c)-(d): HSBM.}
   \label{fig:1}
\end{figure}

Figure \ref{fig:1} shows the average $\ell_\infty$ error of the choice-one MLE, the choice-2 MLE, the full MLE, and the QMLE in the PL model as the total number of objects $n$ increases. For each $n$, we also provide the frequency of each estimator obtaining the smallest $\ell_\infty$ error among others in the $300$ simulations. As expected, in both sampling scenarios, the $\ell_\infty$ errors of the estimators decrease to zero as $n$ increases, verifying the uniform consistency results in Section \ref{sec:4}. For fixed $n$, the full MLE has the smallest $\ell_\infty$ error. The QMLE achieves a competitive result as the full MLE despite using misspecified likelihood, and in around one in five experiments on average, the QMLE has a smaller $\ell_\infty$ error than the full MLE. Meanwhile, both choice-one and choice-two estimators in this example have a slower convergence rate due to using only a fraction of the data.

\subsection{Asymptotic Normality}\label{sec: an_numerical}

We demonstrate the asymptotic normality result for the (marginal) MLE and the QMLE in the PL model using two different sampling graphs: NURHM and HSBM. The experiment setup is almost identical to Section \ref{sec: un_numerical} except that we choose the edge sizes of NURHM as $\{3,4,5,6\}$ to accelerate computation and a slightly larger $N$. Specifically, for NURHM and each edge size, we sample $2.5n^{1.2}$ edges of that size uniformly. Such choices ensure that there are about $10n^{1.2}$ comparisons in total. (One can also consider $n(\log n)^8$, but it is even larger than the $10n^{1.2}$ in the finite sample case.) For HSBM, we also sample around $10n^{1.2}$ comparisons. The detailed numbers can be found in the right-most column in Table \ref{tab: size}. The calculation for the standard deviation of the full MLE is extremely slow if taking $\{3,4,5,6,7\}$ in NURHM as Section \ref{sec: un_numerical}. The results for NURHM and HSBM are reported in Tables \ref{An_result}-\ref{An_result_2}, respectively.

Tables \ref{An_result}-\ref{An_result_2} show the average estimated standard deviation, coverage probability of 95\% confidence interval, and the computational time of the estimated standard deviation for the choice-one MLE, the choice-2 MLE, the full MLE, and the QMLE in the PL model under different values of $n$. For each method, the coverage probability is close to 95\% in both NURHM and HSBM, supporting our result of asymptotic normality. Meanwhile, there appears to be a clear trade-off between statistical efficiency and computational cost. 
The full MLE has the smallest standard deviation but is significantly more expensive than the other methods. This is because the computation of the variance of the full MLE requires averaging over all possible permutations on each edge. The choice-one MLE is the cheapest but yields the worst efficiency. The QMLE strikes a good balance between accuracy and cost. In particular, the QMLE has a computational time of only one in 20-40 of the full MLE but only sacrifices a small amount of statistical efficiency.

\begin{table}[h]
\centering
\begin{tabular}{l l  lll l lll}
& 	&Method  & &      \multicolumn{5}{c}{Sample size $n$}  \\ 
 & &	 &  & 200  &400   &600 &  800 & 1000  \\
	\hline
	\multirow{4}{*}{Standard Deviation}
& 	&QMLE &
	&0.131 &0.122 &0.118& 0.114& 0.112\\
	
& 	&Full MLE  &
	&0.124 &0.116 &0.111& 0.108& 0.105\\
	
& 	&choice-one MLE &
	&0.220 &0.205 &0.197& 0.191& 0.187\\
	
& 	&choice-2 MLE  &
	&0.159 &0.148 &0.141& 0.138& 0.135\\
	\hline
	
	\multirow{4}{*}{Coverage Probability}
& 		&QMLE &
	&0.950 &0.950 &0.950& 0.950& 0.950\\
	
&	&Full MLE &  
	&0.950 &0.950 &0.950& 0.949& 0.950\\
	
& 	&choice-one MLE &
	&0.951 &0.951 &0.950& 0.949 & 0.950\\
	
& 	&choice-2 MLE  &
	&0.950 &0.950 &0.950&0.950 & 0.950\\
	\hline
	
	\multirow{4}{*}{Computation Time(s)}
& 		&QMLE &
	&0.933 &2.149 &3.676& 4.755& 5.485\\
	
& 	&Full MLE  &
	&47.74 &111.7 &193.6& 247.0& 284.1\\
	
& 	&choice-one MLE &
	&0.644 &1.519 &2.599& 3.346& 3.820\\
	
& 	&choice-2 MLE  &
	&1.844 &4.335 &7.438& 9.588& 10.99\\
	\hline
\end{tabular}
\caption{NURHM: Simulation results are summarized over 300 replications.}\label{An_result}
\end{table}

\begin{table}[h]
\centering
\begin{tabular}{l l  lll l lll}
	& 	&Method  & &      \multicolumn{5}{c}{Sample size $n$}  \\ 
	& &	 &  & 200  &400   &600 &  800 & 1000  \\
	\hline
	\multirow{4}{*}{Standard Deviation}
	& 	&QMLE &
	&0.153 &0.143 &0.137&0.133 & 0.130\\
	
	& 	&Full MLE  &
	&0.146 &0.136 &0.131&0.127 & 0.124\\
	
	& 	&choice-one MLE &
	&0.279 &0.258 &0.247&0.240 & 0.234 \\
	
	& 	&choice-2 MLE  &
	&0.196 &0.182 &0.175&0.170 & 0.166\\
	\hline
	
	\multirow{4}{*}{Coverage Probability}
	& 		&QMLE &
	&0.946 &0.949 &0.948& 0.949& 0.949\\
	
	&	&Full MLE &  
	&0.945 &0.947 &0.948& 0.948& 0.948\\
	
	& 	&choice-one MLE &
	&0.946 &0.948 &0.949& 0.946& 0.943\\
	
	& 	&choice-2 MLE  &
	&0.947 &0.948 &0.949& 0.949& 0.949\\
	\hline
	
	\multirow{4}{*}{Computation Time(s)}
	& 		&QMLE &
	&1.208 &2.607 &4.202& 5.607& 6.833\\
	
	& 	&Full MLE  &
	&23.54 &54.08 &86.71& 117.9& 142.6\\
	
	& 	&choice-one MLE &
	&0.750 &1.721 &2.772& 3.767& 4.562\\
	
	& 	&choice-2 MLE  &
	&2.231 &5.144 &8.283& 11.26& 13.59\\
	\hline
\end{tabular}
\caption{ HSBM: Simulation results are summarized over 300 replications.}\label{An_result_2}

\end{table}

\subsection{Influence of Heterogeneity}\label{sec:add_sim}

To understand whether the additional balancing condition in the asymptotic normality result for HSBM in Theorems~\ref{normal_MLE}-\ref{normal_QMLE} is necessary (compared to its uniform consistency result in Theorem~\ref{main_2}), we conduct an additional numerical experiment to investigate the influence of graph heterogeneity on asymptotic normality. We consider an HSBM with $n = 300$ and $M=5$. There are two communities: 30 objects belong to Community One, and the remaining objects belong to Community Two. There are three types of edges: internal edges within Community One, internal edges within Community Two, and cross-community edges between the two communities. We sample these three types of edges with equal probability and then uniformly assign the comparison objects.  In total, we sample around $5n^{1.2}$ comparisons. The utility vector $\bm u^*$ is selected uniformly between the range $[-0.5, 0.5]$. We then increase the heterogeneity of the comparison graph by randomly adding hyperedges within Community One. The average coverage probabilities of the 95\% confidence interval for objects within Community One are reported in Figure \ref{fig:syn}. 

Figure~\ref{fig:syn} shows that when no additional data is added, the coverage probabilities derived from both the full MLE and QMLE are close to 0.95. However, as more data is added and the graph becomes increasingly heterogeneous, the coverage probabilities decrease significantly. This phenomenon aligns with our findings: severe heterogeneity undermines the normality of likelihood-based estimators.

 \begin{figure}[htbp]
  \centering 
   \subfigure[]{\includegraphics[width=0.45\linewidth, trim={0.3cm 0.5cm 2cm 0.5cm}, clip]{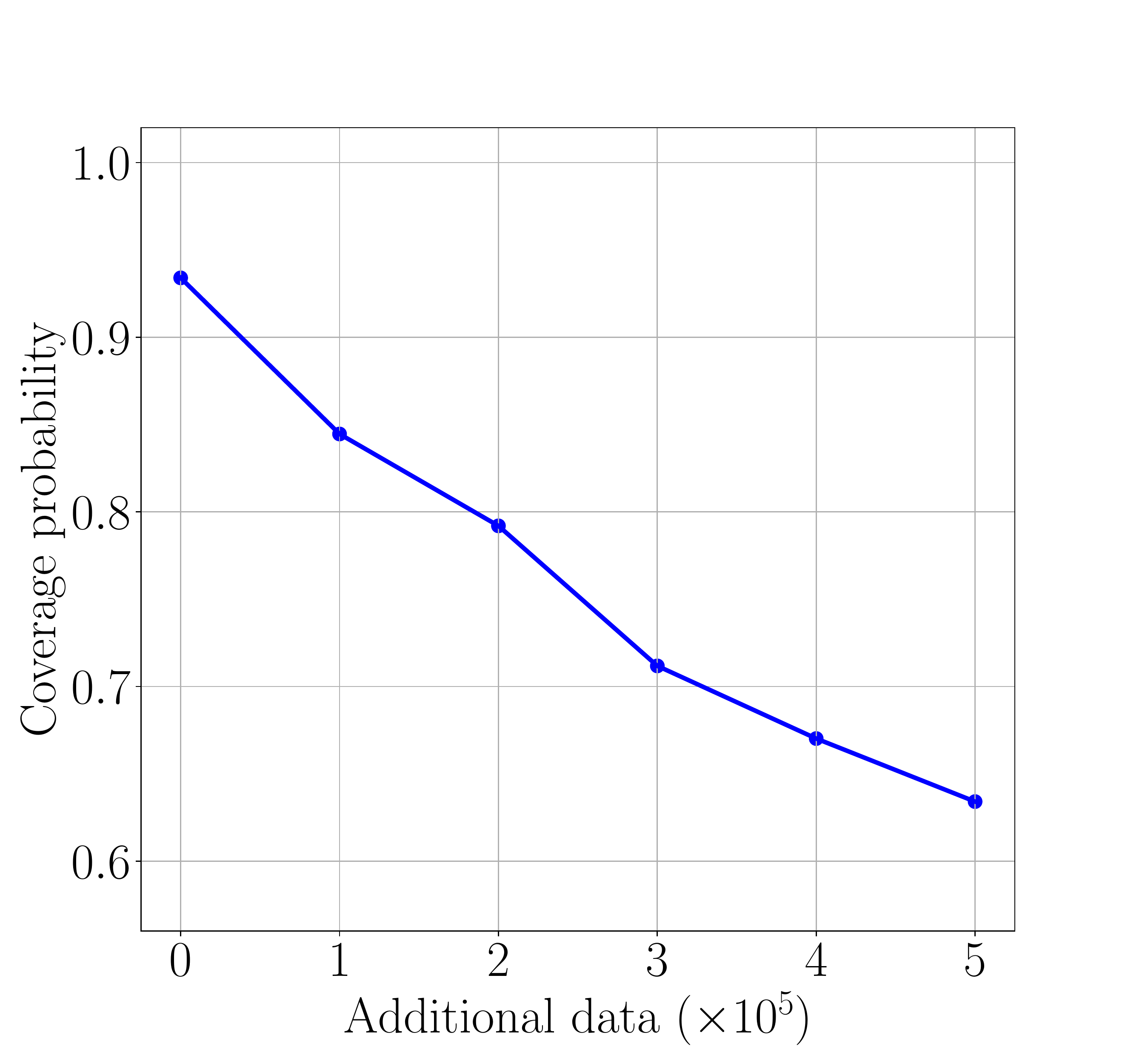}}\hfill
   \subfigure[]{\includegraphics[width=0.45\linewidth, trim={0.3cm 0.5cm 2cm 0.5cm}, clip]{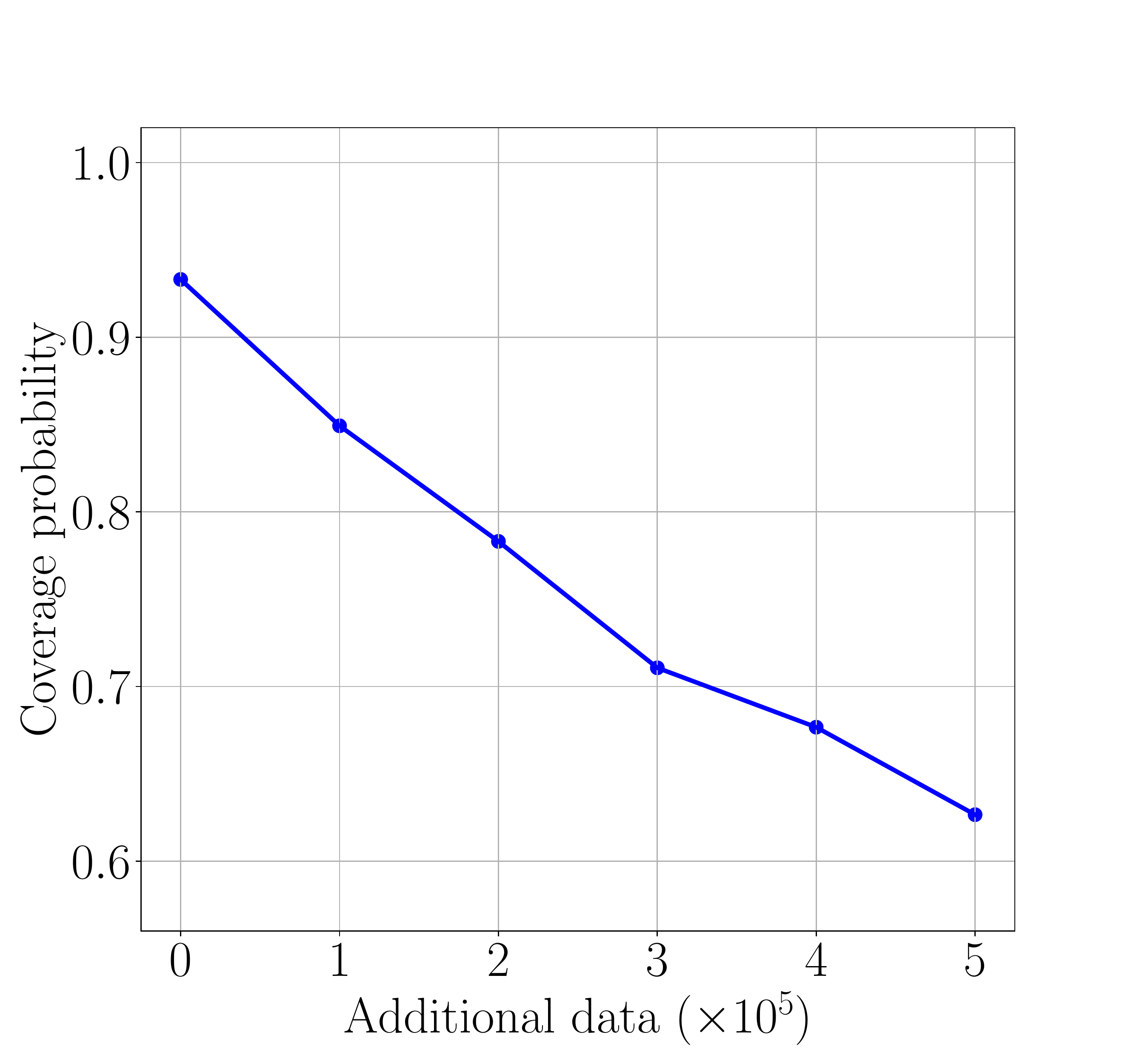}}\hfill
    \caption{Coverage probabilities when adding additional data to increase the heterogeneity of an HSBM. (a): Full MLE; (b): QMLE.}
	\label{fig:syn}
\end{figure}

\subsection{Horse-racing Data}\label{horse}
	We apply the proposed likelihood-based estimators to the Hong Kong horse-racing data\footnote{\href{https://www.kaggle.com/datasets/gdaley/hkracing}{https://www.kaggle.com/datasets/gdaley/hkracing}}. This dataset contains horse-racing competitions from the year 1999 to the year 2005. We cleaned the dataset by removing horses that either participated in too few competitions (fewer than 10) or had won/lost in all the competitions they participated in. After preprocessing, there are 2,814 horses and 6,328 races. The sizes of comparison in this dataset are nonuniform and take values between 4 and 14, with the most common in 12 or 14, making estimating the standard deviation in the full MLE computationally prohibitive. In contrast, the QMLE requires much less computational cost and achieves a comparable efficiency with the full MLE. For this reason, we only apply the QMLE to this dataset. The results are reported in Table \ref{horse_data}. We present the information on the top-10 horses based on the estimation from the QMLE, including the number of races, the average place in the race, the estimated utility score, and the estimated $95\%$ confidence interval. As can be seen in Table~\ref{horse_data}, the identified horses by the QMLE performed consistently well with their historical record. 	
\begin{table}[t]
	\centering
	\begin{tabular}{cccccc}
		Horse id & Race & Average place & Estimation & 95\% confidence interval        & Rank \\ \hline
		1033     & 21   & 1.810      & 5.656      & (4.808, 6.504) & 1   \\
		564      & 11   & 1.364      & 5.424      & (3.324, 7.524) & 2   \\
		2402     & 14   & 1.857      & 5.291      & (4.328, 6.254) & 3   \\
		1588     & 12   & 2.500      & 4.968      & (4.053, 5.883) & 4   \\
		2248     & 24   & 2.875      & 4.543      & (3.917, 5.170) & 5   \\
		2558     & 28   & 2.464      & 4.540      & (3.945, 5.136) & 6   \\
		160      & 11   & 3.091      & 4.452      & (3.562, 5.342) & 7   \\
		218      & 16   & 2.375      & 4.387      & (3.616, 5.159) & 8   \\
		1044     & 19   & 2.684      & 3.877      & (3.182, 4.572) & 9   \\
		2577     & 15   & 3.160      & 3.865      & (3.255, 4.475) & 10   
	\end{tabular}
	\caption{ Horse-racing data: the QMLE for the top ten horses.  }\label{horse_data}
\end{table}
}

\section{Summary}
In this paper, we studied several likelihood-based estimators for utility vector estimation in the PL model. We showed that these estimators can be interpreted from a unified perspective through their estimating equations. Based on this, we established both uniform consistency and asymptotic normality of the estimators under appropriate conditions characterized by the underlying comparison graph sequence; we also discussed the trade-off between statistical efficiency and computational complexity for practical uncertainty quantification. For several common random sampling scenarios, such as NURHM and HSBM, the proposed conditions hold optimally in terms of the leading-order graph sparsity. To the best of our knowledge, the asymptotic results in this paper are amongst the first in the PL model that apply to nonuniform edge sizes and heterogeneous comparison probabilities while covering different likelihood-based estimators.

There are several directions worth further study. First, although the proposed conditions are near-optimal in terms of graph sparsity for homogeneous random graphs models, it remains unclear whether they are also optimal in terms of graph heterogeneity, which is empirically suggested to be appropriately controlled in order for the asymptotic normality result to hold (see Section~\ref{sec: an_numerical}). In addition, our uniform consistency results provide a simple condition purely characterized by the underlying comparison graph sequence. It would be interesting to investigate if such conditions hold for other random graph models with prescribed degree structures. Furthermore, since we have provided the asymptotic normality of the likelihood-based estimator of the PL model, a natural step forward is to consider constructing efficient hypothesis testing procedures on the parameter space.

\section*{Acknowledgment}

We express our sincere gratitude to the Editors, the Associate Editors, and the anonymous referees for their constructive comments, which significantly improved the manuscript's presentation and led to the inclusion of the discussion on asymptotic normality for deterministic graph sequences in Section~\ref{sec:6+}. R. Han was supported by the Hong Kong Research Grants Council (No. 14301821) and the Hong Kong Polytechnic University (P0044617, P0045351). Y. Xu was supported by start-up funding from the University of Kentucky and the AMS-Simons Travel Grant (No. 3048116562).


\section*{Appendices}
In the appendices, we provide rigorous justifications for the technical results in the manuscript that are stated without proof. We list the notations frequently used in the subsequent analysis in Section \ref{sec:s1} for the reader's convenience. The proofs of Propositions \plainref{lm:1}-\plainref{lm:3} are provided in Section \ref{sec:s2}. Section \ref{sec:8} contains the proofs of Theorems~\plainref{main:general}-\plainref{thm:and}. The proofs of the lemmas related to uniform consistency and asymptotic normality are included in Section~\ref{sec:s4} and Section~\ref{sec:s5}, respectively. 

\appendix

\section{Notation List}\label{sec:s1}
In this part, we summarize the important notation used throughout both the main article and the appendices.

\begin{itemize}
		\item $[n]: =\{1,\ldots,n\}$, where $ n\in\mathbb N $ is the number of objects.
	\item $ \bu = (u_1,\ldots,u_n)^\top$: true score vector of the model, where $ u_k$ is the latent score of object $k$ for $ k \in [n]$. $\mathbb E_{\bm u}$ stands for the expectation operator in a PL model with parameter $\bm u$. 
	\item $N$: the number of comparison data; $N_k$: the number of comparisons involving object $k$; $N_{kk'}$: the number of the comparisons involving both objects $k$ and $k'$.
	\item $N_{n,-}:=\min_{k \in [n]} N_k$; $N_{n,+}:=\max_{k \in [n]} N_k$.
	\item $T_i, i\in[N]$: the set of objects participating in the $i$th comparison.
	\item $m_i$: the number of objects participating in the $i$th comparison, namely $m_i = |T_i|$. 
		The upper bound on $m_i$ is $M$. $y_i$ denotes the number of objects observed from the top.
	\item $\pi_i(t)$: the object with rank $t$ in $T_i$; $r_i(k)$: the rank of the object $k$ in $T_i$. 
	\item $l_1(\bu)$: the log-likelihood function for the (marginal) MLE; $l_2(\bu)$: the log-likelihood function for the QMLE.
	\item $p_n^{(\ms)}$: the lower bound on the probability of an $\ms$-size edge in NURHM; $q_n^{(\ms)}$: the upper bound on the probability of an $\ms$-size edge in NURHM.
	\item In NURHM, we define $\xi_{n,-}:=\sum_{\ms=2}^M n^{\ms-1}p_n^{(\ms)}$ and $ \xi_{n,+}:=\sum_{\ms=2}^M n^{\ms-1}q_n^{(\ms)}$. These two numbers represent the minimum and maximum order of the expected number of edges in NURHM.
	\item $\omega_{n, i}$, $i\in[K]$: the probability of an edge in the $i$th community in HSBM.
	\item In HSBM, we define $\zeta_{n,-}=n^{\mm-1}\min_{0\leq i \leq K}\omega_{n, i}$ and $\zeta_{n,+}:=n^{\mm-1}\max_{0\leq i \leq K}\omega_{n, i}.$ These two numbers represent the minimum and maximum order of the expected number of edges in HSBM.
	\item Hypergraph  $\HH(V, E)$, where $V$ is the vertex set and $E\subseteq \mathscr P(V)$ is the edge set. 
	\item For $U_1, U_2\subseteq V$ with $U_1\cap U_2=\emptyset$, we define the edges between $U_1$ and $U_2$ as $\mathcal E(U_1, U_2)=\{e\in E: e\cap U_i\neq\emptyset, i = 1, 2\}$. The set of boundary edges of $U$ is defined as $\partial U = \mathcal E(U, U^\complement)$. 
	\item Given a hypergraph $\HH(V, E)$ with $V = [n]$, for $U\subset [n]$, let $h_\HH(U) ={|\partial U|}/{\min\{|U|, |U^\complement|\}}.$ 	The modified Cheeger constant of $\HH$ is defined as $h_\HH = \min_{U \subset [n]}h_\HH(U)$. 
		\item ${\cH}(\bm u)$: the Hessian matrix of log-likelihood function at $\bm u$. $\cH^*(\bu^*):$ the expectation of ${\cH}(\bm u)$, namely, $\cH^*(\bu^*)=\mathbb{E}[\cH({\bu^*})]$. (With some abuse of notation, we do not distinguish between $l_1$ and $l_2$ since their analyses are analogous under our approach.) 

	\item Since $-\cH^*(\bm u^*)$ is an unnormalized weighted graph Laplacian, we further define its degree matrix and weight matrix to be $\D$ and $\W$ respectively, and normalized graph Laplacian to be $ {\mathcal{L}_{\sym}} =  (I -\A ) $ with $ \A = \D^{-1/2}\W\D^{-1/2}$. 

\item $\lambda_1(B)\leq\cdots\leq \lambda_n(B)$: the eigenvalue of the matrix $B$ in the increasing order (assuming all real).
\end{itemize}

\section{Interpretation of Estimating Equations}\label{sec:s2} 
In this section, we provide the proofs of Propositions \plainref{lm:1}-\plainref{lm:3}.
\subsection{Proof of Proposition \plainref{lm:1}}\label{app:11}
By the first-order optimality condition, $\partial_k l_1(\buu) = 0$ for $k\in [n]$.
This can be further expanded as
\begin{align*}
\partial_kl_1(\buu) &= \sum_{i: k\in T_i}\left(\mathbf 1_{\{r_i(k)\leq y_i\}}- \sum_{j=1}^{r_i(k)\wedge y_i}\frac{\e{\widehat{u}_k}}{\sum_{t = j}^{m_i}\e{\widehat{u}_{\pi_i(t)}}}\right) = 0,
\end{align*}
where $r_i(k)$ is the rank of $k$ in $\pi_i$. 
Dividing both sides by $N_k$ and rewriting, 
\begin{align*}
\frac{1}{N_k}\sum_{i: k\in T_i}\left(\mathbf 1_{\{r_i(k)\leq y_i\}} -\sum_{j\in [m_i]}\frac{\mathbf 1_{\{j\leq r_i(k)\wedge y_i\}}\e{\widehat{u}_k}}{\sum_{t = j}^{m_i}\e{\widehat{u}_{\pi_i(t)}}}\right) =  0.
\end{align*}
To understand the second term in the parenthesis, note that assuming $\pi_i$ (or $r_i$) is observed in a PL model with parameters $\bu$, 
\begin{align*}
\sum_{j\in [m_i]}\frac{\mathbf 1_{\{j\leq r_i(k)\wedge y_i\}}\e{u_k}}{\sum_{t = j}^{m_i}\e{u_{\pi_i(t)}}} = \sum_{j\in [m_i]}\mathbb E_{\bu}[\mathbf 1_{\{\text{$r_i(k) = j$ and $r_i(k)\leq y_i$}\}}\mid\mathscr F_{i, j-1}].
\end{align*}
The first statement in the lemma follows by setting $\bu = \buu$, while the second statement follows by setting $\bu = \bm u^*$ and taking expectation with respect to $\pi_i$:
\begin{align*}
&\mathbb E\left[\sum_{j\in [m_i]}\mathbb E_{\bm u^*}[\mathbf 1_{\{\text{$r_i(k) = j$ and $r_i(k)\leq y_i$}\}}\mid\mathscr F_{i, j-1}] - \mathbf 1_{\{r_i(k)\leq y_i\}}\right]\\
=&\  \mathbb E\left[\sum_{j\in [m_i]}\mathbf 1_{\{\text{$r_i(k) = j$ and $r_i(k)\leq y_i$}\}}-\mathbf 1_{\{r_i(k)\leq y_i\}}\right]=  \ 0.
\end{align*}
Since $\pi_i$'s are sampled from the PL model with the true utility vector $\bm u^*$, the first equality holds as a result of the tower property. 

\subsection{Proof of Proposition \plainref{lm:2}}\label{app:12}
By definition, 
\begin{align*}
\argmin_{\bm v: \langle1 \rangle ^\top\bm v = 0}\mathbb E_{\bm\pi\sim f}[\mathsf{KL}(f(\bm\pi; \bm u)\| g(\bm\pi; \bm v))] &= \argmin_{\bm v: \langle1 \rangle ^\top\bm v = 0}\mathbb E_{\bm\pi\sim f}\left[\log\frac{f(\bm\pi; \bu)}{g(\bm\pi; \bm v)}\right]\\
& = \argmax_{\bm v: \langle1 \rangle ^\top\bm v = 0}\mathbb E_{\bm\pi\sim f}\left[\log g(\bm\pi; \bm v)\right].
\end{align*}
Denoting $G(\bm v) = \mathbb E_{\bm\pi\sim f}\left[\log g(\bm\pi; \bm v)\right]$ and expanding $G(\bm v)$,  
\begin{align*}
G(\bm v)=&\sum_{T\in E}\sum_{\pi_T\in \mathcal S(T)}f_T(\pi_T; \bm u)\times \\
&~ \left[\sum_{k\in T}
\bigg\{(|T|-r_{\pi_T}(k))v_k-\sum_{k'\in T: r_{\pi_T}(k')>r_{\pi_T}(k)}\log(\e{v_k} + \e{v_{k'}}) \bigg\}\right],
\end{align*}
where $r_{\pi_T}(k)$ is the rank of $k$ in $\pi_T$. 
To show $\bm u$ is the unique maximizer of $G(\bm v)$, it suffices to verify that $\bm u$ satisfies the first-order optimality condition and $G(\bm v)$ is strictly concave. 
The former can be verified by a direct calculation as follows. 
\begin{align}
\partial_k G(\bm v) &= \sum_{T\in E}\sum_{\pi_T\in \mathcal S(T)}f_T(\pi_T; \bm u)\left(|T|-r_{\pi_T}(k)-\sum_{k'\in T: k'\neq k}\frac{\e{v_k}}{\e{v_k} + \e{v_{k'}}}\right)\nonumber\\
& = \sum_{T\in E}\left(|T|-\sum_{k'\in T: k'\neq k}\frac{\e{v_k}}{\e{v_k} + \e{v_{k'}}}-\mathbb E_{\pi_T\sim f_T(\cdot; \bu)}[r_{\pi_T}(k)]\right)\nonumber\\
& = \sum_{T\in E}\left[\mathbb E_{\pi_T\sim f_T(\cdot; \bm v)} \left(|T|-\sum_{k'\in T: k'\neq k}\mathbf 1_{\{r_T(k)<r_T(k')\}}\right)-\mathbb E_{\pi_T\sim f_T(\cdot; \bu)}[r_{\pi_T}(k)]\right]\nonumber\\
&=\sum_{T\in E}\left(\mathbb E_{\pi_T\sim f_T(\cdot; \bv)}[r_{\pi_T}(k)] - \mathbb E_{\pi_T\sim f_T(\cdot; \bu)}[r_{\pi_T}(k)]\right),\label{guaguagua}
\end{align}
where the third equality uses the internal consistency of the PL model. Taking $\bm v = \bm u$ yields $\partial_k G(\bm u) = 0$ for all $k$. 

To prove the strict concavity, we compute the Hessian of $G(\bm v)$:
\begin{align*}
\partial_{kk'} G(\bm v)
= 
\begin{cases}
\sum_{T\in E: k\in T}\sum_{j\in T: j\neq k}\frac{-\e{v_k + v_{j}}}{(\e{v_k} + \e{v_{j}})^2}& k' = k\\
\\
\sum_{T\in E: \{k, k'\}\subseteq T}\frac{\e{v_k + v_{k'}}}{(\e{v_k} + \e{v_{k'}})^2}& k' \neq k
\end{cases}
.
\end{align*}
This expression shows that $-\nabla^2 G(\bm v)$ is the unnormalized graph Laplacian of the weighted graph on $[n]$ with weights $\{\partial_{kk'} G(\bm v)\}$ for $k\neq k'$.
As a result, $-\nabla^2 G(\bm v)$ is nonnegative definite.  
Meanwhile, a moment's thought reveals that the connectivity of $\HH$ implies that the weighted graph associated with $-\nabla^2 G(\bm v)$ is connected. Hence, the eigenspace of $\nabla^2 G(\bm v)$ associated with eigenvalue zero is one-dimensional and spanned by the all-ones vector $\langle 1\rangle$. Since the constraint requires $\langle 1\rangle^\top\bm v = 0$, $G(\bm v)$ is strictly concave on the feasible set of $\bm v$. This verifies the uniqueness of the minimizer.  

\subsection{Proof of Proposition \plainref{lm:3}}

By the first-order optimality, 
\begin{align*}
\partial_kl_2(\widetilde{\bu}) &= \sum_{i: k\in T_i}\left(|T_i|-r_{i}(k)-\sum_{k'\in T_i: k'\neq k}\frac{\e{\widetilde{u}_k}}{\e{\widetilde{u}_k} + \e{\widetilde{u}_{k'}}}\right)\\
& = -\sum_{i: k\in T_i}\left[r_{i}(k)-\left(|T_i|-\sum_{k'\in T_i: k'\neq k}\frac{\e{\widetilde{u}_k}}{\e{\widetilde{u}_k} + \e{\widetilde{u}_{k'}}}\right)\right]\\
& = -\sum_{i: k\in T_i}(r_{i}(k)-\mathbb E_{\widetilde{\bu}}[r_{i}(k)])\\
& = 0,
\end{align*}
where the penultimate step follows from a same computation as \eqref{guaguagua}. The proof is finished by dividing both sides by $-N_k$.

\section{Proofs of Theorems \ref{main:general}-\ref{thm:and}}\label{sec:8}
In this section, we provide the proofs of Theorems \plainref{main:general}-\plainref{thm:and}.

\subsection{Proof of Theorem \plainref{main:general}}\label{app:main}

\subsubsection{Preparation}

To establish both unique existence and uniform consistency, we introduce some additional notations. For the former, we define
\begin{align}
\FI := \left\{\text{$\forall U\subset [n]$, $\exists (k_1, k_2)\in U\otimes U^\complement$ and $T_i\in \E$, such that $k_1, k_2\in T_i$ and $k_1\succ k_2$}\right\}.\label{goodin}
\end{align}
The event $\FI$ ensures that the negative log-likelihood or the negative quasi-likelihood functions are coercive, which is sufficient and necessary for the unique existence of the marginal MLE and QMLE \citep[Lemma 1]{MR2051012}. 

Moreover, for both the marginal MLE and QMLE, it follows from the direct computation that, for $k\in [n]$ and $\bm u\in\mathbb R^n$, 
\begin{align}
\partial_{k}l_1(\bu) = \sum_{i: k\in T_i}\psi(k; T_i, \pi_i, \bu)\quad\quad \partial_{k}l_2(\bu) = \sum_{i: k\in T_i}\varphi(k; T_i, \pi_i, \bu)\label{eq:1++},
\end{align}
where 
\begin{align}
\psi(k; T_i, \pi_i, \bm u) &= \mathbf 1_{\{r_i(k)\leq y_i\}} - \sum_{j\in [r_i(k)\wedge y_i]}\frac{\e{u_k}}{\sum_{t=j}^{m_i} \e{u_{\pi_i(t)}}}\label{yourpsi},\\
\varphi(k; T_i, \pi_i, \bm u) &= |T_i|-r_i(k)-\sum_{k'\in T_i, k'\neq k}\frac{\e{u_k}}{\e{u_k} + \e{u_{k'}}}\label{yourphi},
\end{align}
with $\pi_i$ and $r_i(k)$ denoting the permutation associated with $T_i$ and the rank of $k$ in $\pi_i$, respectively. By Propositions~\plainref{lm:1} and \plainref{lm:3}, $\mathbb E[\partial_{k}l_1(\bu^*)]=\mathbb E[\partial_{k}l_2(\bu^*)] = 0$. 
A similar event to \eqref{goodin} concerning $\partial_k l_1$ and $\partial_kl_2$ we will need is the following:
{\small
\begin{align}
\FII:= \left\{\text{$\bigg|\sum_{k\in \US}\partial_{k}l_{s}(\bu^*)\bigg| \leq {4}M^2\sqrt{|\US||\partial \US|\log n}, \text{for any } \US\subset [n]$ with $|\US|\leq n/2$, and $s= 1, 2$}\right\}. \label{goodin1}
\end{align}}

The following lemma states that under Assumptions~\plainref{ap:2}-\plainref{ap:1} and \plainref{ap:11}, both $\FI$ and $\FII$ hold with high probability. 

\begin{Lemma}\label{yadongwang}
Under Assumptions~\plainref{ap:2}-\plainref{ap:1} and \plainref{ap:11}, for all sufficiently large $n$, 
\begin{align*}
\mathbb P(\FI\cap\FII)\geq 1-n^{-3},
\end{align*}
where $\FI$ and $\FII$ are defined in \eqref{goodin} and \eqref{goodin1}, respectively. 
\end{Lemma}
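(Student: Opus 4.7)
The plan is to bound $\mathbb P(\FI^c)$ and $\mathbb P(\FII^c)$ separately, each by $n^{-3}/2$, and then combine them by a union bound. For $\FI$, I would first leverage Assumption \ref{ap:11} through a simple observation: for every nonempty $U\subsetneq [n]$, the two-term sequence $\{U,[n]\}$ is admissible in the sense of Definition \ref{def:ad}, since every edge of $\HH_n$ (in particular every boundary edge of $U$) is a subset of $[n]$. Inserting this sequence into the definition of $\Gamma_n^\RE$ gives $h_{\HH_n}(U)\geq \log n/(\Gamma_n^\RE)^2$, hence $|\partial U|\gg |U|\log n$ uniformly for $|U|\leq n/2$. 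Next, for each $T_i\in\partial U$, the PL mass formula \eqref{PL-mass} together with Assumption \ref{ap:2} yields a strictly positive lower bound (depending only on $M$ and $C_1$) on the probability that the top-ranked element of $\pi_i$ lies in $U$; this is stronger than, and therefore implies, the event that some $U$-element beats some $U^\complement$-element along $T_i$. Independence across comparisons then produces a single-cut bound of order $(1-1/(MC_1^2))^{|\partial U|}\leq \exp(-c|U|\log n)$, and a union bound over $U$ using $\binom{n}{|U|}\leq n^{|U|}$ delivers the required estimate for $\FI$.

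For $\FII$, the key structural input I would establish first is the deterministic identity
\[
\sum_{k\in T_i}\psi(k;T_i,\pi_i,\bu^*)=\sum_{k\in T_i}\varphi(k;T_i,\pi_i,\bu^*)=0
\]
for every $i\in [N]$, obtainable from \eqref{yourpsi} by swapping the order of summation and collapsing a telescoping sum, and from \eqref{yourphi} by pairing the cross terms $\e{u_k^*}/(\e{u_k^*}+\e{u_{k'}^*})+\e{u_{k'}^*}/(\e{u_k^*}+\e{u_{k'}^*})=1$. Writing
\[
\sum_{k\in U}\partial_k l_s(\bu^*)=\sum_{i\in [N]}\sum_{k\in T_i\cap U}\chi_s(k;T_i,\pi_i,\bu^*),\qquad s\in\{1,2\},
\]
with $\chi_1=\psi$ and $\chi_2=\varphi$, the identity forces edges contained entirely in $U$ or entirely in $U^\complement$ to contribute exactly zero, leaving only the $|\partial U|$ boundary terms. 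Each surviving summand is mean-zero by Propositions~\ref{lm:1} and \ref{lm:3}, uniformly bounded by some constant $C(M,C_1)M^2$ under Assumptions~\ref{ap:2} and \ref{ap:1}, and independent across $i$. Hoeffding's inequality then gives a tail of order $\exp(-c'|U|\log n)$, and the same union bound over $U$ as before yields $\mathbb P(\FII^c)\leq n^{-3}/2$, provided the absolute constant in front of $M^2\sqrt{|U||\partial U|\log n}$ in \eqref{goodin1} is taken large enough to absorb the $C_1$-dependence of the per-edge bound.

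The main obstacle I anticipate is the first step of the $\FI$ argument, namely extracting a uniform-in-$U$ lower bound on $h_{\HH_n}(U)$ from Assumption~\ref{ap:11}, which is phrased only in terms of admissible sequences rather than arbitrary cuts. The trivial admissible extension $\{U,[n]\}$ is the clean resolution, but one must verify its admissibility carefully, since the definition demands strict monotonicity and the correct interpretation of ``boundary edges of $A_j$ lying in $A_{j+1}$''. Once this uniform expansion bound and the deterministic cancellation identities for $\chi_s$ are in place, the remainder of the proof reduces to routine Hoeffding concentration and standard binomial-coefficient bookkeeping, and I do not expect further obstacles.
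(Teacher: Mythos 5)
Your proposal is correct and follows essentially the same route as the paper: both treat $\FI$ via the trivial admissible sequence $\{U,[n]\}$ to get $|\partial U|\gg\min\{|U|,|U^\complement|\}\log n$, a per-boundary-edge crossing probability of at least $1/(MC_1^2)$, independence across edges, and a union bound over cuts; and both treat $\FII$ via the cancellation identities $\sum_{k\in T_i}\psi=\sum_{k\in T_i}\varphi=0$ to reduce to boundary edges, followed by Hoeffding and the same union bound. The only cosmetic differences are that the paper packages the $\FI$ step as a Chernoff bound on a Bernoulli count $\mZ(U)$ rather than your direct product bound, and derives $\sum_k\varphi=0$ through the expected-rank identity rather than your pairwise pairing — both equivalent.
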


The proof of Lemma~\ref{yadongwang} is deferred to the end of the section. In the subsequent sections, we will condition on the event $\FI\cap\FII$.

\subsubsection{Chaining}
Since $\FI$ ensures that both the marginal MLE and QMLE exist, to finish the proof, it remains to establish the uniform consistency of $\bm w$. 

Since $\langle 1\rangle^\top\bm u^* = \langle 1\rangle^\top\bm w = 0$, where $\langle 1\rangle$ is the all-ones vector of compatible size, $\langle 1\rangle^\top(\bm w-\bm u^*) = 0$. 
Thus, the largest and smallest components of $\bm w-\bm u^*$ have opposite signs, whence $\|\bm w-\bu^*\|_\infty\leq \max_{k\in [n]}(w_k-u_k^*)-\min_{k\in [n]}(w_k-u_k^*)$.
To prove the theorem, it suffices to show that conditional on $\FI$ and $\FII$, 
\begin{align*}
\max_{k\in [n]}(w_k-u_k^*)-\min_{k\in [n]}(w_k-u_k^*) \lesssim \Gamma_n^{\RE}.
\end{align*} 
Specifically, define $\alpha\in \arg\max_{k\in [n]}(w_k  - u_k^*)$ and $\beta \in \arg\min_{k\in [n]}(w_k  - u_k^*)$. We will show that for all sufficiently large $n$,  $(w_\alpha-u_\alpha^*)-(w_\beta-u_\beta^*)\lesssim\Gamma_n^{\RE}$. To this end, we first construct an increasing sequence of neighbors based on RE-induced estimation errors to chain them together. Then, we show that a chain constructed in such a way is admissible in the sense of Definition \plainref{def:ad}. Under Assumption~\plainref{ap:11}, the desired bound follows.  

To fill in the details, let $c>0$ be  an absolute constant, and $ \{\Delta b_z\}_{z=0}^{\infty} $, $ \{\Delta d_z\}_{z=0}^{\infty}  $ be two increasing sequences that are specified shortly. 
Consider the two sequences of neighbors started at $\alpha$ and $\beta$ respectively defined recursively as follows. 
Let $\Delta b_0 = \Delta d_0 = 0$, and for $z\geq 1$,  
\begin{equation}\label{olkui}
\begin{aligned}
    &B_z = \left\{j: (w_\alpha - u_\alpha^*) - (w_j - u_j^*) \leq \sum_{t=0}^{z-1} \Delta b_t \right\}, & \Delta b_z = c \sqrt{\frac{\log n}{h_{\HH_n}(B_z)}} \\
    &D_z = \left\{j: (w_\beta - u_\beta^*) - (w_j - u_j^*) \geq -\sum_{t=0}^{z-1} \Delta d_t \right\}, & \Delta d_z = c \sqrt{\frac{\log n}{h_{\HH_n}(D_z)}}
\end{aligned}
\end{equation}
where $h_{\HH_n}$ is defined in Definition \plainref{def:ch}. 
Let $Z_{n,1}$ and $Z_{n,2}$ be the stopping times
\begin{align*}
	&Z_{n,1} = \min\left\{z: |B_z|>\frac{n}{2}\right\}& Z_{n,2} = \min\left\{z: |D_z|>\frac{n}{2}\right\}.
\end{align*} 
Note that $\Delta b_z$ and $\Delta d_z$ are defined similarly as the summand of $\Gamma^{\RE}_n$ in Definition~\plainref{def:RE} (up to a multiplicative constant) but for random sequences $\{B_z\}_{z\in [Z_{n,1}] }$ and $\{D_z\}_{Z\in [Z_{n,2}]}$.  
We make the following claim:

\begin{claim}\label{claim:1}
Under Assumptions~\plainref{ap:2}-\plainref{ap:1}, \plainref{ap:11} and conditional on $\FI \cap \FII$, there exists an absolute constant $c>0$ such that for all sufficiently large $n$, $\{B_z\}_{z\in [Z_{n,1}] }$, $\{D_z\}_{z\in [Z_{n,2}]}$ are admissible sequences. 
\end{claim}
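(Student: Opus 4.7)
The plan is to establish the half-condition \eqref{re_1} directly; the strict monotonicity required in Definition~\plainref{def:ad} will then follow as a by-product. I focus on $\{B_z\}$, the argument for $\{D_z\}$ being symmetric after reversing signs. Fix $z<Z_{n,1}$ and call a boundary edge $T_i\in\partial B_z$ \emph{bad} if $T_i\not\subseteq B_{z+1}$, i.e., if it contains some vertex of $B_{z+1}^\complement$. The goal is to choose the absolute constant $c$ in \eqref{olkui} large enough (depending only on $M$ and the bound from Assumption~\plainref{ap:2}) that at most $|\partial B_z|/2$ boundary edges can be bad.

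The driving identity is an edge-wise decomposition of the score equations. Since each per-edge log-likelihood is shift invariant, $\sum_{k\in T_i}\psi(k;T_i,\pi_i,\bu)=0$ and $\sum_{k\in T_i}\varphi(k;T_i,\pi_i,\bu)=0$ for every $\bu$ and every $T_i$. Combined with the first-order optimality $\partial_k l_s(\bm w)=0$ (for $s=1$ or $2$, i.e., the marginal MLE or QMLE), summing the score equations over $k\in B_z$ and regrouping by edges yields
\begin{equation*}
\sum_{T_i\in\partial B_z}\sum_{k\in T_i\cap B_z}\bigl[\mathrm{score}_i(k;\bm w)-\mathrm{score}_i(k;\bu^*)\bigr]=-\sum_{k\in B_z}\partial_k l_s(\bu^*),
\end{equation*}
because edges with $T_i\subseteq B_z$ vanish on the left (by shift invariance) and edges with $T_i\subseteq B_z^\complement$ do not appear. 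On the event $\FII$, the right-hand side is bounded in absolute value by $4M^2\sqrt{|B_z||\partial B_z|\log n}$.

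The core step is a matching one-sided lower bound on the left-hand side in terms of the number of bad edges. The key observation is that, within a single edge $T_i$, pairwise contributions coming from two vertices of $T_i\cap B_z$ cancel by antisymmetry when summed over $k\in T_i\cap B_z$, so only cross terms between $T_i\cap B_z$ and $T_i\setminus B_z$ survive. Cross terms with $k'\in B_{z+1}\setminus B_z$ contribute only $O(\Gamma^{\RE}_n)$ because the definitions of $B_z$ and $B_{z+1}$ force $0\le(w_k-u_k^*)-(w_{k'}-u_{k'}^*)\le\sum_{t\le z}\Delta b_t=O(\Gamma^{\RE}_n)$, and can be absorbed. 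Cross terms with $k'\in B_{z+1}^\complement$ (which exist exactly for bad edges) obey $(w_k-u_k^*)-(w_{k'}-u_{k'}^*)>\Delta b_z$; a mean-value expansion of $\sigma$, combined with an a priori uniform bound on $\bm w$ keeping $\sigma'$ bounded below, yields a signed contribution of order at least $\Delta b_z$ per bad edge. If more than $|\partial B_z|/2$ edges were bad, the magnitude of the left-hand side would be at least a constant multiple of $|\partial B_z|\Delta b_z=c\sqrt{|\partial B_z|^2\log n/h_{\HH_n}(B_z)}=c\sqrt{|B_z||\partial B_z|\log n}$, using $h_{\HH_n}(B_z)=|\partial B_z|/|B_z|$ for $|B_z|\le n/2$, and for $c$ a sufficiently large multiple of $M^2$ this would contradict the $\FII$ bound. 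Strict monotonicity $B_z\subsetneq B_{z+1}$ then follows automatically: otherwise every boundary edge would be bad, forcing $|\partial B_z|=0$ and contradicting the connectedness of $\HH_n$ guaranteed by $\FI$ together with $|B_z|\le n/2$.

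The main obstacle in executing this plan is the per-bad-edge lower bound uniformly for both estimators. For the QMLE the score $\varphi$ is a smooth function of $\bu$ and the mean-value step is transparent; for the marginal MLE, however, $\psi$ depends on the observed ranks and on the conditioning filtration $\mathscr F_{i,j-1}$ (cf.\ Remark~\plainref{kind:remark}), so extracting a lower bound of order $\Delta b_z$ uniformly in $T_i$ and in the partial-observation cutoff $y_i$ will require a careful concavity-plus-monotonicity argument that exploits the internal consistency \eqref{conS}. A secondary technical ingredient is the a priori boundedness of $\bm w$ needed to keep the relevant sigmoid-type derivatives uniformly positive; this must be obtained by a preliminary step that does not circularly rely on the uniform consistency conclusion being proved.
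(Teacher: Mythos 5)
Your overall strategy is the one the paper uses: sum the score equations over $k\in B_z$, exploit the edge-wise identities $\sum_{k\in T_i}\psi=\sum_{k\in T_i}\varphi=0$ so that only boundary edges survive, bound the result from above by $4M^2\sqrt{|B_z||\partial B_z|\log n}$ on $\FII$, and from below by (number of bad edges)$\times c'\Delta b_z$, then choose $c$ large to force $|\partial B_z\cap\partial B_{z+1}|\le|\partial B_z|/2$. For the QMLE your sketch is essentially complete: the score difference really does reduce to a sum of pairwise sigmoid increments $\sigma(\widetilde u_j-\widetilde u_t)-\sigma(u_j^*-u_t^*)$ over $j\in T_i\cap B_z$, $t\in T_i\cap B_z^\complement$, each nonnegative, and each at least a constant times $\Delta b_z$ when $t\in B_{z+1}^\complement$ by monotonicity plus the mean-value theorem (no a priori bound on $\bm w$ is needed for this — monotonicity of $\sigma$ lets you replace $\widetilde u_j-\widetilde u_t$ by $u_j^*-u_t^*+\Delta b_z$, and the mean-value point then lives in a compact interval determined by $C_1$ alone, so your worry about circularity is moot).

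The genuine gap is the marginal MLE. Your key structural claim — that "pairwise contributions coming from two vertices of $T_i\cap B_z$ cancel by antisymmetry, so only cross terms between $T_i\cap B_z$ and $T_i\setminus B_z$ survive" — is false for $\psi$. The score $\psi(k;T_i,\pi_i,\bu)=\mathbf 1_{\{r_i(k)\le y_i\}}-\sum_{j\le r_i(k)\wedge y_i}\e{u_k}/\sum_{t\ge j}\e{u_{\pi_i(t)}}$ is not a sum of pairwise antisymmetric terms; summing it over $k\in T_i\cap B_z$ leaves, for each stage $j$, the \emph{group ratio}
\begin{align*}
\frac{\sum_{t\ge j:\,\pi_i(t)\in B_z\cap T_i}\e{u_{\pi_i(t)}}}{\sum_{t= j}^{m_i}\e{u_{\pi_i(t)}}},
\end{align*}
which mixes every vertex of the edge (inside and outside $B_z$, and inside and outside $B_{z+1}$) through a common denominator, and additionally depends on the realized ranks through the summation range $j\le r_i(k)\wedge y_i$. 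Establishing that the difference of these ratios between $\bm w$ and $\bu^*$ is (a) nonnegative for every boundary edge and (b) at least $\Delta b_z/(M^3C_1^6)$ for every bad edge is precisely the technical core of the argument; it requires splitting the denominator into the three blocks $B_z$, $B_{z+1}\setminus B_z$, $B_{z+1}^\complement$ and running a three-term mediant-type inequality (the paper's inequality \eqref{need?} and the surrounding ratio comparisons), not a per-pair mean-value expansion. You explicitly defer this step as "the main obstacle," so the proposal does not actually prove the claim for the marginal MLE — it reduces it to exactly the lemma that constitutes the proof's hardest part. Everything else (the definition of bad edges as $\partial B_z\cap\partial B_{z+1}$, the contradiction with the $\FII$ bound via $h_{\HH_n}(B_z)=|\partial B_z|/|B_z|$, and the strict-monotonicity remark) is correct and matches the paper.
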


By the construction of $Z_{n,1}$ and $Z_{n,2}$,  $B_{Z_{n,1}}\cap D_{Z_{n,2}}\neq \emptyset$, so that $\alpha$ and $\beta$ can be chained together using $\{B_z\}_{z\in [Z_{n,1}]}\cup \{D_z\}_{z\in [Z_{n,2}]}$. As a result, 
{\begin{align*}
(w_\alpha-u_\alpha^*)-(w_\beta-u_\beta^*)\leq c\sum_{z\in [Z_{n,1}-1]}\Delta b_z + c\sum_{z\in [Z_{n,2}-1]}\Delta d_z\leq 2c\Gamma_n^\RE,
\end{align*}
where the last step follows from the definition of $\Gamma_n^\RE$. }
Since $\Gamma^{\RE}_n\to 0$ under Assumption~\plainref{ap:11}, the proof is complete. Therefore, it remains to verify the claim.

The proof sketch described above is similar to \cite[Theorem 4]{han2022general}; however, verification of the claim requires a notable amount of technical details which are laid out in Section~\ref{chedashuai}. For ease of illustration, we only prove for $\{B_z\}_{z\in [Z_{n,1}]}$ as the case for $\{D_z\}_{z\in [Z_{n,2}]}$ can be treated similarly. 

\subsubsection{Admissible Sequence Verification}

We first prove the claim for the marginal MLE and then show that a similar argument works for the QMLE. 

\smallskip

\textbf{\underline{Case I}: The marginal MLE}\label{chedashuai}

\smallskip

We begin with the marginal MLE. 
Recall that the marginal log-likelihood function based on the choice-$y_i$ observations in the $i$th data is given by 
\begin{align*}
&l_1(\bm u) = \sum_{i\in [N]} \left\{ \sum_{j\in [y_i]} u_{\pi_i(j)} - \log\left( \sum_{t=j}^{m_i}\e{u_{\pi_i(t)}}\right)\right\}& |T_i| = m_i.
\end{align*}
Conditional on $\FI$, $\buu$ uniquely exists, so we can apply the first-order optimality condition and \eqref{eq:1++} to obtain 
\begin{align}
&\partial_{k}l_1(\buu) = \sum_{i: k\in T_i}\psi(k; T_i, \pi_i, \buu) = 0& k\in [n],\label{eq:1}
\end{align}
where $\psi$'s are defined in \eqref{yourpsi}. 

Fixing $T_i$, $\pi_i$, and $\bm u$, a crucial property of $\psi(k; T_i, \pi_i, \bm u)$ is that
\begin{align}
\sum_{k\in T_i}\psi(k; T_i, \pi_i, \bm u) &= y_i - \sum_{k\in T_i}\sum_{j\in [r_i(k)\wedge y_i]}\frac{\e{u_k}}{\sum_{t=j}^{m_i} \e{u_{\pi_i(t)}}}\label{sm}\\
&= y_i - \sum_{k\in T_i}\sum_{j\in [r_i(k)\wedge y_i]}\frac{\e{u_{\pi_i(r_i(k))}}}{\sum_{t=j}^{m_i} \e{u_{\pi_i(t)}}}\nonumber\\
& = y_i - \sum_{s\in [y_i]}\sum_{k: r_i(k)\geq s}\frac{\e{u_{\pi_i(k)}}}{\sum_{t=s}^{m_i}\e{u_{\pi_i(t)}}}\nonumber\\
& = 0,\nonumber
\end{align}
where the third identity follows from changing the order of summation. 
By Proposition \plainref{lm:1}, the true parameter $\bm u^*$ satisfies 
\begin{align}
&\mathbb E[\psi(k; T_i, \pi_i, \bm u^*)] = 0\label{eq:2}
\end{align}
for every $k, T_i$.
Combining \eqref{eq:1}-\eqref{eq:2} yields 
\begin{align}
\sum_{k\in B_z}\partial_{k}l_1(\bu^*)-\sum_{k\in B_z}\partial_{k}l_1(\buu)&\stackrel{\eqref{eq:1}}{=} \sum_{k\in B_z }\partial_{k}l_1(\bu^*)\nonumber\\
& = \sum_{k\in B_z}\sum_{i: k\in T_i}\psi(k; T_i, \pi_i, \bm u^*)\nonumber\\
&\stackrel{\eqref{sm}}{=}\sum_{i: T_i\in\partial B_z}\sum_{k\in B_z\cap T_i}\psi(k; T_i, \pi_i, \bm u^*)\nonumber\\
&\stackrel{\eqref{eq:2}}{\leq} {4}M^2\sqrt{|B_z||\partial B_z|\log n},\label{upper}
\end{align}
where the last inequality holds on $\FII$ since $|B_z|\leq n/2$ for $z< Z_{n,1}$. 

To obtain the desired claim, we now derive a lower bound on the left-hand side of \eqref{upper}. This step requires appropriate monotonicity conditions of the score function to hold in the PL model, which is more complicated than the setting in the BT model due to the additional dependence on $\pi_i$'s. We begin by refining the calculations in \eqref{upper}:
\begin{align}
&\sum_{k\in B_z}\partial_{k}l_1(\bu^*)-\sum_{k\in B_z}\partial_{k}l_1(\buu)\label{guagua}\\
\stackrel{\eqref{sm}}{=}&\ \sum_{i: T_i\in\partial B_z}\sum_{k\in B_z\cap T_i}\sum_{j\in [r_i(k)\wedge y_i]}\left(\frac{\e{\widehat{u}_k}}{\sum_{t=j}^{m_i} \e{\widehat{u}_{\pi_i(t)}}}-\frac{\e{u_k^*}}{\sum_{t=j}^{m_i} \e{u_{\pi_i(t)}^*}}\right)\nonumber\\
=&\  \sum_{i: T_i\in\partial B_z}\sum_{j = 1}^{\max_{k\in B_z\cap T_i}( r_i(k)\wedge y_i)}\left(\frac{\sum_{  t\geq j: \pi_i(t)\in B_z\cap T_i}\e{\widehat{u}_{\pi_i(t)}}}{\sum_{t=j}^{m_i} \e{\widehat{u}_{\pi_i(t)}}}-\frac{\sum_{  t\geq j: \pi_i(t)\in B_z\cap T_i}\e{u^*_{\pi_i(t)}}}{\sum_{t=j}^{m_i} \e{u^*_{\pi_i(t)}}}\right).\nonumber
\end{align}

Note that each summand in the last equation is nonnegative. To see this, we take the quotient of the first and second terms in the parentheses to obtain
\begin{align}
&\frac{\sum_{t\geq j: \pi_i(t)\in B_z\cap T_i}\e{\widehat{u}_{\pi_i(t)}}}{\sum_{t=j}^{m_i} \e{\widehat{u}_{\pi_i(t)}}}\bigg/\frac{\sum_{t\geq j, \pi_i(t)\in B_z\cap T_i}\e{u^*_{\pi_i(t)}}}{\sum_{t=j}^{m_i} \e{u^*_{\pi_i(t)}}}\label{zhaohan}\\
=&\ \frac{\sum_{  t\geq j: \pi_i(t)\in B_z\cap T_i}\e{\widehat{u}_{\pi_i(t)}}}{\sum_{ t\geq j: \pi_i(t)\in B_z\cap T_i}\e{u^*_{\pi_i(t)}}}\times\frac{\sum_{  t\geq j: \pi_i(t)\in B_z\cap T_i} \e{u^*_{\pi_i(t)}} + \sum_{  t\geq j: \pi_i(t)\in B^\complement_z\cap T_i} \e{u^*_{\pi_i(t)}}}{\sum_{  t\geq j: \pi_i(t)\in B_z\cap T_i} \e{\widehat{u}_{\pi_i(t)}} + \sum_{  t\geq j: \pi_i(t)\in B^\complement_z\cap T_i} \e{\widehat{u}_{\pi_i(t)}}}\nonumber\\
\geq&\ \frac{\sum_{  t\geq j: \pi_i(t)\in B_z\cap T_i}\e{\widehat{u}_{\pi_i(t)}}}{\sum_{ t\geq j: \pi_i(t)\in B_z\cap T_i}\e{u^*_{\pi_i(t)}}}\times\min\left\{\frac{\sum_{  t\geq j: \pi_i(t)\in B_z\cap T_i} \e{u^*_{\pi_i(t)}}}{\sum_{  t\geq j: \pi_i(t)\in B_z\cap T_i} \e{\widehat{u}_{\pi_i(t)}}  }, \frac{\sum_{  t\geq j: \pi_i(t)\in B^\complement_z\cap T_i} \e{u^*_{\pi_i(t)}}}{\sum_{  t\geq j: \pi_i(t)\in B^\complement_z\cap T_i} \e{\widehat{u}_{\pi_i(t)}}}\right\}\nonumber\\
\geq&\ \frac{\sum_{  t\geq j: \pi_i(t)\in B_z\cap T_i}\e{\widehat{u}_{\pi_i(t)}}}{\sum_{ t\geq j: \pi_i(t)\in B_z\cap T_i}\e{u^*_{\pi_i(t)}}}\times\frac{\sum_{  t\geq j: \pi_i(t)\in B_z\cap T_i} \e{u^*_{\pi_i(t)}}}{\sum_{  t\geq j: \pi_i(t)\in B_z\cap T_i} \e{\widehat{u}_{\pi_i(t)}}  } = 1\nonumber.
\end{align}

To obtain the last inequality, observe for $t, t'\geq j$ and $\pi_i(t)\in B_z\cap T_i$, $\pi_i(t')\in B^\complement_z\cap T_i$, $\widehat{u}_{\pi_i(t)}-u^*_{\pi_i(t)}\geq \widehat{u}_{\pi_i(t')}-u^*_{\pi_i(t')}$ by the definition of $B_z$. As a result,  
\begin{align*}
&\frac{\sum_{  t\geq j: \pi_i(t)\in B_z\cap T_i} \e{u^*_{\pi_i(t)}}}{\sum_{  t\geq j: \pi_i(t)\in B_z\cap T_i} \e{\widehat{u}_{\pi_i(t)}}}\leq\max_{t\geq j: \pi_i(t)\in B_z\cap T_i}\e{u^*_{\pi_i(t)}-\widehat{u}_{\pi_i(t)}}\\
&\leq \min_{t\geq j: \pi_i(t)\in B^\complement_z\cap T_i}\e{u^*_{\pi_i(t)}-\widehat{u}_{\pi_i(t)}}\leq \frac{\sum_{  t\geq j: \pi_i(t)\in B^\complement_z\cap T_i} \e{u^*_{\pi_i(t)}}}{\sum_{  t\geq j: \pi_i(t)\in B^\complement_z\cap T_i} \e{\widehat{u}_{\pi_i(t)}}}.
\end{align*}
Consequently, we can obtain a lower bound on \eqref{guagua} by only summing over the edges in $\partial B_z\cap\partial B_{z+1}$ which will have its summand lower bounded by some positive constant. Without loss of generality, we may assume $\partial B_z\cap\partial B_{z+1}\neq\emptyset$, since otherwise $\partial B_z\subseteq B_{z+1}$, which immediately implies the desired admissibility condition. Then, 

{\footnotesize\begin{align}
&\sum_{k\in B_z}\partial_{k}l_1(\bu^*)-\sum_{k\in B_z}\partial_{k}l_1(\buu) \nonumber\\
\geq& \sum_{i: T_i\in\partial B_z\cap\partial B_{z+1}}\sum_{j = 1}^{\max_{k\in B_z\cap T_i} (r_i(k)\wedge y_i)}\Bigg(\frac{\sum_{  t\geq j: \pi_i(t)\in B_z\cap T_i}\e{\widehat{u}_{\pi_i(t)}}}{\sum_{t=j}^{m_i} \e{\widehat{u}_{\pi_i(t)}}}-\frac{\sum_{ t\geq j, \pi_i(t)\in B_z\cap T_i}\e{u^*_{\pi_i(t)}}}{\sum_{t=j}^{m_i} \e{u^*_{\pi_i(t)}}}\Bigg)\nonumber\\
\geq&\ \sum_{i: T_i\in\partial B_z\cap\partial B_{z+1}}\Bigg(\frac{\sum_{  t\geq 1: \pi_i(t)\in B_z\cap T_i}\e{\widehat{u}_{\pi_i(t)}}}{\sum_{t=1}^{m_i} \e{\widehat{u}_{\pi_i(t)}}}-\frac{\sum_{ t\geq 1, \pi_i(t)\in B_z\cap T_i}\e{u^*_{\pi_i(t)}}}{\sum_{t=1}^{m_i} \e{u^*_{\pi_i(t)}}}\Bigg).\label{xiyue}
\end{align}}

By definition, for any $T_i\in\partial B_{z+1}\cap \partial B_z$, $T_i\cap B_{z+1}^\complement  \neq\emptyset$, so that
\begin{align*}
\max_{j\in T_i}\left\{(\widehat{u}_\alpha - u^*_\alpha)-(\widehat{u}_j - u^*_j)\right\}\geq \sum_{t = 0}^z\Delta b_t. 
\end{align*}
This allows us to obtain a refined lower bound on \eqref{zhaohan} based on the following elementary inequality. For $\delta_1, \delta_2, \delta_3, \delta_4>0$, if ${\delta_2}{\delta_3}\geq (1+\epsilon_1) {\delta_1}{\delta_4}$ and $\delta_2\geq \epsilon_2 \delta_1$ for some $0<\epsilon_1, \epsilon_2<1/2$, then
\begin{align}
\frac{\delta_1+\delta_2}{\delta_3+\delta_4}\geq \left(1+\frac{\epsilon_1\epsilon_2}{2}\right)\cdot\frac{\delta_1}{\delta_3}.\label{need?}
\end{align}
This inequality can be verified as  
\begin{align*}
	\frac{\delta_1+\delta_2}{\delta_3+\delta_4}\geq\frac{\delta_1+\epsilon_2\delta_1}{\delta_3+ (\epsilon_2 \delta_1\delta_4/\delta_2)}\geq\frac{ (1+\epsilon_1)(1+ \epsilon_2)  \delta_1}{(1+\epsilon_1)\delta_1+\epsilon_2 \delta_3}\geq \left(1+\frac{\epsilon_1\epsilon_2}{2}\right)\cdot\frac{\delta_1}{\delta_3}. 
\end{align*}
Under Assumption \plainref{ap:2}, $C_1^{-2}\leq \e{u^*_i-u^*_j}\leq C^2_1$ for all $i, j\in [n]$. Since $T_i\cap B_{z+1}^\complement  \neq\emptyset$, 
{\small\begin{align*}
&\frac{\sum_{t\geq 1: \pi_i(t)\in B_z\cap T_i}\e{\widehat{u}_{\pi_i(t)}}}{\sum_{t=1}^{m_i} \e{\widehat{u}_{\pi_i(t)}}}\bigg/\frac{\sum_{t\geq 1, \pi_i(t)\in B_z\cap T_i}\e{u^*_{\pi_i(t)}}}{\sum_{t=1}^{m_i} \e{u^*_{\pi_i(t)}}}\\
=&\ \frac{\sum_{t\geq 1: \pi_i(t)\in B_z\cap T_i}\e{\widehat{u}_{\pi_i(t)}}}{\sum_{t\geq 1, \pi_i(t)\in B_z\cap T_i}\e{u^*_{\pi_i(t)}}}\\
& \times\underbrace{\frac{\sum_{t\geq 1, \pi_i(t)\in B_{z}\cap T_i}\e{u^*_{\pi_i(t)}} + \sum_{t\geq 1, \pi_i(t)\in B_z^\complement\cap B_{z+1}\cap T_i}\e{u^*_{\pi_i(t)}}+\sum_{t\geq 1, \pi_i(t)\in B^\complement_{z+1}\cap T_i}\e{u^*_{\pi_i(t)}}}{\sum_{t\geq 1: \pi_i(t)\in B_{z}\cap T_i}\e{\widehat{u}_{\pi_i(t)}}+ \sum_{t\geq 1: \pi_i(t)\in B^\complement_{z}\cap B_{z+1}\cap T_i}\e{\widehat{u}_{\pi_i(t)}}+\sum_{t\geq 1: \pi_i(t)\in B^\complement_{z+1}\cap T_i}\e{\widehat{u}_{\pi_i(t)}}}}_{(*)}\\
\geq&\ \frac{\sum_{t\geq 1: \pi_i(t)\in B_z\cap T_i}\e{\widehat{u}_{\pi_i(t)}}}{\sum_{t\geq 1, \pi_i(t)\in B_z\cap T_i}\e{u^*_{\pi_i(t)}}}\\
&\times \underbrace{\frac{C^2_1(m_i-1)\sum_{t\geq 1, \pi_i(t)\in B_{z}\cap T_i}\e{u^*_{\pi_i(t)}} + \sum_{t\geq 1, \pi_i(t)\in B^\complement_{z+1}\cap T_i}\e{u^*_{\pi_i(t)}}}{C^2_1(m_i-1)\sum_{t\geq 1: \pi_i(t)\in B_{z}\cap T_i}\e{\widehat{u}_{\pi_i(t)}}+ \sum_{t\geq 1: \pi_i(t)\in B^\complement_{z+1}\cap T_i}\e{\widehat{u}_{\pi_i(t)}}}}_{(**)}.
\end{align*}}
To see why the last step holds, note for $x_1, \ldots, x_6>0$ satisfying $x_1/x_4\leq x_2/x_5\leq x_3/x_6$ and $x_2/x_1\leq \epsilon$, 
\begin{align*}
\frac{x_1+x_2+x_3}{x_4+x_5+x_6} &\geq \frac{x_1 + x_2 + x_3}{x_4 + (x_2x_4/x_1) + x_6} = \frac{x_1(1+x_2/x_1) + x_3}{x_4(1+x_2/x_1) + x_6}\geq \frac{x_1(1+\epsilon)+x_3}{x_4(1+\epsilon) + x_6}.
\end{align*}
Identifying ($*$) in the form of $(x_1+x_2+x_3)/(x_4+x_5+x_6)$ and noting $x_2/x_1\leq C_1^2(m_i-1)$ yields the desired result. Furthermore, identifying ($**$) in the form of $(\delta_1+\delta_2)/(\delta_3+\delta_4)$ in \eqref{need?}, we can check using the definition of $B_z$ that
\begin{align*}
\frac{\delta_2\delta_3}{\delta_1\delta_4}&\geq \frac{\e{\widehat{u}_\alpha - u^*_\alpha - \sum_{j\in [z-1]}\Delta b_j}}{\e{\widehat{u}_\alpha - u^*_\alpha - \sum_{j\in [z]}\Delta b_j}} = \e{\Delta b_z}\geq 1+\Delta b_z,  \ \delta_2 \geq\frac{\delta_1}{C^4_1M^2}. 
\end{align*}
Under Assumption~\plainref{ap:11}, $\Delta b_z\leq 1/2$ for all $z<Z_{n,1}$ and all sufficiently large $n$. 
Applying \eqref{need?}, we obtain for $T_i\in\partial B_{z+1}\cap \partial B_z$,  
\begin{align*}
\eqref{zhaohan}&\geq 1+\frac{\Delta b_z}{2C_1^4M^2}.
\end{align*}
As a result, 
\begin{align*}
\frac{\sum_{t\geq 1: \pi_i(t)\in B_z\cap T_i}\e{\widehat{u}_{\pi_i(t)}}}{\sum_{t=1}^{m_i} \e{\widehat{u}_{\pi_i(t)}}}-\frac{\sum_{t\geq 1, \pi_i(t)\in B_z\cap T_i}\e{u^*_{\pi_i(t)}}}{\sum_{t=1}^{m_i} \e{u^*_{\pi_i(t)}}}\geq\frac{\Delta b_z}{M^3C_1^6},
\end{align*}
which is substituted into \eqref{xiyue} to yield
\begin{align}
\sum_{k\in B_z}\partial_{k}l_1(\bu^*)-\sum_{k\in B_z}\partial_{k}l_1(\buu)\geq\frac{\Delta b_z}{M^3C_1^6}|\partial B_z\cap \partial B_{z+1}|. \label{lower}
\end{align}
Combining \eqref{upper} and \eqref{lower}, 
\begin{align*}
\frac{1}{M^3C_1^6}|\partial B_z\cap \partial B_{z+1}|\Delta b_z\leq {4}M^2\sqrt{|B_z||\partial B_z|\log n}. 
\end{align*}
Taking $c = {8}M^5C_1^6$ in \eqref{olkui} yields
\begin{align*}
|\partial B_z\cap\partial B_{z+1}|\leq \frac{1}{2}|\partial B_z|\Longrightarrow |\{e\in\partial B_z: e\subseteq B_{z+1}\}|\geq\frac{1}{2}|\partial B_z|.
\end{align*}
This finishes the proof of $\{B_z\}_{z\in [Z_{n,1}]}$ being admissible.

\smallskip

\textbf{\underline{Case II}: The QMLE}\label{chedashuai1}

\smallskip

 Although the quasi-likelihood is misspecified (as it ignores the dependence among pairwise comparisons obtained from breaking the same edge), its derivatives provide a set of unbiased estimating equations so it is more appropriate to view QMLE as a moment estimator. Writing down the log-quasi-likelihood, we have
\begin{align*}
	l_2(\bm u) = \sum_{i\in [N]}\sum_{1\leq j<t \leq m_i}\left[u_{\pi_i(j)}-\log (\e{u_{\pi_i(j)}}+ \e{u_{\pi_i(t)}})\right].
\end{align*}
By the first-order optimality condition, 
\begin{align}
&\partial_{k}l_2(\widetilde{\bu}) = \sum_{i: k\in T_i}\varphi(k; T_i, \pi_i, \widetilde{\bu}) = 0& k\in [n],\label{eq:231}
\end{align}
where $\varphi$ is defined in \eqref{yourphi}. 
Analogously, when fixing $T_i$, $\pi_i$, and $\bm u$, we can check that 
\begin{align}
\sum_{k\in T_i}\varphi(k; T_i, \pi_i, \bm u) &= \sum_{k\in T_i}\left(|T_i|-r_i(k)-\sum_{k'\in T_i, k'\neq k}\frac{\e{u_k}}{\e{u_k} + \e{u_{k'}}}\right)\nonumber\\
& = \sum_{k\in T_i}\left(|T_i|-\sum_{k'\in T_i, k'\neq k}\mathbb E[\mathbf 1_{\{r_i(k)<r_i(k')\}}]-r_i(k)\right)\nonumber\\
& =  \sum_{k\in T_i}\left(\mathbb E[r_i(k)]-r_i(k)\right) = 0,\label{youwang}
\end{align}
where we have used the internal consistency of the PL model.
Meanwhile, $\mathbb E[\varphi(k; T_i, \pi_i, \bm u^*)] = 0$ due to the tower property. This observation is critical to obtain appropriate concentration estimates as in the marginal MLE case. 

With the same idea in the proof of the marginal MLE, $\FII$ gives the following upper bound: 
\begin{align}
&\sum_{k\in B_z}\partial_{k}l_2(\bu^*)-\sum_{k\in B_z}\partial_{k}l_2(\widetilde{\bu})\leq {4}M^2\sqrt{|B_z||\partial B_z|\log n}& z\leq Z_{n,1}.\label{upper_}
\end{align}
 We further obtain a lower bound via the following calculation:
	\begin{align*}
		&\sum_{k\in B_z}\partial_{k}l_2(\bu^*)-\sum_{k\in B_z}\partial_{k}l_2(\widetilde{\bu})\\
		=&\  \sum_{i: T_i\in\partial B_z}\sum_{j\in T_i\cap B_z}\sum_{t\in T_i\cap B^\complement_z}\left(\frac{\e{\widetilde{u}_{j}}}{\e{\widetilde{u}_{j}}+ \e{\widetilde{u}_t}}-\frac{\e{u^*_{j}}}{\e{u^*_{j}}+ \e{u^*_t}}\right)\\
		\geq&\ \sum_{i: T_i\in\partial B_z\cap\partial B_{z+1}}\sum_{j\in T_i\cap B_z}\sum_{t\in T_i\cap B^\complement_{z+1}}\left(\frac{\e{\widetilde{u}_{j}}}{\e{\widetilde{u}_{j}}+ \e{\widetilde{u}_t}}-\frac{\e{u^*_{j}}}{\e{u^*_{j}}+ \e{u^*_t}}\right)\\
		\geq&\ \ C''|\partial B_z\cap \partial B_{z+1}|\Delta b_z,
	\end{align*}
	where $C''$ is an absolute constant depending on $C_1$ and $M$ only. 
	Here the first inequality follows from the observation that for $j\in B_z$ and $t\in B_z^\complement$, 
	\begin{align*}
		&\frac{\e{\widetilde{u}_{j}}}{\e{\widetilde{u}_{j}}+ \e{\widetilde{u}_t}}\bigg/\frac{\e{u^*_{j}}}{\e{u^*_{j}}+ \e{u^*_t}} = \e{\widetilde{u}_j-u^*_j}\cdot\frac{\e{u^*_{j}}+ \e{u^*_t}}{\e{\widetilde{u}_{j}}+ \e{\widetilde{u}_t}}\\
		&\geq \e{\widetilde{u}_j-u^*_j}\cdot\min\left\{\e{u^*_{j}-\widetilde{u}_j}, \e{u^*_{t}-\widetilde{u}_t}\right\} = \e{\widetilde{u}_j-u^*_j}\cdot \e{u^*_{j}-\widetilde{u}_j} = 1.
	\end{align*}
	For the last inequality, we observe that for $j\in T_i\cap B_z$ and $t\in T_i\cap B^\complement_{z+1}$, by the mean-value theorem, 
	\begin{align*}
	&\frac{\e{\widetilde{u}_{j}}}{\e{\widetilde{u}_{j}}+ \e{\widetilde{u}_t}}-\frac{\e{u^*_{j}}}{\e{u^*_{j}}+ \e{u^*_t}} = \frac{1}{1+ \e{-(\widetilde{u}_j-\widetilde{u}_t)}}-\frac{1}{1+ \e{-(u^*_j-u^*_t)}}\\
	&\geq \frac{1}{1+ \e{-(u^*_j-u^*_t + \Delta b_z)}}-\frac{1}{1+ \e{-(u^*_j-u^*_t)}}\quad\quad \text{($\widehat{u}_j-\widehat{u}_t\geq u_j^*-u_t^*+\Delta b_z$)}\\
	& = \frac{e^{-\varrho}\Delta b_z}{(1+ \e{-\varrho})^2}\quad\quad \text{(where $\varrho\in [u^*_j-u^*_t, u^*_j-u^*_t + \Delta b_z]\subseteq [-2C_1, 2C_1+1]$)},
	\end{align*}
	so taking $C'' = \min_{x\in [-2C_1, 2C_1 + 1]}\exp(-x)/((1+\exp(-x))^2)$ suffices, where we have used again that $\Delta b_z\to 0$ for all $z<Z_{n,1}$ and all sufficiently large $n$ under Assumption~\plainref{ap:11}.  
	Setting $c = {4}M^2C''$ and combining the upper and lower bounds as before yields the admissibility of $\{B_z\}_{z\in [Z_{n,1}]}$.

\subsubsection{Proof of Lemma~\ref{yadongwang}}

We show that $\FI$ and $\FII$ hold with probability at least $1-n^{-4}$, respectively. The desired result follows by applying a union bound. 
 
We begin with $\FI$. For any $U\subset [n]$, the sequence defined by $A_1 = U$ and $A_2 = [n]$ is an admissible sequence. Therefore, Assumption~\plainref{ap:11} implies the following lower bound on its modified Cheeger constant:
\begin{align}
\frac{|\partial U|}{\min\{|U|, |U^\complement|\}\log n}\to\infty\quad\quad \text{for~} U\subset [n].\label{coco1}
\end{align}
We claim that the event in \eqref{coco1} is sufficient to ensure $\mathbb P(\FI)\geq 1-n^{-4}$ for all sufficiently large $n$, as desired in Assumption~\plainref{ap:11}. Indeed, for any $T\subset [n]$ with $|T| \leq M$ and $i, j\in T$, 
\begin{align}
\mathbb P(\text{$i\succ j$ on $T$}\mid T\in \E)&\geq \mathbb P(r_T(i) = 1\mid T\in \E)\nonumber\\
&\geq \frac{\e{u_i^*}}{\sum_{t\in T}\e{u^*_t}}\geq\frac{1}{MC_1^2}>0, &\text{(Assumptions~\plainref{ap:2} and \plainref{ap:1})}\label{pjl}
\end{align} 
where $r_T$ is the ranking outcome on $T$.  Therefore, for any partition $U$ and  $U^\complement$, define 
\begin{align*}
\mZ(U) = \sum_{T\in\partial U}\mathbf 1_{\{\text{there exist $i\in T\cap U$, $j\in T\cap U^\complement$ with $i\succ j$ on $T$}\}}.
\end{align*}
By definition, $\mZ(U)$ is a sum of independent Bernoulli random variables with
\begin{align}
\mathbb E[\mZ(U)]\stackrel{\eqref{pjl}}{\geq}\frac{|\partial U|}{MC_1^2}\stackrel{\eqref{coco1}}{\geq}48\min\{|U|, n-|U|\}\log n.\label{xisi}
\end{align} 
Note that for all $i\in U, j\in U^\complement$ and $\{i, j\}\subset T\in \E$, $i\prec j$ if and only if $\mZ(U) = 0$. 
By the Chernoff bound, 
\begin{align}
\mathbb P(\mZ(U)= 0)&\leq\mathbb P(\mZ(U)\leq \mathbb E[\mZ(U)]/2)\nonumber\\
&\leq \e{-\mathbb E[\mZ(U)]/8}\stackrel{\eqref{xisi}}{\leq}\e{-6\min\{|U|, n-|U|\}\log n}.
\end{align}
Taking a union bound over all $\emptyset\neq U\subset [n]$ yields
\begin{align*}
	\mathbb P\left(\FI\right) &= 1 - \mathbb P\left(\FI^\complement\right) {\geq } 1-\sum_{s=1}^{n-1}\sum_{U\subset [n]: |U|=s}\mathbb P(\mZ(U)= 0)\\
	&\geq 1 - \sum_{s=1}^{n-1}{n\choose s}\e{-6\min\{s, n-s\}\log n}\\
	&\geq 1 - \sum_{s=1}^{n-1}\e{-5\min\{s, n-s\}\log n}\\
	&\geq 1- n^{-4}. 
\end{align*}

For $\FII$, note that for each $U\subset [n]$, 
\begin{align*}
\sum_{k\in \US}\partial_{k}l_1(\bu^*)\stackrel{\eqref{eq:1++}, \eqref{sm}}{=}\sum_{i: T_i\in\partial \US}\left(\sum_{k\in \US\cap T_i}\psi(k; T_i, \pi_i, \bm u^*)\right)\\
\sum_{k\in \US}\partial_{k}l_2(\bu^*)\stackrel{\eqref{eq:1++}, \eqref{youwang}}{=}\sum_{i: T_i\in\partial \US}\left(\sum_{k\in \US\cap T_i}\varphi(k; T_i, \pi_i, \bm u^*)\right).
\end{align*}
Under Assumptions \plainref{ap:2}-\plainref{ap:1}, the summation (over $i$) in both settings above is over mean-zero independent random variables bounded by $M^2$. By Hoeffding's inequality, with probability at least $1-n^{-6|\US|}$, 
\begin{align*}
\bigg|\sum_{k\in \US}\partial_{k}l_{s}(\bu^*)\bigg| \leq {4}M^2\sqrt{|\US||\partial \US|\log n}\quad\quad s = 1, 2. 
\end{align*}
Taking a union bound over $U\subset [n]$ yields $\mathbb P(\FII)\geq 1-n^{-4}$.  We finish the proof of Lemma~\ref{yadongwang}.

\subsection{Proof of Theorem \plainref{thm:and}}\label{tn-t}
Denote by $l$ the log-likelihood or quasi-log-likelihood function, and $\bm w$ the corresponding marginal MLE or QMLE. Recall  we have demonstrated $\mathbb E[\nabla l(\bu^*)] = 0$ and $\nabla l(\bm w) = 0$ in the proof of Theorem \plainref{main:general}.
By the Taylor expansion of $\nabla l$ at $\bm u$, 
\begin{align}
- \nabla l(\bm u^*) = \nabla l(\bm w) - \nabla l(\bm u^*) = \cJ(\bm w, \bm u^*)(\bm w - \bm u^*),\label{zihao}
\end{align}
where 
\begin{align}
\cJ(\bm w, \bm u^*) = \int_0^1 \cH(t\bm w + (1-t)\bm u^*) \d t\label{myJ}
\end{align}
with ${\cH}(\bm u) = \nabla^2 l(\bm u)$ defined in Section~\plainref{sec:6+}. 
Note that $\cH({\bu^*})$ is a random matrix due to the multiple comparison outcomes unless $\bm w$ is the choice-one MLE or QMLE. This additional complexity makes the subsequent analysis more challenging as opposed to Luce's choice model. 

Recall $\cH^*(\bu^*) =\mathbb{E}[\cH({\bu^*})]$, the matrix $\D$ is a diagonal matrix with $\D_{ii} = -[\cH^*(\bu^*)]_{ii}$, and 
$\mathcal{L}_{\sym} = -\D^{-1/2}\cH^*(\bu^*)\D^{-1/2}$.
 Rewriting $\cJ(\bm w, \bm u^*)$ as a summation of three parts:
\begin{equation*}
	\cJ(\bm w, \bm u^*)=  \{\cJ(\bm w, \bm u^*) - \cH({\bu^*})\} + \{\cH({\bu^*}) -\cH^*(\bu^*) \} + \cH^*(\bu^*).
\end{equation*}
Substituting this into \eqref{zihao} and using $\mathcal{L}_{\sym} = -\D^{-1/2}\cH^*(\bu^*)\D^{-1/2}$ yields
\begin{align}
	&\D^{1/2}\mathcal L_\sym\D^{1/2}(\bm w - \bm u^*) = -\cH^*(\bu^*)(\bm w - \bm u^*) \label{AN_decomp} \\
	&=  \nabla l(\bm u^*) + \{\cJ(\bm w, \bm u^*) - \cH({\bu^*})\}(\bm w - \bm u^*) 
	 + \{\cH({\bu^*}) -\cH^*(\bu^*)\}(\bm w - \bm u^*).\nonumber
\end{align}
The weighted graph associated with $\mathcal L_\sym$ is connected whenever $\bw$ exists. In this case, the zero-eigenspace of $-\cH^*(\bm u^*)$ is one-dimensional and spanned by the all-ones vector $\langle 1\rangle$. 
Moreover, it can be verified using \plaineqref{wearediff} and \plaineqref{binyan} that $-\cJ(\bm w, \bm u^*)$ and $-\cH({\bu^*})$ are also weighted graph Laplacians with the same zero-eigenspace with $-\cH^*(\bm u^*)$. Since $\bw-\bu^*\in \text{span}(\langle 1\rangle)^\perp$, we can rearrange \eqref{AN_decomp} to obtain
\begin{eqnarray}\label{dpskl}
	\D^{1/2}(\bm w - \bm u^*) &=& \underbrace{{\mathcal{L}^\dagger_{\sym}} \D^{-1/2} \nabla l(\bm u^*)}_{({\rm i})} + \underbrace{{\mathcal{L}^\dagger_{\sym}} \D^{-1/2} \{\cJ(\bm w, \bm u^*) - \cH({\bu^*})\}(\bm w - \bm u^*)}_{({\rm ii})} \nonumber\\
	&& +\ \underbrace{{\mathcal{L}^\dagger_{\sym}} \D^{-1/2} \{\cH({\bu^*}) -\cH^*(\bu^*)\}(\bm w - \bm u^*)}_{({\rm iii})},
	\end{eqnarray}
where ${\mathcal{L}^\dagger_{\sym}}$ is the pseudoinverse of ${\mathcal{L}_{\sym}}$.

To analyze \eqref{dpskl}, we will use that each component of $\nabla l(\bu^*)$ can be written as a sum of mean-zero independent random variables by arranging summands from the same edge together (for example, using the $\psi$ and $\varphi$ notation in \eqref{eq:1} and \eqref{eq:231}). Therefore, each component of $\D^{-1/2} \nabla l(\bm u^*)$ is expected to converge to a normal distribution of constant order by the central limit theorem (CLT). Meanwhile, under Assumptions~\plainref{ap:deterministic}-\plainref{ap:deterministic+}, both $({\rm ii})$ and $({\rm iii})$ are higher-order terms. These heuristics are made precise by the following lemmas.

\begin{Lemma}\label{CLT:1}
Let $l$ denote the log-likelihood of the marginal MLE or the QMLE. 
Under Assumptions \plainref{ap:2}, \plainref{ap:1} and \plainref{ap:deterministic}, for any fix $k \in [n]$, 
	\begin{equation}\label{rightone}
		({\mathcal{L}^\dagger_{\sym}} \D^{-1/2} \nabla l(\bm u^*))_k \to N(0, \Sigma_{kk} ) 
	\end{equation}
in distribution as $n \to \infty$, where $ \Sigma = \D^{-1/2} \mathbb{E}[\nabla l(\bm u^*) \nabla l(\bm u^*)^\top] \D^{-1/2}.$
 \end{Lemma}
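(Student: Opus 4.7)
The plan is to identify $(\mathcal L_\sym^\dagger \D^{-1/2} \nabla l(\bm u^*))_k$ as a weighted sum of independent, mean-zero, bounded random contributions indexed by edges, and to apply a Lindeberg--Feller CLT after a Neumann-series truncation of the pseudoinverse.

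\textbf{Step 1 (decomposition into independent summands).} By Propositions \plainref{lm:1} and \plainref{lm:3} together with the identities \eqref{eq:1++}, we can write $\nabla l(\bm u^*) = \sum_{i \in [N]} \bm Y_i$, where $\bm Y_i$ is a mean-zero random vector supported on $T_i$ whose nonzero entries are $\psi(k; T_i, \pi_i, \bm u^*)$ (marginal MLE) or $\varphi(k; T_i, \pi_i, \bm u^*)$ (QMLE). Under Assumptions \plainref{ap:2}--\plainref{ap:1}, each entry of $\bm Y_i$ is bounded uniformly in $n$, and $\{\bm Y_i\}_{i\in[N]}$ are independent. Substituting,
\[X := (\mathcal L_\sym^\dagger \D^{-1/2} \nabla l(\bm u^*))_k = \sum_{i \in [N]} \langle \bm a_i, \bm Y_i\rangle,\quad \bm a_i := (\D^{-1/2}\mathcal L_\sym^\dagger e_k)\big|_{T_i},\]
a sum of independent mean-zero scalars with exact variance $e_k^\top \mathcal L_\sym^\dagger \Sigma \mathcal L_\sym^\dagger e_k$.

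\textbf{Step 2 (Neumann truncation).} Let $\bm v = \D^{1/2}\bone/\|\D^{1/2}\bone\|_2$ span $\ker(\mathcal L_\sym)$, and set $\tilde{\A} := \A - \bm v \bm v^\top$, which satisfies $\|\tilde{\A}\|_2 \leq 1-\mix$. Then
\[\mathcal L_\sym^\dagger = \sum_{j=0}^{J-1} \tilde{\A}^{\,j} - \bm v \bm v^\top + R_J,\qquad \|R_J\|_2 \leq (1-\mix)^J/\mix.\]
Choose $J \asymp (\log n)/\mix$ so that $\|R_J\|_2 = o(n^{-C})$; combined with the coordinate-wise variance bound $\|\D^{-1/2}\nabla l(\bm u^*)\|_2 = O_p(\sqrt{n})$, the contribution of $R_J$ to $X$ is $o_p(1)$. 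Replacing $\mathcal L_\sym^\dagger$ by its truncated part yields $\widetilde X := \sum_{i} \langle \widetilde{\bm a}_i, \bm Y_i\rangle$ with $\widetilde{\bm a}_i$ defined analogously using the truncated series; this is still a sum of $N$ independent bounded random scalars.

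\textbf{Step 3 (CLT for the truncated sum).} Apply the Lindeberg--Feller CLT to $\widetilde X$. The two tasks are: (a) verifying $\mathrm{Var}(\widetilde X)\to \Sigma_{kk}$, and (b) verifying the Lindeberg condition, which under the uniform boundedness of $\bm Y_i$ reduces to $\max_i \|\widetilde{\bm a}_i\|_\infty = o(1)$. For (b), combining $\|\mathcal L_\sym^\dagger\|_2 \leq 1/\mix$ with $\|\D^{-1/2}\|_2 = 1/\sqrt{\N_{n,-}}$ and the boundedness of $\bm Y_i$ yields the uniform bound $\|\widetilde{\bm a}_i\|_\infty = O(1/(\mix\sqrt{\N_{n,-}}))$, which vanishes under Assumption \plainref{ap:deterministic}. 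For (a), after truncation one has $\mathrm{Var}(\widetilde X) = e_k^\top \mathcal L_\sym^\dagger \Sigma \mathcal L_\sym^\dagger e_k + o(1)$, and one further shows this equals $\Sigma_{kk} + o(1)$. Expanding via the Neumann series and using $\|\A\|_{\max} \lesssim \sqrt{\cc}$, one controls the off-diagonal contributions of $\mathcal L_\sym^\dagger$ to the quadratic form by combinatorial path-counting: $\mix$ furnishes geometric decay in the depth $j$, while $\cc$ bounds the branching at each step.

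The main obstacle is (a), namely the identity $(\mathcal L_\sym^\dagger \Sigma \mathcal L_\sym^\dagger)_{kk} = \Sigma_{kk} + o(1)$. Operator-norm bounds are insufficient; one needs genuinely entrywise estimates on the Neumann polynomial in $\tilde{\A}$, trading $\cc$ against $\mix$. In the marginal MLE case, the information identity $\Sigma = \mathcal L_\sym$ simplifies this to proving $(\mathcal L_\sym^\dagger)_{kk} \to 1$, which is equivalent to the bulk of the spectrum of $\mathcal L_\sym$ concentrating near $1$; for the QMLE, the bookkeeping is more intricate but follows the same template, since $\Sigma$ still has the same sparsity pattern as $-\cH^*(\bm u^*)$. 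Here the coupled assumption in \plainref{ap:deterministic}, which balances $\cc$, $\mix$, $\Gamma_n^\RE$, and $\N_{n,\pm}$, is indispensable to close the estimates uniformly.
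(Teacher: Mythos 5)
Your plan is workable and rests on the same two technical pillars as the paper's argument — the truncated Neumann expansion of $\mathcal{L}_{\sym}^\dagger$ with $t_n\asymp(\log n)/\mix$ terms, and the entrywise diagonal bound $(\A^t)_{kk}\lesssim\cc$ obtained by telescoping the path sums — but it is organized differently, and the difference matters for how much work is left. The paper first splits $({\mathcal{L}^\dagger_{\sym}}\D^{-1/2}\nabla l(\bu^*))_k=(\D^{-1/2}\nabla l(\bu^*))_k+((\mathcal{L}_{\sym}^\dagger-\I)\D^{-1/2}\nabla l(\bu^*))_k$, applies the Lindeberg–Feller CLT only to the first term (whose variance is \emph{exactly} $\Sigma_{kk}$ by definition, so no variance identification is needed and the Lindeberg condition is the trivial one for an unweighted sum of $N_k$ bounded independent summands), and then kills the second term by Chebyshev, bounding its variance by $t_n^2\cdot\max_t e_k^\top\A^t(\I+\A)\A^te_k\lesssim\cc(\log n)^2/\mix^2=o(1)$ under Assumption \plainref{ap:deterministic}. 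You instead apply the CLT to the full weighted sum, which forces you to prove both a Lindeberg condition with graph-dependent weights and the convergence $(\mathcal{L}_{\sym}^\dagger\Sigma\mathcal{L}_{\sym}^\dagger)_{kk}\to\Sigma_{kk}$. Your step (b) is fine: $\cc\geq 1/\N_{n,-}$ for a connected graph, so $\sqrt{\cc}\log n/\mix\to 0$ forces $1/(\mix\sqrt{\N_{n,-}})\to 0$.

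The substantive concern is your step (a), which you correctly flag as the main obstacle but only sketch. Note that it is not actually necessary to identify the limit of the full quadratic form: once you show $\mathrm{Var}\bigl(((\mathcal{L}_{\sym}^\dagger-\I)\D^{-1/2}\nabla l(\bu^*))_k\bigr)\to 0$ — which is precisely the Chebyshev estimate above, using the same $\cc$/$\mix$ trade-off you invoke — the cross term vanishes by Cauchy–Schwarz and $(\mathcal{L}_{\sym}^\dagger\Sigma\mathcal{L}_{\sym}^\dagger)_{kk}\to\Sigma_{kk}$ follows for free, for the QMLE as well as the MLE (the paper only needs the entrywise comparison $\D^{-1/2}\mathbb E[\nabla l_2(\bu^*)\nabla l_2(\bu^*)^\top]\D^{-1/2}\preceq C(\I+\A)$, not the exact information identity). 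Also, your side remark that for the marginal MLE the claim $(\mathcal{L}_{\sym}^\dagger)_{kk}\to 1$ is ``equivalent to the bulk of the spectrum of $\mathcal{L}_{\sym}$ concentrating near $1$'' is not right: under Assumption \plainref{ap:deterministic} only $\mix$ bounded below is guaranteed, the spectrum need not concentrate near $1$ (e.g.\ for HSBM with communities), and the correct mechanism is the entrywise smallness of the diagonal of $\A^t$, i.e.\ eigenvector delocalization combined with geometric decay — which is the path-counting argument you describe. I would recommend restructuring around the additive split so that the CLT step and the remainder step decouple cleanly.
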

 \begin{Lemma} \label{CLT:2}
 Under Assumptions \plainref{ap:2}, \plainref{ap:1} and \plainref{ap:deterministic}, 
 	\begin{equation}\label{residual_2}
\lVert{\mathcal{L}^\dagger_{\sym}} \D^{-1/2} \{\cJ(\bm w, \bu^*) - \cH({\bu^*})\}(\bm w - \bm u^*)\lVert_\infty = o_p(1), 
 	\end{equation}
and $\lVert{\mathcal{L}^\dagger_{\sym}} \D^{-1/2} \{\cH({\bu^*}) -\cH^*(\bu^*)\}(\bm w - \bm u^*)\lVert_\infty = 0$ when $\bm w$ is the choice-one MLE or the QMLE. For the general marginal MLE, if we further assume Assumption~\plainref{ap:deterministic+} holds, then 
 	\begin{equation}\label{residual_3}
	 \lVert{\mathcal{L}^\dagger_{\sym}} \D^{-1/2} \{\cH({\bu^*}) -\cH^*(\bu^*)\}(\bm w - \bm u^*)\lVert_\infty = o_p(1).
\end{equation}
 \end{Lemma}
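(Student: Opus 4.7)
The lemma has two parts: statement \plaineqref{residual_2} concerns the deterministic difference $\cJ(\bw,\bu^*)-\cH(\bu^*)$ between Hessians evaluated at nearby $\bu$-values, while statement \plaineqref{residual_3} concerns the fluctuation of the random Hessian $\cH(\bu^*)$ around its mean. The plan is to handle the two separately, with the common ingredient being a Neumann-series bound on $\mathcal{L}^\dagger_\sym$ restricted to $(\D^{1/2}\langle 1\rangle)^\perp$ (each input vector lies in this subspace because $\langle 1\rangle^\top\cH=\langle 1\rangle^\top\cH^*=0$). Throughout I condition on $\FI\cap\FII$, which by Lemma~\ref{yadongwang} and Theorem~\plainref{main:general} holds with probability $\geq 1-n^{-3}$ and delivers the uniform consistency $\|\bw-\bu^*\|_\infty\lesssim\Gamma_n^\RE$.

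For \plaineqref{residual_2}, I would write $\cJ(\bw,\bu^*)-\cH(\bu^*)=\int_0^1[\cH(\bu^*+t(\bw-\bu^*))-\cH(\bu^*)]\,dt$ and differentiate \plaineqref{wearediff}--\plaineqref{binyan} in $\bu$. Since each entry of $\cH$ depends on $\bu$ only through the values at vertices appearing in the edges that already support that entry, a mean-value estimate gives $|\Delta_{kk'}|\lesssim M\N_{kk'}\|\bw-\bu^*\|_\infty$ and $|\Delta_{kk}|\lesssim\N_k\|\bw-\bu^*\|_\infty$, with $\Delta:=\cJ(\bw,\bu^*)-\cH(\bu^*)$. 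Summing over $k'$ and using $\sum_{k'}\N_{kk'}\leq(M-1)\N_k$ yields $|(\Delta(\bw-\bu^*))_k|\lesssim \N_k(\Gamma_n^\RE)^2$, hence $\|\D^{-1/2}\Delta(\bw-\bu^*)\|_\infty\lesssim\sqrt{\N_{n,+}}(\Gamma_n^\RE)^2$. To convert this into an $\ell_\infty$ bound after applying $\mathcal{L}^\dagger_\sym$, I will use the Neumann decomposition $\mathcal{L}^\dagger_\sym=\sum_{t=0}^{T-1}\A^t+\A^T\mathcal{L}^\dagger_\sym$ with $T\asymp\log n/\mix$: the tail enjoys $\|\A^T\mathcal{L}^\dagger_\sym\|_2\leq(1-\mix)^T/\mix=O(1/(n\mix))$ by the spectral gap, while the finite-order iterates $\A^t\D^{-1/2}\Delta(\bw-\bu^*)$ are to be controlled entrywise via $\sum_{k'}|(\A^t)_{kk'}|$ interpreted as weighted length-$t$ walks, producing an overall scaling compatible with $\log n\cdot(\Gamma_n^\RE)^2\N_{n,+}/(\sqrt{\N_{n,-}}\mix)\to 0$ in Assumption~\plainref{ap:deterministic}.

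Statement \plaineqref{residual_3} is immediate for the choice-one MLE and the QMLE: with $y_i=1$ the inner summation in \plaineqref{wearediff} collapses to $j=1$ and $\sum_{t\geq 1}\e{u_{\pi_i(t)}}=\sum_{t\in T_i}\e{u_t}$ is $\pi_i$-independent, while \plaineqref{binyan} involves no $\pi_i$ at all; hence $\cH(\bu^*)=\cH^*(\bu^*)$ pointwise and the residual vanishes identically. For the general marginal MLE I will invoke the leave-one-out MLE $\bw^{(-k)}$ of \plaineqref{leave_one_out_likelihood}, which is independent of the comparison outcomes on edges through $k$. Writing $(\cH(\bu^*)-\cH^*(\bu^*))(\bw-\bu^*)=(\cH(\bu^*)-\cH^*(\bu^*))(\bw^{(-k)}-\bu^*)+(\cH(\bu^*)-\cH^*(\bu^*))(\bw-\bw^{(-k)})$, I would bound the first piece by a conditional concentration argument that exploits the independence after conditioning on edges disjoint from $k$, and the second by a perturbation estimate on $\|\bw-\bw^{(-k)}\|_\infty$ that uses the leave-one-out spectral parameter $\lambda^{\leave}_2$ in Assumption~\plainref{ap:deterministic+}; uniform consistency of $\bw^{(-k)}$ is obtained by running the chaining argument of Theorem~\plainref{main:general} on the subhypergraph, whose RE property is preserved up to constants. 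When $\lambda^{\leave}_2$ is not directly controlled, Lemma~\plainref{lastone} (via Assumption~\plainref{ap:deterministic++}) suffices.

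The main technical obstacle is sharpening $\|\mathcal{L}^\dagger_\sym v\|_\infty$ beyond the naive bound $\|v\|_2/\mix$, which loses a $\sqrt{n}$ factor that Assumption~\plainref{ap:deterministic} cannot absorb: the Neumann truncation must genuinely exploit both the signed-Laplacian structure of $\Delta$ (whose row sums vanish, so that the first few iterates benefit from cancellation governed by $\cc$) and the spectral gap $\mix$ (which makes the tail polynomially small in $n$). A secondary difficulty, for the leave-one-out analysis on deterministic graphs, is that conventional symmetry-based perturbation arguments do not directly apply; here Lemma~\plainref{lastone} plays the decisive role of transferring information between the full and leave-one-out spectra so that the perturbation bound on $\bw-\bw^{(-k)}$ can be propagated through the remainder analysis.
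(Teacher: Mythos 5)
Your plan for \plaineqref{residual_2} coincides with the paper's: the Lipschitz/mean-value bound on the entries of $\cJ(\bw,\bu^*)-\cH(\bu^*)$ giving $\|\D^{-1/2}\{\cJ(\bw,\bu^*)-\cH(\bu^*)\}(\bw-\bu^*)\|_\infty\lesssim(\Gamma_n^\RE)^2\sqrt{\N_{n,+}}$, followed by a Neumann truncation of $\mathcal L^\dagger_\sym$ at $t\asymp\log n/\mix$ with the head iterates controlled through $\|\A^t\|_{\infty\to\infty}\lesssim\sqrt{\N_{n,+}/\N_{n,-}}$ (the weighted-walk telescoping) and the tail killed by the spectral gap, is exactly the argument in the paper (Lemma \ref{CLT:3} plus the computation in Section \ref{laile}). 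One small correction: in the paper the head iterates are bounded by taking absolute values and using only the nonnegativity and row-sum structure of $\A$; no cancellation "governed by $\cc$" is used there ($\cc$ enters only in the proof of Lemma \ref{CLT:1}). Your observation that the choice-one MLE and QMLE cases of \plaineqref{residual_3} are trivial because the Hessian is outcome-independent is also the paper's argument.

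The gap is in your treatment of the general marginal MLE. You propose to establish uniform consistency of the leave-one-out estimator by "running the chaining argument of Theorem~\plainref{main:general} on the subhypergraph, whose RE property is preserved up to constants." For a deterministic, possibly heterogeneous graph sequence this preservation is not automatic and can fail outright: as the paper notes just before Lemma~\plainref{lastone}, if every edge containing some $k'$ also contains $k$, then removing $k$ disconnects $k'$, so the leave-one-out hypergraph need not even be connected, let alone RE with comparable constants. The paper avoids this entirely by defining $\widehat{\bu}^{(-k)}$ as a \emph{constrained} maximizer of $l_1^{(-k)}$ over the compact convex set $\mathcal S_k=\{\bu:\langle 1\rangle^\top\bu=-u_k^*,\ \|\bu-\bu^*_{-k}\|_\infty\leq 1\}$, so that existence is free and no consistency of $\widehat{\bu}^{(-k)}$ is needed; the quantity actually controlled is $\|\widehat{\bu}^{(-k)}-\widehat{\bu}_{-k}\|_2$, obtained from the first-order optimality inequality for convex problems and a strong-convexity bound in terms of $\lambda_2^\leave$ (Lemma~\ref{lm:tired}). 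Relatedly, your plan to use an $\ell_\infty$ perturbation bound on $\bw-\bw^{(-k)}$ does not pair correctly with the concentration available for the rows of $\cH(\bu^*)-\cH^*(\bu^*)$: the paper pairs the $\ell_2$ norm of the row (which Hoeffding bounds by $\sqrt{N_k\log n}$ after bounding each entry by $\sqrt{\N_{kk'}\log n}$) with the $\ell_2$ norm of the perturbation via Cauchy--Schwarz; an $\ell_\infty$ perturbation bound would force you through the $\ell_1$ norm of the row, which is too large, and an entrywise leave-one-out bound is precisely the heavy machinery this truncated-error approach is designed to avoid. You would need to restate this step with the $\ell_2$ pairing and the constrained leave-one-out estimator for the argument to close.
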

Combining \eqref{rightone}-\eqref{residual_3} and applying Slutsky's theorem yields the desired CLT result. For the reader's convenience, we sketch the proofs of Lemmas \ref{CLT:1}-\ref{CLT:2}; the technical details are given in Section \ref{sec:s5}. Lemma \ref{CLT:1} follows from an application of Chebyshev's inequality that involves entrywise estimates of the moments of $\mathcal A$. This step requires the upper bound on $\cc$ in Assumption~\plainref{ap:deterministic}. Lemma \ref{CLT:2} is more technical and based on a truncated error analysis associated with the Neumann series expansion of $\mathcal{L}^\dagger_{\sym}$. Specifically, for \eqref{residual_3}, to deal with the extra randomness of the comparison outcomes, we further apply a perturbation analysis similar to the leave-out methods developed in \cite{gao2023uncertainty,fan2022ranking}. Nevertheless, this additional tool will not be needed for Luce's choice model (which is the model considered in \cite{gao2023uncertainty,fan2022ranking}). This sets our method apart from theirs. 

To further illustrate, we sketch the proof of Lemma \ref{CLT:2}, for which we need the following estimates:

\begin{Lemma}\label{CLT:3}
	Under Assumptions \plainref{ap:2}, \plainref{ap:1} and \plainref{ap:deterministic}, 
	\begin{align}\label{residual_21}
		\|\D^{-1/2} \{\cJ(\bw, \bu^*) - \cH({\bu^*})\}(\bm w - \bm u^*)\|_\infty = O_p\left( (\Gamma^{\RE}_n)^2  \sqrt{\N_{n,+}}   \right).
	\end{align}
	For the general marginal MLE, if we further assume Assumption~\plainref{ap:deterministic+} holds, then 
	\begin{align}\label{residual_31}
		\|\D^{-1/2} \{\cH({\bu^*}) -\cH^*(\bu^*)\}(\bm w - \bm u^*)\|_\infty = O_p\Big( \frac{\Gamma_n^{\RE} \N_{n, +}\sqrt{\log n} + \sqrt{\N_{n, +}}\log n}{\lambda_2^\leave}\Big).
	\end{align}
\end{Lemma}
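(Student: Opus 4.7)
The plan is to prove the two bounds separately, leveraging the Laplacian structure of the (random) Hessians in \plaineqref{wearediff} and \plaineqref{binyan}, together with the uniform consistency $\|\bw-\bu^*\|_\infty\lesssim\Gamma^\RE_n$ delivered by Theorem \plainref{main:general}.

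For \plaineqref{residual_21}, I would start from the integral representation $\cJ(\bw,\bu^*)-\cH(\bu^*)=\int_0^1[\cH(t\bw+(1-t)\bu^*)-\cH(\bu^*)]\d t$. Each summand of $\{\cH(\bm u)\}_{kk'}$ in \plaineqref{wearediff}--\plaineqref{binyan} is a smooth function of $\bm u$ with bounded partial derivatives under Assumptions \plainref{ap:2}--\plainref{ap:1}, yielding $|\{\cJ-\cH(\bu^*)\}_{kk'}|\lesssim N_{kk'}\Gamma^\RE_n$ for $k\neq k'$ and $|\{\cJ-\cH(\bu^*)\}_{kk}|\lesssim N_k\Gamma^\RE_n$. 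Using the row-sum identity $\sum_{k'}\{\cJ-\cH(\bu^*)\}_{kk'}=0$ (both matrices being weighted Laplacians), the $k$-th entry of the product is $\sum_{k'\neq k}\{\cJ-\cH(\bu^*)\}_{kk'}((w_{k'}-u_{k'}^*)-(w_k-u_k^*))$, which is controlled by $\|\bw-\bu^*\|_\infty\sum_{k'\neq k}N_{kk'}\lesssim N_k(\Gamma^\RE_n)^2$ since $\sum_{k'\neq k}N_{kk'}\le(M-1)N_k$. Multiplication by $\D_{kk}^{-1/2}\asymp N_k^{-1/2}$ produces the target rate $\sqrt{N_{n,+}}(\Gamma^\RE_n)^2$.

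For \plaineqref{residual_31}, the additional difficulty is the randomness in $\cH(\bu^*)$. I would introduce the leave-one-out decomposition $\bw-\bu^*=(\bw-\bw^{(-k)})+(\bw^{(-k)}-\bu^*)$, where $\bw^{(-k)}$ is the constrained maximizer of $l_1^{(-k)}$ from \plaineqref{leave_one_out_likelihood}. The first piece is independent of the $k$-th row of $\cH(\bu^*)-\cH^*(\bu^*)$ (which only sees outcomes of edges through $k$); conditioning on $\bw^{(-k)}$ and applying Bernstein's inequality to $\sum_{i:k\in T_i}\sum_{k'\in T_i\setminus\{k\}}\{\cH-\cH^*\}_{kk'}^{(i)}(v_{k'}-v_k)$ with $v=\bw^{(-k)}-\bu^*$ yields, after division by $\sqrt{N_k}$, a contribution of order $\Gamma^\RE_n\sqrt{\log n}$, using $\|v\|_\infty\lesssim\Gamma^\RE_n$ (Theorem \plainref{main:general} applied to the leave-one-out hypergraph). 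The perturbation piece $\bw-\bw^{(-k)}$ is handled by Taylor-expanding $\nabla l_1^{(-k)}$: since $\nabla l_1^{(-k)}(\bw^{(-k)})=\nabla l_1(\bw)=0$, one has
\[
\cH^{(-k)}(\widetilde{\bu})(\bw-\bw^{(-k)})=-\nabla[l_1-l_1^{(-k)}](\bw)
\]
at some $\widetilde{\bu}$ between $\bw$ and $\bw^{(-k)}$, so inverting $\cH^{(-k)}(\widetilde{\bu})$ on the orthogonal complement of $\langle 1\rangle$ gives $\|\bw-\bw^{(-k)}\|_\infty\lesssim\|\nabla[l_1-l_1^{(-k)}](\bw)\|_2/\lambda_2^\leave$. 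The residual gradient is supported on vertices sharing an edge with $k$, and each component is split into its deterministic drift at $\bu^*$ (bounded by $O(N_{kk'}\Gamma^\RE_n)$ via Lipschitz continuity) and its stochastic fluctuation (bounded by $O(\sqrt{N_{kk'}\log n})$ via Bernstein concentration); summing in $\ell_2$, inverting by $\lambda_2^\leave$, and then bounding $\sum_{k'}|(\cH-\cH^*)_{kk'}|\lesssim N_k$ followed by the $\D_{kk}^{-1/2}$ scaling yields the advertised rate $(\Gamma^\RE_n N_{n,+}\sqrt{\log n}+\sqrt{N_{n,+}}\log n)/\lambda_2^\leave$.

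The main technical obstacle lies in the perturbation piece of the second bound. Two points require care: (i) justifying that the leave-one-out Hessian $\cH^{(-k)}(\widetilde{\bu})$ at the random intermediate point retains a spectral gap comparable with $\lambda_2^\leave$ on the subspace orthogonal to $\langle 1\rangle$, which I would establish by a matrix-concentration argument for $\cH^{(-k)}$ around $\cH^{*(-k)}$ combined with the uniform consistency $\|\widetilde{\bu}-\bu^*\|_\infty\lesssim\Gamma^\RE_n$; and (ii) carefully balancing the deterministic-bias and stochastic-fluctuation contributions of $\nabla[l_1-l_1^{(-k)}](\bw)$ so that the resulting $\ell_\infty$ bound on $\bw-\bw^{(-k)}$, once amplified by $\sqrt{N_{n,+}}$, matches the stated rate. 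The bounded edge size from Assumption \plainref{ap:1} is crucial throughout for keeping the underlying Lipschitz and subexponential constants controlled.
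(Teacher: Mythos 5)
Your treatment of \eqref{residual_21} is essentially the paper's proof: the Lipschitz bound on the Hessian summands, the Laplacian row-sum identity reducing the $k$th entry to $\sum_{k'\neq k}[\cJ-\cH(\bu^*)]_{kk'}\{(w_k-u_k^*)-(w_{k'}-u_{k'}^*)\}$, the bound $\sum_{k'\neq k}N_{kk'}\lesssim N_k$, and the $\D_{kk}^{-1/2}\asymp N_k^{-1/2}$ rescaling combined with $\|\bw-\bu^*\|_\infty\lesssim\Gamma_n^\RE$ from Theorem \plainref{main:general}. No issues there.

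For \eqref{residual_31} your architecture (leave-one-out surrogate, conditional concentration for the independent piece, perturbation bound for $\bw-\bw^{(-k)}$ via the inverted leave-one-out Hessian) is the same as the paper's, and your bound $\|\nabla[l_1-l_1^{(-k)}](\bw)\|_2\lesssim\Gamma_n^\RE N_{n,+}+\sqrt{N_{n,+}\log n}$ matches Lemma \ref{lm:tired}. However, the final assembly of the perturbation piece has a genuine quantitative gap: you pair $\|\bw-\bw^{(-k)}\|$ with the deterministic row bound $\sum_{k'}|[\cH(\bu^*)-\cH^*(\bu^*)]_{kk'}|\lesssim N_k$. Since the only control you have on $\bw-\bw^{(-k)}$ is in $\ell_2$, this H\"older pairing gives, after the $\D_{kk}^{-1/2}$ scaling, a contribution of order $\sqrt{N_{n,+}}\cdot(\Gamma_n^\RE N_{n,+}+\sqrt{N_{n,+}\log n})/\lambda_2^\leave=(\Gamma_n^\RE N_{n,+}^{3/2}+N_{n,+}\sqrt{\log n})/\lambda_2^\leave$, which exceeds the advertised rate by a factor of $\sqrt{N_{n,+}/\log n}\to\infty$. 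The paper instead applies Cauchy--Schwarz with the \emph{stochastic} $\ell_2$ row norm: by Hoeffding, $|[\cH(\bu^*)-\cH^*(\bu^*)]_{kk'}|\lesssim\sqrt{N_{kk'}\log n}$ entrywise, hence $\|[\cH(\bu^*)-\cH^*(\bu^*)]_{k\cdot}\|_2\lesssim\sqrt{N_k\log n}$, and pairing this with $\|\widehat\bu^{(-k)}-\widehat\bu_{-k}\|_2$ recovers exactly $(\Gamma_n^\RE N_{n,+}\sqrt{\log n}+\sqrt{N_{n,+}}\log n)/\lambda_2^\leave$ after rescaling. You need this sharper pairing; without it Assumption~\plainref{ap:deterministic+} is not strong enough to make your remainder vanish.

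Two smaller points. First, you assume $\nabla l_1^{(-k)}(\bw^{(-k)})=0$; the paper deliberately defines $\widehat\bu^{(-k)}$ as a \emph{constrained} maximizer over a compact set $\mathcal S_k$ (whose existence is automatic) and works with the variational inequality $(\widehat\bu^{(-k)}-\bu)^\top\nabla l_1^{(-k)}(\widehat\bu^{(-k)})\geq 0$, precisely because existence and interiority of an unconstrained leave-one-out MLE is not guaranteed uniformly over $k$. Second, invoking Theorem \plainref{main:general} on each leave-one-out hypergraph to get $\|\bw^{(-k)}-\bu^*_{-k}\|_\infty\lesssim\Gamma_n^\RE$ would require re-verifying the RE condition for all $n$ punctured graphs; the paper avoids this by the triangle inequality $\|\widehat\bu^{(-k)}-\bu^*_{-k}\|_\infty\leq\|\widehat\bu^{(-k)}-\widehat\bu_{-k}\|_2+\|\widehat\bu-\bu^*\|_\infty$. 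Both points are repairable, but the lossy H\"older step above is the one that actually prevents you from reaching the stated rate.
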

Assuming Lemma~\ref{CLT:3} holds, we demonstrate how to prove Lemma~\ref{CLT:2}.
Note that a crude $\ell_2$-$\ell_\infty$ estimate would fail badly since
 \begin{align*}
&\|\mathcal{L}^\dagger_{\sym} \D^{-1/2} \{\cJ(\bw, \bu^*) - \cH({\bu^*})\}(\bm w - \bm u^*)\|_\infty\\
\leq&\ \|\mathcal{L}^\dagger_{\sym} \D^{-1/2} \{\cJ(\bw, \bu^*) - \cH({\bu^*})\}(\bm w - \bm u^*)\|_2\\
\leq& \ \|\mathcal{L}^\dagger_{\sym}\|_2 \|\D^{-1/2} \{\cJ(\bw, \bu^*) - \cH({\bu^*})\}(\bm w - \bm u^*)\|_2\\
\leq&\  \sqrt{n}\|\mathcal{L}^\dagger_{\sym}\|_2\|\D^{-1/2} \{\cJ(\bw, \bu^*) - \cH({\bu^*})\}(\bm w - \bm u^*)\|_\infty. 
\end{align*}
Since $\|\mathcal{L}^\dagger_{\sym}\|_2\gtrsim 1$, the last term may not converge if loosely using the bound in Lemma \ref{CLT:3}.

To address this issue, we require a sharper estimate. Write ${\mathcal{L}^\dagger_{\sym}} = \sum_{t=0}^{\infty} (\A - \cP_1)^t - \cP_1$, where $\cP_1$ is the projection operator onto the zero-eigenspace of $\mathcal{L}^\dagger_{\sym}$. 
 One key observation is that 
 \begin{equation}\label{tobeu}
 	\cP_1 \D^{-1/2} \{\cJ(\bw, \bu^*) - \cH({\bu^*})\}(\bm w - \bm u^*) = \cP_1 \D^{-1/2} \{\cH({\bu^*}) -\cH^*(\bu^*)\}(\bm w - \bm u^*) = \bm 0,
 \end{equation}
which can be seen by noting that both $\D^{-1/2} \{\cJ(\bw, \bu^*) - \cH({\bu^*})\}\D^{-1/2}$ and $ \D^{-1/2} \{\cH({\bu^*}) -\cH^*(\bu^*)\} \D^{-1/2}$ are symmetric with their zero-eigenspace containing the range of $\cP_1$.  
Consequently,
\begin{eqnarray*}
&&{\mathcal{L}^\dagger_{\sym}} \D^{-1/2} \{\cJ(\bw, \bu^*) - \cH({\bu^*})\}(\bm w - \bm u^*)\\
&=& \sum_{t=0}^{\infty} (\A - \cP_1)^t  \D^{-1/2} \{\cJ(\bw, \bu^*) - \cH({\bu^*})\}(\bm w - \bm u^*).
\end{eqnarray*}

We conduct a truncated error analysis to control the series on the right-hand side. Under Assumption~\plainref{ap:deterministic}, $\|\A - \cP_1\|_2 = \max\{1-\lambda_2(\mathcal L_\sym), \lambda_n(\mathcal L_\sym)-1\} = 1-\mix<1$. Define 
\begin{align}
t_n = \frac{4M\log n}{1-\|\A - \cP_1\|_2} = \frac{4M\log n }{\mix}.\label{mytn}
\end{align}
As a result, 
\begin{align*}
\left\|\sum_{t = t_n}^{\infty} (\A- \cP_1)^t\right\|_2\leq\frac{\lVert \A- \cP_1\lVert_2^{t_n}}{1-\|\A-\cP_1\|_2}\leq \frac{n^{-4M}}{\mix}\leq n^{-2M},
\end{align*}
where the last step follows from Assumption~\plainref{ap:deterministic} for all sufficiently large $n$:
\begin{align*}
\frac{\log n\times (\Gamma_n^\RE)^2\N_{n,+}}{\mix\sqrt{\N_{n,-}}}\to 0&\Longrightarrow\frac{1}{\mix}\leq \frac{1}{(\Gamma_n^\RE)^2}\leq n^{2M}.
\end{align*}
For convenience, let $\Delta \cH \in\left\{\cJ(\bw, \bu^*) - \cH({\bu^*}), \cH({\bu^*}) - \cH^*(\bu^*)\right\}$.
Writing
\begin{align*}
&{\mathcal{L}^\dagger_{\sym}} \D^{-1/2} \Delta \cH(\bm w - \bm u^*)\\
 =&\ \underbrace{\sum_{t=0}^{t_n-1 } (\A - \cP_1)^t  \D^{-1/2} \Delta \cH (\bm w - \bm u^*)}_{({\rm iv})} + \underbrace{\sum_{t=t_n}^{\infty} (\A - \cP_1)^t  \D^{-1/2} \Delta \cH (\bm w - \bm u^*)}_{({\rm v})},
\end{align*}
we bound $\lVert (\rm v) \lVert_\infty$ and $\lVert (\rm iv) \lVert_\infty$ separately. For $\lVert (\rm v) \lVert_\infty$, we have
\begin{align*}
	&\lVert ({\rm v}) \lVert_\infty\leq\sqrt{n} \left\lVert \sum_{t = t_n}^{\infty} (\A- \cP_1)^t \right\lVert_2 \lVert \D^{-1/2} \Delta \cH(\bm w - \bm u^*) \lVert_\infty\\
	&\stackrel{\eqref{residual_21}, \eqref{residual_31}}{\leq} \begin{cases}
		n^{-M}(\Gamma^{\RE}_n)^2  \sqrt{\N_{n,+}},&\text{$\bm w\in\{\text{choice-one MLE, QMLE}\}$}\\
		\\
		n^{-M} \times\frac{\Gamma_n^{\RE} \N_{n, +}\sqrt{\log n} + \sqrt{\N_{n, +}}\log n}{\lambda_2^\leave},&\text{others}. 
	\end{cases}
\end{align*}
Meanwhile, 
\begin{align}
	&\lVert ({\rm iv}) \lVert_\infty \leq \sum_{t=0}^{t_n-1} \lVert(\A - \cP_1)^t  \D^{-1/2} \Delta \cH(\bm w - \bm u^*) \lVert_\infty \nonumber \\
	&\lesssim {t_n}  \sqrt{\frac{\N_{n,+}}{\N_{n,-}}} \lVert \D^{-1/2} \Delta \cH(\bm w - \bm u^*) \lVert_\infty \label{tobejustified}\\
	&\stackrel{\eqref{residual_21}, \eqref{residual_31}}{\lesssim} {t_n}  \sqrt{\frac{\N_{n,+}}{\N_{n,-}}} \times\begin{cases}
		(\Gamma^{\RE}_n)^2  \sqrt{\N_{n,+}},&\text{$\bm w\in\{\text{choice-one MLE, QMLE}\}$}\\
		\\
		\frac{\Gamma_n^{\RE} \N_{n, +}\sqrt{\log n} + \sqrt{\N_{n, +}}\log n}{\lambda_2^\leave},&\text{others}.	\end{cases}\nonumber
\end{align}
The second inequality \eqref{tobejustified} is far from obvious and requires computing $(\A - \cP_1)^t$, which is proved in Section~\ref{laile}. Based on Assumptions~\plainref{ap:deterministic}-\plainref{ap:deterministic+}, we conclude that both $\lVert ({\rm iv}) \lVert_\infty$ and  $\lVert ({\rm v}) \lVert_\infty$ are $o_p(1).$ We finish the proof of Theorem \plainref{thm:and}.

\section{Proofs of Lemmas Related to Uniform Consistency}\label{sec:s4} 
In this section, we provide the proof of Lemma \plainref{main:REs}, which verifies that both NURHM and HSBM are RE under appropriate conditions.

For any admissible sequence $\{A_j\}_{j\in [J]}$, by the definition of modified Cheeger constants in Definition~\plainref{def:ch}, 
\begin{align}
\sum_{j=1}^{J-1}\sqrt{\frac{\log n}{h_{\HH_n}(A_j)}}&\leq (J-1)\sqrt{\frac{\log n}{h_{\HH_n}}}.\label{kpl}
\end{align}
To establish the RE property for the hypergraph sequences of interest, it suffices to show the right-hand side of \eqref{kpl} converges to zero uniformly regardless of the choice of $\{A_j\}_{j\in [J]}$. We approach this by first obtaining a lower bound on $h_{\HH_n}$ and then a uniform upper bound on $J$, and we proceed to show this case by case. For convenience, the following notation will be used throughout the proof:
\begin{align*}
a_j &:= |A_j|& \Delta a_j := a_{j+1}-a_j\\
\partial A_{j\subseteq j+1} &:=\{e\in \partial A_j: e\subseteq A_{j+1}\}\subseteq \partial A_j.
\end{align*}
Without loss of generality, we assume $\Delta a_j\leq n-\Delta a_j$, as the opposite case can occur at most once in any admissible sequence, having an asymptotically negligible impact on the estimate on $J$. 

\subsection{NURHM}\label{huapangzi}

The proof consists of two steps. 

\smallskip

\textbf{\underline{Step I}: Lower bound on $h_{\HH_n}$.}

\smallskip
 
For $2\leq \ms\leq M$ and $U\subset [n]$ with $|U| = \st$, the number of edges in $\partial U$ with size $\ms$ is a sum of $\Lu(n, \st, \ms)$ independent Bernoulli random variables with probabilities bounded between $p_n^{(\ms)}$ and $q_n^{(\ms)}$, where $\Lu(n, \st, \ms) = {n\choose \ms} - {\st\choose \ms} - {{n-\st}\choose \ms}.$
Thus, 
\begin{align}
\sum_{\ms=2}^M \Lu(n, \st, \ms)p_n^{(\ms)}=:\mu_{n,-}(\st)\leq\mathbb E[|\partial U|]\leq \mu_{n,+}(\st) := \sum_{\ms=2}^M \Lu(n, \st, \ms)q_n^{(\ms)}.\label{pangermu}
\end{align} 
For $U_2\subset U_1\subseteq [n]$ with $|U_1| = \st_1$ and $|U_2|= \st_2$, the edges in $\partial U_2$ contained in $U_1$ with size $\ms$ are a subset of edges in $\partial (U_1\setminus U_2)$. Hence, 
\begin{align}
\mathbb E[|\{e\in\partial U_2: e\subset U_1\}|]\leq\mathbb E[|\partial (U_1\setminus U_2)|]\leq\mu_{n,+}(\st_1-\st_2).\label{married}
\end{align}
Both $\mu_{n,-}(\st)$ and $\mu_{n,+}(\st)$ will appear in the subsequent analysis. The following estimate on $\Lu(n, \st, \ms)$ will be useful and its proof is deferred to the end of the section. 
\begin{Lemma}\label{lemma:ln}
For $\st\geq \ms$, the following inequalities hold:
\begin{align}
\frac{\ms\min\{\st, n-\st\}}{n}{n\choose \ms}\lesssim \Lu(n, \st, \ms)\leq \frac{\ms\min\{\st, n-\st\}}{n-\ms+1}{n\choose \ms}.\label{lu1}
\end{align}
\end{Lemma}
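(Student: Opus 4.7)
The plan is to derive both inequalities directly from the identity
$$\Lu(n, \st, \ms) = {n\choose\ms} - {\st\choose\ms} - {{n-\st}\choose\ms}.$$
By the symmetry $\Lu(n, \st, \ms) = \Lu(n, n-\st, \ms)$, I would assume without loss of generality that $\st \leq n-\st$, so that $\min\{\st, n-\st\} = \st$. Throughout, $\ms \leq M$ is a constant by Assumption~\plainref{ap:1}, so implicit constants are allowed to depend on $M$.

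For the upper bound, I would drop the nonnegative term ${\st\choose\ms}$ to obtain $\Lu(n, \st, \ms) \leq {n\choose\ms} - {{n-\st}\choose\ms}$, and then factor the ratio
$${{n-\st}\choose\ms}\Big/{n\choose\ms} = \prod_{i=0}^{\ms-1}\frac{n-\st-i}{n-i} = \prod_{i=0}^{\ms-1}\left(1 - \frac{\st}{n-i}\right).$$
Applying the elementary inequality $1 - \prod_i(1-x_i) \leq \sum_i x_i$, valid for $x_i \in [0,1]$ (a one-line induction or the union-bound interpretation), and bounding each $\st/(n-i) \leq \st/(n-\ms+1)$ for $i \leq \ms-1$ gives the desired estimate $\Lu(n, \st, \ms) \leq \frac{\ms\st}{n-\ms+1}{n\choose\ms}$.

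For the lower bound, I would use the inclusion $\Lu(n, \st, \ms) \geq \st{{n-\st}\choose{\ms-1}}$, which counts the $\ms$-subsets of $[n]$ having exactly one element in a fixed $\st$-subset (so they necessarily straddle $U$ and $U^\complement$). A direct computation of the ratio against the target yields
$$\frac{\st{{n-\st}\choose{\ms-1}}}{(\ms\st/n){n\choose\ms}} = \prod_{i=1}^{\ms-1}\frac{n-\st-i+1}{n-i}.$$
Each factor is monotone in $i$ and minimized at $i = \ms-1$, giving the lower bound $(n-\st-\ms+2)/(n-\ms+1) \geq 1/2 - (M-2)/n \geq 1/4$ whenever $\st \leq n/2$ and $n \geq 4M$. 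The product of at most $M-1$ such factors is then bounded below by $4^{-(M-1)}$, a constant depending only on $M$. The finitely many small-$n$ cases (where $n < 4M$) can be absorbed into the hidden constant by a compactness argument since $\Lu(n, \st, \ms) > 0$ whenever $\ms \leq \st \leq n - 1$.

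The argument is routine; the only thing to keep track of is that all implicit constants depend only on $M$. No step is a genuine obstacle, though the lower bound requires slightly more care than the upper bound because one must verify that the product of ratios stays bounded below as $\st$ grows up to $n/2$.
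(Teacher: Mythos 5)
Your proof is correct. The upper bound is essentially the paper's argument in slightly different clothing: the paper factors out ${n\choose \ms}$, bounds the bracketed term by $1-\bigl(1-\st/(n-\ms+1)\bigr)^{\ms}$ and invokes $(1-x)^{\ms}\geq 1-\ms x$, while you apply the equivalent inequality $1-\prod_i(1-x_i)\leq\sum_i x_i$ directly to the exact product of ratios; both yield the stated constant $\ms\st/(n-\ms+1)$ exactly. Your lower bound, however, is genuinely different. The paper bounds the bracket below by $1-x^{\ms}-(1-x)^{\ms}$ with $x=\st/n$ and then argues analytically that $[1-x^{\ms}-(1-x)^{\ms}]/[\ms\min\{x,1-x\}]$ is uniformly bounded away from zero on $[0,1]$ for each fixed $\ms$, whereas you use the purely combinatorial inclusion $\Lu(n,\st,\ms)\geq \st{{n-\st}\choose{\ms-1}}$ (subsets meeting $U$ in exactly one point) together with the identity $(\ms/n){n\choose\ms}={n-1\choose\ms-1}$, reducing everything to a product of $\ms-1$ ratios each bounded below by an absolute constant for $n\gtrsim M$. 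Your route is more self-contained and makes the dependence of the implicit constant on $M$ completely explicit ($4^{-(M-1)}$), at the cost of a short case analysis for small $n$; the paper's route is shorter to state but leaves the uniform positivity of the analytic ratio as an unproved (though true) claim. Both are valid, and your handling of the WLOG reduction via the symmetry $\st\leftrightarrow n-\st$ is fine since neither the identity for $\Lu$ nor your counting argument actually requires $\st\geq\ms$.
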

Consequently, combining \eqref{lu1}, \eqref{pangermu}, and \plaineqref{myxis} yields
\begin{align}
\min\{\st, n-\st\}\xi_{n, -}\lesssim\mu_{n,-}(\st)\leq \mu_{n,+}(\st)\lesssim \min\{\st, n-\st\}\xi_{n, +},\label{jingyue}
\end{align}
where the implicit constant depends on $M$ only. 

By the Chernoff bound \citep{MR0057518}, for all $U\subset [n]$ with $|U| = \st\leq n/2$ and all sufficiently large $n$, the following inequalities hold with probability at least $1-n^{-5\st}$, 
\begin{align}
&\frac{1}{2}\mu_{n,-}(\st)\leq(1-\delta)\mu_{n,-}(\st)\leq(1-\delta)\mathbb E[|\partial U|]\leq |\partial U|\nonumber\\
&|\partial U| \leq (1+\delta)\mathbb E[|\partial U|]\leq (1+\delta)\mu_{n,+}(\st)\leq 2\mu_{n,+}(\st)& \delta = \sqrt{\frac{10\st\log n}{\mathbb E[|\partial U|]}},\label{lps}
\end{align}
where we used that
\begin{align*}
\delta = \sqrt{\frac{10\st\log n}{\mathbb E[|\partial U|]}}\stackrel{\eqref{pangermu}}{\leq} \sqrt{\frac{10\st\log n}{\mu_{n,-}(\st)}}\stackrel{\eqref{jingyue}}{\lesssim} \sqrt{\frac{\log n}{\xi_{n,-}}}\xrightarrow{\plaineqref{2022}} 0.
\end{align*}
Taking a union bound over $U$ in \eqref{lps} yields a uniform bound for all $U$ with $|U|\leq n/2$ (there are ${n\choose \st}\leq n^{\st}$ different subsets of $[n]$ with size $\st$) that holds with probability at least {$1-n^{-4}$}. 
In what follows we condition on this event.
On this event, the modified Cheeger constant $h_{\HH_n}$ can be bounded from below as 
\begin{align}
h_{\HH_n}\geq\min_{1\leq \st\leq n/2}\frac{\mu_{n,-}(\st)}{2\st}\stackrel{\eqref{jingyue}}{\gtrsim}\xi_{n,-}.\label{wenlu}
\end{align}
In particular, for all $U\subset [n]$, 
\begin{align}
|\partial U|\gtrsim \min\{|U|, n-|U|\}\xi_{n,-}. \label{coco}
\end{align}
\smallskip

\textbf{\underline{Step II}: Upper bound on $J$.}

\smallskip

Setting $U = A_i$ in \eqref{coco} yields the following lower bound on $|\partial A_j|$:
\begin{align}
|\partial A_j|&\gtrsim\min\{a_j, n-a_j\}\xi_{n, -}\label{2023}.
\end{align}
Meanwhile, we also have an upper bound on $|\partial A_{j\subseteq j+1}|$ as follows: 
\begin{align}
|\partial A_{j\subseteq j+1}|\stackrel{\eqref{married}}{\leq}|\partial (A_{j+1}\setminus A_j)|\stackrel{\eqref{lps}}{\leq} 2\mu_{n,+}(\Delta a_j)\stackrel{\eqref{jingyue}}{\lesssim}\Delta a_j\xi_{n,+},\label{gamma2024}
\end{align}
where the last step holds under the assumption $\Delta a_j\leq n-\Delta a_j$. 
Since $\{A_j\}_{j\in [J]}$ is an admissible sequence, $|\partial A_{j\subseteq j+1}|\geq |\partial A_j|/2$. This combined with \eqref{2023} and \eqref{gamma2024} implies that 
\begin{align*}
\frac{(n-a_{j})-(n-a_{j+1})}{\min\{a_j, n-a_j\}}=\frac{a_{j+1}-a_j}{\min\{a_j, n-a_j\}}=\frac{\Delta a_j}{\min\{a_j, n-a_j\}}\gtrsim \frac{\xi_{n,-}}{\xi_{n,+}}
\end{align*}
Consequently, we conclude that when $a_j\leq n/2$, $a_j$ grows exponentially with rate of at least $1+(C\xi_{n,-}/\xi_{n,+})$; when $a_j> n/2$, $n-a_j$ shrinks exponentially with rate of at least $1-(C\xi_{n,-}/\xi_{n,+})$, where $C>0$ is some absolute constant depending neither on $\{A_j\}_{j\in [J]}$ nor $n$. Since $a_J\leq n$, it takes $a_j$ at most $O(\xi_{n,+}(\log n)/\xi_{n,-})$ steps to reach $a_J$. Substituting this and \eqref{wenlu} into \eqref{kpl} and applying the Borel--Cantelli lemma over $n$ yields the desired result. We finish the proof of Lemma \plainref{main:REs} under the NURHM.

\subsection{HSBM}

The proof for the HSBM requires more careful counting. Specifically, we will provide a sharper upper bound on $J$ by exploiting the community structure in HSBM. In an HSBM introduced in Section~\plainref{hsbm}, the comparison hypergraph is $\mm$-uniform and has $K$ blocks $V_1, \ldots, V_K$ with $|V_i| = n_i$ and $\sum_{i\in [K]}n_i = n$. The probabilities of a given $\mm$-edge lying within and across $V_i, i \in [K]$ are $\omega_{n, i}$ and $\omega_{n,0}$, respectively.
In this setup, we let
\begin{align*}
	&a_{j, i} = |A_j\cap V_i| \quad\quad \Delta a_{j, i} = |(A_{j+1}\setminus A_j) \cap V_i|&i\in [K].
\end{align*}
 By definition,
\begin{align*}
	&\Delta a_{j, i} = a_{j+1, i}-a_{j,i} &\Delta a_{j} = a_{j+1}-a_{j}.
\end{align*}
Following the same reason of the assumption $\Delta a_j\leq n-a_j$, we further assume 
\begin{align}
	\Delta a_{j, i}\leq \frac{n_i}{2}\quad\quad i\in [K].\label{uselot}
\end{align}
The proof proceeds by lower bounding $h_{\HH_n}$ and upper bounding $J$.

\smallskip

\textbf{\underline{Step I}: Lower bound on $h_{\HH_n}$.}

\smallskip

This step is almost identical to the NURHM setting. Applying the same concentration argument as \eqref{lps} and a union bound yields that, with probability at least $1-n^{-3}$, the following analogs of \eqref{2023} and \eqref{gamma2024} hold for all admissible sequences:
\begin{align}
|\partial A_j|\gtrsim \mathbb E[|\partial A_j|]\to\infty\quad\quad |\partial A_{j\subseteq j+1}|\lesssim \mathbb E[|\partial (A_{j+1}\setminus A_j)|]\label{qiangbu},
\end{align}
where we kept the expectation rather than using the lower and upper bounds $\mu_{n,-}(t)$ and $\mu_{n,+}(t)$ for further processing. 
Moreover, the modified Cheeger constant can be lower bounded as
\begin{align}
h_{\HH_n}\gtrsim \zeta_{n,-}.\label{kaizi}
\end{align} 

\smallskip

\textbf{\underline{Step II}: Upper bound on $J$.}

\smallskip

Obtaining a sharp bound on $J$ in HSBM requires more work than in NURHM. In particular, we will utilize the homogeneity within  communities to refine estimates on $\mathbb E[|\partial A_j|]$ and $\mathbb E[|\partial (A_{j+1}\setminus A_j)|]$. The resulting bound we are targeting is $J = O(\log n)$. We will prove this by showing that, apart from at most a fixed number of steps that depend only on $K$, at each step $j$ there exists at least one community index $i\in [K]$ such that either $a_{j,i}$ grows exponentially with absolute constant rate larger than one from $j$ to $j+1$, or $(n_i-a_{j,i})$ shrinks exponentially with absolute constant rate less than one from $j$ to $j+1$. Thus, it takes at most $O(\log n)$ steps for each $a_{j,i}$ to grow to full. Since $K$ is finite, $J\lesssim \log n$. 

Recall $\Lu(n, \st, \ms) = {n\choose \ms} - {\st\choose \ms} - {{n-\st}\choose \ms}$ in Section \ref{huapangzi} that counts the number of $\ms$-boundary edges of a size-$\st$ subset of an size-$n$ set. 
Grouping edges in $\partial A_j$ and $\partial (A_{j+1}\setminus A_j)$ based on whether they are within or across $V_i$, 
\begin{align}
\mathbb E[|\partial A_j|] &= \sum_{i\in [K]}\underbrace{\omega_{n,i}\Lu(n_i, a_{j,i}, \mm)}_{\vartheta_i} + \underbrace{\omega_{n,0}\left(\Lu(n, a_j, \mm)-\sum_{i \in [K]}\Lu(n_i, a_{j,i}, \mm)\right)}_{\vartheta_{K+1}}\nonumber
\end{align}
and
{\small
\begin{align}
\mathbb E[|\partial (A_{j+1}\setminus A_j)|]&=\sum_{i\in [K]}\underbrace{\omega_{n,i}\Lu(n_i, \Delta a_{j,i}, \mm)}_{\tau_i} + \underbrace{\omega_{n,0}\left(\Lu(n, \Delta a_j, \mm)-\sum_{i \in [K]}\Lu(n_i, \Delta a_{j,i}, \mm)\right)}_{\tau_{K+1}}. \nonumber
\end{align}}
Thanks to $\eqref{qiangbu}$, for all sufficiently large $n$, $ \mathbb E[|\partial A_j|] \geq K+1$ for all nontrivial $A_j$. Therefore,
the admissibility condition $|\partial A_{j\subseteq j+1}|\geq |\partial A_j|/2$ yields
\begin{align}
\frac{2\sum_{i\in [K+1]}\tau_i}{\sum_{i\in [K+1]}(\vartheta_i +1)} \stackrel{}{\geq} \frac{\sum_{i\in [K+1]}\tau_i}{\sum_{i\in [K+1]}\vartheta_i}=\frac{\mathbb E[|\partial (A_{j+1}\setminus A_j)|]}{\mathbb E[|\partial A_j|]}\gtrsim 1,\label{parking}
\end{align}
which implies 
\begin{align}
\max_{i\in [K+1]}\frac{\tau_i}{\vartheta_i + 1}\gtrsim 1.\label{tyre}
\end{align}
The addition of constant $1$ ensures each denominator is positive. In the following, we discuss the implications of $\tau_i/(\vartheta_i +1)\gtrsim 1$ for each $i\in [K+1]$. 

We first consider the scenario where $\tau_i/(\vartheta_i +1)\gtrsim 1$ for some $i\in [K]$. 
In this setting, if $\vartheta_i>0$, then by the inequalities in \eqref{lu1} and \eqref{uselot}, 
\begin{align*}
\frac{\Delta a_{j, i}}{\min\{a_{j,i}, n_i-a_{j,i}\}}\stackrel{\eqref{lu1}}{\gtrsim}\frac{\tau_i}{\vartheta_i}\geq\frac{\tau_i}{\vartheta_i + 1}\gtrsim 1.
\end{align*}
Consequently, $\{a_{j,i}\}_j$ grows exponentially if $a_{j,i}\leq n_i/2$ with some rate greater than $1$ or $\{n_i-a_{j,i}\}_j$ decreases exponentially if $a_{j,i}> n_i/2$ with some rate less than $1$. Both rates are absolute and independent of the choice of the admissible sequence and $n_i$.  

If $\vartheta_i=0$, then $a_{j,i} = n_{i}$ or $0$, implying either $\Delta a_{j,i} = 0$ (which further implies $\tau_i=0$ so that both $\tau_i$ and $\vartheta_i$ can be removed from the summations in \eqref{parking} before proceeding to \eqref{tyre}) or $a_{j+1, i}>a_{j, i}=0$. The latter can occur at most once for each $i$ (i.e., changing from zero to nonzero) and thus can be safely ignored.   

The case $\tau_{K+1}/(\vartheta_{K+1}+1)\gtrsim 1$ is more complicated. We first observe that as long as $A_j\subset [n]$, $\vartheta_{K+1}>0$. 
We introduce the following notation for the index set. For any $\bm\alpha = (\alpha_1, \ldots, \alpha_{2K})^\top\in\mathbb N^{2K}$, let 
\begin{align*}
\mGamma(\bm\alpha) :=\bigg\{&\bm \mm = (\mm_1, \ldots, \mm_{2K})^\top\in\mathbb N^{2K}: \sum_{i\in [2K]}\mm_i = \mm; \ \mm_i\leq \alpha_i, \ i\in [2K]; \\
&\ \mm_{2i-1}+\mm_{2i}<\mm, \ i\in [K]; \ \sum_{i\in [K]}\mm_{2i-1}<\mm; \sum_{i\in [K]}\mm_{2i}<\mm\bigg\}.
\end{align*}
Recall that $M$ is the edge size in an HSBM. The elements in $\mGamma(\bm\alpha)$ partition the $M$-boundary edges of a subset associated with $\bm\alpha$ into equivalent classes based on their intersected sizes with different communities, counting from both within and outside. Specifically, denote by $\bm a_j = (a_{j, 1}, n_1-a_{j, 1}, \ldots, a_{j,i}, n_K-a_{j, K})^\top\in\mathbb N^{2K}$ and $\Delta\bm a_j = (\Delta a_{j, 1}, n_1-\Delta a_{j, 1}, \ldots, \Delta a_{j,K}, n_K-\Delta a_{j, K})^\top\in\mathbb N^{2K}$. When  $\bm \alpha = \bm a_j$, $\mm_{2i-1}$ and $\mm_{2i}$ count the size of the intersection of an $\mm$-boundary edge of $A_j$ with $V_i\cap A_j$ and  $V_i\cap A^\complement_{j}$, respectively. When $\bm \alpha = \Delta\bm a_j$, $\mm_{2i-1}$ and $\mm_{2i}$  count the size of intersection of an $\mm$-boundary edge of $A_j$ with $V_i\cap (A_{j+1}\setminus A_j)$ and with $V_i\cap (A_{j+1}\setminus A_j)^\complement$.
 
Similar to Vandermonde's identity, we can rewrite $\vartheta_{K+1}$ and $\tau_{K+1}$ as the following sums:
{\scriptsize	\begin{align*}
\vartheta_{K+1} &= \omega_{n,0}\sum_{\bm \mm\in\mGamma(\bm a_j)}\prod_{i\in [K]}{a_{j,i}\choose \mm_{2i-1}}{n_i-a_{j,i}\choose \mm_{2i}}\gtrsim \omega_{n,0}\sum_{\bm \mm\in\mGamma(\bm a_j)}\prod_{i\in [K]}a_{j,i}^{\mm_{2i-1}}(n_i-a_{j,i})^{\mm_{2i}}\\ 
\tau_{K+1} &= \omega_{n,0}\sum_{\bm \mm\in\mGamma(\Delta\bm a_j)}\prod_{i\in [K]}{\Delta a_{j,i}\choose \mm_{2i-1}}{n_i-\Delta a_{j,i}\choose \mm_{2i}}\stackrel{\eqref{uselot}}{\lesssim} \omega_{n,0}\sum_{\bm \mm\in\mGamma(\Delta\bm a_j)}\prod_{i\in [K]}(\Delta a_{j,i})^{\mm_{2i-1}}n_i^{\mm_{2i}},  
\end{align*}}
where the inequalities hold for any valid choices of $a_{j,i}, \Delta a_{j,i}, n_i$ and do not require divergence of any of these numbers. 

To utilize these bounds in $\tau_{K+1}/\vartheta_{K+1}\geq\tau_{K+1}/(\vartheta_{K+1}+1)\gtrsim 1$, we need a more explicit estimate on $\tau_{K+1}$. Since $K$ is finite, it suffices to consider a potential $\bm \mm\in\mGamma(\Delta\bm a_j)$ whose associated summand is maximized in asymptotic order when $n\to\infty$. For a fixed summand $\prod_{i\in [K]}(\Delta a_{j,i})^{\mm_{2i-1}}n_i^{\mm_{2i}}$, since $\Delta a_{j,i}\leq n_i$, one may wish to put the support of $\bm \mm$ on even indices to maximize this quantity. However, this is not allowed since $\sum_{i\in [K]}\mm_{2i} \leq \mm-1$, that is, a boundary edge of $(A_{j+1}\setminus A_{j})$ necessarily intersects both $(A_{j+1}\setminus A_{j})$ and $(A_{j+1}\setminus A_{j})^\complement$. This upper bound is attainable since for any $\bm \mm$ with $\sum_{i\in [K]}\mm_{2i-1} > 1$, one can lift it to some $\bm \mm'$ satisfying $\sum_{i\in [K]}\mm'_{2i} = \mm-1$ without decreasing the asymptotic order of the summand. For instance, one can tentatively reduce any $\mm_{2i-1}\geq 1$ by one while increasing $\mm_{2i}$ by one at a time; if increasing $\mm_{2i}$ is not allowed, that is, $\mm_{2i} = n_i - \Delta a_{j,i}>0$, then increase $\mm_{2i_0^*}$ by one for $i^*_0 = \argmax_{i\in [K]}n_i$ (which is always possible since $n_{i^*_0}-\Delta a_{j, i^*_0}\geq n_{i^*_0}/2\gtrsim n\geq M$ due to \eqref{uselot}). Implementing this until the support condition is satisfied would yield a modified $\bm \mm'\in\mGamma(\Delta a_j)$ with nondecreasing asymptotic order than $\bm\mm$.

Therefore, we can restrict to the subset of $\bm \mm\in\mGamma(\Delta a_j)$ with $\sum_{i\in [K]}\mm_{2i}=\mm-1$ in order to maximize the summand. To further maximize the summand, one may wish to set $\mm_{2i_0^*}=\mm-1$. However, this is not always allowed. For example, if $(A_{j+1}\setminus A_j)\subseteq V_{i^*_0}$, then setting $\mm_{2i_0^*}=\mm-1$ yields a boundary edge contained in $V_{i_0^*}$ rather than across $V_{i_0^*}$. Nevertheless, one can always initially set $\mm_{2i_0^*}=\mm-2$ and consider the following optimization problem: 
 \begin{align*}
(i_1^*, i_2^*) \in \argmax_{\substack{i_1\in [K]: \Delta a_{j, i_1}>0\\ i_2\in [K]\setminus\{i_1\}}} \Delta a_{j, i_1}n_{i_2}.
\end{align*}
Following the reasoning above, we have
\begin{align*}
\sum_{\bm \mm\in\mGamma(\Delta\bm a_j)}\prod_{i\in [K]}(\Delta a_{j,i})^{\mm_{2i-1}}n_i^{\mm_{2i}}\lesssim n^{\mm-2}_{i_0^*}\Delta a_{j, i^*_1}n_{i_2^*}.
\end{align*}
Therefore, $\tau_{K+1}/\vartheta_{K+1}\geq \tau_{K+1}/(\vartheta_{K+1}+1)\gtrsim 1$ implies that 
\begin{align}
n^{\mm-2}_{i_0^*}\Delta a_{j, i^*_1}n_{i_2^*}\gtrsim\sum_{\bm \mm\in\mGamma(\bm a_j)}\prod_{i\in [K]}{a_{j,i}\choose \mm_{2i-1}}{n_i-a_{j,i}\choose \mm_{2i}}\label{consq}.
\end{align}

To see the consequences of \eqref{consq}, note that $n_{i_1^*}-a_{j,i_1^*}\geq\Delta a_{j,i_1^*}> 0$. 
Consider two choices of $\bm \mm\in\mathbb N^{2K}$ depending on whether $i_0^*=i_2^*$ or not:
\begin{enumerate} 
\item [(i)] If $i_0^*=i_2^*$ (which holds if $i_0^*\neq i_1^*$), take 
\begin{align*}
\mm_t = \begin{cases}
 (\mm-1)\cdot \mathbf 1_{\{a_{j,i_0^*}\leq n_{i_0^*}/2\}}& t = 2i_0^*\\
 (\mm-1)\cdot \mathbf 1_{\{a_{j,i_0^*}> n_{i_0^*}/2\}}& t = 2i_0^*-1\\
\mathbf 1_{\{a_{j,i_0^*}\leq n_{i_0^*}/2\}}& t = 2i_1^*-1\\
\mathbf 1_{\{a_{j,i_0^*}> n_{i_0^*}/2\}} & t = 2i^*_1
\end{cases} 
\end{align*}
Since $n_{i_1^*}-a_{j,i_1^*}>0$, setting $M_{2i^*_1} = \mathbf 1_{\{a_{j,i_0^*}> n_{i_0^*}/2\}}$ is always valid. 
For $M_{2i^*_1-1} = \mathbf 1_{\{a_{j,i_0^*}\leq n_{i_0^*}/2\}}$ to be valid, one must ensure $a_{j, i_1^*}>0$, which can fail at most once for each $i_1^*$ since $a_{j+1,i_1^*}-a_{j,i_1^*}=\Delta a_{j,i_1^*}> 0$. Consequently, this choice of $\bm M$ is valid ($\bm \mm\in\mGamma(\bm a_j)$) except for at most $K-1$ times.   
\item [(ii)] If $i_0^*\neq i_2^*$ (which implies $i_0^*= i_1^*$), take 
\begin{align*}
\mm_t = \begin{cases}
 (\mm-2)\cdot \mathbf 1_{\{a_{j,i_0^*}\leq n_{i_0^*}/2\}} + \mathbf 1_{\{a_{j,i_0^*}> n_{i_0^*}/2\}}& t = 2i_0^*\\
 (\mm-2)\cdot \mathbf 1_{\{a_{j,i_0^*}> n_{i_0^*}/2\}}+ \mathbf 1_{\{a_{j,i_0^*}\leq n_{i_0^*}/2\}}& t = 2i_0^*-1\\
\mathbf 1_{\{a_{j,i_2^*}\leq n_{i_2^*}/2\}}& t = 2i_2^*\\
\mathbf 1_{\{a_{j,i_2^*}> n_{i_2^*}/2\}}& t = 2i_2^*-1
\end{cases} 
\end{align*}
Similar to case (i), $M_{2i_0^*-1}$ may not take the value one if $0=a_{j,i_0^*}\leq n_{i_0^*}/2$, but this can occur at most once since $a_{j+1,i_0^*}-a_{j,i_0^*}=\Delta a_{j,i_0^*}> 0$.   
\end{enumerate}

Whenever the above choice of $\bm M$ is valid, we can lower bound the sum in the right-hand side of \eqref{consq} using the single summand associated with $\bm \mm$ to obtain 
\begin{align*}
\frac{\Delta a_{j, i_1^*}}{n_{i_1^*}-a_{i_1^*}}\gtrsim 1\quad\text{or}\quad \frac{\Delta a_{j, i_1^*}}{a_{i_1^*}}\gtrsim 1.
\end{align*}
Consequently, $\{a_{j,i_1^*}\}_j$ grows exponentially with some rate greater than $1$ or $\{n_{i_1^*}-a_{j,i_1^*}\}_j$ decreases exponentially with some rate that is less than $1$, where both rates are absolute and depend neither on the choice of the admissible sequence nor $n_{i_1^*}$.  

Putting all cases implied by \eqref{tyre} together, we conclude that, except for at most a fixed number of times (which depends on $K$ only), there exists at least one community index $i\in [K]$ such that either $a_{j,i}$ grows exponentially with absolute constant rate larger than one from $j$ to $j+1$, or $(n_i-a_{j,i})$ shrinks exponentially with absolute constant rate less than one from $j$ to $j+1$. Hence, $J\lesssim \log n$. Substituting this and \eqref{kaizi} into \eqref{kpl} and applying the Borel--Cantelli lemma over $n$ yields the desired result. We finish the proof of Lemma \plainref{main:REs} under the HSBM.

\subsubsection{Proof of Lemma \ref{lemma:ln}}
Without loss of generality, we assume $n-\st>\ms$; the other case can be discussed similarly. 
It follows from the direct computation that
\begin{align*}
	\Lu(n, \st, \ms) &= {n\choose \ms}\underbrace{\left[1-\frac{\st\cdots (\st-\ms+1)}{n\cdots (n-\ms+1)}-\frac{(n-\st)\cdots (n-\st-\ms+1)}{n\cdots (n-\ms+1)}\right]}_{(\#)}.
\end{align*}
It is easy to see that 
\begin{align*}
	1-\left(\frac{\st}{n}\right)^\ms-\left(1-\frac{\st}{n}\right)^\ms\leq (\#)\leq 1-\left(\frac{\st-\ms+1}{n-\ms+1}\right)^\ms-\left(1-\frac{\st}{n-\ms+1}\right)^\ms.
\end{align*}
The lower bound in \eqref{lu1} follows by noting $[1-x^\ms-(1-x)^\ms]/[\ms\min\{x, 1-x\}]$ is uniformly away from $0$ for $x\in [0,1]$ (when $x=0$ or $x=1$ the ratio is understood by taking the limit), and the upper bound follows by noting $(1-x)^\ms\geq 1-\ms x$ for $x\in [0,1]$. We finish the proof of Lemma \ref{lemma:ln}.

\section{Proofs of Lemmas Related to Asymptotic Normality}\label{sec:s5}

We provide the technical details omitted while proving the asymptotic normality result for the deterministic comparison graph sequence in Section~\ref{tn-t}, as well as the remaining lemmas in Section~\plainref{sec:6+}. To proceed, we first complete the proof of Lemma \ref{CLT:2} assuming Lemma \ref{CLT:3} holds in Section \ref{laile}. Then, we prove Lemma \ref{CLT:3} and Lemma \ref{CLT:1} in Section \ref{laile2} and Section \ref{laile3} respectively.  
 In Section \ref{laile4},
we prove Lemma~\ref{newtech} which provides explicit bound on the parameters used in Assumptions \plainref{ap:deterministic}-\plainref{ap:deterministic++}
in the context of NURHM and HSBM. In Section \ref{new19}, we establish Lemma~\plainref{lastone}.

\subsection{Proof of Lemma \ref{CLT:2}}\label{laile}
We complete the proof of Lemma \ref{CLT:2} assuming Lemma \ref{CLT:3} holds. Recall that
\begin{align}
	\Delta \cH \in\left\{\cJ(\bw, \bu^*) - \cH({\bu^*}), \ \cH({\bu^*}) - \cH^*(\bu^*)\right\}.\label{Delta_H_d}
\end{align}
Following our discussion in Section~\ref{tn-t}, it remains to provide an upper bound on $\lVert ({\rm iv}) \lVert_\infty$ as needed in the second step in \eqref{tobejustified}. Specifically, we aim to show for any $t \geq 0$,
\begin{align*}
	\lVert	(\A - \cP_1)^t\D^{-1/2} \Delta \cH(\bm w - \bm u^*) \lVert_\infty \lesssim \sqrt{\frac{\N_{n,+}}{\N_{n,-}}} \lVert	\D^{-1/2} \Delta \cH(\bm w - \bm u^*) \lVert_\infty,
\end{align*}
where $\Delta \cH$ is defined in \eqref{Delta_H_d}. 

Recall that $  \A = \D^{-1/2}\W\D^{-1/2}$ and $-\cH^*(\bm u^*) = \D - \W$. For convenience, we let $\cH_{ij} := [\cH^*(\bm u^*)]_{ij}$ for $i \neq j \in [n]$ and $\cH_{ii} = 0$ for $i \in [n]$.  The explicit form of $\A$ and $\cP_1$ can be written as 
\begin{eqnarray}\label{xiuwuwang_d}
	[\A]_{ij} = \frac{\h_{ij}}{\sqrt{\D_{ii}\D_{jj}}}, \ \ \ \ \ [\cP_1]_{ij} = \frac{\sqrt{\D_{ii}\D_{jj}}}{\sum_{i\in [n]}\D_{ii}} \ \ \qquad i,j \in[n].
\end{eqnarray}

To prove Lemma \ref{CLT:2}, recall from \eqref{tobeu} that $\cP_1\D^{-1/2} \Delta \cH(\bm w - \bm u^*) = \bm 0$ and $\A\cP_1 = \cP_1\A$. As a result,
\begin{eqnarray*}
	(\A - \cP_1)^t\D^{-1/2} \Delta \cH(\bm w - \bm u^*) = \A^t\D^{-1/2} \Delta \cH(\bm w - \bm u^*).
\end{eqnarray*}
Then,
\begin{align*}
	\lVert	\A^t\D^{-1/2} \Delta \cH(\bm w - \bm u^*) \lVert_\infty \leq \lVert	\A^t \lVert_{\infty\to\infty} \lVert	\D^{-1/2} \Delta \cH(\bm w - \bm u^*) \lVert_\infty,
\end{align*}
where $\|B\|_{\infty\to\infty} = \max_{i\in[n]} \sum_{j\in[n]} |B_{ij}| $ for a $n \times n$ matrix $B$. It remains to bound $\lVert\A^t \lVert_{\infty\to\infty}$ to obtain the desired results. 

Note that under Assumptions~\plainref{ap:2}-\plainref{ap:1} and \plaineqref{wearediff}-\plaineqref{binyan}, 
\begin{equation}\label{degree_concentration_d}
	\N_{n,-} \lesssim \min_{j \in[n]} \D_{jj} \leq \max_{j \in[n]} \D_{jj} \lesssim \N_{n,+}
\end{equation}
for both the marginal MLE and QMLE, where $\N_{n, \pm}$ are defined in \plaineqref{mydn}. We now can compute $\lVert \A^t\lVert_\infty$, $t > 0$ as follows:
\begin{eqnarray*}
	\lVert \A^t\lVert_{\infty\to\infty} &= & \max_{i \in [n]} \sum_{j_1 \in [n]} \cdots \sum_{j_t \in [n]} \frac{\h_{ij_1}\h_{j_1j_2} \cdots \h_{j_{t-1}j_t} }{\sqrt{\D_{ii}} \D_{j_1j_1}\D_{j_2j_2} \cdots \D_{j_{t-1}j_{t-1}} \sqrt{\D_{j_tj_t}}}\\
	&\stackrel{\eqref{degree_concentration_d}}{\lesssim}& \frac{1}{\sqrt{\N_{n,-}}}\max_{i \in [n]} \sum_{j_1 \in [n]} \cdots \sum_{j_t \in [n]} \frac{\h_{ij_1}\h_{j_1j_2} \cdots \h_{j_{t-1}j_t} }{\sqrt{\D_{ii}} \D_{j_1j_1} \D_{j_2j_2} \cdots \D_{j_{t-1}j_{t-1}}}\\
	& = & \frac{1}{\sqrt{\N_{n,-}}}\max_{i \in [n]}\sum_{j_1 \in [n]} \cdots \sum_{j_{t-1} \in [n]} \frac{\h_{ij_1}\h_{j_1j_2} \cdots \h_{j_{t-2}j_{t-1}} }{\sqrt{\D_{ii}} \D_{j_1j_1} \D_{j_2j_2} \cdots \D_{j_{t-2}j_{t-2}}}\\
	& = &  \frac{1}{\sqrt{\N_{n,-}}}\max_{i \in [n]} \sqrt{\D_{ii}} \lesssim \sqrt{\frac{\N_{n,+}}{\N_{n,-}}}.
\end{eqnarray*}
When $t=0$, $\lVert \A^t\lVert_{\infty\to\infty} = 1.$ Therefore, for any $t \geq 0$, 
\begin{align*}
	\lVert	\A^t\D^{-1/2} \Delta \cH(\bm w - \bm u^*) \lVert_\infty \lesssim 	\sqrt{\frac{\N_{n,+}}{\N_{n,-}}}  \lVert	\D^{-1/2} \Delta \cH(\bm w - \bm u^*) \lVert_\infty.
\end{align*}
This justifies the inequality \eqref{tobejustified}. We finish the proof of Lemma \ref{CLT:2}.

\subsection{Proof of Lemma \ref{CLT:3}}\label{laile2}

\subsubsection{Proof of \eqref{residual_21} in Lemma \ref{CLT:3}}
Recall the Hessian matrices of the marginal MLE and QMLE computed in \plaineqref{wearediff} and \plaineqref{binyan}. 
We begin by summarizing a few common properties shared by Hessian matrices of both the marginal MLE and QMLE:
\begin{itemize}
	\item [(I)] (Graph Laplacian) For $k\in [n]$, $\{\cH(\bm u)\}_{kk} + \sum_{k'\neq k}\{\cH(\bm u)\}_{kk'} = 0$;
	\item [(II)] (Sparsity) For $k \neq k'$, $\{\cH(\bm u)\}_{kk'} = 0$ unless there exists some $T_i\in \E$ such that $\{k, k'\}\subseteq T_i$;
	\item [(III)] (Lipschitz property of the summand) Consider $\bu, \bv \in \mathbb{R}^n$ satisfying $\|\bu - \bv \|_\infty < 1$. 
\end{itemize}
In the marginal MLE, we have
\begin{eqnarray}\label{panglip_d}
	&&\left|\sum_{j\in [r_i(k)\wedge r_i(k')\wedge y_i]} \left\{\frac{\e{u_k}\e{u_{k'}}}{(\sum_{t \geq j}\e{u_{\pi_i(t)}})^2} - \frac{\e{v_k}\e{v_{k'}}}{(\sum_{t \geq j}\e{v_{\pi_i(t)}})^2} \right\} \right|\nonumber\\
	&\lesssim&\sum_{j\in [r_i(k)\wedge r_i(k')\wedge y_i]}  \max_{t \geq j} | u_t - v_t| \lesssim \lVert \bu - \bv \lVert_\infty.
\end{eqnarray}
In the QMLE, we have
\begin{align}\label{panglip}
\left|\frac{\e{u_k}\e{u_{k'}}}{(\e{u_k} + \e{u_k'})^2}-\frac{\e{v_k}\e{v_{k'}}}{(\e{v_k} + \e{v_k'})^2}\right|\lesssim \lVert \bu - \bv \lVert_\infty.
\end{align}
Using the above properties, we can bound $[\{\cJ(\bm w, \bu^*) - \cH({\bu^*})\}(\bm w - \bm u^*)]_k$ as follows:
\begin{align*}
	& |[\{\cJ(\bm w, \bu^*) - \cH({\bu^*})\}(\bm w - \bm u^*)]_k | \\
	=&\ \left| \sum_{k' \neq k} [\cJ(\bm w, \bu^*) - \cH({\bu^*})]_{kk'} \{(w_k - u^*_k) - (w_{k'} - u^*_{k'})\}\right|\\ 
		\stackrel{\eqref{myJ}}{=}&\ \left| \int_0^1\sum_{k' \neq k} [\cH(t\bm w + (1-t)\bu^*) - \cH({\bu^*})]_{kk'} \{(w_k - u^*_k) - (\widehat{u}_{k'} - u^*_{k'})\}\d t\right|\\
	\stackrel{\eqref{panglip_d}, \eqref{panglip}}{\lesssim}&\ \sum_{k' \neq k} \sum_{i: \{k, k'\}\subseteq T_i} \lVert \bm w - \bu^* \lVert_\infty \left| \{(w_k - u^*_k) - (w_{k'} - u^*_{k'})\}\right|\\
	\leq\ & 2 \sum_{k' \neq k} \sum_{i: \{k, k'\}\subseteq T_i} 
	\lVert {\bm w} - \bu^* \lVert_\infty^2 =   2  \lVert {\bm w} - \bu^* \lVert_\infty^2 \sum_{i: k \in T_i} |T_i|.
\end{align*}

Meanwhile, note that $\D$ is the degree matrix of $\cH^*(\bm u^*).$ Under Assumptions \plainref{ap:2}-\plainref{ap:1}, 
\begin{eqnarray}\label{forgot}
	\D_{kk} \gtrsim \sum_{i: k \in T_i} |T_i|.
\end{eqnarray}
Consequently,
\begin{align}\label{Lemma_D3_1}
&	|[\D^{-1/2} \{\cJ(\bm w, \bu^*) - \cH({\bu^*})\}(\bm w - \bm u^*)]_k| \nonumber \\
&	\lesssim   \lVert {\bm w} - \bu^* \lVert_\infty^2 \sqrt{\sum_{i: k \in T_i} |T_i|}\lesssim \lVert {\bm w} - \bu^* \lVert_\infty^2\sqrt{\N_{n,+}}.
\end{align}
According to Theorem \plainref{main:general}, for all sufficiently large $n$, with probability at least $1-n^{-3}$, 
\begin{equation}\label{Lemma_D3_2}
	\lVert \bm w - \bu^* \lVert_\infty^2 \lesssim (\Gamma^{\RE}_n)^2.
\end{equation}
Substituting this into \eqref{Lemma_D3_2} into \eqref{Lemma_D3_1} and taking a union bound over $k \in [n]$ yields
 \begin{align*}
	\|\D^{-1/2} \{\cJ(\bw, \bu^*) - \cH({\bu^*})\}(\bm w - \bm u^*)\|_\infty = O_p\left( (\Gamma^{\RE}_n)^2  \sqrt{\N_{n,+}}   \right).
\end{align*}

\subsubsection{Proof of \eqref{residual_31} in Lemma \ref{CLT:3}}
The results for the choice-one MLE and the QMLE are trivial since $\cH({\bu^*}) = \cH^*(\bu^*)$. 
One can check this by noting that $\cH(\bu^*)$ in \plaineqref{binyan} is independent of the comparison outcomes $\{\pi_i\}_{i\in [N]}$, and $\cH(\bu^*)$ in \plaineqref{wearediff} is independent of the comparison outcomes $\{\pi_i\}_{i\in [N]}$ if $y_i=1$ for all $i\in [N]$. As a result, $\|\D^{-1/2} \{\cH({\bu^*}) - \cH^*(\bu^*)\}(\bm w - \bm u^*)\|_\infty = 0$. Under such circumstances, \eqref{residual_31} automatically holds. 

Things become more complicated for marginal MLE in general as $\cH({\bu^*}) \neq \cH^*(\bu^*)$.  
In this case, we write $\bw = \buu$. Similar to the previous section, for $k\in [n]$, separating the summation of diagonal and off-diagonal entries, 
\begin{eqnarray*}
	&&	[\{\cH({\bu^*}) -\cH^*(\bu^*) \}(\buu - \bm u^*)]_k = \sum_{k' \neq k} [\cH({\bu^*}) - \cH^*(\bu^*)]_{kk'} \{(\widehat{u}_k - u^*_k) - (\widehat{u}_{k'} - u^*_{k'})\}\\
	&=&\underbrace{ \sum_{k' \neq k} [\cH({\bu^*}) - \cH^*(\bu^*)]_{kk'} (\widehat{u}_k - u^*_k)}_{({\text{vi}})} - \underbrace{\sum_{k' \neq k} [\cH({\bu^*}) - \cH^*(\bu^*)]_{kk'} (\widehat{u}_{k'} - u^*_{k'})}_{({\text{vii}})},
\end{eqnarray*}
where $\cH^*({\bu^*})$ is the expectation of $\cH({\bu^*})$ over the comparison outcomes. In the following, we will bound (vi) and (vii) separately. 

\smallskip

\textbf{\underline{Step I}: Bound on (vi)}

\smallskip

We begin by noting 

\begin{align}
	&\sum_{k' \neq k} [\cH({\bu^*}) - \cH^*(\bu^*)]_{kk'}\label{cuties}\\
	=&\ \sum_{k' \neq k}\sum_{i: \{k, k'\}\subseteq T_i}\Bigg\{\sum_{j\in [r_i(k)\wedge r_i(k')\wedge y_i]}\frac{\e{u_k}\e{u_{k'}}}{(\sum_{t \geq j}\e{u_{\pi_i(t)}})^2} \nonumber \\
	& \quad\quad\quad\quad\quad\quad\quad\quad\quad\quad - \mathbb E\bigg[\sum_{j\in [r_i(k)\wedge r_i(k')\wedge y_i]}\frac{\e{u_k}\e{u_{k'}}}{(\sum_{t \geq j}\e{u_{\pi_i(t)}})^2}\bigg]\Bigg\}\nonumber\\
	=&\ \sum_{i: k\in T_i}\Bigg\{\sum_{k'\in T_i, k'\neq k}\sum_{j\in [r_i(k)\wedge r_i(k')\wedge y_i]}\frac{\e{u_k}\e{u_{k'}}}{(\sum_{t \geq j}\e{u_{\pi_i(t)}})^2}  \nonumber\\
&	\quad\quad\quad\quad\quad\quad\quad\quad\quad\quad - \mathbb E\bigg[\sum_{k'\in T_i, k'\neq k}\sum_{j\in [r_i(k)\wedge r_i(k')\wedge y_i]}\frac{\e{u_k}\e{u_{k'}}}{(\sum_{t \geq j}\e{u_{\pi_i(t)}})^2}\bigg]\Bigg\}\nonumber
\end{align}
is a sum of $\ndeg_k$ uniformly bounded independent random variables with mean zero under Assumptions~\plainref{ap:2}-\plainref{ap:1}.  
By Hoeffding's inequality, with probability at least $1 - n^{-3}$,
\begin{align}\label{0722}
|({\text{vi}})| = \left|(\widehat{u}_k - u^*_k) \sum_{k' \neq k} [\cH({\bu^*}) - \cH^*(\bu^*)]_{kk'}\right| \lesssim  \lVert  \buu  - \bu^*\lVert_\infty\sqrt{\ndeg_k\log n}.
\end{align}
\smallskip

\textbf{\underline{Step II}: Bound on (vii)}

\smallskip

One may wish to apply a similar argument to bound (vii). Unfortunately, $({\text{vii}})$ is not a sum of independent random variables because the summation index $k'$ also appears in $(\widehat{u}_{k'} - u^*_{k'})$. To address this, we resort to a perturbation argument similar to the leave-one-out analysis in \cite{gao2023uncertainty}. Recall the following leave-one-out log-likelihood function defined in \plaineqref{leave_one_out_likelihood}:
\begin{align*}
	&l_1^{(-k)}(\bm u) = \sum_{i: k \notin T_i} \sum_{j\in [y_i]}\left[u_{\pi_i(j)} - \log\left(\sum_{l=j}^{m_i}\e{u_{\pi_i(l)}}\right)\right]& k \in [n].
\end{align*}
To find the (constrained) marginal MLE of the leave-one-out log-likelihood, for each $k\in [n]$, we introduce the following constrained set:
\begin{align*}
\mathcal{S}_k = \left\{{\bu \in \mathbb{R}^{n-1}}: \sum_{j=1}^{n-1} u_j = \sum_{j \neq k } u_j^* = -u^*_k, \ \lVert \bu - \bu^*_{-k}  \lVert_\infty \leq 1  \right \},
\end{align*}
where $\bu^*_{-k}$ is obtained by removing the $k$th component of $\bu^*$, and consider the constrained leave-one-out marginal MLE:
\begin{align*}
\widehat{\bu}^{(-k)}\in \argmax_{\bu\in\mathcal S_k}l_1^{(-k)}(\bm u). 
\end{align*}

Since $\mathcal S_k$ is compact and $l_1^{(-k)}(\cdot)$ is continuous, $\widehat{\bu}^{(-k)}$ exists (uniqueness is not needed). Serving as a bridge, the following estimate of $\|\widehat{\bu}^{(-k)} -\widehat{\bu}_{-k}\|_2$ will be needed in our proof. 
\begin{Lemma}\label{lm:tired}             
	Under Assumptions \plainref{ap:2}, \plainref{ap:1} and \plainref{ap:deterministic}, with probability at least $1-2n^{-3}$, for all $k\in [n]$, 
		\begin{align*}
\left\|\widehat{\bu}^{(-k)} - \widehat{\bu}_{-k}  \right \|_2 \lesssim  \frac{\Gamma_n^{\RE} \N_{n, +} + \sqrt{\N_{n, +}\log n}}{\lambda_2^\leave}. 
	\end{align*}                                                                                                                       
\end{Lemma}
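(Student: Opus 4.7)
The plan is a leave-one-out perturbation argument. Set $d := \widehat{\bu}^{(-k)} - \widehat{\bu}_{-k} \in \mathbb{R}^{n-1}$. Because $\langle 1\rangle^\top \widehat{\bu}^{(-k)} = -u_k^*$ while $\langle 1\rangle^\top \widehat{\bu}_{-k} = -\widehat{u}_k$, I decompose $d = d_\parallel + d_\perp$ with $d_\parallel \in \mathrm{span}(\langle 1\rangle)$ and $d_\perp \perp \langle 1\rangle$; then $\|d_\parallel\|_2 = |\widehat{u}_k - u_k^*|/\sqrt{n-1} \lesssim \Gamma_n^{\RE}/\sqrt{n}$ by Theorem~\ref{main:general} is negligible, so the task reduces to controlling $\|d_\perp\|_2$. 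By shift-invariance of $l_1^{(-k)}$, a Lagrange-multiplier argument gives $\nabla l_1^{(-k)}(\widehat{\bu}^{(-k)}) = 0$, provided $\widehat{\bu}^{(-k)}$ lies in the interior of $\mathcal{S}_k$ (verified a posteriori by a continuity argument using the uniform consistency of $\widehat{\bu}_{-k}$). The integral mean-value theorem then yields $-\nabla l_1^{(-k)}(\widehat{\bu}_{-k}) = \bar{\cH}^{(-k)} d$ with $\bar{\cH}^{(-k)} := \int_0^1 \cH^{(-k)}(\widehat{\bu}_{-k} + t d)\,dt$. Since $-\bar{\cH}^{(-k)}$ is a weighted graph Laplacian with null space $\mathrm{span}(\langle 1\rangle)$ and $\nabla l_1^{(-k)}(\widehat{\bu}_{-k}) \in \langle 1\rangle^\perp$ (shift-invariance again), projecting onto $\langle 1\rangle^\perp$ gives
\begin{align*}
\|d_\perp\|_2 \leq \frac{\|\nabla l_1^{(-k)}(\widehat{\bu}_{-k})\|_2}{\lambda_2(-\bar{\cH}^{(-k)})}.
\end{align*}

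For the numerator, the key identity is $\partial_j l_1^{(-k)}(\widehat{\bu}_{-k}) = -\partial_j l_1^{(k)}(\widehat{\bu})$ for $j \neq k$, obtained from $\partial_j l_1(\widehat{\bu}) = 0$ together with the split $l_1 = l_1^{(-k)} + l_1^{(k)}$, where $l_1^{(k)}$ collects summands from edges containing $k$. Writing $\partial_j l_1^{(k)}(\widehat{\bu}) = \partial_j l_1^{(k)}(\bu^*) + [\partial_j l_1^{(k)}(\widehat{\bu}) - \partial_j l_1^{(k)}(\bu^*)]$, Hoeffding's inequality applied to the first (a sum of $\N_{jk}$ bounded mean-zero independent random variables, by Proposition~\ref{lm:1}) and the Lipschitz property of each $\psi(j; T_i, \pi_i, \cdot)$ against $\|\widehat{\bu} - \bu^*\|_\infty \lesssim \Gamma_n^{\RE}$ applied to the second yield $|\partial_j l_1^{(k)}(\widehat{\bu})| \lesssim \sqrt{\N_{jk}\log n} + \N_{jk}\Gamma_n^{\RE}$ with high probability, uniformly in $j, k$. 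Squaring and summing over $j \neq k$ using $\sum_{j}\N_{jk} \lesssim M\N_{n,+}$ and $\max_j \N_{jk} \leq \cc \N_{n,+}$ produces $\|\nabla l_1^{(-k)}(\widehat{\bu}_{-k})\|_2 \lesssim \sqrt{\N_{n,+}\log n} + \Gamma_n^{\RE}\N_{n,+}$ (the $\sqrt{\cc} \leq 1$ factor is absorbed). For the denominator, Weyl's inequality combined with matrix Bernstein on $\cH^{(-k)}(\bu^*) - {\cH^*}^{(-k)}(\bu^*)$ (an independent sum over edges not containing $k$) and Lipschitz continuity of the Hessian entries to pass from $\bu^*$ to the interpolated argument delivers $\lambda_2(-\bar{\cH}^{(-k)}) \gtrsim \lambda_2^{\leave}$ with high probability under Assumption~\ref{ap:deterministic+}.

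Combining the two estimates and taking a union bound over $k \in [n]$ yields the claim. The main obstacle is the circular structure of the argument: interiority of $\widehat{\bu}^{(-k)}$ in $\mathcal{S}_k$ and the Weyl-type lower bound on $\lambda_2(-\bar{\cH}^{(-k)})$ each require $\|d\|_\infty$ to be small, whereas the bound on $d$ in turn invokes both. This is resolved by a bootstrap along the path $\widehat{\bu}_{-k} + t d$ for $t \in [0,1]$, with Assumption~\ref{ap:deterministic+} ensuring that a crude preliminary estimate already closes the loop and self-improves to the target bound.
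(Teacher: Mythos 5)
Your skeleton (leave-one-out perturbation, reduction to $\|\nabla l_1^{(-k)}(\widehat{\bu}_{-k})\|_2/\lambda_2(-\bar{\cH}^{(-k)})$, and the numerator bound via the identity $\partial_j l_1^{(-k)}(\widehat{\bu}_{-k}) = -\sum_{i:\{j,k\}\subseteq T_i}\psi(j;T_i,\pi_i,\widehat{\bu})$ split into a Hoeffding term and a Lipschitz term) matches the paper, and your numerator estimate and the treatment of the $\langle 1\rangle$-direction mismatch are correct. However, two steps do not go through as written.

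First, the stationarity condition. You need $\nabla l_1^{(-k)}(\widehat{\bu}^{(-k)})=0$, which requires the constrained maximizer to be interior to the $\ell_\infty$ ball defining $\mathcal{S}_k$. You flag the circularity and propose a bootstrap, but the circularity here is not of the self-improving kind: if the $\ell_\infty$ constraint is active, the first-order equality fails outright (the KKT system acquires extra multipliers), so there is no crude preliminary estimate on $d$ from which a bootstrap along $\widehat{\bu}_{-k}+td$ could start. Ruling out an active constraint directly would essentially require uniform consistency of the \emph{unconstrained} leave-one-out MLE, i.e.\ an RE property for the leave-one-out hypergraph, which is not established anywhere. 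The paper sidesteps this entirely: since $l_1^{(-k)}$ is concave and $\mathcal{S}_k$ is convex, the variational inequality $(\widehat{\bu}^{(-k)}-\bu)^\top\nabla l_1^{(-k)}(\widehat{\bu}^{(-k)})\geq 0$ holds for all $\bu\in\mathcal{S}_k$ regardless of where the maximizer sits; applying it with $\bu=\widehat{\bu}_{-k}+(n-1)^{-1}(\widehat{u}_k-u_k^*)\langle 1\rangle$ and the mean-value expansion gives $(\widehat{\bu}^{(-k)}-\bu)^\top\{-\cH^{(-k)}(\bar{\bu}^{(-k)})\}(\widehat{\bu}^{(-k)}-\bu)\leq(\widehat{\bu}^{(-k)}-\bu)^\top\nabla l_1^{(-k)}(\bu)$ with no interiority needed; the membership $\bu\in\mathcal S_k$ is checked from Theorem~\plainref{main:general}.

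Second, the spectral lower bound. Your Weyl-plus-matrix-Bernstein route needs $\|\bar{\cH}^{(-k)}-{\cH^*}^{(-k)}(\bu^*)\|_2=o(\lambda_2^\leave)$, which you justify by invoking Assumption~\plainref{ap:deterministic+} — but the lemma is stated under Assumptions~\plainref{ap:2}, \plainref{ap:1} and \plainref{ap:deterministic} only, and under those alone $\lambda_2^\leave$ may be arbitrarily small (even zero), in which case Weyl's inequality yields nothing. The paper's argument is deterministic and assumption-free beyond boundedness: by \plaineqref{wearediff}, every off-diagonal entry of $\cH^{(-k)}$ at any uniformly bounded argument and for any comparison outcome is comparable, up to universal constants, to the number of shared edges $|\{i:\{s,s'\}\subseteq T_i,\,k\notin T_i\}|$, hence the Laplacian quadratic forms of $-\cH^{(-k)}(\bar{\bu}^{(-k)})$ and $-{\cH^*}^{(-k)}(\bu^*)$ are comparable on $\langle 1\rangle^\perp$, and Courant--Fischer gives $\lambda_2(-\cH^{(-k)}(\bar{\bu}^{(-k)}))\gtrsim\lambda_2^\leave$ with no concentration step and no extra hypothesis. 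Replacing your two problematic steps by these two devices turns your outline into the paper's proof.
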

The proof of Lemma~\ref{lm:tired} is deferred to the end of this section. We are now ready to bound $(\text{vii})$. Inserting $\widehat{u}^{(-k)}_{k'}$ in the summand, 
\begin{eqnarray*}
	(\text{vii}) = \underbrace{\sum_{k' \neq k} [\cH({\bu^*}) - \cH^*(\bu^*)]_{kk'} (\widehat{u}^{(-k)}_{k'} - u^*_{k'})}_{(\text{vii.1})} + \underbrace{\sum_{k' \neq k} [\cH({\bu^*}) - \cH^*(\bu^*)]_{kk'} (\widehat{u}_{k'} - \widehat{u}^{(-k)}_{k'} )}_{(\text{vii.2})}. 
\end{eqnarray*}
For (vii.1), note that $[\cH({\bu^*}) - \cH^*(\bu^*)]_{kk'}$ depend only on the comparison outcomes $\{\pi_i\}_{i: k\in T_i}$ while $(\widehat{u}^{(-k)}_{k'} - u^*_{k'})$ depend only on $\{\pi_i\}_{i: k\notin T_i}$, so the two terms are independent. Thus, conditioning on $\{\pi_i\}_{i: k \notin T_i}$, $(\widehat{u}^{(-k)}_{k'} - u^*_{k'})$ is nonrandom. Therefore, ${(\text{vii.1})}$ could be further written as a sum of bounded independent random variables. Applying Hoeffding's inequality and a union bound over $k\in [n]$, with probability at least $1 - n^{-3}$,
\begin{align*}
	&|{(\text{vii.1})}| \lesssim  \|\widehat{\bu}^{(-k)} -\bu^*_{-k}\|_\infty \sqrt{\ndeg_k\log n}& k \in [n].
\end{align*}
Meanwhile, by the Cauchy--Schwarz inequality and another application of Hoeffding's inequality plus union bound, with probability at least $1-n^{-3}$,  
\begin{align*}
&|{(\text{vii.2})}| \leq \| [\cH({\bu^*}) - \cH^*(\bu^*)]_{k\cdot} \|_2 \|\widehat{\bu}^{(-k)} -\widehat{\bu}_{-k}\|_2\lesssim \sqrt{\ndeg_k\log n}\cdot \|\widehat{\bu}^{(-k)} -\widehat{\bu}_{-k}\|_2 & k \in [n],
\end{align*}
where Hoeffding's inequality is applied to control $|[\cH({\bu^*}) - \cH^*(\bu^*)]_{kk'}|\lesssim\sqrt{\N_{kk'}\log n}$ for all $k'\in [n]\setminus \{k\}$; see \eqref{cuties} for the expression of $[\cH({\bu^*}) - \cH^*(\bu^*)]_{kk'}$. 
As a result, with probability at least $1-2n^{-3}$, for all $k\in [n]$, 
\begin{align}\label{0723}
|{(\text{vii})}|  \leq |{(\text{vii.1})}|  + |{(\text{vii.2})}|\lesssim (\|\widehat{\bu}^{(-k)} -\bu^*_{-k}\|_\infty + \|\widehat{\bu}^{(-k)} -\widehat{\bu}_{-k}\|_2) \sqrt{\ndeg_k\log n}. 
\end{align}
Putting the estimates in \eqref{0722}, \eqref{0723}, and Lemma~\ref{lm:tired} together via a union bound yields that, with probability at least $1-n^{-2}$, for all $k\in [n]$, 
\begin{align}
	&|[\{\cH({\bu^*}) -\cH^*(\bu^*) \}(\buu - \bm u^*)]_k|\nonumber\\
	 \leq&\  |{(\text{vi})}|  + |{(\text{vii})}|\nonumber\\
	 \lesssim&\ \left(\lVert  \buu  - \bu^*\lVert_\infty+ \|\widehat{\bu}^{(-k)} -\bu^*_{-k}\|_\infty + \|\widehat{\bu}^{(-k)} -\widehat{\bu}_{-k}\|_2 \right)\sqrt{\ndeg_k\log n}\nonumber\\
	\leq&\ \left[\lVert  \buu  - \bu^*\lVert_\infty + (\|\widehat{\bu}^{(-k)} -\buu_{-k}\|_\infty + \|\buu_{-k} -\bu^*_{-k}\|_\infty) + \|\widehat{\bu}^{(-k)} -\widehat{\bu}_{-k}\|_2\right] \sqrt{\ndeg_k\log n}\nonumber\\
	\lesssim\ & \left(\lVert  \buu  - \bu^*\lVert_\infty+ \|\widehat{\bu}^{(-k)} -\widehat{\bu}_{-k}\|_2\right)\sqrt{\ndeg_k\log n} \nonumber\\
	\lesssim\ & \left(\Gamma^\RE_n +  \frac{\Gamma_n^{\RE} \N_{n, +} + \sqrt{\N_{n, +}\log n}}{\lambda_2^\leave}\right)\sqrt{\ndeg_k\log n}\nonumber\\
	\lesssim\ & \frac{\Gamma_n^{\RE} \N_{n, +} + \sqrt{\N_{n, +}\log n}}{\lambda_2^\leave}\sqrt{\ndeg_k\log n},\label{dinglu}
\end{align}
To see why the last step holds, fixing $k\in [n]$ and denoting by $\D^{(-k)}$ the degree matrix of the weighted undirected graph associated with $-{\cH^*}^{(-k)}(\bu^*)$,  
\begin{align}
\frac{\N_{n, +}}{\lambda_2^\leave} \nonumber&\geq \frac{\N_{n, +}}{\lambda_2(-{\cH^*}^{(-k)}(\bu^*))} \\
&\geq \frac{\N_{n, +}}{\lambda_2(\{\mathcal D^{(-k)}\}^{1/2}\{\mathcal D^{(-k)}\}^{-1/2}\{-{\cH^*}^{(-k)}(\bu^*)\}\{\mathcal D^{(-k)}\}^{-1/2}\{\mathcal D^{(-k)}\}^{1/2})}\nonumber\\
&\geq \frac{\N_{n, +}}{\max_{j\in [n-1]}[\D^{(-k)}]_{jj}\lambda_2(\{\mathcal D^{(-k)}\}^{-1/2}\{-{\cH^*}^{(-k)}(\bu^*)\}\{\mathcal D^{(-k)}\}^{-1/2})}\nonumber\\
&\geq \frac{\N_{n, +}}{\max_{j\in [n-1]}[\D^{(-k)}]_{jj}}\nonumber\\
&\gtrsim 1,\label{zhaoran}
\end{align}
where the penultimate step used $\lambda_2(\{\mathcal D^{(-k)}\}^{-1/2}\{-{\cH^*}^{(-k)}(\bu^*)\}\{\mathcal D^{(-k)}\}^{-1/2})\leq 1$, and the last step holds under Assumption~\plainref{ap:deterministic} and follows as a consequence of \eqref{louis} in Section~\ref{new19}. 
Hence, 
\begin{align*}
|[\D^{-1/2}\{\cH({\bu^*}) -\cH^*(\bu^*) \}(\buu - \bm u^*)]_k|\lesssim\frac{\Gamma_n^{\RE} \N_{n, +}\sqrt{\log n} + \sqrt{\N_{n, +}}\log n}{\lambda_2^\leave}.
\end{align*}
\subsubsection{Proof of Lemma~\ref{lm:tired}}
To finish the proof, it remains to prove Lemma~\ref{lm:tired}.
\begin{proof}[Proof of Lemma~\ref{lm:tired}]
Note that $l_1^{(-k)}(\bm u)$ is a concave function and the domain $\mathcal S_k$ is convex. By the optimality condition for convex optimization \citep{boyd2004convex}, the solution $\widehat{\bu}^{(-k)}$ satisfies  
\begin{equation}\label{pangeru+}
(\widehat{\bu}^{(-k)}-\bu)^\top \{\nabla l_1^{(-k)}(\widehat{\bu}^{(-k)})\} \geq 0\quad\quad\text{for all $\bu \in \mathcal{S}_k$}.
\end{equation}
Take $\bu = \widehat{\bu}_{-k}+ (n-1)^{-1}(\widehat{u}_k -  u^*_{k})\langle 1\rangle$. 
Note that this choice is valid since $\langle 1\rangle^\top \bu = \langle 1\rangle^\top\buu - u^*_k = -u_k^*$. In addition, according to Theorem~\plainref{main:general}, with probability at least $1-n^{-3}$, 
\begin{align*}
\|\bu - \bu^*_{-k}\|_\infty\leq \|\buu_{-k} - \bu^*_{-k}\|_\infty + \frac{1}{n-1}|\widehat{u}_k-u^*_k|\leq 2\|\buu - \bu^*\|_\infty\lesssim\Gamma_n^\RE = o(1),
\end{align*}
where the last step is implied by Assumption~\plainref{ap:deterministic}. Hence, $\bu\in \mathcal S_k$. 

Therefore, it follows from the mean-value theorem that
\begin{align*}
&(\widehat{\bu}^{(-k)} - {\bu})^\top  \left\{\nabla l_1^{(-k)}({\bu})\right\}\\
&\stackrel{\eqref{pangeru+}}{\geq} (\widehat{\bu}^{(-k)} - {\bu})^\top  \left\{\nabla l_1^{(-k)}({\bu})\right\} -  (\widehat{\bu}^{(-k)}-\bu)^\top \{\nabla l_1^{(-k)}(\widehat{\bu}^{(-k)})\} \\
	&= 	-(\widehat{\bu}^{(-k)} - {\bu})^\top       \left\{\nabla l_1^{(-k)}(\widehat{\bu}^{(-k)}) -   \nabla l_1^{(-k)}({\bu})\right\}  \\
	&= 	(\widehat{\bu}^{(-k)} - {\bu})^\top       \left\{-\cH^{(k)}(\bar{\bu}^{(-k)})\right\}   (\widehat{\bu}^{(-k)} - {\bu}) \geq 0,
\end{align*}
where $\cH^{(k)}(\bar{\bu}^{(-k)})=\nabla^2 l_1^{(-k)}(\bar{\bu}^{(-k)})$ and $\bar{\bu}^{(-k)}$ lies on the line segment between $\widehat{\bu}^{(-k)}$ and ${\bu}$. By the Cauchy--Schwarz inequality,  
\begin{align}
\|\widehat{\bu}^{(-k)} - {\bu}\|_2 \leq \frac{\|  \nabla l_1^{(-k)}({\bu})\|_2}{\lambda_2(-\cH^{(-k)}(\bar{\bu}^{(-k)}))}\lesssim \frac{\|  \nabla l_1^{(-k)}({\bu})\|_2}{\lambda_2(-\cH^{(-k)}(\bu^*_{-k}))}\stackrel{\plaineqref{myleave}}{\leq} \frac{\|  \nabla l_1^{(-k)}({\bu})\|_2}{\lambda_2^\leave}.\label{jjuhy}
\end{align}
For the second step, thanks to Theorem~\plainref{main:general}, with probability at least $1-n^{-3}$,   
\begin{align*}
\|\bar{\bu}^{(-k)}\|_\infty\leq\max\{\|\bm u\|_\infty, \|\buu^{(-k)}\|_\infty\}\leq 2(\|\bu^*\|_\infty + 1)<\infty. 
\end{align*}
Under Assumption~\plainref{ap:2}, both $\bar{\bu}^{(-k)}$ and $\bu^*$ are uniformly bounded in $\|\cdot\|_\infty$. It follows from \plaineqref{wearediff} (that is, the summand for each edge $T_i$ is uniformly bounded for all possible comparison outcomes $r_i$) that for $s\neq s'\in [n]\setminus\{k\}$, 
\begin{align}
[\cH^{(-k)}(\bar{\bu}^{(-k)})]_{ss'}\asymp [\cH^{(-k)}(\bu^*_{-k})]_{ss'}\asymp [{\cH^*}^{(-k)}(\bu^*_{-k})]_{ss'}\asymp |\{i: \{s, s'\}\subseteq T_i, k\notin T_i\}|.\label{mazda1}
\end{align}
Therefore, for any $\bm x\in\mathbb R^{n-1}$, 
\begin{align}
\bm x^\top [-\cH^{(-k)}(\bar{\bu}^{(-k)})]\bm x &= \frac{1}{2}\sum_{s, s'\in [n]\setminus\{k\}}\cH^{(-k)}(\bar{\bu}^{(-k)})(x_s-x_{s'})^2\label{mazda2}\\
&\stackrel{\eqref{mazda1}}{\asymp} \frac{1}{2}\sum_{s, s'\in [n]\setminus\{k\}}{\cH^*}^{(-k)}(\bu^*_{-k})(x_s-x_{s'})^2\nonumber\\
& = \bm x^\top [-{\cH^*}^{(-k)}(\bu^*_{-k})]\bm x. \nonumber
\end{align}
By the Courant--Fischer theorem, $-\cH^{(-k)}(\bar{\bu}^{(-k)})$ and $-{\cH^*}^{(-k)}(\bu^*_{-k})$ have the same zero-eigenspace; moreover, $\lambda_2(-\cH^{(-k)}(\bar{\bu}^{(-k)}))\asymp\lambda_2(-{\cH^*}^{(-k)}(\bu^*_{-k}))\geq\lambda_2^\leave$. This justifies the second step in \eqref{jjuhy}. 

To finish the proof, it remains to bound the numerator in the upper bound in \eqref{jjuhy}.
Since $\nabla l_1(\widehat{\bu}) = 0$, separating the edges involving $k$ and the rest apart and noting that log-likelihood summand from edges not containing $k$ does not depend the $k$th component,  
\begin{align}
\|\nabla l_1^{(-k)}({\widehat{\bu}_{-k}})\|_2^2 = \sum_{k' \in [n], k' \neq k} \left\{\sum_{i: \{k, k'\}\subseteq T_i} \psi(k'; T_i, \pi_i, \widehat{\bu})\right\}^2,\label{localt}
\end{align}
where $\psi$ is defined in \eqref{yourpsi} and we write it down for the reader's convenience:
\begin{align*}
\psi(k'; T_i, \pi_i, \widehat{\bu}) =  \mathbf 1_{\{r_i(k')\leq y_i\}} - \sum_{j\in [r_i(k')\wedge y_i]}\frac{\e{\widehat{u}_{k'} }}{\sum_{t=j}^{m_i} \e{\widehat{u}_{\pi_i(t)}}}.
\end{align*} 

The right-hand side of \eqref{localt} is a local term which can be estimated as follows:
\begin{align*}
&\sum_{k' \in [n], k' \neq k} \bigg\{\sum_{i: \{k, k'\}\subseteq T_i} \psi(k'; T_i, \pi_i, \widehat{\bu})\bigg\}^2\\
& \leq 2  \sum_{k' \in [n], k' \neq k} \Bigg[\bigg\{ \sum_{i: \{k, k'\}\subseteq T_i} \Big(\psi(k'; T_i, \pi_i, \widehat{\bu}) -  \psi(k'; T_i, \pi_i, {\bu^*})\Big)\bigg\}^2 \\
& \quad\quad\quad\quad\quad\quad\quad\quad\quad\quad\quad\quad\quad\quad\quad\quad+ \bigg\{  \sum_{i: \{k, k'\}\subseteq T_i} \psi(k'; T_i, \pi_i, {\bu^*})\bigg\}^2 \Bigg].
\end{align*}
For the first term, note that $\psi(k'; T_i, \pi_i, {\bu})$ is bounded by $M$ under Assumption \plainref{ap:1} and is Lipschitz continuous with respect to $\bm u$. As a result, 
\begin{align*}
	|\psi(k'; T_i, \pi_i, \widehat{\bu}) - \psi(k'; T_i, \pi_i, {\bu^*})| \lesssim \lVert \widehat{\bu} - {\bu^*} \lVert_\infty \lesssim \Gamma_n^{\RE}.
\end{align*}
Meanwhile, since $\mathbb E[\psi(k'; T_i, \pi_i, {\bu^*})]=0$, denoting $\N_{kk'} = |\{ i: \{k, k'\}\subseteq T_i\}|$ for $k' \neq k$ and applying Hoeffding's inequality, 
\begin{align*}
\mathbb{P}	\Bigg(  \Big|\sum_{i: \{k, k'\}\subseteq T_i} \psi(k'; T_i, \pi_i, {\bu^*})\Big| \leq \sqrt{24M\N_{kk'}\log n}  \Bigg) \geq 1 - n^{-5}.
\end{align*}
Taking a union bound over $k'\in [n]\setminus\{k\}$ yields that, with probability at least $1-n^{-4}$, 
\begin{align}
\max_{k'\in [n]\setminus\{k\}}\Big|\sum_{i: \{k, k'\}\subseteq T_i} \psi(k'; T_i, \pi_i, {\bu^*})\Big| \leq \sqrt{24M\N_{kk'}\log n}.\label{aftern}
\end{align}
Consequently,
\begin{align*}
\|\nabla l_1^{(-k)}({\widehat{\bu}_{-k}})\|_2^2 \lesssim \sum_{k' \in [n], k' \neq k} \{(\N_{kk'}\Gamma_n^{\RE})^2 + \N_{kk'} \log n\}\lesssim (\Gamma_n^{\RE} \N_{n, +})^2 + \N_{n, +} \log n.
\end{align*}
Recall that $\bu = \widehat{\bu}_{-k}+ (n-1)^{-1}(\widehat{u}_k -  u^*_{k})\langle 1\rangle$. As a result, 
\begin{equation}\label{l2_error_leave_one_out}
\left\|\widehat{\bu}^{(-k)} - \widehat{\bu}_{-k} - \frac{1}{n-1}(\widehat{u}_k -  u^*_{k})\langle 1\rangle\right \|_2 \lesssim  \frac{\Gamma_n^{\RE} \N_{n, +} + \sqrt{\N_{n, +}\log n}}{\lambda_2^\leave}. \end{equation}
with probability at least $1-n^{-3}$. Since $\|(n-1)^{-1}(\widehat{u}_k -  u^*_{k})\langle 1\rangle \|_2 \lesssim n^{-1/2}\Gamma_n^\RE = o(\N_{n,+}\Gamma_n^\RE/\lambda_2^\leave)$ as a result of \eqref{zhaoran}, we conclude that 
 \begin{equation}\label{l2_error_leave_one_out}
\left\|\widehat{\bu}^{(-k)} - \widehat{\bu}_{-k} \right \|_2 \lesssim \frac{\Gamma_n^{\RE} \N_{n, +} + \sqrt{\N_{n, +}\log n}}{\lambda_2^\leave} . \end{equation}
with probability at least $1-n^{-3}-n^{-4}$. We would like to point out this probability comes from Theorem \plainref{main:general} and \eqref{aftern}. Taking a union bound over $k\in [n]$ in \eqref{aftern}, we finish the proof of Lemma \ref{lm:tired}.
\end{proof}

\subsection{Proof of Lemma \ref{CLT:1}}\label{laile3}

We first prove the result for the QMLE since it is more involved. For any fixed $k\in[n]$,
\begin{equation}\label{CTL: decomposition_1}
	({\mathcal{L}^\dagger_{\sym}} \D^{-1/2} \nabla l_2(\bm u^*))_k = (\D^{-1/2} \nabla l_2(\bm u^*))_k
	+ (({\mathcal{L}^\dagger_{\sym}- \I} )\D^{-1/2} \nabla l_2(\bm u^*))_k.
\end{equation}
For the first term, note that each component of $\nabla l_2(\bm u^*)$ is a sum of uniformly bounded and mean-zero independent random variables indexed by edges (see \eqref{eq:231}):
\begin{align}
(\nabla l_2(\bu^*))_k = \sum_{i: k\in T_i}\varphi(k, T_i, \pi_i, \bu^*),\label{letgo}
\end{align}
where $\varphi(k, T_i, \pi_i, \bu^*)$ are random variables defined in \eqref{yourphi} with comparable second moments: $\mathbb E[\varphi^2(k, T_i, \pi_i, \bu^*)]\asymp 1$ uniformly for all $i\in [N]$ under Assumptions~\plainref{ap:2}-\plainref{ap:1}. 
Since $\D^{-1/2} $ is a diagonal matrix with $\min_{j\in [n]}\D_{jj}\gtrsim \N_{n,-}\to\infty$ under Assumption~\plainref{ap:deterministic}, by the Lindeberg--Feller central limit theorem,  
\begin{align*}
	& (\D^{-1/2} \nabla l_2(\bm u^*))_k \to N(0, \Sigma_{kk} ) & \Sigma := \D^{-1/2} \mathbb{E}[\nabla l_2(\bm u^*) \nabla l_2(\bm u^*)^\top] \D^{-1/2}
\end{align*}
in distribution as $n \to \infty$. Thus, it remains to show that $(({\mathcal{L}^\dagger_{\sym}-\I} )\D^{-1/2} \nabla l_2(\bm u^*))_k = o_p(1)$, for which we use Chebyshev's inequality.
Since $\mathbb{E}[(({\mathcal{L}^\dagger_{\sym}-\I} )\D^{-1/2} \nabla l_2(\bm u^*))_k] =0$, it suffices to compute the second moment and verify that any fixed component of it converges to zero.  

Rewriting the second term on the right-hand side in \eqref{CTL: decomposition_1} using the Neumann series expansion of $(\mathcal{L}^\dagger_{\sym}-\I)$ as a truncated sum, 
\begin{align}
	&(({\mathcal{L}^\dagger_{\sym}-\I} )\D^{-1/2} \nabla l_2(\bm u^*))_k\nonumber\\
	=& \left(\sum_{t=1}^{t_n-1 } (\A - \cP_1)^t \D^{-1/2} \nabla l_2(\bm u^*)\right)_k + \left(	\sum_{t=t_n}^{\infty}(\A - \cP_1)^t\D^{-1/2} \nabla l_2(\bm u^*)\right)_k\label{k890}
\end{align}
where $t_n$ is the same constant in \eqref{mytn} such that $\left\|\sum_{t = t_n}^{\infty} (\A- \cP_1)^t\right\|_2\lesssim n^{-2M}$. As a result,
\begin{align}
	\left(	\sum_{t=t_n}^{\infty}(\A - \cP_1)^t\D^{-1/2} \nabla l_2(\bm u^*)\right)_k &\leq \left(\sum_{t=t_n}^\infty\|\A - \cP_1\|^t_2\right)\|\D^{-1/2} \nabla l_2(\bm u^*)\|_2\label{56324}\\
	&\lesssim n^{-2M}\|\D^{-1}\|_2\|\nabla l_2(\bm u^*)\|_2 = o_p(1),\nonumber
\end{align} 
because $\|\D^{-1}\|_2\lesssim \N_{n,-}^{-1}\lesssim 1$ and each component of $\nabla l_2(\bm u^*)$ consists of a summation of at most $O(n^{M})$ uniformly bounded random variables so that $\|\nabla l_2(\bm u^*)\|_2\lesssim n^{M+1/2}$. 
For the first term in \eqref{k890}, by a similar argument to obtain \eqref{tobeu}, $\mathcal P_1\D^{-1/2}\nabla l_2(\bu^*) = \bm 0$. This combined with the commutativity between $\A$ and $\mathcal P_1$ yields 
\begin{align*}
	\sum_{t=1}^{t_n-1 } (\A - \cP_1)^t \D^{-1/2} \nabla l_2(\bm u^*) = \sum_{t=1}^{t_n-1 } \A^t \D^{-1/2} \nabla l_2(\bm u^*),
\end{align*}
where each term has mean zero and variance
\begin{align*}
	&\mathbb{E}\left[\left\{(\A^t \D^{-1/2} \nabla l_2(\bm u^*))_k\right\}^2\right]
	=\ e_k^\top\A^t\D^{-1/2}  \mathbb{E}[\{\nabla l_2(\bm u^*)\}\{\nabla l_2(\bm u^*)\}^\top] \D^{-1/2}\A^te_{k}.
\end{align*}
Recall that $\{\nabla l_2(\bu^*)\}_k = \sum_{i: k\in T_i}\varphi(k, T_i, \pi_i, \bu^*)$, where $\varphi(k, T_i, \pi_i, \bu^*)$ is defined in \eqref{yourphi}. We compute the $k'k''$th entry of $\mathbb{E}[\{\nabla l_2(\bm u^*)\}\{\nabla l_2(\bm u^*)\}^\top]$ for $k', k'' \in [n]$ as follows. When $k' = k''$, 
\begin{align*}
(	\mathbb{E}[\{\nabla l_2(\bm u^*)\}\{\nabla l_2(\bm u^*)\}^\top])_{k'k'} = \sum_{i: k'\in T_i} \text{Var}(\varphi(k', T_i, \pi_i, \bu^*)) \lesssim \D_{k'k'}.
\end{align*}
When $k' \neq k''$, 
\begin{align*}
	(	\mathbb{E}[\{\nabla l_2(\bm u^*)\}\{\nabla l_2(\bm u^*)\}^\top])_{k'k''} = \sum_{{i: \{k', k''\}\subseteq T_i}} \mathbb{E}\{\varphi(k', T_i, \pi_i, \bu^*) \times \varphi(k'', T_i, \pi_i, \bu^*) \} \lesssim \cH_{k'k''}.
\end{align*}
As a result, 
\begin{align}\label{QMLE_special}
	(\D^{-1/2}  \mathbb{E}[\{\nabla l_2(\bm u^*)\}\{\nabla l_2(\bm u^*)\}^\top] \D^{-1/2})_{k'k''}&\lesssim (\I+\A)_{k'k''}.
\end{align}
Since $\mathcal A^t$ has nonnegative entries for every $t> 0$, 
\begin{align*}
	\mathbb{E}\left[\left\{(\A^t \D^{-1/2} \nabla l_2(\bm u^*))_k\right\}^2\right] 
	&\lesssim e_k^\top\A^t(\I+\A)\A^te_{k} \lesssim \cc,
\end{align*}
where the last step follows from a similar computation as the proof of Lemma \ref{CLT:2}. Specifically, for any $t\geq 0$, we have
\begin{eqnarray*}
		 [\A^{t+1}]_{kk}  &= &  \sum_{j_1 \in [n]} \cdots \sum_{j_t \in [n]} \frac{\h_{kj_1}\h_{j_1j_2} \cdots \h_{j_{t-1}j_t}\h_{j_tk} }{{\D_{kk}} \D_{j_1j_1}\D_{j_2j_2} \cdots \D_{j_{t-1}j_{t-1}} {\D_{j_tj_t}}}\\
		&{\leq}&   \left\{\max_{j_t \in [n]} \frac{\h_{j_tk}}{\D_{j_tj_t}}\right\} \sum_{j_1 \in [n]} \cdots \sum_{j_t \in [n]} \frac{\h_{kj_1}\h_{j_1j_2} \cdots \h_{j_{t-1}j_t} }{{\D_{kk}} \D_{j_1j_1} \D_{j_2j_2} \cdots \D_{j_{t-1}j_{t-1}}}\\
		& = &  \left\{\max_{j_t \in [n]} \frac{\h_{j_tk}}{\D_{j_tj_t}}\right \} \sum_{j_1 \in [n]} \cdots \sum_{j_{t-1} \in [n]}\frac{\h_{kj_1}\h_{j_1j_2} \cdots \h_{j_{t-2}j_{t-1}} }{{\D_{kk}} \D_{j_1j_1} \D_{j_2j_2} \cdots \D_{j_{t-2}j_{t-2}}}\\
		& = &  \max_{j_t \in [n]} \frac{\h_{j_tk}}{\D_{j_tj_t}} \lesssim \cc.
\end{eqnarray*}
Thus, the first term in \eqref{k890} can be bounded using the Cauchy--Schwarz inequality as
\begin{align*}
	\text{Var}\left[\sum_{t=1}^{t_n-1 } \left((\A - \cP_1)^t \D^{-1/2} \nabla l_2(\bm u^*)\right)_k\right]&\leq t^2_n\max_{1\leq t\leq t_n-1}\text{Var}\left[((\A - \cP_1)^t \D^{-1/2} \nabla l_2(\bm u^*))_k\right]\\
	&\lesssim \frac{\cc (\log n)^2  }{\mix^2}\\
	& = o(1).\quad\quad\quad(\text{Assumption~\plainref{ap:deterministic}})
\end{align*}
Applying Chebyshev's inequality, we have
\begin{align*}
	\sum_{t=1}^{t_n-1 } (\A - \cP_1)^t \D^{-1/2} \nabla l_2(\bm u^*) = o_p(1). 
\end{align*}
This combined with the estimate in \eqref{56324} yields the desired result. 

The proof of the marginal MLE is almost identical except that \eqref{QMLE_special} can be explicitly computed as
\begin{align*}
	&(\D^{-1/2}  \mathbb{E}[\{\nabla l_1(\bm u^*)\}\{\nabla l_1(\bm u^*)\}^\top] \D^{-1/2})_{k'k''} =  (\I-\mathcal A)_{k'k''}\leq (\I+\A)_{k'k''} &k', k''\in [n]
\end{align*}
due to $\mathbb{E}[\{\nabla l_1(\bm u^*)\}\{\nabla l_1(\bm u^*)\}^\top] = \mathbb E[-\nabla^2 l_1(\bu^*)] = -\cH^*(\bm u^*)$. We finish the proof of Lemma \ref{CLT:1}.

\subsection{Proof of Lemma~\plainref{newtech}}\label{laile4}
It suffices to prove \plaineqref{2024222} and \plaineqref{2024333} only since \plaineqref{2024111} is an immediate consequence of degree concentration. 
We first recall the notation $\Omega^{(\ms)} = \{ T\subseteq [n]:   |T| = \ms \}$. Moreover, we define $\Omega^{(\ms)}_i = \{ T\subseteq [n]:  i\in T, |T| = \ms \}$ and  $\Omega^{(\ms)}_{ij} = \{ T\subseteq [n]:  \{i, j\}\in T, |T| = \ms \}$.

Recall the eigenvalues of ${\mathcal{L}_{\sym} = \I - \A  = \I - \D^{-1/2}\W\D^{-1/2}}$ as $0 = \lambda_1(\mathcal L_\sym)\leq\cdots\leq\lambda_n(\mathcal L_\sym)\leq 2$. Let $\bar{\D}$ and $\bar{\W}$ denote the expectation of $\D$ and $\W$, respectively, and $\bar{\mathcal{L}}_{\sym} = \I - \bar{\D}^{-1/2}\bar{\W}\bar{\D}^{-1/2}$. The only randomness here comes from the comparison graph sampling. We denote the eigenvalues of $\bar{\mathcal{L}}_{\sym}$ by $0=\lambda_1(\bar{\mathcal{L}}_{\sym})\leq\cdots\leq \lambda_n(\bar{\mathcal{L}}_{\sym})\leq 2$. Since both $\mathcal{L}_{\sym}$ and $\bar{\mathcal{L}}_{\sym}$ are symmetric, by Weyl's inequality, $\max_{i\in [n]}|\lambda_i({\mathcal{L}}_{\sym})-\lambda_i(\bar{\mathcal{L}}_{\sym})|\leq \|\mathcal{L}_{\sym}-\bar{\mathcal{L}}_{\sym}\|_2$. Consequently,
\begin{align}
\mix\geq \bar{\mix} - \|\mathcal{L}_{\sym}-\bar{\mathcal{L}}_{\sym}\|_2, \quad\quad \bar{\mix}:=\min\{\lambda_2(\bar{\mathcal{L}}_{\sym}), 2-\lambda_n(\bar{\mathcal{L}}_{\sym})\}.\label{lexing}
\end{align}

To prove \plaineqref{2024222}, we bound $\|\mathcal{L}_{\sym}-\bar{\mathcal{L}}_{\sym}\|_2$ from above and $\bar{\mix}$ from below, respectively. We begin by introducing the definitions of the Cheeger constant and its dual version in the spectral graph literature; readers familiar with the relevant materials may skip Section \ref{guihuazi}. 

\subsubsection{Review of Cheeger Constants}\label{guihuazi}
For a weighted undirected graph $\GG$ (not a hypergraph) with vertex set $[n]$ and nonnegative symmetric edge weights $\{\chi_{ij}\}_{i, j\in [n]}$ ($\chi_{ij}=\chi_{ji}$), the Cheeger constant of $\GG$ is defined as 	
\begin{align}
&g_{\GG}:= \min_{U\subset [n]}\frac{\sum_{i\in U, j\in U^\complement}\chi_{ij}}{\min\{\text{vol}(U), \text{vol}(U^\complement)\}}&\vol(U) = \sum_{i\in U}\sum_{j\in [n]}\chi_{ij}.\label{fc:cheeger}
\end{align}

This definition is more often used in the literature to characterize the spectral gap of a graph \cite[Lemmas 2.1-2.2]{MR1421568} and is different from the modified Cheeger constant $h_{\GG}$ where normalization in the denominator counts vertices rather than edges. 
\begin{Lemma}[Cheeger's inequality]\label{chg1}
Let $\lambda_2(\GG)$ be the second smallest eigenvalue of the normalized graph Laplacian matrix of the weighted undirected graph $\GG$, and $g_{\GG}$ be the Cheeger constant defined in \eqref{fc:cheeger}. Then,
\begin{align}
\frac{g_\GG^2}{2}<\lambda_2(\GG)\leq 2g_{\GG}.\label{yonghuol}
\end{align}
\end{Lemma}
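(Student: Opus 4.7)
The inequality in question is the classical Cheeger inequality for weighted graphs, and the plan is to prove the two bounds separately using the variational characterization of $\lambda_2(\GG)$. Recall that the normalized graph Laplacian $\mathcal L = I - \bar D^{-1/2}\bar W\bar D^{-1/2}$ associated with $\GG$ satisfies
\begin{equation*}
\lambda_2(\GG) = \min_{f\perp \bar D^{1/2}\langle 1\rangle} \frac{\sum_{i,j\in [n]}\chi_{ij}(g_i-g_j)^2}{2\sum_{i\in [n]} \vol(\{i\})\, g_i^2}, \qquad g = \bar D^{-1/2}f,
\end{equation*}
where $\vol(\{i\}) = \sum_{j} \chi_{ij}$. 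This is the workhorse identity driving both directions.

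For the upper bound $\lambda_2(\GG)\leq 2g_\GG$, the plan is to exhibit a test function realizing a Rayleigh quotient at most $2g_\GG$. Let $U^\star$ be an optimizer of \plaineqref{fc:cheeger} with $\vol(U^\star)\leq \vol((U^\star)^\complement)$, and take $g = c_1\mathbf 1_{U^\star} - c_2\mathbf 1_{(U^\star)^\complement}$ with $c_1,c_2>0$ chosen so that $g$ is orthogonal to $\bar D\langle 1\rangle$ (so that $f=\bar D^{1/2}g \perp \bar D^{1/2}\langle 1\rangle$). Plugging $g$ into the variational formula, the numerator equals $(c_1+c_2)^2\sum_{i\in U^\star, j\in (U^\star)^\complement}\chi_{ij}$ and the denominator reduces, after using the orthogonality constraint, to something comparable to $(c_1+c_2)^2 \vol(U^\star)$, giving a Rayleigh quotient bounded by $2 g_\GG$.

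The lower bound $\lambda_2(\GG) > g_\GG^2/2$ is the substantive direction and the main obstacle. Here the plan is a sweep/threshold argument applied to the second eigenfunction: let $f$ be an eigenfunction attaining $\lambda_2(\GG)$ and set $g = \bar D^{-1/2}f$. Order the vertices by decreasing values of $g_i$ and, for each threshold $t$, consider the level set $U_t = \{i: g_i > t\}$. A Cauchy--Schwarz argument applied to the sum $\sum_{i,j}\chi_{ij}|g_i^2-g_j^2|$ converts a pointwise bound on $\sum_{i,j}\chi_{ij}(g_i-g_j)^2$ (the numerator of the Rayleigh quotient) into an averaged bound on the Cheeger ratios of the sweeping sets $U_t$, yielding some $t^\star$ with $g_\GG \leq \sqrt{2\lambda_2(\GG)}$; rearranging gives the claim. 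The care needed is ensuring $g$ can be shifted to have support with $\vol(U_{t^\star})\leq \vol(U_{t^\star}^\complement)$ while preserving the Rayleigh quotient inequality, which is handled by the standard median-shift trick.

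Since both arguments are textbook (going back to Cheeger for manifolds and to Alon--Milman and Sinclair--Jerrum for graphs), the cleanest path is simply to cite \cite{MR1421568}, as the paper already does in the surrounding text; the outline above indicates what is hidden behind the citation and suffices to verify the inequality under the weighted, reversible setting used throughout Section~\plainref{sec:6+}.
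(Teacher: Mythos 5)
Your proposal is correct and matches the paper's treatment: the paper states this as a known classical result and cites \cite{MR1421568} (Lemmas 2.1--2.2) without reproducing a proof, exactly as you ultimately do, and your sketch of the two-valued test function for the upper bound and the sweep/Cauchy--Schwarz argument for the lower bound is the standard argument hidden behind that citation. The only cosmetic point is that your sweep argument as sketched yields the non-strict bound $\lambda_2(\GG)\geq g_\GG^2/2$, whereas the strict inequality in \eqref{yonghuol} requires the slightly sharper computation in Chung's proof; this is immaterial for the asymptotic use the paper makes of the lemma.
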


We also need a dual version of the Cheeger constant defined in \eqref{fc:cheeger}:
\begin{align}
g^{\ddd}_{\mathsf G} = \max_{U_1, U_2\subset [n], U_1\cap U_2 = \emptyset}\frac{2\sum_{i\in U_1, j\in U_2}\chi_{ij}}{\text{vol}(U_1)+\text{vol}(U_2)},\label{dualcheeger}
\end{align}
This definition was introduced in \cite{bauer_jost_2013} to study the largest eigenvalue of the weighted graph Laplacian of $\GG$. 
A similar inequality for the dual Cheeger's constant is proved in \cite[Theorem 3.2]{bauer_jost_2013}.
\begin{Lemma}[Dual Cheeger's inequality]\label{chg2}
Let $\lambda_n(\GG)$ be the largest eigenvalue of the normalized graph Laplacian matrix of the weighted undirected graph $\GG$, and $g^{\ddd}_{\GG}$ be the dual Cheeger constant defined in \eqref{dualcheeger}. Then,
\begin{align}
\lambda_n(\GG)\leq 1 + \sqrt{1-(1-g^{\ddd}_{\mathsf G})^2}.\label{kouyi}
\end{align}
\end{Lemma}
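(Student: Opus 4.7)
My plan is to proceed variationally and via a coarea sweep. After squaring and rearranging, the claim is equivalent to the product bound
$$(1-g^{\ddd}_{\GG})^2 \leq \lambda_n(\mathcal L_\sym)\,\bigl(2-\lambda_n(\mathcal L_\sym)\bigr),$$
so it suffices to establish this. Since $\mathcal L_\sym = \I - \A$, the substitution $f = \D^{-1/2}g$ and the identity $2\sum_i d_i f_i^2 + 2\sum_{ij}\chi_{ij}f_if_j = \sum_{ij}\chi_{ij}(f_i+f_j)^2$ yield the Rayleigh representation
$$2 - \lambda_n(\mathcal L_\sym) \;=\; \min_{f\neq 0}\ \frac{\sum_{ij}\chi_{ij}(f_i+f_j)^2}{2\sum_{i} d_i f_i^2},$$
whose minimizers are precisely $f = \D^{-1/2}g$ for bottom eigenvectors $g$ of $\I + \A$.

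I would then take an extremal $f$ normalized by $\|f\|_\infty=1$ and consider the one-parameter family of disjoint pairs
$$U_1(t) = \{i:f_i>\sqrt{t}\},\qquad U_2(t) = \{i:f_i<-\sqrt{t}\},\qquad t\in[0,1],$$
each admissible for \plaineqref{dualcheeger}. The coarea identity $\int_0^1(\vol(U_1(t))+\vol(U_2(t)))\,dt = \sum_i d_i f_i^2$, combined with the expansion $(f_i+f_j)^2 = (f_i-f_j)^2 + 4 f_if_j$ and the defining inequality $2\sum_{i\in U_1(t),\, j\in U_2(t)}\chi_{ij}\leq g^{\ddd}_{\GG}\,(\vol(U_1(t))+\vol(U_2(t)))$ applied pointwise in $t$, converts the numerator of the Rayleigh quotient into a functional of $1-g^{\ddd}_{\GG}$. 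A Cauchy--Schwarz step that pairs the $(f_i-f_j)^2$ contribution with $\lambda_n$ and the $f_if_j$ contribution with $2-\lambda_n$ then closes the product bound displayed above.

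The main obstacle is the final Cauchy--Schwarz closure: a naive sweep only yields the weaker Rayleigh-type bound $(1-g^{\ddd}_{\GG})^2/2 \leq 2-\lambda_n$, corresponding to the classical form in Lemma~\ref{chg1} rather than the sharper bound claimed here. Recovering the tighter product form $(1-g^{\ddd}_{\GG})^2 \leq \lambda_n(2-\lambda_n)$ requires separately tracking the $(f_i-f_j)^2$ and $4f_if_j$ pieces of the expansion and coupling each with the appropriate spectral factor, which is exactly the refinement carried out in \cite[Theorem~3.2]{bauer_jost_2013}. Since $\GG$ here is an ordinary weighted undirected graph with nonnegative weights, that argument applies verbatim; alternatively, the statement may be invoked as a black box, as it enters only through the lower bound on $\bar{\mix}$ in the proof of Lemma~\plainref{newtech}.
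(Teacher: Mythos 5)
Your proposal is correct and ends up in the same place as the paper: the paper gives no independent proof of this lemma and simply cites \cite[Theorem 3.2]{bauer_jost_2013}, which is exactly the reference you invoke for the sharp product bound $(1-g^{\ddd}_{\GG})^2\leq \lambda_n(2-\lambda_n)$. Your variational setup and your honest observation that a naive sweep only recovers the weaker $(1-g^{\ddd}_{\GG})^2/2\leq 2-\lambda_n$ (which, incidentally, is all the paper actually uses downstream in bounding $\bar{\mix}$) are both accurate, and treating the lemma as a black-box citation is precisely what the paper does.
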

Both Lemmas~\ref{chg1} and \ref{chg2} will be used in Section~\ref{ksjsjsjsk} to obtain lower bound on $\min\{\lambda_2(\bar{\mathcal{L}}_{\sym}), 2-\lambda_n(\bar{\mathcal{L}}_{\sym})\}$. 

\subsubsection{Proof of \plaineqref{2024222}}

The proof consists of three steps:

\smallskip

\textbf{\underline{Step I}: Bound on $\|\mathcal{L}_{\sym}-\bar{\mathcal{L}}_{\sym}\|_2$}

\smallskip
The idea to bound $\|\mathcal{L}_{\sym}-\bar{\mathcal{L}}_{\sym}\|_2$ is as follows:
{\begin{align*}
&\|\mathcal{L}_{\sym} - \bar{\mathcal{L}}_{\sym}\|_2 = \|\D^{-1/2}\W\D^{-1/2} - \bar{\D}^{-1/2}\bar{\W}\bar{\D}^{-1/2}\|_2\\
\leq&\  \|\D^{-1/2}\W\D^{-1/2} - \bar{\D}^{-1/2}\W\D^{-1/2}\|_2 \\
&+ \|\bar{\D}^{-1/2}\W\D^{-1/2}-\bar{\D}^{-1/2}\bar{\W}\D^{-1/2}\|_2 + \|\bar{\D}^{-1/2}\bar{\W}\D^{-1/2}-\bar{\D}^{-1/2}\bar{\W}\bar{\D}^{-1/2}\|_2\nonumber\\
\leq&\ \|\D^{-1/2}-\bar{\D}^{-1/2}\|_2\|\W\|_2\|\D^{-1/2}\|_2 \\
&+ \|\bar{\D}^{-1/2}\|_2\|\W-\bar\W\|_2\|{\D}^{-1/2}\|_2 + \|\bar\D^{-1/2}\|_2\|\bar\W\|_2\|\D^{-1/2}-\bar\D^{-1/2}\|_2\\
\stackrel{\eqref{random_graph_1}, \eqref{random_graph_2}}{\lesssim}&\sqrt{\frac{\xi_{n, +}^2\log n}{\xi_{n, -}^3}} + \sqrt{\frac{\xi_{n, +}\log n}{\xi_{n, -}^2}}\nonumber\\
\lesssim & \sqrt{\frac{\xi_{n, +}^2\log n}{\xi_{n, -}^3}}\nonumber.
\end{align*}}
In the rest of this section, we shall justify the penultimate step using matrix concentration. 

{For both the marginal MLE and QMLE, the $i$th diagonal entry of $\D$ can be written as a sum of independent uniformly bounded random variables:
\begin{align*}
	\sum_{\ms=2}^M\sum_{{\ \ T\in \Omega^{(\ms)}_{i}}}\mathbf 1_{\{T\in \E\}}\underbrace{\sum_{j\in T: j\neq i}z_{ij}(T)}_{=: z_i(T)\stackrel{\eqref{wenhai}}{\asymp} 1}
\end{align*}
where 
\begin{align}\label{wenhai}
z_{ij}(T) &= 
\begin{cases}
\frac{1}{|T|!}\sum_{r^{-1} = \pi\in\mathcal S(T)}\sum_{s\in [r(i)\wedge r(j)\wedge y_T]}\frac{\e{u^*_i}\e{u^*_{j}}}{(\sum_{t\geq s}\e{u^*_{\pi(t)}})^2} & \text{(marginal MLE)}\\
\\
\frac{\e{u_i}\e{u_{j}}}{(\e{u_i} + \e{u_j})^2}&\text{(QMLE)}
\end{cases}\\
& \asymp 1\quad\quad\quad\quad\quad\quad\quad\quad\quad\quad\text{(Assumptions \plainref{ap:2}-\plainref{ap:1})}\nonumber
\end{align}
and $1\leq y_T\leq |T|$ refers to the choice-$y_T$ observations on $T$ in the marginal MLE\footnote{For notational convenience, instead of defining $y_i$ only for the observed edges in $\E$, we assume each edge $T$ corresponds to a $y_T$. In particular, for the $i$th edge $T_i\in \E$, $y_i = y_{T_i}$.}. 
The expectation of $\D_{ii}$ (taken with respect to $\mathbf 1_{\{T\in \E\}}$) satisfies
\begin{align}
	(\log n)^3\leq\xi_{n, -}\lesssim \bar{\D}_{ii} = \sum_{\ms=2}^{M}\sum_{\ \ T \in \Omega^{(\ms)}_i}p_{T, n}^{(\ms)}z_{i}(T)\lesssim \xi_{n, +} \ \ \ \ \ i \in [n],\label{kuner}
\end{align}
where $p^{(\ms)}_{T,n}$ are defined in \plaineqref{pnqn}. 
}

By the Chernoff bound, with probability at least $1-n^{-3}$, all $D_{ii}$ are concentrated around their means 
\begin{align}
&|\D_{ii} - \bar{\D}_{ii}|\lesssim\sqrt{\bar{\D}_{ii}\log n}\stackrel{\eqref{kuner}}{=}o(\bar{\D}_{ii}) & i\in [n].\label{liuliu}
\end{align}
As a result,  
\begin{align}
\|\D^{-1/2} - \bar{\D}^{-1/2}\|_2 &= \max_{i\in [n]}|\D^{-1/2}_{ii} - \bar{\D}^{-1/2}_{ii}|\stackrel{\eqref{liuliu}}{\lesssim} \max_{i\in [n]}\left(\bar{\D}^{-3/2}_{ii}|\D_{ii} - \bar{\D}_{ii}|\right)\nonumber\\
&\stackrel{\eqref{kuner}, \eqref{liuliu}}{\lesssim}\sqrt{\frac{\log n}{\xi_{n ,-}^2}}\label{google01}\\
\|\D^{-1/2}\|_2&\stackrel{\eqref{liuliu}}{\lesssim} \|\bar{\D}^{-1/2}\|_2 \stackrel{\eqref{kuner}}{\lesssim} \sqrt{\frac{1}{\xi_{n, -}}}.\label{google02}
\end{align}
On the other hand, 
\begin{align}
\|\bar{\W}\|_2 &= \|\bar{\D}^{1/2}\bar{\D}^{-1/2}\bar{\W}\bar{\D}^{-1/2}\bar{\D}^{1/2}\|_2\leq \|\bar{\D}^{1/2}\|^2_2\|\bar{\D}^{-1/2}\bar{\W}\bar{\D}^{-1/2}\|_2\leq \|\bar{\D}\|_2\nonumber\\
&=\max_{i\in [n]}\bar{\D}_{ii}\stackrel{\eqref{kuner}}{\lesssim}\xi_{n, +},\label{google4}
\end{align}
where we used the fact that $\bar{\D}^{-1/2}\bar{\W}\bar{\D}^{-1/2}$ is symmetric and all its eigenvalues are contained in $[-1, 1]$. Combining \eqref{google01} and \eqref{google4}, we obtain
\begin{align}\label{random_graph_1}
	\|\D^{-1/2}- \bar{\D}^{-1/2}\|_2(\|\W\|_2\|\D^{-1/2}\|_2 + \|\bar\W\|_2\|\bar\D^{-1/2}\|_2) \lesssim \sqrt{\frac{\xi_{n, +}^2\log n}{\xi_{n, -}^3}}.
	\end{align} 
\color{black}

For $\W-\bar{\W}$, note that it can be written as a sum of independent symmetric matrices
\begin{align*}
	\W-\bar{\W} &= \sum_{\ms=2}^M\sum_{\ \ T \in \Omega^{(m)}}(\mathbf 1_{\{T\in \E\}} - p_{T, n}^{(\ms)}) \Lambda_T,
	\end{align*}
 where $[ \Lambda_{T}]_{ij} = \mathbf 1_{\{\{i, j\}\subseteq T\}}z_{ij}(T)$ when $i \neq j$ and $[\Lambda_{T}]_{ij} = 0$ when $i = j$.
Note that $\Lambda_{T}$ is symmetric and has at most $|T|^2$ nonzero entries. 
For all $T$ with $|T|=\ms\leq M$, there exists an absolute constant $C>0$ such that 
\begin{align*}
\|\mathbf 1_{\{T\in \E\}}\Lambda_{T}- p_{T, n}^{(\ms)}\Lambda_{T}\|_2 \leq \|\Lambda_{T}\|_2\leq \|\Lambda_{T}\|_F\lesssim M,
\end{align*}
and
\begin{align}
	\left\|\sum_{\ms=2}^M\sum_{\ \ T \in \Omega^{(m)}}\mathbb E\left[(\mathbf 1_{\{T\in \E\}} - p_{T, n}^{(\ms)})^2\Lambda^2_T\right]\right\|_2&\lesssim \sum_{\ms=2}^M\left\|\sum_{\ \ T \in \Omega^{(m)}}p_{T, n}^{(\ms)}(1-p_{T, n}^{(\ms)})\Lambda^2_{T}\right\|_2\nonumber\\
	&\lesssim \sum_{\ms=2}^Mq_{n}^{(\ms)}\left\|\sum_{\ \ T \in \Omega^{(m)}}\Lambda^2_{T}\right\|_2\label{hopea}.
\end{align} 
	To further bound the right-hand side, note for $i\neq j$, 
	\begin{align}
		\left|\left\{\sum_{\ \ T \in \Omega^{(m)}}\Lambda^2_{T}\right\}_{ij}\right| &= \left|\sum_{\ \ T \in \Omega_{ij}^{(m)}}[\Lambda^2_{T}]_{ij}\right| \leq \sum_{\ \ T \in \Omega_{ij}^{(m)}}\left|(\Lambda_{T}[:, i])^\top\Lambda_{T}[:, j]\right|\label{gshj1}\\
		&\lesssim \left|\Omega_{ij}^{(m)}\right|\lesssim n^{\ms-2}\nonumber, 
	\end{align} 
and
\begin{align*}
	\left\{\sum_{\ \ T \in \Omega^{(m)}}\Lambda^2_{T}\right\}_{ii} &= \sum_{\ \ T \in \Omega_i^{(m)}}[\Lambda^2_{T}]_{ii} = \sum_{\ \ T \in \Omega_i^{(m)}}\|\Lambda_T[:, i]\|_2^2\gtrsim \left|\Omega_i^{(m)}\right|\gtrsim n^{\ms-1},
\end{align*}
where $\Lambda_T[:, i]$ is the $i$th column of $\Lambda_T$. 
By the Gershgorin circle theorem, every eigenvalue of $(\sum_{T \in \Omega^{(m)}}\Lambda^2_{T})$ lies in some circle centered at one of its diagonal entries with radius equal to the $\ell_1$-norm of the remaining entries of that row. 
Since $(\sum_{T \in \Omega^{(m)}}\Lambda^2_{T})$ is also symmetric and nonnegative definite, 
proceeding with \eqref{hopea},  
\begin{align*}
	\left\|\sum_{\ms=2}^M\sum_{\ \ T \in \Omega^{(m)}}\mathbb E\left[(\mathbf 1_{\{T\in \E\}} - p_{T, n}^{(\ms)})^2\Lambda^2_T\right]\right\|_2&\lesssim \sum_{\ms=2}^Mq_{n}^{(\ms)}\left\|\sum_{\ \ T \in \Omega^{(m)}}\Lambda^2_{T}\right\|_2\\
	&\lesssim\sum_{\ms=2}^M q_{n}^{(\ms)}\left(n^{\ms-1} + (n-1)\cdot n^{\ms-2}\right)\\
	&\lesssim \xi_{n, +}.
\end{align*} 
By the matrix Bernstein inequality \citep{tropp2012user}, with probability at least $1-n^{-3}$, 
\begin{align}
\|\W-\bar{\W}\|_2 \lesssim\sqrt{\xi_{n, +}\log n}. \label{google03}
\end{align}
Combining \eqref{google02} and \eqref{google03} using a union bound, we obtain that, with probability at least $1-2n^{-3}$, 
	\begin{align}\label{random_graph_2}
	 \|\D^{-1}\|_2\|\W-\bar{\W}\|_2 \lesssim \sqrt{\frac{\xi_{n, +}\log n}{\xi_{n, -}^2}}.
\end{align}

\smallskip

\textbf{\underline{Step II}: Bound on $\bar{\mix}$}\label{ksjsjsjsk}

\smallskip

It suffices to lower bound $\lambda_2(\bar{\mathcal{L}}_{\sym})$ and $2-\lambda_n(\bar{\mathcal{L}}_{\sym})$, respectively, for which we appeal to Lemmas~\ref{chg1} and \ref{chg2}. 
Note that $\bar{\mathcal{L}}_{\sym}$ can be viewed as the normalized graph Laplacian of a weighted graph $\bar{\GG}$, where the weight between $i, j\in [n]$ is given by $\bar{\W}$, where 
\begin{align}
	\bar{\W}_{ij} = \sum_{\ms=2}^M \sum_{\ \ T \in \Omega_{ij}^{(m)}}p_{T, n}^{(\ms)}z_{ij}(T).\label{WWW}
\end{align}
For $\lambda_2(\bar{\mathcal{L}}_{\sym})$, \eqref{yonghuol} in Lemma~\ref{chg1} implies $\lambda_2(\bar{\mathcal{L}}_{\sym})>g^2_{\bar{\mathsf G}}/2$, and the lower bounded can be estimated using \eqref{WWW} as follows:
\begin{align}
	g_{\bar{\mathsf G}} &= \min_{U\subset [n]}\frac{\sum_{i\in U}\sum_{j\in U^\complement}\sum_{\ms=2}^M\sum_{T \in \Omega_{ij}^{(m)}}p_{T, n}^{(\ms)}z_{ij}(T)}{\min_{S\in \{U, U^\complement\}}\left\{\sum_{i\in S}\sum_{j\in [n]}\sum_{\ms=2}^M\sum_{T \in \Omega_{ij}^{(m)}}p_{T, n}^{(\ms)}z_{ij}(T)\right\}}\label{kouyiniao}\\
	&\stackrel{\eqref{wenhai}}{\gtrsim} \min_{U\subset [n]}\frac{|U|(n-|U|)\sum_{\ms=2}^M{n-2\choose \ms-2}p_n^{(\ms)}}{\min\{|U|, n-|U|) n\sum_{\ms=2}^M{n-2\choose \ms-2}q_n^{(\ms)}}\gtrsim\frac{\xi_{n,-}}{\xi_{n, +}}\nonumber,
\end{align}
where $\xi_{n,-}$ is defined in \plaineqref{myxis}. Therefore,
\begin{align}
\lambda_2(\bar{\mathcal{L}}_{\sym})\gtrsim\left(\frac{\xi_{n,-}}{\xi_{n,+}}\right)^2.\label{egen2}
\end{align}

To bound $2-\lambda_n(\bar{\mathcal{L}}_{\sym})$, we resort to Lemma~\ref{chg2} and obtain
\begin{align}
 2-\lambda_n(\bar{\mathcal{L}}_{\sym}) \geq 1-\sqrt{1-(1-g^{\ddd}_{\bar{\mathsf G}})^2}\geq \frac{(1-g^{\ddd}_{\bar{\mathsf G}})^2}{2},\label{kouyi}
\end{align}
where for the last inequality follows from the elementary inequality $1-\sqrt{1-x}\geq x/2$ for $0\leq x\leq 1$. 

To estimate $1-g^{\ddd}_{\bar{\mathsf G}}$, we assume $g^{\ddd}_{\bar{\mathsf G}}$ is attained at some disjoint sets $U_1, U_2\subset [n]$ with $|U_1|\geq |U_2|$. Then, 
\begin{align*}
g^{\ddd}_{\bar{\mathsf G}}&=\frac{2\sum_{i\in U_1, j\in U_2}\bar{\W}_{ij}}{2\sum_{i\in U_1, j\in U_2}\bar{\W}_{ij} + \sum_{i\in U_1, j\in U^\complement_2}\bar{\W}_{ij} + \sum_{i\in U^\complement_1, j\in U_2}\bar{\W}_{ij}}\\
&=\frac{1}{1 + \frac{\sum_{i\in U_1, j\in U^\complement_2}\bar{\W}_{ij} + \sum_{i\in U^\complement_1, j\in U_2}\bar{\W}_{ij}}{2\sum_{i\in U_1, j\in U_2}\bar{\W}_{ij}}}.
\end{align*}
By a similar computation as \eqref{kouyiniao}, 
\begin{align*}
\frac{\sum_{i\in U_1, j\in U^\complement_2}\bar{\W}_{ij} + \sum_{i\in U^\complement_1, j\in U_2}\bar{\W}_{ij}}{2\sum_{i\in U_1, j\in U_2}\bar{\W}_{ij}}&\gtrsim \frac{(|U_1||U_2^\complement| + |U_1^\complement||U_2|)\xi_{n,-}}{|U_1||U_2|\xi_{n,+}}
\geq \frac{|U_1||U_2^\complement|\xi_{n,-}}{|U_1||U_2|\xi_{n,+}} \geq \frac{\xi_{n,-}}{\xi_{n,+}}.
\end{align*}
Consequently, there exists an absolute constant $C>1$ such that   
\begin{align*}
g^\ddd_{\bar{\mathsf G}}\leq \frac{1}{1+\frac{\xi_{n, -}}{C\xi_{n,+}}}\Longrightarrow 1-g^\ddd_{\bar{\mathsf G}}\geq \frac{\xi_{n, -}}{2C\xi_{n,+}}.
\end{align*}
This combined with \eqref{kouyi} implies 
\begin{align}
2-\lambda_n(\bar{\mathcal{L}}_{\sym})\gtrsim \left(\frac{\xi_{n,-}}{\xi_{n,+}}\right)^2.\label{egenn}
\end{align}
Putting \eqref{egen2} and \eqref{egenn} together, 
\begin{align*}
\bar{\mix}\gtrsim \left(\frac{\xi_{n,-}}{\xi_{n,+}}\right)^2.
\end{align*}

\smallskip

\textbf{\underline{Step III}: Wrap-up}

\smallskip

Under the assumption $\xi_{n,-}^7/(\xi_{n,+}^6\log n)\to\infty$, the expected spectral gap dominates the fluctuation errors:
\begin{align*}
\sqrt{\frac{\xi_{n, +}^2\log n}{\xi_{n, -}^3}} = o\left(\frac{\xi^2_{n,-}}{\xi^2_{n, +}}\right). 
\end{align*}
Substituting this into \eqref{lexing} finishes the proof for \plaineqref{2024222}. 

To prove the lower bound on $\lambda_2^\leave$ in \plaineqref{2024222}, note 
\begin{align}
\lambda_2(-H^*(\bu^*)) = \lambda_2(\D^{1/2}\mathcal{L}_{\sym}\D^{1/2})\geq \lambda_2(\mathcal{L}_{\sym})\cdot\min_{i\in [n]}\D_{ii}\gtrsim\frac{\xi^3_{n,-}}{\xi^2_{n, +}}. \label{jhuhuj}
\end{align}
To bound $\lambda_2(-{H^*}^{(-k)}(\bu^*))$ in the leave-one-out analysis, notice that after removing $k$ and the corresponding edges, the remaining comparison remains a NURHM model with $n-1$ vertices. Consequently, \eqref{jhuhuj} holds for each $-{H^*}^{(-k)}(\bu^*)$ with probability at least $1-2n^{-3}$. Taking a union bound over $k\in [n]$ yields the desired result. 

\subsubsection{Proof of \plaineqref{2024333}}\label{jianlo}
We first consider the setting of NURHM and then discuss a refined estimate for HSBM. To begin with, note that under the assumptions $\xi_{n,-}\gtrsim\log n$ and $\zeta_{n,-}\gtrsim \log n$, the estimate \plaineqref{2024111} holds with probability at least $1-n^{-5}$. In NURHM, for $j\neq k\in [n]$, 
\begin{align*}
\N_{jk} = \sum_{m=2}^M\sum_{\ \ T \in \Omega^{(\ms)}_{jk}}  \underbrace{\mathbf 1_{\{T\in \E\}}}_{=:Y_T}, \quad\quad \mathbb E[Y_T] = p_{T, n}^{(\ms)}.
\end{align*}
is a sum of independent Bernoulli random variables, where $p^{(\ms)}_{T,n}$ are the edge probabilities defined in \plaineqref{pnqn}. Since we wish to obtain an upper bound on $\N_{jk}$, without loss of generality, we  assume $p^{(\ms)}_{T, n} = q_{n}^{(\ms)}$ as defined in \plaineqref{pnqn}. Under such circumstances, we can compute $\mathbb E[\N_{jk}]$ as 
\begin{align*}
\mathbb E[\N_{jk}] = \sum_{\ms=2}^M{n-2\choose \ms-2}q^{(\ms)}_{n}\leq \sum_{\ms=2}^M n^{\ms-2}q^{(\ms)}_n \stackrel{\plaineqref{myxis}}{=} \frac{\xi_{n, +}}{n}.
\end{align*}
We first consider the case where $\xi_{n,+}/n\geq 15 \log n$. It follows from the Chernoff bound that 
\begin{align}
\mathbb P\left(\N_{jk}\geq 2\mathbb E[\N_{jk}] \right)\leq \e{-\mathbb E[\N_{jk}]/3}\leq n^{-5}. \label{1843}
\end{align}
On the other hand, if $\xi_{n,+}/n<15\log n$, then we can construct a coupling $Y'_T$ of $Y_T$ such that $Y_T\leq Y'_T$ and $\sum_{\ms=2}^M \sum_{T \in \Omega^{(\ms)}_{jk}}  \mathbb E[Y'_{T}]= 15\log n$. Applying the estimate \eqref{1843} to $N'_{jk}:=\sum_{\ms=2}^M\sum_{T \in \Omega^{(\ms)}_{jk}} Y'_{T}$ and utilizing $Y_T\leq Y'_T$ shows that 
\begin{align}
\mathbb P\left(\N_{jk}\geq 30\log n \right)\leq \mathbb P\left(N'_{jk}\geq 30\log n \right)\leq n^{-5}.\label{1844}
\end{align}
Combining \eqref{1843}-\eqref{1844} and applying a union bound over all distinct pairs $(j, k)$ yields that, with probability at least $1-n^{-3}$, 
\begin{align*}
\max_{j\neq k\in [n]}\N_{jk}\lesssim \max\left\{\frac{\xi_{n,+}}{n}, \log n\right\}. 
\end{align*}
The proof is finished by combining this with \plaineqref{2024111} and the Borel--Cantelli lemma:
\begin{align}
\cc = \max_{j\neq k\in [n]}\frac{\N_{jk}}{\ndeg_j}\lesssim\max_{j\neq k\in [n]}\frac{\N_{jk}}{\xi_{n,-}}\lesssim\frac{\max\left\{n^{-1}{\xi_{n,+}}, \log n\right\}}{\xi_{n,-}}. \label{7463}
\end{align}
We finish the proof of Lemma \plainref{newtech}.

For HSBM, more refined estimates can be obtained for $\ndeg_j$ and $\N_{jk}$. Consider an HSBM with edge size $\mm\geq 2$ and $K$ communities $V_1, \ldots, V_K$. The edge probabilities within $V_1, \ldots, V_K$ and across them are $\omega_{n,1}, \ldots, \omega_{n,K}$, and $\omega_{n,0}$, respectively. Fix $i\in [K]$ and any object $j\in V_i$. By degree concentration, with probability at least $1-n^{-5}$, 
\begin{align}
\ndeg_j \gtrsim \mathbb E[\ndeg_j]&=\omega_{n,i}{|V_i|-1\choose \mm-1} + \omega_{n,0}\sum_{s=1}^{\mm-1}{n-|V_i|\choose s}{|V_i|-1\choose \mm-1-s}\nonumber\\
&\gtrsim  \max\left\{|V_i|^{\mm-1}\omega_{n,i}, (n-|V_i|)n^{\mm-2}\omega_{n,0}\right\}.\label{198234}
\end{align}
On the other hand, for each $k\in [n]\setminus\{j\}$, it follows from a similar concentration argument as in the NURHM setting that, with probability at least $1-n^{-5}$, 
\begin{align}
\N_{jk}&\lesssim \mathbb \max\left\{\mathbb E[\N_{jk}], \log n\right\}\lesssim\max\left\{|V_i|^{\mm-2}\omega_{n, i}, n^{\mm-2}\omega_{n, 0},\log n\right\},\label{198235}
\end{align}
where the last step follows from the observation
\begin{align*}
\mathbb E[\N_{jk}] = \begin{cases}
\omega_{n,0}{n-2\choose \mm-2}\lesssim \omega_{n,0} n^{\mm-2}& k\notin V_i\\
\\
\omega_{n,i}{|V_i|-2\choose \mm-2} + \omega_{n,0}\sum_{s=1}^{\mm-2}{n-|V_i|\choose s}{|V_i|-2\choose \mm-2-s}\lesssim |V_i|^{\mm-2}\omega_{n, i}+ n^{\mm-2}\omega_{n, 0}& k\in V_i
\end{cases}.
\end{align*}
Putting \eqref{198234} and \eqref{198235} together, we have 
\begin{align}
\frac{\N_{jk}}{\ndeg_j}&\lesssim\frac{\max\left\{|V_i|^{\mm-2}\omega_{n, i}, n^{\mm-2}\omega_{n, 0}\right\}}{\max\left\{|V_i|^{\mm-1}\omega_{n,i}, (n-|V_i|)n^{\mm-2}\omega_{n,0}\right\}}\nonumber\\
&\lesssim\max\left\{\frac{1}{|V_i|}, \frac{1}{n-|V_i|}\right\}\nonumber\\
&\lesssim\frac{1}{\min_{i\in [K]}|V_i|}.\label{kouqi}
\end{align}
Taking a union bound over $i, j, k$ and combining with \eqref{7463} (HSBM identified as a special case of NURHM) yields that, with probability at least $1-n^{-2}$,  
\begin{align*}
\cc \lesssim \min\left\{\frac{1}{\min_{i\in [K]}|V_i|}, \frac{\max\left\{n^{-1}{\zeta_{n,+}}, \log n\right\}}{\zeta_{n,-}}\right\}.
\end{align*}
When $\min_{i\in [K]}|V_i|\gtrsim n$, this becomes $\cc\lesssim \max\{n^{-1}, (\log n)/\zeta_{n,-}\}$.  

\subsection{Proof of Lemma~\plainref{lastone}}\label{new19}
We show how to lower bound the leave-one-out spectral gap $\lambda_2^{\leave}$ using $\lambda_2(\mathcal L_{\sym})$ and $\cc$. 
Recall that the weighted graph associated with $-\cH^*(\bu^*)$ is $\GG$. For each $k\in [n]$, denote the weighted graph associated with $-{\cH^*}^{(-k)}(\bu^*)$ as $\GG^{(-k)}$. By definition, the edge weights in $\GG$ and $\GG^{(-k)}$ are given by the off-diagonal entries of ${\cH^*}(\bu^*)$ and ${\cH^*}^{(-k)}(\bu^*)$, respectively.  

Under Assumptions~\plainref{ap:2}-\plainref{ap:1}, a direct computation using \plaineqref{wearediff} shows that 
\begin{align}
[\cH^*(\bu^*)]_{ij} \asymp \N_{ij},\label{giveup1}
\end{align}
where $\N_{ij}$ is defined in \plaineqref{mydij}. 
Moreover, for $i\neq j\in [n]\setminus\{k\}$, 
\begin{align}
[\cH^*(\bu^*)]_{ij}-[{\cH^*}^{(-k)}(\bu^*)]_{ij}\asymp \N_{ijk}:=\left|\{t: \{i, j, k\}\subseteq T_t\}\right|.\label{giveup2}
\end{align}

We now use Cheeger's inequality to obtain a lower bound on $\lambda_2(-{\cH^*}^{(-k)}(\bu^*))$. 
To this end, we first conduct a perturbation analysis to estimate $g_{\GG^{(-k)}}$. Suppose the Cheeger constant $g_{\GG^{(-k)}}$ is attained at some partition $U_1$ and $U_2$ of $[n]\setminus\{k\}$ ($U_2$ is the complement of $U_1$ in $[n]\setminus\{k\}$) with $\vol^{(-k)}(U_1)\leq \vol^{(-k)}(U_2)$, where $\vol^{(-k)}(\cdot)$ is the volume defined on $\GG^{(-k)}$; see \eqref{fc:cheeger}. Define $U'_2 = U_2\cup\{k\}$. According to the definition in \eqref{fc:cheeger},  
{\small\begin{align}
&g_{\GG} \leq \frac{\sum_{i\in U_1}\sum_{j\in U_2'}[\cH^*(\bu^*)]_{ij}}{\min\{\vol(U_1), \vol(U'_2)\}}\nonumber\\
& = \frac{\sum_{i\in U_1}\sum_{j\in U_2}[{\cH^*}^{(-k)}(\bu^*)]_{ij} + \sum_{i\in U_1}\sum_{j\in U_2}([\cH^*(\bu^*)]_{ij}-[{\cH^*}^{(-k)}(\bu^*)]_{ij})+\sum_{i\in U_1}[\cH^*(\bu^*)]_{ik}}{\min\{\vol(U_1), \vol(U'_2)\}}\nonumber\\
&\leq \frac{\sum_{i\in U_1}\sum_{j\in U_2}[{\cH^*}^{(-k)}(\bu^*)]_{ij}}{\min\{\vol^{(-k)}(U_1), \vol^{(-k)}(U_2)\}} + \frac{\sum_{i\in U_1}\sum_{j\in U_2}([\cH^*(\bu^*)]_{ij}-[{\cH^*}^{(-k)}(\bu^*)]_{ij})+\sum_{i\in U_1}[\cH^*(\bu^*)]_{ik}}{\min\{\vol(U_1), \vol(U'_2)\}}\nonumber\\
&= g_{\GG^{(-k)}} + \frac{\sum_{i\in U_1}\sum_{j\in U_2}([\cH^*(\bu^*)]_{ij}-[{\cH^*}^{(-k)}(\bu^*)]_{ij})+\sum_{i\in U_1}[\cH^*(\bu^*)]_{ik}}{\min\{\vol(U_1), \vol(U'_2)\}}.\label{uwe}
\end{align}}
To bound the second term in \eqref{uwe}, note
\begin{align}
\vol(U_1) - \vol^{(-k)}(U_1) &= \sum_{i\in U_1}\sum_{j\in [n]\setminus\{k\}, j\neq i}([\cH^*(\bu^*)]_{ij}-[{\cH^*}^{(-k)}(\bu^*)]_{ij}) + \sum_{i\in U_1}[\cH^*(\bu^*)]_{ik}\nonumber\\
&\stackrel{\eqref{giveup1}, \eqref{giveup2}}{\lesssim}\sum_{i\in U_1}\sum_{j\in [n]\setminus\{k\}, j\neq i} \N_{ijk} + \sum_{i\in U_1}\N_{ik}\nonumber\\
& \lesssim \sum_{i\in U_1}\N_{ik}\label{onefortwo}\\
&= \sum_{T\in \E: k\in T}|T\cap U_1|\lesssim \ndeg_k;\nonumber
\end{align}
and
\begin{align}
\vol(U'_2) - \vol^{(-k)}(U_2) &\geq \sum_{i\in U_1}[\cH^*(\bu^*)]_{ik}\gtrsim\ndeg_k.\nonumber
\end{align}
Combining these with $\vol^{(-k)}(U_1)\leq \vol^{(-k)}(U_2)$, we have 
\begin{align}
\vol(U_1)\lesssim \vol(U_2'). \label{u1u2}
\end{align}
Thus,
\begin{align}
&\frac{\sum_{i\in U_1}\sum_{j\in U_2}[\cH^*(\bu^*)-{\cH^*}^{(-k)}(\bu^*)]_{ij}+\sum_{i\in U_1}[\cH^*(\bu^*)]_{ik}}{\min\{\vol(U_1), \vol(U'_2)\}}\nonumber\\
\stackrel{\eqref{onefortwo}, \eqref{u1u2}}{\lesssim}&\ \frac{\sum_{i\in U_1}\N_{ik}}{\vol(U_1)} \lesssim \frac{\sum_{i\in U_1}\N_{ik}}{\sum_{i\in U_1}\ndeg_i}\leq\max_{i\in U_1}\frac{\N_{ik}}{\ndeg_i}\leq\cc. \label{bert}
\end{align}
Under the assumption $\cc/\lambda_2(\mathcal L_\sym)\to 0$ and applying Lemma~\ref{chg1}, 
\begin{align}
\frac{\cc}{g_{\GG}}\lesssim\frac{\cc}{\lambda_2(\mathcal L_\sym)}\to 0. \label{zero1}
\end{align}
Putting \eqref{uwe}, \eqref{bert}, and \eqref{zero1} together yields $g_{\GG^{(-k)}}\gtrsim g_{\GG}$. 

To finish the proof, denote by $\D^{(-k)}$ the degree matrix in $\GG^{(-k)}$. By another application of Lemma~\ref{chg1}, 
\begin{align*}
\lambda_2(-{\cH^*}^{(-k)}(\bu^*)) &= \lambda_2(\{\D^{(-k)}\}^{1/2}[\{\D^{(-k)}\}^{-1/2}\{-{\cH^*}^{(-k)}(\bu^*)\}\{\D^{(-k)}\}^{-1/2}]\{\D^{(-k)}\}^{1/2})\nonumber\\
&\gtrsim \min_{i\in [n]\setminus\{k\}}[\D^{(-k)}]_{ii}\lambda_2(\{\D^{(-k)}\}^{-1/2}\{-{\cH^*}^{(-k)}(\bu^*)\}\{\D^{(-k)}\}^{-1/2})\nonumber\\
&\gtrsim \min_{i\in [n]\setminus\{k\}}[\D^{(-k)}]_{ii}g^2_{\GG^{(-k)}}\nonumber\\
&\gtrsim \min_{i\in [n]\setminus\{k\}}[\D^{(-k)}]_{ii}g^2_{\GG}\nonumber\\
&\gtrsim \min_{i\in [n]\setminus\{k\}}[\D^{(-k)}]_{ii}\lambda^2_2(\mathcal L_\sym)\nonumber\\
&\gtrsim \N_{n, -}\lambda^2_2(\mathcal L_\sym),
\end{align*}
where the last step follows because 
\begin{align}
\max_{i\in [n]\setminus \{k\}}\frac{\D_{ii} - [{\D}^{(-k)}]_{ii}}{\D_{ii}} &= \max_{i\in [n]\setminus \{k\}}\frac{\sum_{j\in [n]\setminus \{k\}, j\neq i}([\cH^*(\bu^*)]_{ij}-[{\cH^*}^{(-k)}(\bu^*)]_{ij}) + [\cH^*(\bu^*)]_{ik}}{\D_{ii}}\nonumber\\
& \lesssim \max_{i\in [n]\setminus \{k\}}\frac{\N_{ik}}{\ndeg_i}\leq \cc\to 0. \label{louis}
\end{align}
The desired result follows by taking the infimum over $k\in [n]$.  We finish the proof of Lemma~\plainref{lastone}.

\bibliographystyle{imsart-nameyear}
\bibliography{ref} 
\end{document}